\theoremstyle{definition}
\newtheorem{theorem}{Theorem}[section]
\newtheorem{lemma}[theorem]{Lemma}
\newtheorem{corollary}[theorem]{Corollary}
\newtheorem{proposition}[theorem]{Proposition}
\newtheorem{claim}[theorem]{Claim}
\newtheorem{definition}[theorem]{Definition}
\newtheorem{remark}[theorem]{Remark}
\numberwithin{equation}{section}
\newcommand{\N}{\mathbb{N}}
\newcommand{\Z}{\mathbb{Z}}
\newcommand{\R}{\mathbb{R}}
\newcommand{\ind}{\mathds{1}}
\newcommand{\veps}{\varepsilon}
\newcommand{\fsubset}{\subset \subset}
\newcommand{\ball}{\mathcal{B}}
\newcommand{\distance}{\mathrm{d}}
\newcommand{\prob}{\mathbb{P}}
\newcommand{\Var}{\mathrm{Var}}
\newcommand{\E}{\mathbb{E}}
\newcommand{\capacity}{\mathrm{cap}}
\newcommand{\wormspace}{W^\lozenge} 
\newcommand{\wormspacesigmaalgebra}{\mathcal{W}^\lozenge}
\newcommand{\lozengeworm}{w^\lozenge}
\newcommand{\fwormspace}{W^\bullet}
\newcommand{\firstlastvis}{\Lambda}
\newcommand{\wormspm}{\boldsymbol{\mathcal{M}}}
\newcommand{\pointmeasure}{M}
\newcommand{\wormsppp}{\mathcal{P}}
\newcommand{\step}{\textbf{s}}
\newcommand{\initial}{\textbf{i}}
\newcommand{\terminal}{\textbf{e}}
\newcommand{\initialinterval}{\textbf{i}}
\newcommand{\terminalinterval}{\textbf{e}}
\newcommand{\growth}{\mathcal{V}}
\newcommand{\Cayley}{\mathsf{Cay}}
\newcommand{\equivalent}{\sim}
\newcommand{\rank}{\mathrm{rk}}
\newcommand{\isoperimetry}{\mathsf{IS}}
\newcommand{\boundary}{\partial}
\newcommand{\subgrp}{\leq}
\newcommand{\isoprof}{\Phi}
\begin{document}

\title{Random interlacement is a factor of i.i.d.}
\author{
	 M\'arton Borb\'enyi\footnote{
		E\"otv\"os Lor\'and University, Mathematics Institute, Department of Computer Science, P\'azmány P\'eter s\'et\'any 1/C, H-1117 Budapest, Hungary.
		marton.borbenyi@gmail.com}, \;
	Bal\'azs R\'ath\footnote{
		Department of Stochastics, Institute of Mathematics, Budapest University of Technology and
Economics, M\H{u}egyetem rkp. 3., H-1111 Budapest, Hungary.; 
		MTA-BME Stochastics Research Group, M\H{u}egyetem rkp. 3., H-1111 Budapest, Hungary;
		Alfr\'ed R\'enyi Institute of Mathematics, Re\'altanoda utca 13-15, 1053 Budapest, Hungary.
		rathb@math.bme.hu}, \; 
	S\'andor Rokob\footnote{
		Department of Stochastics, Institute of Mathematics, Budapest University of Technology and
Economics, M\H{u}egyetem rkp. 3., H-1111 Budapest, Hungary.;
		Alfr\'ed R\'enyi Institute of Mathematics, Re\'altanoda utca 13-15, 1053 Budapest, Hungary.
		roksan@math.bme.hu }
}

\maketitle
	
\begin{abstract} The random interlacement point process (introduced in \cite{S10}, generalized in \cite{T09}) is a Poisson point process on the space of labeled doubly infinite nearest neighbour trajectories modulo time-shift on a transient graph $G$. We show that the random interlacement point process on any transient transitive graph $G$ is a factor of i.i.d., i.e., it can be constructed from a family of i.i.d.\  random variables indexed by vertices of the graph via an equivariant measurable map.
Our proof uses a variant of the soft local time method (introduced in \cite{PT15}) to construct the interlacement point process as the almost sure limit of a sequence of finite-length variants of the model with increasing length.  We also discuss a more direct method of proving that the interlacement point process is a factor of i.i.d.\ which works if and only if $G$ is non-unimodular.

\noindent \textsc{Keywords: random interlacements, factor of iid, random walk, unimodularity} \\
\textsc{AMS MSC 2020: 37A50, 82B41}
		
\end{abstract}

\section{Introduction}

	
\subsection{Random interlacements}

Random interlacements, introduced in \cite{S10}, describe the local distributional limit of the trace of a random walk on a $d$-dimensional discrete torus $\left( \mathbb{Z} / N \mathbb{Z} \right)^d, d \geq 3$ if we run the random walk up to times comparable to the volume of the torus and let $N \to \infty$, cf.\ \cite{W08}. The notion of random interlacements was generalized to transient weighted graphs in \cite{T09}.
	
Let us give a brief  description of the random interlacement point process, deferring the technical details to Section \ref{subsection:pointmeasure}. Let us denote by $W$ the space of doubly infinite transient nearest neighbour trajectories in $G$.
We say that $w,w' \in W$ are equivalent modulo time-shift 
 if there exists  $k \in \mathbb{Z}$ such that for all $n \in \mathbb{Z}$ we have
	$w(n)=w'(n+k)$. Let us denote by $W^*$ the set of equivalence classes of $W$
with respect to time-shift equivalence. 
The random interlacement point process $\mathcal{Z}=\sum_{i \in I}\delta_{(w^*_i,t_i)}  $ is a Poisson point process (PPP) on  the space $W^* \times \R_+$ of labeled trajectories modulo time-shift with intensity measure $\nu \times \lambda$,  where $\lambda$ denotes the Lebesgue measure on $\R_+$ and
$\nu$ is a $\sigma$-finite measure on $W^*$ that we will precisely define in Section \ref{subsection:pointmeasure}. However, note that the following property characterizes $\nu$: for each finite subset $K$ of the vertex set of $G$, an alternative way of generating a PPP on $W^*$ with the same distribution as the point process of trajectories of $\mathcal{Z}$ that hit $K$ and have a label in the interval $[0,u]$ is as follows (cf.\ Theorem \ref{thm_nu_exists_unique}): independently for each vertex $v$ of $K$, let us start a $\mathrm{POI}(u)$ number of i.i.d.\ doubly infinite random walks from $v$ indexed by $\mathbb{Z}$,  throw away those trajectories
that already visit $K$ at a time indexed by a negative number and take the point process that consists of  the equivalence classes of the remaining trajectories modulo time-shift.

The goal of our paper is to construct the interlacement Poisson point process $\mathcal{Z}$ from a family of i.i.d.\ random variables indexed by the vertex set of $G$ via a measurable map which intertwines the action of the automorphism group $\Gamma$ of $G$. Let us now provide the precise formulation of this this property.

\subsection{Factor of i.i.d.\ property}

If we are given a group $\Gamma$ acting on two sets $\Omega_1$ and $\Omega_2$ then a map $T \, : \, \Omega_1 \rightarrow \Omega_2$ is called $\Gamma$-equivariant if it intertwines the actions of $\Gamma$, i.e., if $T(\varphi(\omega_1) )=\varphi(T(\omega_1))$ holds for any $\omega_1 \in \Omega_1$ and $\varphi \in \Gamma$. In the case when $(\Omega_i, \mathcal{A}_i)$, $i = 1, 2$ are measurable spaces, then a $\Gamma$-equivariant measurable map is called a $\Gamma$-factor. If there is also a measure $\mu$ given on the domain space $(\Omega_1, \mathcal{A}_1)$, then the push-forward measure is called $\Gamma$-factor of $\mu$.

We focus on the case when the domain space $\Omega_1$ is a product space of the form $( \Omega^V, \mathcal{A}^V)$, where $(\Omega, \mathcal{A})$ is a measurable space and $V$ is the (countable) vertex set
of a graph $G$. In our case the group $\Gamma$ is the automorphism group $\mathrm{Aut}(G)$ of the simple graph $G$ with vertex set $V$ and edge set $E$, i.e., $\varphi \in \Gamma$ if and only if $\varphi: V \to V$ is a permutation with the property that  $\{x,y\} \in E$ if and only if $\{\varphi(x), \varphi(y) \} \in E$ for any $x \neq y \in V$.
 Note that in this case $\Gamma$ acts on the product space $\Omega^V$ as
 $\varphi (\underline{\eta}) = ( \eta_{ \varphi^{-1}(x) } )_{x \in X}$, where $\underline{\eta} = ( \eta_{x} )_{x \in V} \in \Omega^V$ and $\varphi \in \Gamma$. We assume that $G$ is transitive, i.e., for any $x,y \in V$ there exists $\varphi \in \Gamma$ such that $\varphi(x)=y$.
 
 In our case the target space $\Omega_2$ is the space $\wormspm(W^* \times \mathbb{R}_+)$ of locally finite point measures on $W^* \times \mathbb{R}_+$ (see Definition \ref{def_space_of_worm_point_measures} for details) and the action of $\Gamma$ extends naturally to  $\wormspm(W^* \times \mathbb{R}_+)$ (see
 Definition \ref{def_auto} for details).

If the probability measure $\mu$ on the domain space is a product measure on $( \Omega^V, \mathcal{A}^V)$, i.e., if $ ( \eta_{x} )_{x \in V}$ are i.i.d.\ then the corresponding factor is called a factor of i.i.d.\ (or f.i.i.d.\ for short).

\subsection{Statements of  results}

Recall that we denote $\Gamma=\mathrm{Aut}(G)$.

\begin{theorem}[Main result]\label{main_thm_intro} Let $G$ denote a locally finite, connected, transitive, transient infinite simple graph.
 There exists a probability space $(\Omega, \mathcal{A}, \vartheta)$ and a measurable map $T \, : \, \Omega^V \rightarrow \wormspm ( W^* \times \mathbb{R}_+ )$  with the following properties.
\begin{enumerate}[(i)]
	\item\label{main_thm_i_intro} If $\underline{\eta}=( \eta_x )_{x\in V}$ are i.i.d.\ with distribution $\vartheta$ then $T ( \underline{\eta} )$ is a PPP on $W^* \times \mathbb{R}_+$ with intensity measure $\nu \times \lambda$.
	\item\label{main_thm_ii_intro} For any  $\varphi\in\Gamma$ we have $T ( \varphi( \underline{\eta} ) ) = \varphi\left( T ( \underline{\eta} ) \right)$.
\end{enumerate}
\end{theorem}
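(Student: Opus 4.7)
The plan is to realise $\mathcal{Z}$ as an almost sure limit of finite-length approximations $(\mathcal{Z}_N)_{N\ge 1}$, each of which is directly a $\Gamma$-factor of i.i.d.\ via a symmetric local construction, and to use a soft local time coupling along the lines of \cite{PT15} to tie the $\mathcal{Z}_N$'s together so that the limit is a measurable function of a single i.i.d.\ input.

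First I would define $\mathcal{Z}_N$ as the natural length-$(2N+1)$ analogue of $\mathcal{Z}$: a Poisson point process over trajectories of length $2N+1$, with intensity $\nu_N \times \lambda$ pinned down by the finite-length analogue of the recipe in Theorem~\ref{thm_nu_exists_unique}---for each finite $K\subset V$, one starts $\mathrm{POI}(u)$ many length-$(2N+1)$ random walks at each $v\in K$, discards those visiting $K$ at a negative time, and passes to equivalence classes. As $N\to\infty$, $\mathcal{Z}_N$ should converge vaguely to $\mathcal{Z}$, because any trajectory of $\mathcal{Z}$ hitting a fixed finite set is eventually captured in full. Next I would construct each $\mathcal{Z}_N$ directly as a factor of i.i.d.: the input $\eta_v$ at vertex $v$ carries a Poisson clock on $\R_+$, an independent sequence of length-$(2N+1)$ random walk paths based at $v$, and an independent uniform tie-breaking label $U_v\in[0,1]$. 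I include the walk $w$ with label $t$ issued at $v$ in $\mathcal{Z}_N(\underline{\eta})$ precisely when $v$ attains $\min_{x\in\mathrm{range}(w)}U_x$; since the range of a length-$(2N+1)$ walk is a.s.\ finite, this minimum is a.s.\ unique, and each trajectory is counted exactly once. Calibrating the clock rate to match $\nu_N$, and checking equivariance of the symmetric construction, is then routine.

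The main technical task is to couple the $\mathcal{Z}_N$'s on a single i.i.d.\ probability space so that $\mathcal{Z}_N\to\mathcal{Z}$ almost surely rather than merely in law. This is where the soft local time method of \cite{PT15} enters: rather than resampling walks independently at each $N$, I would harvest walks of increasing length from a single per-vertex pool of Poisson randomness encoded in $\eta_v$, using soft local time thresholds to guarantee that the length-$(2N+3)$ walk extends the length-$(2N+1)$ walk up to a controlled boundary discrepancy. I expect the hardest step to be carrying this construction out in a purely local, equivariant manner, without any global reference point or deterministic ordering of the vertex set. The sharpness of this obstacle is reflected in the authors' remark that the simpler direct construction works if and only if $G$ is non-unimodular: on a unimodular transitive graph the mass transport principle rules out any local rule for selecting a canonical representative of a doubly infinite trajectory, so one must pass through finite truncations and argue for convergence via soft local time rather than define $\mathcal{Z}$ in one shot.
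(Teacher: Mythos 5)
Your high-level plan (finite-length approximations, each a factor of i.i.d., tied together on one i.i.d.\ space via soft local times so that the convergence is almost sure) is the same strategy as the paper's, but two of its load-bearing steps do not work as you describe. First, your finite-length intensity $\nu_N$ is not actually pinned down by the ``truncated recipe'': for finite-length walks the family of laws obtained by rooting at the first visit to $K$ (and discarding walks that visit $K$ at negative times) is \emph{not} consistent as $K$ varies, because re-rooting a doubly infinite trajectory is harmless while re-rooting a finite window of length $2N+1$ shifts the window and changes which finite path one sees; so there is no single point process whose restriction to $W(K)$-hitting trajectories matches your recipe for every $K\fsubset V$. The paper avoids this entirely by taking as its approximation the homogeneous cloud $\mathcal{Q}_T$ (a PPP of length-$T$ walks emitted from \emph{every} vertex at rate $1/T$, Definition \ref{def_PPP_of_labelled_worms}), which is trivially well defined and trivially an equivariant factor of i.i.d.\ (no tie-breaking over the range is needed, since a finite trajectory has a canonical starting point).

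Second, and more seriously, the coupling between consecutive lengths is the actual content of the theorem, and your mechanism --- harvest longer walks from a per-vertex Poisson pool and use ``soft local time thresholds'' so that the length-$(2N+3)$ walk extends the length-$(2N+1)$ walk --- does not confront the main obstruction: to converge to $\nu\times\lambda$ the per-vertex emission rate must scale like $1/T$, so when the length doubles the number of trajectories must (locally) halve. Simply extending walks keeps the number fixed, and independently thinning half of them destroys the local picture, so neither extension nor thinning has small local error. The paper's resolution is structurally different: it splits $\mathcal{X}^{v,T}$ into two independent halves $\mathcal{X}_1,\mathcal{X}_2\sim\mathcal{P}_{v/2,T}$, uses the soft local time method to build an equivariant partial matching of the \emph{terminal} points of $\mathcal{X}_1$ with the \emph{initial} points of $\mathcal{X}_2$ with pair intensity $\tfrac{v}{2}\,p_L(x,y)$ (Lemma \ref{lemma:matching_iid_PPP_on_V}), inserts length-$(L+1)$ bridges, stitches, and trims to get exactly $\mathcal{P}_{v/2,2T}$ (Lemma \ref{lemma:coupling_lemma}); the quantitative input is the heat kernel bound $p_n(o,o)\le Cn^{-3/2}$ and the choice $L\asymp T^{4/7}v^{-2/7}$, which make the local discrepancy summable along $T_n=2^n$ so Borel--Cantelli yields almost sure local convergence, and then Lemma \ref{lemma:worms_converge_in_distribution} identifies the limit law. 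Without a concrete merging mechanism of this kind, and without error estimates strong enough to sum along your sequence of lengths, your proposal remains an outline of the strategy rather than a proof; note also that soft local times in the paper are used to match two independent point clouds through the kernel $p_L$, not to threshold a per-vertex reservoir of increasingly long walks.
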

\noindent
In words:  the law of the random interlacement point process $\mathcal{Z}$  on any  locally finite, connected, transitive, transient infinite simple graph $G$ is a factor of i.i.d.

Our proof of Theorem \ref{main_thm_intro} uses an approximation of $\mathcal{Z}$ with a homogeneous PPP of random walk trajectories of length $T$ that we call finite-length interlacements (cf.\ Definition \ref{def_PPP_of_labelled_worms}).
This notion is inspired by that of \cite{B19}, where
 a homogeneous PPP of random walk trajectories with geometric length distribution called finitary random interlacements is introduced. As it turns out, we found the variant of the model with trajectories of fixed length to be more suitable for our purposes. One ingredient of our proof is that finite-length interlacements converge in distribution to random interlacements as $T \to \infty$ 
(cf.\ Lemma \ref{lemma:worms_converge_in_distribution}). Let us note that
similar approximation results have already appeared in the literature, cf.\ 
 \cite[Chapter 4.5]{S12}, \cite[Theorem 3.1]{DRS14}, \cite[Proposition 3.3]{H18},
 \cite[Theorem A.2]{B19}. 
  Let us also note  that the factor of i.i.d\ property is not necessarily inherited by a distributional limit (see e.g.\ Corollary 3.3 of \cite{L17}), hence  Theorem \ref{thm:main} does not follow automatically from Lemma \ref{lemma:worms_converge_in_distribution} and the fact that finite-length interlacements
  is  factor of i.i.d.\ for each $T \in \mathbb{N}_+$.

 The novelty of our paper is that we boost the above-mentioned
 result about convergence in distribution and show that there exists a jointly equivariant realization of finite-length interlacements $\mathcal{Z}^n, n \in \N$ with lengths $T_n=2^n, n \in \N$ on the same probability space which converges almost surely with respect to an appropriate topology (cf.\ Section \ref{subcetion_topology}) on the space of labeled nearest neighbour trajectories.
In order to do so, we employ a variant of the \emph{soft local time method}, which has found many applications in the development of the theory of random interlacements (see e.g.\ \cite{CPW16, CT16, S17}) since its introduction in \cite{PT15}.

Informally, we construct a (partial) matching of the trajectories of finite-length interlacements of length $T$ and ``stitch together' the matched pairs to obtain a point process which is  ``close'' to being finite-length interlacements of length $2T$ (see the introduction of  Section \ref{section_matching_softlocal} for a more detailed description). This almost sure convergence result
is the key to our proof of Theorem \ref{main_thm_intro}.

Our proof of Theorem \ref{main_thm_intro} is somewhat involved and one may wish for a more direct proof. As it turns out, a certain type of direct proof works if and only if the transitive graph $G$ is not unimodular. In order to make this statement precise, we need to introduce some definitions.

A function $f \, : \, V \times V \rightarrow [0, \infty)$ is called a mass transport function if it is invariant under the diagonal action of $\Gamma$, i.e., if  we have  $f(x,y)=f(\varphi (x),\varphi (y))$ for any pair of vertices $x,y \in V$ and for any $\varphi\in\Gamma $.
A transitive graph $G$  is called unimodular if it satisfies the mass-transport principle, i.e., for any vertex $o \in V$ and any mass transport function $f \, : \, V \times V \rightarrow [0, \infty)$  
we have
\begin{equation}
	\label{mass_transport_principle_transitive}
	\sum_{x \in V} f(o,x) = \sum_{x \in V} f(x,o).
\end{equation}
One can think of $f(x,y)$ as an amount of mass that is sent from $x$ to $y$, in which case \eqref{mass_transport_principle_transitive} is just a formal way to state that mass is conserved. 

Let us denote by $\mathcal{W}$ the $\sigma$-algebra on $W$ generated by the coordinate maps.
Note that the action of $\Gamma$ on $W$   extends to the $\sigma$-algebra 
$\mathcal{W}$  in a natural way.
Let us denote by $\pi^*: W \to W^*$ the function which maps to each element of $W$ its equivalence class with respect to time shift equivalence. Let us introduce the natural $\sigma$-algebra $\mathcal{W}^* = \left\{ A \subseteq W^* \, : \, (\pi^*)^{-1}(A) \in \mathcal{W} \right\}$ on $W^*$.

\begin{claim}[A sufficient condition for the interlacement to be a factor of i.i.d.\ in a cheap way]\label{claim_QW_exists_then_fiid_triv}
 If there exists a  measure $Q$ on $(W,\mathcal{W})$ such that
 \begin{enumerate}[(i)]
\item\label{QW_projects_to_nu}  $\nu(A) =Q((\pi^*)^{-1}(A))$ for all $A \in \mathcal{W}^*$ and
\item\label{QW_invariant}  $Q(B)=Q(\varphi(B))$ for all $B \in \mathcal{W}$ and all $\varphi \in \Gamma$
\end{enumerate}
 then 
 \begin{enumerate}[(a)]
\item\label{triv_claim_a} the PPP $\mathcal{Z}^W= \sum_{i \in I} \delta_{(w_i,t_i)} $ on $W \times \mathbb{R}_+$ with intensity measure $Q \times \lambda  $ is a f.i.i.d.,
\item \label{triv_claim_b} $\mathcal{Z}:=  \sum_{i \in I} \delta_{(\pi^*(w_i),t_i)} $ is a PPP with intensity measure $\nu \times \lambda$ and $\mathcal{Z}$ is also a f.i.i.d.
 \end{enumerate}
\end{claim}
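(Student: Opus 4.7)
The plan is to Poissonize $Q \times \lambda$ vertex-by-vertex using the decomposition $W = \bigsqcup_{v \in V} W_v$ with $W_v := \{w \in W : w(0) = v\}$. By hypothesis (ii) and transitivity of $\Gamma$ on $V$, $c := Q(W_v)$ does not depend on $v$; assuming the natural case $c < \infty$, the measure $\rho_v := c^{-1} Q|_{W_v}$ is a probability measure on $W_v$, and $(\sigma)_* \rho_v = \rho_{\sigma(v)}$ for every $\sigma \in \Gamma$, again by (ii).

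Next, take $\vartheta$ to be the law of a PPP on $W_o \times \mathbb{R}_+$ with intensity $Q|_{W_o} \times \lambda$; this distribution is $\Gamma_o$-invariant because $Q|_{W_o}$ is, where $\Gamma_o := \{\varphi \in \Gamma : \varphi(o)=o\}$. Fix a family $\{\sigma_v\}_{v \in V} \subset \Gamma$ with $\sigma_o = \mathrm{id}$ and $\sigma_v(o) = v$, and define
\[
T(\underline{\eta}) \;:=\; \sum_{v \in V} (\sigma_v)_* \eta_v,
\]
where $(\sigma_v)_*$ pushes a point measure on $W_o \times \mathbb{R}_+$ to one on $W_v \times \mathbb{R}_+$ via $(w,t) \mapsto (\sigma_v \cdot w,\,t)$. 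Poisson superposition combined with push-forward, using $(\sigma_v)_* Q|_{W_o} = Q|_{W_v}$, identifies $T(\underline{\eta})$ as a PPP on $W \times \mathbb{R}_+$ with intensity $\sum_v Q|_{W_v} \times \lambda = Q \times \lambda$, giving the distributional part of (a). Claim (b) will then follow by composing $T$ with the $\Gamma$-equivariant measurable map $\pi^* \times \mathrm{id}_{\mathbb{R}_+}$: by the Poisson mapping theorem and (i), the push-forward is a PPP on $W^* \times \mathbb{R}_+$ with intensity $(\pi^*)_* Q \times \lambda = \nu \times \lambda$, and the composition of $\Gamma$-equivariant maps is again $\Gamma$-equivariant.

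The delicate step is verifying the equivariance of $T$ itself. For $\varphi \in \Gamma$ a direct calculation gives $T(\varphi \underline{\eta}) = \sum_u (\sigma_{\varphi(u)})_* \eta_u$ while $\varphi T(\underline{\eta}) = \sum_u (\varphi \sigma_u)_* \eta_u$; these differ by the $\Gamma_o$-element $\tau_{u,\varphi} := \sigma_{\varphi(u)}^{-1} \varphi \sigma_u$ acting on $\eta_u$. Since each $\eta_u$ is $\Gamma_o$-invariant in distribution, one obtains $T(\varphi \underline{\eta}) \stackrel{d}{=} \varphi T(\underline{\eta})$ for every fixed $\varphi$, yielding the factor-of-i.i.d.\ property in the usual almost-sure sense. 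This handling of the stabilizer $\Gamma_o$ is the main obstacle, and it is precisely the symmetry-breaking issue that later restricts this ``cheap'' direct construction to the non-unimodular case.
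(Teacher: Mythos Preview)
Your decomposition $W=\bigsqcup_v W_v$ and the idea of pushing i.i.d.\ copies of a PPP on $W_o$ out to each vertex is exactly the right skeleton, and it matches the paper's approach. The problem is the equivariance step.

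With a \emph{fixed} family $\{\sigma_v\}$, the computation you wrote is correct: $T(\varphi\underline{\eta})$ and $\varphi T(\underline{\eta})$ differ by the action of $\tau_{u,\varphi}=\sigma_{\varphi(u)}^{-1}\varphi\sigma_u\in\Gamma_o$ on each $\eta_u$. From the $\Gamma_o$-invariance of the law $\vartheta$ you can only conclude $T(\varphi\underline{\eta})\stackrel{d}{=}\varphi T(\underline{\eta})$. This is \emph{not} the factor-of-i.i.d.\ property: the paper requires a measurable map $T$ with $T(\varphi\underline{\eta})=\varphi T(\underline{\eta})$ as an almost-sure identity of random variables, not merely equality of marginals. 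Distributional equivariance only tells you the output law is $\Gamma$-invariant, which is strictly weaker (indeed, there exist $\Gamma$-invariant processes that are provably not factors of i.i.d.). Your last sentence suggests you believe this obstruction is tied to unimodularity, but Claim~\ref{claim_QW_exists_then_fiid_triv} is stated unconditionally on the geometry of $G$; unimodularity only enters in Proposition~\ref{prop_QW_unimod}, which concerns whether such a $Q$ exists at all.

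The paper resolves exactly this issue with Lemma~\ref{lemma_atlas_factor_of_iid}: instead of a fixed section $\{\sigma_v\}$, one uses an additional layer of i.i.d.\ $\mathrm{UNI}[0,1]$ variables $\underline{U}=(U_x)_{x\in V}$ to construct a \emph{random} atlas $\underline{\varphi}^{\underline{U}}=(\underline{\varphi}^{\underline{U}}_x)_{x\in V}$ satisfying the genuine equivariance $\underline{\varphi}^{\gamma(\underline{U})}_{\gamma(x)}=\gamma\circ\underline{\varphi}^{\underline{U}}_x$. With this atlas in place of your $\sigma_v$, the map $(\underline{U},\underline{\mathcal{Z}})\mapsto\sum_x \underline{\varphi}^{\underline{U}}_x(\mathcal{Z}^x)$ is literally $\Gamma$-equivariant, and the rest of your argument (including the deduction of (b) from (a) via $\pi^*$ and the mapping theorem) goes through unchanged.
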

\noindent Claim \ref{claim_QW_exists_then_fiid_triv} states that the conclusion of our main result  ``trivially'' holds if there exists a PPP $\mathcal{X}=\sum_{i\in I} \delta_{w_i}$ on $W$ such that the law of  $\mathcal{X}$ is invariant under the action of $\Gamma$ and $\pi^*(\mathcal{X}):= \sum_{i\in I} \delta_{\pi^*(w_i)}$ is a PPP with intensity measure $\nu$.

We will present the (rather short) proof of Claim \ref{claim_QW_exists_then_fiid_triv} in Section \ref{section_unimodular_proofs}. 

\begin{proposition}[On the role of unimodularity]\label{prop_QW_unimod} Assume that $G$ is locally finite, connected, transitive and transient. Then the following conditions are equivalent:
\begin{enumerate}[(A)]
    \item\label{unimod_A}  If $G$ is unimodular.
    \item\label{unimod_B}  There is no measure $Q$ on  $(W,\mathcal{W})$ that satisfies properties \eqref{QW_projects_to_nu}  and \eqref{QW_invariant} of Claim \ref{claim_QW_exists_then_fiid_triv}.
\end{enumerate}
\end{proposition}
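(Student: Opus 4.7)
My plan is to prove both implications separately. The key observation is that a $\Gamma$-invariant measure $Q$ on $(W, \mathcal{W})$ projecting under $\pi^*$ to $\nu$ amounts to a $\Gamma$-equivariant way to place a probability distribution on each fiber $(\pi^*)^{-1}(\{w^*\}) \cong \Z$; the unimodular case shows no such equivariant choice exists, while the non-unimodular case uses the modular cocycle to construct one.

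For $\eqref{unimod_A} \Rightarrow \eqref{unimod_B}$, assume $G$ is unimodular and suppose for contradiction that $Q$ satisfies \eqref{QW_projects_to_nu} and \eqref{QW_invariant}. For each $n \in \Z$, the quantity $c_n := Q(\{w : w(n) = o\})$ is independent of $o \in V$ by \eqref{QW_invariant} and transitivity. The function $f_n(x, y) := Q(\{w : w(n) = x,\, w(n+1) = y\})$ is $\Gamma$-diagonally invariant, so \eqref{mass_transport_principle_transitive} yields $c_n = \sum_y f_n(o, y) = \sum_y f_n(y, o) = c_{n+1}$, whence $c_n \equiv c$. Writing $N(w^*, o) := \#\{n \in \Z : w^*(n) = o\}$ for the number of visits (which depends only on the equivalence class), Fubini and \eqref{QW_projects_to_nu} give
\[
    \sum_{n \in \Z} c \;=\; \sum_{n \in \Z} Q(\{w : w(n) = o\}) \;=\; \int N(w^*, o)\, d\nu(w^*) \;=\; 1,
\]
with the last equality being the standard identity $\mathrm{cap}(\{o\}) \cdot g(o, o) = 1$ (using transience). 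Since $\infty \cdot c \in \{0, \infty\}$ cannot equal $1$, no such $Q$ exists.

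For $\neg\eqref{unimod_A} \Rightarrow \neg\eqref{unimod_B}$, non-unimodularity provides a non-constant positive $\Gamma$-quasi-invariant function $\rho : V \to (0, \infty)$ (a coboundary of the non-trivial modular cocycle $m(x,y) = |\mathrm{Stab}_\Gamma(x)\cdot y|/|\mathrm{Stab}_\Gamma(y)\cdot x|$), satisfying $\rho(\varphi x) = \Delta(\varphi)\rho(x)$ for a non-trivial homomorphism $\Delta : \Gamma \to (0,\infty)$. The generalized mass-transport principle applied to $f(x, y) := \mathbf{1}[y \sim x]$ shows that $\rho^{-1}$ is harmonic for SRW (under the appropriate sign convention), so $M_n := \rho(X_n)^{-1}$ is a positive martingale. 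Non-constancy of $\rho$ combined with Jensen's inequality forces strictly negative drift of $\log M_n$; by the strong law for stationary-increment additive functionals, $\rho(X_n)^{-1} \to 0$ exponentially almost surely, and reversibility of SRW with respect to counting measure on a transitive graph transfers the decay to the backward direction along $\nu$-typical trajectories. Hence
\[
    Z(w^*) \;:=\; \sum_{k \in \Z} \rho(w^*(k))^{-1}
\]
is $\nu$-almost surely finite. Setting $p_k(w^*) := \rho(w^*(k))^{-1}/Z(w^*)$ yields a $\Gamma$-equivariant probability kernel on fibers (the factor $\Delta(\varphi)^{-1}$ cancels between numerator and normalizer), and the pushforward of $\nu$ under this random choice of representative defines a $\Gamma$-invariant measure $Q$ on $W$ satisfying \eqref{QW_projects_to_nu} and \eqref{QW_invariant}.

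The main obstacle is verifying the $\nu$-almost sure finiteness of $Z(w^*)$ in the backward direction: under $\nu|_{W^*_K}$ for a finite set $K$, the backward part of the trajectory is SRW Doob-$h$-transformed by $h_K(v) := \prob_v(\text{never hit } K)$. One must check that this conditioning preserves the exponential decay of $\rho^{-1}$; since $h_K \to 1$ away from $K$, the conditioned walk agrees asymptotically with SRW and inherits the same drift, but formalizing this together with the relevant measurability statements requires care.
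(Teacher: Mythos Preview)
Your argument for \eqref{unimod_A}$\Rightarrow$\eqref{unimod_B} is essentially identical to the paper's: you both use the mass-transport principle to show that $c_n:=Q(\{w:w(n)=o\})$ is constant in $n$, and then derive a contradiction from the fact that $\sum_n c_n=\int N(w^*,o)\,d\nu(w^*)=\capacity(\{o\})\cdot g(o,o)=1$.

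For $\neg$\eqref{unimod_A}$\Rightarrow\neg$\eqref{unimod_B} you use the same key ingredient as the paper --- namely, a function $\mu:V\to(0,\infty)$ with $\Gamma$-invariant ratios (the modular cocycle of \cite[Theorem 8.10]{LP16}) together with the strict Jensen/generalized mass-transport argument showing that $\log\mu(X_n)$ has strictly negative drift along SRW. The difference is in how you build $Q$ from this: the paper picks a \emph{deterministic} representative in each fiber (shift so that the first global maximum of $n\mapsto\mu(w(n))$ sits at time $0$, which is well-defined since the argmax set is $\Gamma$-equivariantly finite), whereas you distribute mass over the whole fiber with weights $p_k\propto\mu(w(k))$. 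Both constructions are $\Gamma$-equivariant for the same reason (ratios $\mu(x)/\mu(y)$ are $\Gamma$-invariant), and both require that $\mu(w(n))\to 0$ as $|n|\to\infty$; your version additionally needs summability $\sum_n\mu(w(n))<\infty$, which follows from the same exponential decay.

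Your flagged ``main obstacle'' is not actually an obstacle, and the Doob $h$-transform machinery is unnecessary. By Theorem~\ref{thm_nu_exists_unique}, showing that $Z(w^*)<\infty$ for $\nu$-a.e.\ $w^*$ reduces to showing it $Q_K$-a.s.\ for every $K\fsubset V$; since $Q_K(\,\cdot\,)=\sum_{x\in K}P^\pm_x(\,\cdot\,,H_K=0)\leq\sum_{x\in K}P^\pm_x(\,\cdot\,)$, it suffices to prove the statement $P^\pm_x$-a.s.\ for each $x$. Under $P^\pm_x$ the backward path is an \emph{unconditioned} SRW (by Definition~\ref{def_rw}\eqref{def_double_rw}), so your forward-direction drift argument applies verbatim to the backward direction. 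This is exactly how the paper handles the backward part, and it closes your proof without further work.
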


We will prove Proposition \ref{prop_QW_unimod} in Section \ref{section_unimodular_proofs}. The idea of the proof that \eqref{unimod_B} implies \eqref{unimod_A} was suggested to us by \'Ad\'am T\'im\'ar. 

In \cite[Remark 1.2]{S10} it is proved that
 \eqref{unimod_B}  holds in the special case when $G=\mathbb{Z}^d, d \geq 3$, however, that argument used that if $G=\mathbb{Z}^d$ then $\mathrm{cap}(K)/|K|$ can be made arbitrarily small (e.g.\ by choosing $K$ to be a ball with big radius). This argument does not generalize to the  setting of Proposition \ref{prop_QW_unimod}, since e.g.\ if $G$ is a $d$-regular infinite tree $\mathbb{T}_d$  then one can show that $\inf_{K \fsubset V} \mathrm{cap}(K)/|K|>0$. However, note that  $\mathbb{T}_d$ is unimodular (cf.\ \cite[Exercise 8.7]{LP16}), thus Proposition \ref{prop_QW_unimod} can be applied to conclude that \eqref{unimod_B} holds for $\mathbb{T}_d$.

\subsection{Related literature}

\subsubsection{Bird's eye view}\label{subsub_birdseye}

The question whether a stationary stochastic process on $\mathbb{Z}$ is a factor of another one traces back to the seminal work of Ornstein \cite{O70} (see \cite{O77} for a more detailed explanation and some related results), who  answered the question: when is an i.i.d.\ process isomorphic to (i.e., an invertible factor of) an other i.i.d.\ process?
Although there are some natural extensions of this result even on $\Z$, such as a description of the existence of similar isomorphisms under more constraints on the factor maps like in \cite{KS77, KS79}, or a construction of a Markov chain as a factor of i.i.d.\ as in \cite{AJR79, R82, AS21}, the research focusing on proving which of the well-known random fields (indexed by more general graphs) arise as a factor of i.i.d.\  took place immediately.

Obviously, the presence (or in some cases the absence) of the factor of i.i.d.\ property depends on the underlying graph as well as the distribution of the random field. As a consequence, we only discuss those  results in detail that are most relevant from the point of view of our results.
However, without providing a fully exhaustive list, some of the examined models of statistical physics are: the Ising model on $\Z^d$ \cite{BS99, MS22, RS22B} and on more general graphs \cite{A92, L17, NSZ22, HS22}; the Potts model on $\Z^d$ \cite{HS00, HS22, ST19, S20B, RS22B} and on general graphs \cite{HJL02, HS22}; the proper $q$-coloring of $\Z^d$ \cite{S20A, RS22A, S20B}; the hard-core model on $\Z^d$ \cite{S20B}; the six-vertex model \cite{RS22B}; the Widom-Rowlinson lattice gas on $\Z^d$ \cite{HS00, S20B}; the Voter model on $\Z^d$ \cite{ST19, SZ22}; and the uniform spanning forests on random rooted almost surely transient graphs \cite{ARS21}.

Let us also note  that many of the papers cited above ask about the presence of a stronger property, i.e.,  when can the examined model be constructed as a finitary factor of i.i.d.
In this strengthening, one also requires that the output variable at a vertex is calculated by looking at almost surely finitely many input variables (where the random number of input variables that need to be inspected can depend on the location of the output variable).
Due to the algorithmic nature of these constructions, the theory of finitary factor of i.i.d.\ processes is a very active area of research in theoretical computer science. A comprehensive list of references of such results can be found in \cite{L17}.

\subsubsection{Generalized divide and color model, voter model}

One model of particular interest for us is a random field called the generalized divide and color model, introduced in \cite{ST19}.
In this model, the vertex set of a graph is partitioned into subsets by a random equivalence relation and then each equivalence class of vertices is given a random color independently of the others.
This general model includes e.g.\ the Ising and Potts models as a special case via the random cluster representation and the extremal shift-invariant stationary distributions of the voter model 
via the coalescing random walk representation.

The authors argue that on $\Z^d$, if the law of the partition is invariant under the translations of $\Z^d$ and  almost surely produces partition sets of finite cardinality, then certain ergodic theoretic properties such as the factor of i.i.d.\ property is inherited from the random equivalence relation to the generalized divide and color model.

Question 7.21 of \cite{ST19} asks whether the extremal stationary distributions of the voter model on  $\Z^d$ ($d \geq 3$)  are factors of i.i.d. This question is affirmatively answered in \cite{SZ22}.
Let us mention that the spine of their argument, that is, producing the stationary distribution in question as an almost sure limit of i.i.d.\ factors using a well-behaved coupling was highly influential for us.

\subsubsection{Ising model on regular trees}

Let us point out that there are some negative results among those proved in the papers on the list given in Section \ref{subsub_birdseye}.
One example of this is the free Ising model on the $d$-regular tree, which cannot be represented as a factor of i.i.d.\ if the inverse temperature $\beta$ satisfies $\tanh \beta > (d-1)^{-1 / 2}$, cf.\ \cite[Corollary 3.2]{L17}.
It is believed that  this result is sharp, i.e., if $\tanh \beta \leq (d-1)^{-1/2}$ then the model is a factor of i.i.d.
Currently this is only proved if $\tanh \beta \leq c(d-1)^{-1/2}$, where $c > 0$ is an absolute constant and $d$ is large, cf.\ \cite{NSZ22}.

\subsubsection{An application: interlacement Aldous-Border algorithm and WUSF}

There are multiple ways of proving that the law of the wired uniform spanning forest (WUSF) on
on a transitive transient graph $G$  is a  factor of i.i.d.:  the case of amenable Cayley graphs follows from \cite[Corollary 7.4]{LT16}, the nonamenable case is part
of the proof of \cite[Proposition 9]{GL09}, the general case of transient transitive graphs is implicit in the proof of \cite[Proposition 5.3]{BLPS01}, and \cite[Theorem 1.4]{ARS21} proves the result in a more general setting.

Our main result can be used to provide yet another proof of this result.
The proof that we propose builds on the generalization of the so-called Aldous-Broder algorithm \cite{A90, B89} to transient graphs, introduced in \cite{H18} under the name of interlacement Aldous-Broder algorithm.

The input of this algorithm is a random interlacement point process $\mathcal{Z}$ (i.e., a PPP on $W^* \times \mathbb{R}_+$ with intensity measure $\nu \times \lambda$) on $G$. The  WUSF is generated by keeping at every vertex only the first entry edge of the random interlacement trajectory with the smallest label. 
In our Theorem  \ref{main_thm_intro} we construct $\mathcal{Z}$ as a factor of i.i.d.\  and the interlacement Aldous-Border algorithm produces a WUSF
from $\mathcal{Z}$ in an equivariant fashion, thus the composition of these two constructions provide an alternative way to show that the WUSF is a factor of i.i.d.

\subsubsection{Random interlacements and amenability}
If $G$ is a locally finite, connected, transitive, transient graph and $\mathcal{Z}=\sum_{i \in I}\delta_{(w^*_i,t_i)}  $ is a PPP on  $W^* \times \R_+$ with intensity measure $\nu \times \lambda$, let us denote by $\mathcal{I}^u$ the set of vertices visited by the trajectories $w^*_i$ satisfying $t_i \leq u$. The random set $\mathcal{I}^u$ of vertices is called the interlacement set at level $u$.  The main result of \cite{TT13} states that  $\mathcal{I}^u$ is almost surely connected for all $u >0$ if and only if $G$ is amenable. This result provides a characterization of amenability using interlacements, somewhat similarly to our Proposition \ref{prop_QW_unimod}, which provides a characterization of unimodularity  using interlacements. Let us note here that every locally finite, connected, transitive, amenable graph is also unimodular by
\cite[Proposition 8.14]{LP16}.

\section{Setup and notation}

In Section \ref{subsec_basic} we  introduce some basic notation. 
In Section \ref{subsec_transitivity} we fix our notation regarding graphs and graph automorphisms.
In Section \ref{sebsection:wstar} we introduce various spaces of nearest neighbour trajectories on $G$. In Section \ref{subsection_rw} we introduce our notation pertaining to random walks and state an important heat kernel estimate (the proof of which is deferred to the Appendix).
In Section \ref{subsection:pointmeasure} we introduce our notation pertaining to PPPs on various spaces of (labeled) nearest neighbour trajectories and define finite-length random interlacements with length $T$ as well as  classical (infinite-length) random interlacements.

\subsection{Basic notation}\label{subsec_basic}

Let $\lambda$ denote the Lebesgue measure on $\mathbb{R}_+$ equipped with the $\sigma$-algebra $\mathcal{B}(\mathbb{R}_+)$ of Borel sets.

Let $\mathbb{N}:=\{0,1,2,\dots\}$ and $\mathbb{N}_+:=\{1,2,\dots\}$. Let $\mathbb{Z}$ denote the set of integers.

If $n \in \mathbb{N}_+$, let $[n]:=\{1,\dots,n\}$.

If $a\in \mathbb{R}$, let us denote by $a_+:= \max\{a,0\}$ the positive part of $a$. 

If $a,b \in \mathbb{R}$, let $a \wedge b:=\min\{a,b\}$ and $a \vee b := \max\{a,b\}$. Note that 
$a- a \wedge b= (a-b)_+$.

If  $(Z, \mathcal{Z})$ is a  measurable space and $A \in \mathcal{Z}$, let us denote by $\mathds{1}[A]$ the indicator of $A$, i.e., 
the function from $Z$ to $\{0,1\}$ which assigns $1$ to elements of $A$ and $0$ to elements of $Z \setminus A$.
If $\mu$ is a measure on $(Z, \mathcal{Z})$,
let us define
the measure $ \mu \, \ind[A]$ on  $(Z, \mathcal{Z})$ by
\begin{equation}
    \label{nota:restricted_measure}
    (\mu \, \ind[A])(B)  :=  \mu(A \cap B), \qquad B \in \mathcal{Z}.
\end{equation}

\subsection{Graphs, transitivity}
\label{subsec_transitivity}

Let $G=(V,E)$ denote an undirected locally finite infinite graph. Let $o$ denote a fixed vertex of the graph, ``the origin''. Since we focus on  graphs that are also transitive and transient, let us recall these notions.

For $x, y \in V$, we denote by $\{x,y\} \in E$ if there is an edge between $x$ and $y$. We say that a bijection $\varphi: V \to V$ is a graph automorphism if
\begin{equation*}
    \{x,y\} \in E \iff \{\varphi(x),\varphi(y)\} \in E.
\end{equation*}
The graph $G$ is called (vertex-)transitive, if the group $\Gamma$ of graph automorphisms of $G$ acts transitively on the vertex set of $G$. Intuitively this means that the graph $G$ looks the same from all of its vertices. As a consequence, such $G$ must be regular, so let us denote its degree by $d$.

We call a graph $G$ transient if the simple random walk on $G$ is transient.

We denote the fact $K$ is a finite subset of $V$ by $K \subset \subset V$.

 Distances with respect to the usual graph metric will be denoted by $\distance(.,.)$.

\subsection{Spaces of trajectories}
\label{sebsection:wstar}
Let $\mathcal{H}$ denote the set of finite or infinite sub-intervals $H$ of $\mathbb{Z}$. 
For any $H \in \mathcal{H}$, let
\begin{align}
\label{transient_nearest_neighb_traj}
W_H :=  \left\{ \;
w: H \to V \, : \,
\parbox{17em}{
	$\forall\ n, n+1 \in H \quad  \{ w(n), w(n+1) \} \in E; \; \\
	\forall \, x \in V \quad \sum_{n \in H} \mathds{1}[w(n)=x]<+\infty $
}
\; \right\}
\end{align}
denote the space of nearest neighbour trajectories indexed by $H$ which visit every vertex $x$ of $G$ only finitely many times. Let us introduce the shorthand notation
\begin{align}
    \label{def_eq_W_Wplus_Wminus}
    & W:=W_{(-\infty,\infty)}; \qquad W_+:= W_{[0,+\infty)}; \qquad W_-:=W_{( -\infty , 0  ] }; \\  \label{def_eq_WT_frormspace}
    & \qquad W_T:=W_{[0,T-1]}, \; T \in \mathbb{N}_+; \qquad \fwormspace:=  \bigcup_{T=1}^{\infty} W_T.
\end{align}

\begin{definition}[Time shift equivalence]\label{time_shift_equivalence}
	Let $\sim$ denote the following equivalence relation on $W$. The trajectories
	$w,w' \in W$  are equivalent if there exists  $k \in \mathbb{Z}$ such that for all $n \in \mathbb{Z}$ we have
	$w(n)=w'(n+k)$, i.e., $w'$ can be obtained from $w$ by a time shift.
	The quotient space $W/\sim$ is denoted by $W^*$.
\end{definition}

\noindent We write
\begin{equation}
    \label{def:canonical_projection}
    \pi^*: W \to W^*
\end{equation}
for the  projection which assigns to a trajectory $w \in W$ its $\sim$-equivalence class $\pi^*(w)\in W^*$.

All of the spaces above can be endowed with a natural $\sigma$-algebra. Indeed, for example in the case of $W$ or $W_+$ we can simply define the $\sigma$-algebras $\mathcal{W}$ or $\mathcal{W}_+$, respectively to be the one generated by the canonical coordinate maps.
Furthermore, using the former we can define a $\sigma$-algebra $\mathcal{W}^*$ in $W^*$ given by the preimages of the map $\pi^*$, i.e., $\mathcal{W}^* = \left\{ A \subseteq W^* \, : \, (\pi^*)^{-1}(A) \in \mathcal{W} \right\}$.

Let us define $\wormspace$ to be the (disjoint) union of the spaces $ W^*$,   $ \fwormspace$,  $W_+$, and $W_-$:
\begin{equation}
\label{wormspaces}
\wormspace:= W^* \cup  \fwormspace  \cup  W_+ \cup W_-
\end{equation}
Let us define the natural sigma-algebra $\wormspacesigmaalgebra$ on $\wormspace$ as follows:
\begin{equation*}
    \wormspacesigmaalgebra=\left\{\, A \subseteq \wormspace \; : \; 
    A \cap W^* \in \mathcal{W}^*, \;
    A \cap  \fwormspace \in \mathcal{W}^\bullet, \; 
    A \cap W_+ \in \mathcal{W}_+, \;
    A \cap W_- \in \mathcal{W}_- \, \right\}.
\end{equation*}

\begin{remark}
We will focus on local convergence (cf.\ Definition \ref{def:local_conv}) of point measures supported on $\fwormspace \times [0,1]$ to a point measure supported on $W^* \times [0,1]$, but we also included $W_+$ and $W_-$ along with $\fwormspace$ and $W^*$ in the definition of $\wormspace$ in \eqref{wormspaces} in order to make the space of point measures on $\wormspace \times [0,1]$
 complete w.r.t.\ our notion of local convergence
(cf.\ Claim \ref{claim_completeness}).
\end{remark}

\begin{definition}[Trajectories that visit a set $K$]\label{worms_that_hit_K}
	For any $K \subset \subset V$, let us denote by $\wormspace(K)$ the subset of $\wormspace$ which consists of those trajectories that visit the set $K$. Let us define $W_H(K)$ for any $H \in \mathcal{H}$, $\fwormspace(K)$ and  $W^*(K)$  analogously.
\end{definition}

If $K \subset \subset V$,  $H \in \mathcal{H}$ and $w\in W_H(K)$, let us define
\begin{align}\label{def_eq_entrance_time}
H_K(w) & := \min\{\, n \in H \, : \, w(n) \in K\, \}, \quad \text{`first entrance time'} \\
\label{eq:lastExitTime}
L_K(w) & := \max\{\, n \in H\, :\, w(n) \in K \, \}, \quad \text{`time of last visit'}
\end{align}
Note that the reason why we can write $\min$ and $\max$ (rather than $\inf$ and $\sup$) in the above definitions is that by \eqref{transient_nearest_neighb_traj} the trajectory $w$ only spends a finite amount of time in $K$, therefore $H_K(w) >-\infty$ and  $L_K(w)<+\infty$.

If $H \in \mathcal{H}$, let $|H|$ denote the cardinality of $H$.
If $w \in W_H$, let $|w|:=|H|$ denote the length of $w$, that is, $w$ performs $|w| - 1$ steps. In particular, if $w \in W_T$ then $|w|=T$.

\subsection{Random walks}\label{subsection_rw}

\begin{definition}[Random walk]\label{def_rw}  $ $

\begin{enumerate}[(i)]
\item For $x \in V$, let $P_x$ denote the law of the simple random walk $(X(n))_{n \in \N}$ on $G$ which starts from $X(0) = x$, and let $E_x$  denote the corresponding expectation. The law $P_x$ can be considered as a probability measure on the measurable space $(W_+, \mathcal{W}_+)$.

\item\label{def_double_rw}  Denote by $P^{\pm}_x$ the law of a doubly infinite simple random walk
$(X(n))_{n \in \Z}$ on $G$ which satisfies $X(0)=x$.
The law  $P^{\pm}_x$ is a probability measure on the measurable space $(W, \mathcal{W})$.
\end{enumerate}
\end{definition}

Let us denote the transition probabilities of a simple random walk on $G$ by
\begin{equation}
	\label{random_walk_transitions}
	p_n(x,y) := P_x(X(n) = y), \qquad x, y \in V, \, n \in \N.
\end{equation}
By the time-reversibility of simple random walk on $G$, we have
\begin{equation}\label{heat_kernel_symmetric}
  p_n(x,y)=p_n(y,x), \qquad x,y \in V,  \quad n \in \mathbb{N}.
\end{equation}

\begin{lemma}[Heat kernel bound]
	\label{lemma:heat_kernel_upper}
	Let $G=(V,E)$ denote a locally finite, connected, transitive, transient simple graph. There exists a constant $C=C(G)$ such that
	\begin{equation}\label{eq:heat_kernel_upper}
	p_n(x,y) \leq C n^{-3/2}, \qquad x,y \in V.
	\end{equation}
\end{lemma}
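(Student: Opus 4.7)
The plan is to reduce the off-diagonal bound to an on-diagonal return-probability estimate and then obtain the latter from the classification of transient transitive graphs.

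First I would reduce to bounding $p_{2n}(o,o)$. By the Chapman--Kolmogorov identity, time-symmetry \eqref{heat_kernel_symmetric} and Cauchy--Schwarz in $\ell^2(V)$,
\begin{equation*}
p_{2n}(x,y) = \sum_{z \in V} p_n(x,z)\, p_n(z,y) \leq \Bigl(\sum_{z} p_n(x,z)^2\Bigr)^{\!1/2}\!\Bigl(\sum_{z} p_n(z,y)^2\Bigr)^{\!1/2} = \sqrt{p_{2n}(x,x)\, p_{2n}(y,y)}.
\end{equation*}
Transitivity gives $p_{2n}(x,x) = p_{2n}(o,o)$ for every $x \in V$, so $p_{2n}(x,y) \leq p_{2n}(o,o)$. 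For odd times, the one-step decomposition $p_{2n+1}(x,y) = \sum_z p(x,z)\, p_{2n}(z,y) \leq p_{2n}(o,o)$ gives the same decay up to an absolute constant. It therefore suffices to show $p_{2n}(o,o) \leq C n^{-3/2}$ for some $C = C(G)$ and all $n \geq 1$.

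For the on-diagonal estimate I would split according to amenability. If $G$ is non-amenable, the spectral radius $\rho$ of the Markov operator on $\ell^2(V)$ is strictly less than $1$, so $p_{2n}(o,o) \leq \rho^{2n}$, decaying exponentially and dominating any inverse polynomial. If $G$ is amenable and transient, I would invoke Varopoulos's theorem for transitive graphs: transience is equivalent to a three-dimensional isoperimetric inequality $|\partial A| \geq c\,|A|^{2/3}$ for every finite $A \subseteq V$. Such an isoperimetric inequality is in turn equivalent to a discrete Nash inequality of the form $\|f\|_2^{10/3} \leq C\, \mathcal{E}(f,f)\, \|f\|_1^{4/3}$, where $\mathcal{E}$ denotes the Dirichlet form of the simple random walk. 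Applying this to $f = P^n \delta_o$, for which $\|f\|_1 = 1$ and $\|f\|_2^2 = p_{2n}(o,o)$, yields a difference inequality of the type $p_{2n+2}(o,o) \leq p_{2n}(o,o) - c\, p_{2n}(o,o)^{5/3}$, whose standard resolution produces the desired on-diagonal decay $p_{2n}(o,o) \leq C n^{-3/2}$.

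The main obstacle is the appeal to Varopoulos's theorem in the transitive (and possibly non-Cayley) setting: its proof is deep, relying e.g.\ on Trofimov's structure theory for amenable transitive graphs together with the Coulhon--Saloff-Coste isoperimetric characterisation of transience, and it would not be reproduced in the appendix but rather cited from a standard reference such as Woess's \emph{Random Walks on Infinite Graphs and Groups} or Lyons--Peres's \emph{Probability on Trees and Networks}. The remaining ingredients (the Cauchy--Schwarz reduction, the amenability dichotomy, and the Nash-inequality computation) are textbook and should occupy only a handful of lines.
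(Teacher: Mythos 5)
Your outline is correct in substance, but it ultimately rests on the same deep inputs that the paper's appendix assembles explicitly, so the comparison is worth spelling out. The paper's route is: transience forces superquadratic growth (Nash--Williams, Lemma~\ref{lemma:at_least_quadratic_growth}); Trofimov's theorem plus the Bass--Guivarc'h formula upgrade this to $\growth_G(n)\succeq n^3$ (Corollary~\ref{coro:transient_at_least_cubic}); Coulhon--Saloff-Coste (Theorem~\ref{theorem:Coulhon_Saloff-Coste}) converts cubic growth into $\isoperimetry_3$ (Corollary~\ref{coro:cubic_growth_implies_3dim_isoperimetry}); and the Morris--Peres evolving-set theorem (Theorem~\ref{theorem:Morris_Peres}) then yields $p_n(x,y)\le Cn^{-3/2}$ directly, off-diagonal, with no separate reduction. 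You instead black-box the middle of this chain as ``Varopoulos: transience is equivalent to $\isoperimetry_3$ for transitive graphs'', add an amenability dichotomy, and finish with the Nash-inequality iteration after a Cauchy--Schwarz reduction to $p_{2n}(o,o)$. The dichotomy is harmless but unnecessary: a non-amenable transitive graph satisfies $\isoperimetry_\infty$, hence $\isoperimetry_3$, so the isoperimetry-to-heat-kernel step covers both cases at once. Your Cauchy--Schwarz reduction and the Nash mechanism are fine, and what each approach buys is clear: yours is shorter on the page because the hard geometric step is cited rather than derived, while the paper's version makes the derivation self-contained precisely because no single clean reference for the quoted equivalence was found.

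Two caveats to attend to. First, quote the black-boxed statement correctly: for general graphs transience does \emph{not} imply $\isoperimetry_3$ (attach a long path to $\mathbb{Z}^3$ and test the inequality on the path), so the citable fact is the classification of recurrent transitive graphs as those of at most quadratic growth, which together with Coulhon--Saloff-Coste gives $\isoperimetry_3$ --- i.e.\ exactly the Nash--Williams/Trofimov/Bass--Guivarc'h/CSC assembly in the appendix, not a one-line theorem. Second, the ``standard resolution'' of the Nash iteration for the discrete-time simple random walk has the usual periodicity wrinkle: with $f_n=P^n\delta_o$ one has $u_n-u_{n+1}=\langle (I-P^2)f_n,f_n\rangle$, which cannot be bounded below by $c\,\mathcal{E}(f_n,f_n)$ when the spectrum of $P$ approaches $-1$ (e.g.\ bipartite graphs), so one must pass to the lazy walk, to continuous time, or run the argument with $P^2$; the evolving-set theorem used in the paper sidesteps this issue and also delivers the off-diagonal bound without the Cauchy--Schwarz step. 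With these two repairs your route is a perfectly viable alternative proof.
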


\noindent
The result stated in  Lemma \ref{lemma:heat_kernel_upper} is part of the mathematical folklore. However, since we did not find it stated in the published literature, 
 we provide a detailed derivation of the statement of Lemma \ref{lemma:heat_kernel_upper} from known results \cite{CSC93,G81,MP05,T85} in the Appendix.

Given $K \subset \subset V$ and $x \in K$, let us define the equilibrium measure $e_K(x)$ of $x$ with respect to $K$ as well as the capacity $\mathrm{cap}(K)$ of $K$ by
\begin{equation}\label{def_eq_equlibrium_measure}
    e_K(x):= P_x(\, X(n) \notin K, \; n=1,2,3,\dots \,) \qquad \text{and} \qquad \mathrm{cap}(K):=\sum_{x \in K} e_K(x).
\end{equation}
For any $K \subset \subset V$, $x \in K$ and $s=0,1,2,\dots$,  let us also define
\begin{equation}\label{equilib_s}
    e_K^{s}(x):=P_x(\, X(n) \notin K \;\; \text{if} \;\; 0< n \leq  s \,).
\end{equation}
Let us note that for any $K \subset \subset V$ and $x \in K$ we have
\begin{equation}\label{equlib_s_conv}
    e_K^{s}(x) \searrow e_K(x), \qquad s \to \infty. 
\end{equation}

\subsection{Point measures}
\label{subsection:pointmeasure}

\begin{definition}[Point measures]
\begin{enumerate}[(i)] $ $

\item If $(\mathcal{S}, \mathcal{F})$ is a measurable space then a $\sigma$-finite point measure $\omega$ on $(\mathcal{S}, \mathcal{F})$ is measure of form $\omega = \sum_{i \in I} \delta_{s_i}$, where $I$ is a finite or countably infinite index set, $s_i \in \mathcal{S}$ for each $i \in I$ and $\delta_{s_i}$ denotes the Dirac measure concentrated on $s_i$. In other words, for each  $A \in \mathcal{F}$, the measure $\omega(A)$ of the set $A$ with respect to the measure $\omega$ is equal to $\omega(A)=\sum_{i \in I} \mathds{1}[s_i \in A]$.

\item Let us denote by $\pointmeasure(\mathcal{S})$ the set of $\sigma$-finite point measures on $\mathcal{S}$.

\item Let $|\omega|$ denote the total mass of $\omega \in \pointmeasure(\mathcal{S}) $, i.e., $|\omega|=\omega(\mathcal{S})$. 

\item If $\omega, \omega' \in M(\mathcal{S}) $ then we say that $\omega \geq \omega'$ if $\omega-\omega' \in  M(\mathcal{S})$.
\end{enumerate}
\end{definition}

\begin{definition}[Locally finite point measures]
	\label{def_space_of_worm_point_measures}
	\begin{enumerate}[(i)]
	$ $

\item	We say that
	$ \omega \in \wormspm(\wormspace)$ if $
	\omega = \sum_{i \in I} \delta_{\lozengeworm_i} \in \pointmeasure(\wormspace)$
		and for any $x \in V$  the number of indices $i \in I$ for which the trajectory $\lozengeworm_i$ hits $x$  is finite, i.e., $\omega\big( \wormspace(\{x\}) \big)<+\infty$.	
	
\item	We say that $\omega  \in \wormspm(\wormspace \times \mathbb{R}_+)$ if $\omega = \sum_{i \in I} \delta_{(\lozengeworm_i,t_i)} \in \pointmeasure(\wormspace \times \mathbb{R}_+)$ and for any $x \in V$ and any $t \in \mathbb{R}_+$ the number of indices $i \in I$ for which $\lozengeworm_i$ hits $x$ and $t_i \leq t$ holds  is finite, i.e., $\omega\big(  \wormspace(\{ x \}) \times [0,t]   \big)<+\infty$.

\item One defines the spaces of point measures $\wormspm(W_T)$, $\wormspm(\fwormspace)$, $\wormspm(V)$ as follows.  If $\mathcal{S}$ is a countable set,
then $ \omega = \sum_{i \in I} \delta_{s_i} \in \wormspm(\mathcal{S})$
if $\omega \in \pointmeasure(\mathcal{S})$ and $\omega( \{s\} )<+\infty$ for any  $s \in \mathcal{S}$.
\end{enumerate}
\end{definition}

\noindent  One can think about the point measure $\omega \in \wormspm(\wormspace)$ as a multiset of finite or infinite trajectories, where $\omega(\{ \lozengeworm \} )$ denotes the number of copies of the trajectory $\lozengeworm \in \wormspace$ contained in $\omega$. In the case of $\omega \in \wormspm ( \wormspace \times \R_+ )$, the trajectories also have a label attached to them.

\begin{definition}[Automorphisms]\label{def_auto} 
\begin{enumerate}[(i)] $ $

\item\label{auto_i} Let $H \in \mathcal{H}$.
Given a graph automorphism $\varphi \in \Gamma$, we define its action on a trajectory $w = (w(n))_{n \in H} \in W_H$  as $\varphi(w) := (\varphi(w(n)))_{n \in H}$. 
\item 
If $w^* \in W^*$, we define $\varphi(w^*):=\pi^*(\varphi(w)) \in W^* $ for any $w \in (\pi^*)^{-1}(w^*)$, which   is unambiguous, since it does not matter which $w \in (\pi^*)^{-1}(w^*)$ we pick.
\item 
We define the action of $\Gamma$ on any point measure of trajectories naturally, e.g.\ if  $\omega = \sum_{i \in I} \delta_{(\lozengeworm_i,t_i)}  \in \wormspm(\wormspace \times \mathbb{R}_+)$  and $\varphi \in \Gamma$, then $\varphi(\omega):=\sum_{i \in I} \delta_{(\varphi(\lozengeworm_i),t_i)} \in  \wormspm(\wormspace \times \mathbb{R}_+) $.
\end{enumerate}
\end{definition}


\begin{definition}[Counting measure]\label{def_counting_measure} If $\mathcal{S}$ is a countable set, let us denote by $\mu^{\mathcal{S}}$ the counting measure on $\mathcal{S}$.
    \end{definition}

\begin{definition}[Homogeneous PPP on $V$]\label{def_PPP_on_V}
Given some $v \in \R_+$, a random element $\mathcal{R}$ of  $\wormspm(V)$ has law $\mathcal{P}_{v,1}$ if $\mathcal{R}$ is a Poisson point process on $V$ with intensity measure 	$v \cdot \mu^V$.
\end{definition}
\noindent
In words, $\mathcal{R} \sim \mathcal{P}_{v,1}$ if and only if the integer-valued random variables $\mathcal{R}(\{x\}), x \in V$ are i.i.d.\ with $\mathrm{POI}(v)$ distribution. Our next definition generalizes the previous one, since $W_T$ with $T=1$ can be identified with $V$.

\begin{definition}[Finite-length random interlacement with length $T$]
	\label{def_PPP_of_worms}
	Given some $v \in \R_+$ and $T \in \N_+$, a random element $\mathcal{X}$ of  $\wormspm(\wormspace)$ has law $\wormsppp_{v,T}$ if $\mathcal{X}$ is a Poisson point process (PPP) on $\wormspace$ with intensity measure 	$\nu_{v,T}$, where 	$\nu_{v,T}$ is defined as
	\begin{equation}
	\label{intensity_worms_PPP}
	\nu_{v,T} := v \cdot d^{1-T} \cdot \mu^{W_T}.
		\end{equation}
\end{definition}

\begin{claim}[Construction of  $\mathcal{P}_{v,T}$ using random walks]\label{claim_worms_random_walks} A cloud $\mathcal{X}$ of trajectories with distribution $\mathcal{P}_{v,T}$ can be generated as follows. Let   $N_x$ denote the number of trajectories starting from $x$. Then $(N_x)_{x \in V}$ are i.i.d.\  with $\mathrm{POI}(v)$ distribution  and given their starting points, the trajectories are conditionally independent, and they are distributed as the first $T-1$ steps of a simple random walk on $G$.
\end{claim}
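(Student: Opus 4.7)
The plan is to exploit the fact that $W_T$ is a countable set and that the intensity measure $\nu_{v,T} = v \cdot d^{1-T} \cdot \mu^{W_T}$ is a constant multiple of the counting measure, so we can decompose the PPP according to the starting vertex of each trajectory and apply the standard restriction/superposition properties of Poisson point processes.

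First I would partition the space of trajectories by their starting point: write $W_T = \bigsqcup_{x \in V} W_T^x$, where $W_T^x := \{w \in W_T \, : \, w(0) = x\}$. Since $G$ is $d$-regular, each $W_T^x$ is a finite set with cardinality $d^{T-1}$ (there are $d$ choices for each of the $T-1$ consecutive steps from $x$). By the standard restriction property of Poisson point processes, for a PPP $\mathcal{X}$ on $W_T$ with intensity measure $\nu_{v,T}$, the restricted point measures $\mathcal{X} \, \ind[W_T^x]$ for $x \in V$ are independent, and each $\mathcal{X} \, \ind[W_T^x]$ is a PPP on $W_T^x$ with intensity measure $v \cdot d^{1-T} \cdot \mu^{W_T^x}$.

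Next, from these facts I would read off the two statements of the claim. The total mass of the intensity on $W_T^x$ equals $v \cdot d^{1-T} \cdot |W_T^x| = v \cdot d^{1-T} \cdot d^{T-1} = v$, so $N_x := \mathcal{X}(W_T^x)$ has distribution $\mathrm{POI}(v)$. Independence across $x \in V$ then follows from the restriction property applied to the pairwise disjoint sets $W_T^x$. Moreover, conditionally on $N_x = k$, the $k$ trajectories starting at $x$ are i.i.d.\ with the uniform distribution on the finite set $W_T^x$ (this is the standard conditional description of a PPP on a finite space given its total count).

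Finally, I would match the conditional uniform distribution with the law of a simple random walk. A uniform choice of $w \in W_T^x$ assigns probability $d^{-(T-1)}$ to every trajectory of length $T$ starting at $x$; but this is precisely the law of $(X(0), X(1), \dots, X(T-1))$ under $P_x$, since at each step the walker chooses uniformly among $d$ neighbours. Hence the conditionally i.i.d.\ trajectories are distributed as the first $T-1$ steps of a simple random walk started from their common starting vertex, which is exactly the construction stated in the claim. There is no real obstacle here; the argument is a direct verification using the elementary decomposition properties of Poisson point processes on countable spaces.
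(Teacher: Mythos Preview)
Your proof is correct. The paper does not supply a proof for this claim at all---it is stated as an immediate consequence of the definition of $\nu_{v,T}$ in \eqref{intensity_worms_PPP}---and your argument spells out precisely the standard decomposition (partition $W_T$ by starting vertex, use the restriction property of PPPs, identify the uniform law on $W_T^x$ with the law of the first $T-1$ steps of simple random walk) that the authors leave implicit.
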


\begin{definition}[Finite-length random interlacements with labels]
	\label{def_PPP_of_labelled_worms}
	Given some  $T \in \N_+$, a random element $\mathcal{Z}$ of $\wormspm(\wormspace \times [0,1])$ has law $\mathcal{Q}_{ T}$ if $\mathcal{Z}$ is a Poisson point process on $\wormspace \times [0,1]$ with intensity measure
	\begin{equation}
	\label{intensity_worms_PPP_labelled}
	 \frac{1}{T} \cdot d^{1-T} \cdot \mu^{W_T} \times \lambda \ind \left[\, [0,1]\, \right].
	\end{equation}
\end{definition}

\begin{claim}[Construction of $\mathcal{Q}_{ T}$ from $\mathcal{P}_{1/T,T}$]\label{claim_uniform_labels}  A cloud $\mathcal{Z}$ of trajectories with distribution $\mathcal{Q}_{T}$ can be generated as follows.
Let $\mathcal{X}=\sum_{i \in I} \delta_{w_i} \sim \mathcal{P}_{1/T,T}$. Given $\mathcal{X}$, let $U_i, i \in I$ denote conditionally i.i.d.\ random variables with $\mathrm{UNI}[0,1]$ distribution. Then $\mathcal{Z}=\sum_{i \in I} \delta_{(w_i, U_i)} \sim \mathcal{Q}_{T}$.
\end{claim}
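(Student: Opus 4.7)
The plan is to derive Claim \ref{claim_uniform_labels} as a direct application of the independent marking (coloring) theorem for Poisson point processes. Starting from $\mathcal{X} = \sum_{i \in I}\delta_{w_i} \sim \mathcal{P}_{1/T,T}$, which by Definition \ref{def_PPP_of_worms} is a PPP on $\wormspace$ with intensity
\begin{equation*}
\nu_{1/T,T} = \frac{1}{T} \cdot d^{1-T} \cdot \mu^{W_T},
\end{equation*}
we attach to each point $w_i$ an independent mark $U_i \sim \mathrm{UNI}[0,1]$, with the $U_i$ conditionally i.i.d.\ given $\mathcal{X}$. The marking theorem then tells us that $\mathcal{Z} = \sum_{i \in I}\delta_{(w_i,U_i)}$ is a PPP on $\wormspace \times [0,1]$ whose intensity is the product of the base intensity and the mark distribution, namely
\begin{equation*}
\nu_{1/T,T} \times \bigl(\lambda \, \ind[[0,1]]\bigr) = \frac{1}{T}\cdot d^{1-T} \cdot \mu^{W_T} \times \lambda \, \ind[[0,1]],
\end{equation*}
which is exactly the intensity appearing in \eqref{intensity_worms_PPP_labelled}. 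Hence $\mathcal{Z} \sim \mathcal{Q}_T$ by Definition \ref{def_PPP_of_labelled_worms}.

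The writing of the proof only needs to take care of two small points. First, one should verify measurability, i.e.\ that the map $(w,u)\mapsto(w,u)$ is measurable with respect to the $\sigma$-algebras $\wormspacesigmaalgebra$ and $\mathcal{B}([0,1])$, and that $\mathcal{X}$ is almost surely supported on the countable set $W_T\subseteq \fwormspace\subseteq\wormspace$, so that the marking applies point-by-point without ambiguity. Second, one should check that the resulting $\mathcal{Z}$ satisfies the local finiteness requirement of Definition \ref{def_space_of_worm_point_measures}; this is inherited from local finiteness of $\mathcal{X}$, since $\mathcal{Z}(\wormspace(\{x\}) \times [0,t]) \le \mathcal{X}(\wormspace(\{x\}))<+\infty$ almost surely for every $x\in V$ and $t\in[0,1]$.

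I do not anticipate any real obstacle here: the statement is essentially a bookkeeping consequence of the marking theorem once one matches the intensity measures. The only part that requires a moment of care is confirming that the product measure $\nu_{1/T,T} \times (\lambda\,\ind[[0,1]])$ literally coincides with the intensity in \eqref{intensity_worms_PPP_labelled}, which is immediate by Fubini since both factors are $\sigma$-finite.
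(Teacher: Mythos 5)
Your proposal is correct and matches the intended argument: the paper states Claim \ref{claim_uniform_labels} without a separate proof, treating it (as it does elsewhere, e.g.\ in the proof of Lemma \ref{lemma:labeled_coupling}) as an immediate consequence of the marking/coloring property of Poisson point processes from \cite[Section 5.2]{DRS14}, with the intensity check $\nu_{1/T,T}\times\lambda\,\ind[\,[0,1]\,]$ against \eqref{intensity_worms_PPP_labelled} being exactly the bookkeeping you carry out.
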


In order to define the interlacement point process on a general (transient, weighted) graph, we need to recall the $\sigma$-finite measure $\nu$ on $W^*$ which was introduced in \cite{T09}. Recall the notion of the law  $P^{\pm}_x$  from
Definition \ref{def_rw}\eqref{def_double_rw}. Our next theorem follows from  \cite[Theorem 2.1]{T09}.

\begin{theorem}[Interlacement intensity measure]\label{thm_nu_exists_unique}
    There exists a unique $\sigma$-finite measure $\nu$ on $(W^*, \mathcal{W}^*)$ such that for every $A \in \mathcal{W}^*$ and every finite $K \fsubset V$ we have
    \begin{equation}
        \label{eq:compatibility_of_RI_intensity_measure}
        \nu \left( A \cap W^*(K) \right) =
        Q_K( (\pi^*)^{-1}(A) ),
    \end{equation}
    where the finite measure $Q_K$ on $W$ is defined by
    \begin{equation}\label{def:Q_k_measures}
        Q_K(B)= \sum_{x \in K} P^{\pm}_x(\,  B, \, H_K=0 \,  ), \qquad
        B \in \mathcal{W}.
    \end{equation}

%
\end{theorem}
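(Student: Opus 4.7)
The plan is to follow the classical projective-limit construction from \cite{T09}: build a consistent family of finite measures $\{\widetilde Q_K\}_{K \fsubset V}$ on $(W^*, \mathcal{W}^*)$ and patch them into a single $\sigma$-finite measure $\nu$ via an exhaustion of $V$ by finite sets. For uniqueness, I would fix any sequence $K_n \fsubset V$ with $K_n \nearrow V$. Since every $w \in W$ visits at least one vertex, $W^*(K_n) \nearrow W^*$ in $\mathcal{W}^*$, and any measure satisfying \eqref{eq:compatibility_of_RI_intensity_measure} is forced on each $A \cap W^*(K_n)$ to equal $Q_{K_n}((\pi^*)^{-1}(A \cap W^*(K_n)))$, so monotone continuity of measure gives $\nu(A) = \lim_n Q_{K_n}((\pi^*)^{-1}(A \cap W^*(K_n)))$; this pins $\nu$ down.

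For existence, set $\widetilde Q_K := (\pi^*)_{*} Q_K$, a finite measure on $(W^*, \mathcal{W}^*)$ concentrated on $W^*(K)$ with total mass $\sum_{x \in K} e_K(x) = \mathrm{cap}(K) < \infty$ (finite by transience of $G$). The heart of the argument is the \emph{sweeping identity}
\[
\widetilde Q_{K'}(A \cap W^*(K)) = \widetilde Q_K(A) \qquad \text{for all } K \subseteq K' \fsubset V \text{ and } A \in \mathcal{W}^*,
\]
which says that $\{\widetilde Q_K\}$ is consistent along the directed system of finite subsets of $V$. Given this, defining $\nu(A) := \lim_n \widetilde Q_{K_n}(A \cap W^*(K_n))$ yields a $\sigma$-finite measure on $(W^*, \mathcal{W}^*)$ satisfying \eqref{eq:compatibility_of_RI_intensity_measure}; the limit is independent of the exhausting sequence by the sweeping identity itself, and $\sigma$-finiteness follows from $\nu(W^*(K_n)) = \mathrm{cap}(K_n) < \infty$.

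To establish the sweeping identity, I would use that every class $w^* \in W^*(K) \subseteq W^*(K')$ has a unique representative $\hat w$ with $H_K(\hat w) = 0$ and a unique representative $\tilde w$ with $H_{K'}(\tilde w) = 0$, related by $\hat w = \tilde w(\cdot + m)$ where $m := H_K(\tilde w) \geq 0$. Decomposing $\widetilde Q_{K'}(A \cap W^*(K))$ according to $y := \tilde w(0) \in K'$, $m$, and $x := \tilde w(m) \in K$, and writing $[w] := \pi^*(w)$, the identity reduces to proving
\[
\sum_{y \in K'} \sum_{m \geq 0} P^{\pm}_y\bigl([w] \in A,\, H_{K'} = 0,\, H_K = m,\, X(m) = x\bigr) = P^{\pm}_x\bigl([w] \in A,\, H_K = 0\bigr)
\]
for each $x \in K$. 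I would verify this by combining (a) the independence of the forward and backward walks under $P^{\pm}_{y}$, (b) the strong Markov property at time $m$ (which rewrites the post-$m$ walk as a fresh simple random walk from $x$), (c) the shift-invariance of $\{[w] \in A\}$, and (d) reversibility of simple random walk on the transitive graph $G$, so that $p_n(a,b)=p_n(b,a)$. The reversal step turns the forward bridge from $y$ to $x$ of length $m$ avoiding $K$ into a backward bridge from $x$ to $y$, which, together with the strong Markov property applied at the first backward hit of $K'$ on the right-hand side, re-assembles as $m$ and $y$ vary into the full backward walk from $x$ avoiding $K$. The main obstacle is precisely this sweeping step: the bilaterally infinite structure of $w$, the shift-invariant event $A$, and the first-entrance decomposition must be coordinated so that the reversal identity holds exactly at the level of the joint path measures, and not merely as an equality of marginals. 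Once it is in hand, the remainder of the proof is routine extension theory.
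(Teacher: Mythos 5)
Your proposal is correct in outline and is essentially the argument the paper relies on: the paper does not prove Theorem \ref{thm_nu_exists_unique} itself but derives it from \cite[Theorem 2.1]{T09}, and your construction (push $Q_K$ forward by $\pi^*$, establish the sweeping/consistency identity via a first-entrance decomposition combined with the strong Markov property, independence of the forward and backward walks, shift-invariance of the event and reversibility, then patch along an exhaustion $K_n \nearrow V$, with uniqueness forced by $W^*(K_n)\nearrow W^*$) is precisely the standard Sznitman--Teixeira proof of that cited result. The only caveat is that the sweeping identity, which you yourself flag as the heart of the matter, is left as a plan rather than carried out in detail, but the per-$x$ identity you state is the correct one (its total-mass version is the classical sweeping relation $\sum_{y \in K'} e_{K'}(y)P_y(H_K<\infty,\, X_{H_K}=x)=e_K(x)$) and the ingredients you list do suffice to prove it.
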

\noindent
We extend $\nu$ to $\wormspace$ by defining $\nu(\wormspace \setminus W^*) := 0$.


\begin{definition}[Random interlacement point process]
    \label{def:RI_PP}
\begin{enumerate}[(i)] $ $

\item  The random interlacement point process $\mathcal{Z}$ is a random element of $\wormspm(\wormspace \times \R_+)$ which is a Poisson point process on $\wormspace \times \R_+$ with intensity measure  $ \nu \times \lambda$.
\item Let us denote by $\mathcal{Q}_\infty$ the law of $\mathcal{Z}\mathds{1}[\, \wormspace \times [0,1]\, ]$. Thus $\mathcal{Q}_\infty$ is the law of a PPP on $\wormspm(\wormspace \times [0,1])$ with intensity measure $ \nu \times \lambda \mathds{1}[\, [0,1]\, ] $.
 \end{enumerate}
    \end{definition}

\begin{claim}\label{claim_interlacement_invariant}
The laws of the above defined point processes are invariant under the action of $\Gamma$.
\end{claim}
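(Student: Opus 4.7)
The plan is to reduce everything to invariance of intensity measures, using the standard fact that a Poisson point process has $\Gamma$-invariant law if and only if its intensity measure is $\Gamma$-invariant. This gives a uniform strategy for $\mathcal{P}_{v,T}$, $\mathcal{Q}_{T}$, $\mathcal{Q}_\infty$ and the infinite interlacement $\mathcal{Z}$.

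For the finite-length laws, each $\varphi \in \Gamma$ restricts to a bijection $W_T \to W_T$ because $\varphi$ preserves the nearest-neighbour relation. Hence $\varphi$ pushes the counting measure $\mu^{W_T}$ to itself, so the intensity $v\cdot d^{1-T}\cdot \mu^{W_T}$ of $\mathcal{P}_{v,T}$ is $\Gamma$-invariant. For $\mathcal{Q}_T$, since the action of $\Gamma$ on $\wormspace\times[0,1]$ fixes the label coordinate, the product with $\lambda\mathds{1}[[0,1]]$ is also invariant. This takes care of the finite-length case.

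The main step is the invariance of $\nu$, and here the plan is to exploit the uniqueness statement in Theorem \ref{thm_nu_exists_unique}. First I would verify that each finite measure $Q_K$ is equivariant, i.e.,
\begin{equation*}
Q_{\varphi(K)}(\varphi(B)) = Q_K(B),\qquad B\in\mathcal{W},\ \varphi\in\Gamma.
\end{equation*}
This follows from two facts: (a) since $\varphi$ is a graph automorphism, $\varphi_* P^{\pm}_x = P^{\pm}_{\varphi(x)}$, because the pushforward of a doubly-infinite SRW under a graph automorphism is again a doubly-infinite SRW started at the image vertex; and (b) the event $\{H_K=0\}$ satisfies $\varphi(\{H_K=0\}) = \{H_{\varphi(K)}=0\}$, which is immediate from the definition \eqref{def_eq_entrance_time}. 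Next I would observe that $\pi^*\circ\varphi = \varphi\circ\pi^*$ and $\varphi(W^*(K)) = W^*(\varphi(K))$. Consequently, the pushforward measure $\nu\circ\varphi^{-1}$ on $(W^*,\mathcal{W}^*)$ satisfies the defining compatibility \eqref{eq:compatibility_of_RI_intensity_measure}: for any $A\in\mathcal{W}^*$ and $K\fsubset V$, rewriting $\nu(\varphi^{-1}(A\cap W^*(K))) = \nu(\varphi^{-1}(A)\cap W^*(\varphi^{-1}(K)))$ and applying \eqref{eq:compatibility_of_RI_intensity_measure} to $\varphi^{-1}(A),\varphi^{-1}(K)$, followed by the equivariance of $Q$, yields $Q_K((\pi^*)^{-1}(A))$. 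Uniqueness in Theorem \ref{thm_nu_exists_unique} then forces $\nu\circ\varphi^{-1}=\nu$. Invariance of $\mathcal{Z}$ and $\mathcal{Q}_\infty$ is then immediate because $\varphi$ acts trivially on the label coordinate, so $\nu\times\lambda$ (or its restriction to $[0,1]$) is $\Gamma$-invariant.

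I expect no serious obstacle; the main danger is bookkeeping, namely keeping track of whether $\varphi$ or $\varphi^{-1}$ acts and verifying that the various objects $W^*(K)$, $\{H_K=0\}$, and the projection $\pi^*$ all intertwine correctly with $\Gamma$. Once these small compatibility checks are made, the claim reduces to a one-line application of the uniqueness part of Theorem \ref{thm_nu_exists_unique}.
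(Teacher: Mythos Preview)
Your proposal is correct and takes essentially the same approach as the paper: reduce to $\Gamma$-invariance of the intensity measures $\nu_{v,T}$ and $\nu$, and deduce the latter from the characterizing property in Theorem~\ref{thm_nu_exists_unique}. The paper's proof is a two-line ``follows directly from the definitions'', whereas you have spelled out the bookkeeping (equivariance of $Q_K$, the intertwining $\pi^*\circ\varphi=\varphi\circ\pi^*$, and the appeal to uniqueness) that the paper leaves implicit.
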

\begin{proof} It is enough to check that the measures $\nu_{v,T}$ and $\nu$ are invariant under $\Gamma$, and this
directly follows from the definitions of $\nu_{v,T}$ 
(cf.\ \eqref{intensity_worms_PPP}) and  $\nu$ (cf.\ \eqref{eq:compatibility_of_RI_intensity_measure}, \eqref{def:Q_k_measures}).
\end{proof}

\section{One the role of (non)unimodularity}
\label{section_unimodular_proofs}

The goal of Section \ref{section_unimodular_proofs} is to prove Claim \ref{claim_QW_exists_then_fiid_triv} and Proposition \ref{prop_QW_unimod}. 

First we deal with a technical issue: our graph $G$ is transitive, but there might be many automorphisms that map the origin $o$ to vertex $x$. We will use i.i.d.\ $\mathrm{UNI}[0,1]$ random variables  $(U_x)_{x \in V}=\underline{U}$  to simultaneously pick for each $x \in V$ our favourite automorphism $\underline{\varphi}_x^{\underline{U}}$ that maps $o$ to $x$, moreover our construction of such a family of automorphisms will be equivariant in the sense that
$\underline{\varphi}^{\gamma(\underline{U})}_{\gamma(x)}(y)=\gamma( \underline{\varphi}^{\underline{U}}_x(y) ) $ holds for any $\gamma \in \Gamma$ and any $x,y\in V$.

\begin{definition}[Chart at a vertex, atlas] $ $
\begin{enumerate}[(i)]
\item     Let $\overline{G}=(\overline{V},\overline{E})$ denote a fixed copy of the graph $G$ with root $o$. Given some $x \in V$, we say that $\underline{\varphi}:\overline{V}\to V$ is a chart at $x$ if $\underline{\varphi}$ is a graph automorphism and $\underline{\varphi}(o)=x$.
\item We call $\underline{\varphi} = (\underline{\varphi}_x)_{x\in V}$ an atlas if  $\underline{\varphi}_x$ is a chart at $x$ for all $x \in V$. Denote by
$\Phi$ the set of atlases.
\item The  group  $\Gamma$  of graph automorphisms acts on $\Phi$ as follows: if $\gamma \in \Gamma$ and $\underline{\varphi} \in \Phi$  then the atlas $\gamma(\underline{\varphi})$ is  defined by \begin{equation}\label{auto_acts_on_atlas}
(\gamma(\underline{\varphi}))_x=\gamma\circ\underline{\varphi}_{\gamma^{-1}(x)}.
\end{equation}
\end{enumerate}
\end{definition}
\noindent One easily checks that if $\underline{\varphi}$ is an atlas then 
indeed $\gamma(\underline{\varphi})$ is also an atlas.

Our next result states that  we can construct an atlas as a factor of i.i.d.
\begin{lemma}[Factor of i.i.d.\ atlas]\label{lemma_atlas_factor_of_iid}
     There is a measurable function $F :[0,1]^V\to \Phi$ such that if $\underline{U}=(U_x)_{x \in V}$ is a family of i.i.d.\ random variables with $\mathrm{UNI}[0,1]$ distribution then 
    we have 
    \begin{equation}\label{chart_gamma_intertwine}
        \mathbb{P}\big[ \, \forall \, \gamma \in \Gamma  \, : \, 
        F(\gamma(\underline{U}) )=\gamma( F(\underline{U}) ) \,
        \big]=1. 
    \end{equation}
\end{lemma}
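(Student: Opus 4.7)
The plan is to use the i.i.d.\ labels $\underline{U}$ to break the symmetry among the charts at each vertex $x \in V$ via a lexicographic selection rule indexed by a fixed enumeration of $\overline{V}$. First I would fix once and for all an enumeration $v_0 = o, v_1, v_2, \dots$ of $\overline{V}$ produced by a breadth-first search from $o$, so that for every $n \geq 0$ the vertex $v_{n+1}$ has at least one neighbour in $\{v_0, \dots, v_n\}$. For each $x \in V$ let $\Gamma_x \subseteq V^{\overline{V}}$ denote the set of charts at $x$. Since every chart is an isometry, $\underline{\varphi}(v_n)$ is forced into the finite sphere $\{y \in V : d(x,y) = d(o,v_n)\}$; a short argument (using the fact that for a connected locally finite graph any pointwise limit of graph isomorphisms is again a graph isomorphism) shows that $\Gamma_x$ is closed in the product topology on $V^{\overline{V}}$ with $V$ discrete, and hence compact. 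Transitivity of $G$ gives $\Gamma_x \neq \emptyset$.

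Next I would define $F$ on the full-measure event $\Omega_0 := \{\underline{U} \in [0,1]^V : \text{the } U_y,\, y \in V, \text{ are pairwise distinct}\}$ and extend $F$ arbitrarily off $\Omega_0$. Given $\underline{U} \in \Omega_0$ and $x \in V$, I would inductively construct nested compact nonempty sets $\Gamma_x = C_0 \supseteq C_1 \supseteq \dots$ by letting $S_{n+1} := \{\underline{\varphi}(v_{n+1}) : \underline{\varphi} \in C_n\}$, a finite nonempty subset of $V$; picking the unique $y^*_{n+1} \in S_{n+1}$ minimizing $U_\cdot$; and setting $C_{n+1} := \{\underline{\varphi} \in C_n : \underline{\varphi}(v_{n+1}) = y^*_{n+1}\}$. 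By the finite intersection property $\bigcap_{n \geq 0} C_n$ is nonempty, and since the successive restrictions pin down the value at every $v_n$, this intersection is in fact a singleton $\{\underline{\varphi}_x^{\underline{U}}\} \subseteq \Gamma_x$. Setting $F(\underline{U}) := (\underline{\varphi}_x^{\underline{U}})_{x \in V}$ produces an element of $\Phi$, and measurability of each coordinate $\underline{U} \mapsto \underline{\varphi}_x^{\underline{U}}(v_n)$ is immediate since at step $n$ it depends on only finitely many labels through a finite combinatorial recipe.

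For the equivariance property \eqref{chart_gamma_intertwine}, the key observation is that for any $\gamma \in \Gamma$ the map $\underline{\psi} \mapsto \gamma \circ \underline{\psi}$ is a bijection from $\Gamma_{\gamma^{-1}(x)}$ onto $\Gamma_x$ which satisfies the label identity
\[
(\gamma(\underline{U}))_{(\gamma \circ \underline{\psi})(v_n)} \;=\; U_{\gamma^{-1}(\gamma \circ \underline{\psi})(v_n)} \;=\; U_{\underline{\psi}(v_n)}, \qquad n \geq 0.
\]
Consequently the inductive selection applied at $x$ with input $\gamma(\underline{U})$ is precisely $\gamma$ composed with the selection applied at $\gamma^{-1}(x)$ with input $\underline{U}$, giving $\underline{\varphi}_x^{\gamma(\underline{U})} = \gamma \circ \underline{\varphi}_{\gamma^{-1}(x)}^{\underline{U}} = (\gamma(F(\underline{U})))_x$ by \eqref{auto_acts_on_atlas}. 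Since $\gamma$ permutes $\Omega_0$ onto itself and $\mathbb{P}(\Omega_0)=1$, the claimed equivariance holds simultaneously for all $\gamma \in \Gamma$ on the same full-measure set.

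The main point to get right is the verification that $\bigcap_n C_n$ is a nonempty singleton genuinely lying in $\Gamma_x$. Nonemptiness rests on the compactness of $\Gamma_x$, which uses local finiteness of $G$ together with the isometric rigidity of charts on the connected graph $\overline{G}$; uniqueness is automatic once every coordinate $v_n$ is fixed in turn. The remaining ingredients -- finiteness of the $S_n$, almost sure absence of ties, measurability of $F$, and equivariance -- are essentially bookkeeping once the compactness framework is in place.
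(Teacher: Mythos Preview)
Your proposal is correct and takes essentially the same approach as the paper: fix an enumeration of $\overline{V}$ starting at $o$, and for each $x$ iteratively pin down the chart's value at successive vertices by choosing, among the options still compatible with a chart, the one with smallest $U$-label; equivariance then follows because the selection rule at $x$ with input $\gamma(\underline{U})$ is $\gamma$ applied to the selection at $\gamma^{-1}(x)$ with input $\underline{U}$. The only notable difference is that you make the limit step explicit via compactness of $\Gamma_x$ and the finite intersection property, whereas the paper simply asserts that letting $n\to\infty$ yields a well-defined automorphism; your formulation is a cleaner justification of that step.
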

\begin{proof} Let $(x_n)_{n \in \mathbb{N}}$ denote a well-ordering of $\overline{V}$ satisfying $x_0=o$. We will construct the atlas
$\underline{\varphi}^{\underline{U}} :=F(\underline{U})$ as follows.
Let us fix $x \in V$. We will recursively construct $\underline{\varphi}^{\underline{U}}_x$ by determining the values $x'_n:=\underline{\varphi}^{\underline{U}}_x(x_n),\, n\in \N $ one by one using induction on $n$. We define $x'_0:=x$.
Assuming that we have already defined $x'_0,\dots, x'_n$ for some $n \in \mathbb{N}$, let us denote 
\begin{equation}\Gamma_n:=\{\, \varphi \in \Gamma \, : \, \varphi(x_i)=x'_i, \, i=0,\dots,n \, \}.
\end{equation}
Our induction hypothesis is that $\Gamma_n \neq \emptyset$. Note that this indeed holds for $n=0$ since $G$ is transitive. Given $x'_0,\dots, x'_n$, note that the orbit $O_{n}:= \Gamma_n x_{n+1}$ of
$x_{n+1}$ under $\Gamma_n$ is finite since $G$ is connected and locally finite. Let 
$x'_{n+1}:=\mathrm{argmin}_{y \in O_{n}} U_y$ denote the vertex with the smallest  label among the possible options. Now we see that 
$\Gamma_{n+1} \neq \emptyset$ and we can continue. If we let $n \to \infty$, we obtain a well-defined graph automorphism $\underline{\varphi}^{\underline{U}}_x$ if we let $\underline{\varphi}^{\underline{U}}_x(x_n):= x'_n ,\, n\in \N $, noting that $\underline{\varphi}^{\underline{U}}_x$ is
 a chart at $x$.
We do this for all $x \in V$ to obtain the atlas $F(\underline{U}):=\underline{\varphi}^{\underline{U}}
=(\underline{\varphi}^{\underline{U}}_x )_{x \in V} $.

This construction works if  $(U_x)_{x \in V}$ are all distinct, and this event has probability $1$.
    Then it is straightforward to check by induction on $n$ that for any $\gamma \in \Gamma$, $x \in V$ and $n \in \mathbb{N}$ we have $\underline{\varphi}^{\gamma(\underline{U})}_{\gamma(x)}(x_n)=\gamma( \underline{\varphi}^{\underline{U}}_x(x_n) ) $. Hence we have 
    $\underline{\varphi}^{\gamma(\underline{U})}_{x}(y)=\gamma( \underline{\varphi}^{\underline{U}}_{\gamma^{-1}(x)}(y) ) $ for any $\gamma \in \Gamma$ and any $x,y \in V$,
        i.e., we have $\underline{\varphi}^{\gamma(\underline{U})}=
    \gamma(\underline{\varphi}^{\underline{U}})$ (c.f.\ 
\eqref{auto_acts_on_atlas}),  i.e., \eqref{chart_gamma_intertwine} holds.
\end{proof}

\begin{proof}[Proof of Claim \ref{claim_QW_exists_then_fiid_triv}]
Let us assume that there is a   measure $Q$ on $(W,\mathcal{W})$ such that
properties  \eqref{QW_projects_to_nu} and \eqref{QW_invariant} of  Claim \ref{claim_QW_exists_then_fiid_triv} both hold. 

We begin with the proof of statement \eqref{triv_claim_a} of Claim \ref{claim_QW_exists_then_fiid_triv}.
For any $x \in V$ let 
\begin{equation*}
W^x:=\{\, w \in W \, : \, w(0)=x  \,\}.\end{equation*}
Let $\mathcal{Z}^x, \, x \in V$ denote independent and identically distributed Poisson point processes on $W\times \mathbb{R}_+$ with intensity measure $Q\mathds{1}[W^o] \times \lambda$.
 Let us denote $\underline{\mathcal{Z}}:= (\mathcal{Z}^x)_{x \in V}$.
Let
$\underline{U}=(U_x)_{x \in V}$ denote a family of i.i.d.\ random variables with $\mathrm{UNI}[0,1]$ distribution. Let us define the atlas $\underline{\varphi}^{\underline{U}}:=F(\underline{U})$, where $F$ is defined in Lemma \ref{lemma_atlas_factor_of_iid}. Let us define the point process
\begin{equation}\label{def_of_Z_W}
    \mathcal{Z}^W=  \mathcal{Z}^W(\underline{U}, \underline{\mathcal{Z}}) := \sum_{x \in V} \underline{\varphi}^{\underline{U}}_x(\mathcal{Z}^x). 
\end{equation}
It follows from  \eqref{QW_invariant} that the intensity measure of $\underline{\varphi}^{\underline{U}}_x(\mathcal{Z}^x)$ is  $Q\mathds{1}[W^x] \times \lambda$, thus 
$ \mathcal{Z}^W$ is a PPP with intensity measure $\sum_{x \in V} Q\mathds{1}[W^x] \times \lambda= Q \times \lambda$. It remains to check  that the output $\mathcal{Z}^W $ depends on the i.i.d.\ input $(\underline{U}, \underline{\mathcal{Z}})$  in an equivariant way. Indeed, for any $\gamma \in \Gamma$, we have
\begin{multline*}
  \mathcal{Z}^W\big(\gamma(\underline{U}), \gamma(\underline{\mathcal{Z}})\big)  =\mathcal{Z}^W\big( \gamma(\underline{U}), (\mathcal{Z}^{\gamma^{-1}(x)})_{x \in V}  \big)\stackrel{\eqref{def_of_Z_W} }{=}
  \sum_{x \in V} \underline{\varphi}^{\gamma(\underline{U})}_x\big(\mathcal{Z}^{\gamma^{-1}(x)}\big)\stackrel{(*)}{=}\\ 
  \sum_{x\in V}   \gamma\Big( \underline{\varphi}^{\underline{U}}_{\gamma^{-1}(x)}\big( \mathcal{Z}^{\gamma^{-1}(x)} \big) \Big) \stackrel{(**)}{=} 
   \sum_{y\in V}   \gamma\Big( \underline{\varphi}^{\underline{U}}_{y }\big( \mathcal{Z}^{y } \big) \Big)=
     \gamma\Big(  \sum_{y\in V}  \underline{\varphi}^{\underline{U}}_{y }\big( \mathcal{Z}^{y } \big) \Big) \stackrel{\eqref{def_of_Z_W}}{=} \gamma\left( \mathcal{Z}^W(\underline{U}, \underline{\mathcal{Z}}) \right),
  \end{multline*}
where in $(*)$ we used Lemma \ref{lemma_atlas_factor_of_iid} and \eqref{auto_acts_on_atlas}, and in $(**)$ we changed the variable of summation from $x$ to $y=\gamma^{-1}(x)$.
The proof of \eqref{triv_claim_a} is complete.

If we define $\mathcal{Z}$ as in statement \eqref{triv_claim_b} of Claim \ref{claim_QW_exists_then_fiid_triv} then the proof of \eqref{triv_claim_b}  follows from statement \eqref{triv_claim_a}, property \eqref{QW_projects_to_nu} of Claim \ref{claim_QW_exists_then_fiid_triv}, the mapping property of Poisson point processes
(cf.\ \cite[Section 5.2]{DRS14}) and the fact that $\pi^*(\varphi(w))=\varphi(\pi^*(w))$ holds for any $w \in W$.
    \end{proof}

\begin{proof}[Proof of Proposition \ref{prop_QW_unimod},
\eqref{unimod_A}$\implies$\eqref{unimod_B}] 
We will prove this implication by contradiction.
    Let us suppose that \eqref{unimod_A} holds (i.e., $G$ is unimodular) and the conclusion \eqref{unimod_B}  is false, i.e., let us assume that there exists  a measure $Q$ on $(W,\mathcal{W})$ that satisfies properties \eqref{QW_projects_to_nu} and \eqref{QW_invariant} of Claim \ref{claim_QW_exists_then_fiid_triv}. Let $\mathcal{X} := \sum_{i \in I} \delta_{w_i}$ denote a PPP on $(W, \mathcal{W})$ with intensity measure $Q$. Let us define $w^*_i:=\pi^*(w_i)$ for any $i \in I$ and let $\mathcal{X}^* :=\pi^*(\mathcal{X})= \sum_{i \in I} \delta_{ w^*_i }$.  Note that it follows from   Claim \ref{claim_QW_exists_then_fiid_triv}\eqref{QW_projects_to_nu} and the mapping property of Poisson point processes (cf.\ \cite[Section 5.2]{DRS14}) that $\mathcal{X}^*$ is a PPP on $(W^*, \mathcal{W}^*)$ with intensity measure $\nu$.
    
     For any $n \in \Z$ let us define the point processes $L_n^{\mathcal{X}} \in \wormspm(V)$ as $L_n^{\mathcal{X}} := \sum_{i \in I} \delta_{w_i(n)}$ and the total local time point process $L^{\mathcal{X}} \in \wormspm(V)$ as 
    \begin{equation}\label{sum_L_n}
        L^{\mathcal{X}} := \sum_{n \in \Z} L_n^{\mathcal{X}}.
    \end{equation}
    Let us also define the map: $L: W^* \to \wormspm(V)$ by  $L(w^*):= \sum_{n \in \mathbb{Z}} \delta_{w(n)}$ for any $w \in (\pi^*)^{-1}(w^*) $, noting that this definition is unambiguous, since it does not matter which $w \in (\pi^*)^{-1}(w^*) $ we pick. Let us also define $L^{\mathcal{X}^*}:=\sum_{i \in I} L(w^*_i) $, and note that we have  
    \begin{equation}\label{LLstar_equal}
    L^{\mathcal{X}}=L^{\mathcal{X}^*}.
    \end{equation}
    We will show that the following equalities hold:
    \begin{align}
    \label{expected_total_local_time_at_origin}  
    \E \left[ L^{\mathcal{X}^*}( \{ o \} ) \right]&=1, \\ 
\label{mass_tr_implies}  
\E \left[ L_n^{\mathcal{X}}( \{o\} ) \right] &=  \E \left[ L_0^{\mathcal{X}}( \{o\} ) \right], \quad n \in \mathbb{Z}.
    \end{align}
Note that \eqref{expected_total_local_time_at_origin} together with \eqref{LLstar_equal} implies   $\E \left[ L^{\mathcal{X}}( \{ o \} ) \right]=1$, while \eqref{mass_tr_implies} together with \eqref{sum_L_n}
implies that $\E \left[ L^{\mathcal{X}}( \{ o \} ) \right]$ is  equal to $0$ or  $+\infty$, and we arrived at a contradiction. Consequently, it remains to prove \eqref{expected_total_local_time_at_origin}  and \eqref{mass_tr_implies}.
   
 We start with the proof of \eqref{expected_total_local_time_at_origin}. 
    Given a random walk started form $o$, let $\tau_o$ denote the number of its visits to $o$. Recalling the definition of the equilibrium measure from \eqref{def_eq_equlibrium_measure}, it  follows from the strong Markov property of the random walk that $\tau_o$ has geometric distribution with parameter $e_{\{ o \}}(o)$. Now we can write
    \begin{equation}
        \E \left[ L^{\mathcal{X}^*}( \{ o \}  ) \right] \stackrel{(*)}{=}
        \E \big[\, | \mathcal{X}^* \left( W^*( \{  o \} ) \right) |\, \big] \cdot \E \left[ \tau_o \right] \stackrel{ \eqref{eq:compatibility_of_RI_intensity_measure}, \eqref{def:Q_k_measures} }{ = }\capacity( \{ o \}  ) \cdot \frac{1}{e_{ \{o\} }(o)} \stackrel{\eqref{def_eq_equlibrium_measure}}{=} 1,
    \end{equation}
    where  in $(*)$ we used the properties of PPPs, \eqref{def:Q_k_measures} and the law of total expectation.
    
    Let us now prove \eqref{mass_tr_implies}. For $n \in \Z$ and $x, y \in V$ let us introduce the mass transport function
    \begin{equation}
        f_n(x,y) := \E \left[ \sum_{i \in I} \ind \left[ w_i(0) = x, \, w_i(n) = y \right] \right].
    \end{equation}
    Note that for each $n \in \mathbb{Z}$, the function $f_n(\cdot,\cdot)$ is invariant under the diagonal action of the group $\Gamma$ of automorphisms of $G$, since the intensity measure $Q$ is invariant under $\Gamma$  by   Claim \ref{claim_QW_exists_then_fiid_triv}\eqref{QW_invariant}, which implies that the law of $\mathcal{X}$ is also invariant under $\Gamma$.
    Consequently,  we obtain 
    \begin{equation}
        \E \left[ L_0^{\mathcal{X}}( \{ o \} ) \right] =
        \sum_{x \in V} f_n(o, x) \stackrel{\eqref{mass_transport_principle_transitive} }{=}
        \sum_{x \in V} f_n(x, o) =
        \E \left[ L_n^{ \mathcal{X}}( \{ o \} ) \right], \qquad n \in \Z.
    \end{equation}
 We are done with the proof of the fact that 
 \eqref{unimod_A}$\implies$\eqref{unimod_B}.
\end{proof}
 
\begin{proof}[Proof of Proposition \ref{prop_QW_unimod},
\eqref{unimod_B}$\implies$\eqref{unimod_A}]
Our goal is to show that if $G$ is locally finite, transitive, transient and not unimodular then
 there is a measure $Q$ on  $(W,\mathcal{W})$ that satisfies properties \eqref{QW_projects_to_nu}  and \eqref{QW_invariant} of Claim \ref{claim_QW_exists_then_fiid_triv}.

Let us denote the stabilizer of $x \in V$ by $S(x):=\{\, \varphi \in \Gamma \, : \, \varphi(x)=x \, \}$. Let us denote by $S(x)y:=\{\, \varphi(y) \, : \, \varphi \in S(x)\, \}$ the orbit of $y \in V$ under the action of $S(x)$.

By \cite[Theorem 8.10]{LP16} there exists a function $\mu: V \to (0,+\infty)$  such that 
\begin{equation}\label{def_of_mu_haar}
    \frac{\mu(x)}{\mu(y)}= \frac{|S(x)y | }{|S(y)x|}, \qquad x,y \in V.
\end{equation}
Let us note that we have
\begin{equation}\label{mu_ratio_automorph}
     \frac{\mu(x)}{\mu(y)}=\frac{\mu(\varphi(x))}{\mu(\varphi(y))} \qquad x,y \in V, \, \varphi \in \Gamma,
\end{equation}
since $S(\varphi(x'))\varphi(y')=\varphi(S(x')y' )$ and thus $|S(\varphi(x'))\varphi(y')|=|S(x')y'|$  holds if $x',y' \in V$, $\varphi \in \Gamma$.

For any $x \in V$, let us denote by $N(x)$ the set of neighbours of $x$. Exercise 8.6 of
 \cite{LP16} and our assumptions on $G$ (locally finite, transitive, non-unimodular) together imply that 
 \begin{equation}\label{mu_not_constant}
\text{for any $o \in V$ there exists $x \in N(o)$ such that $|S(o)x | \neq |S(x)o|$.}
 \end{equation}
Let $H \in \mathcal{H}$. For any $w \in W_H$, let us denote 
$\sup^\mu(w):=\sup_{n \in H}\mu(w(n))$. Let us denote 
$W^b_H:=\{\, w \in W_H \, : \,  \sup^\mu(w)=\mu(w(n)) \text{ for some } n \in H    \,\} $.
For  any $w \in W^b_H$, let us denote 
$ \mathrm{argmax}^\mu(w):= \{ \, n \in H\, : \, \mu(w(n))= \sup^\mu(w)   \, \}  $.
Let 
\begin{equation}
\overline{W}_H:= \{\, w \in W^b_H \, : \, |\mathrm{argmax}^\mu(w)|<+\infty   \,\}, \quad  \overline{W}:= \overline{W}_{(-\infty,\infty)}, \quad
 \overline{W}_+:= \overline{W}_{[0,+\infty)}.
\end{equation}
For any  $w \in \overline{W}$, let $n_0(w):= \min \mathrm{argmax}^\mu(w) $ denote the smallest index $n \in \mathbb{Z}$ for which 
$\mu(w(n))=\sup^\mu(w)$. Let us define $T_0: \overline{W} \to \overline{W}$ by $(T_0(w))(n):=w(n-n_0(w))$ for any $n \in \mathbb{Z}$.
Let $\overline{W}^*:=\pi^*(\overline{W})$. Let us define
 $T^*_0: \overline{W}^* \to \overline{W}$ by  $T^*_0(w^*):=T_0(w)$ for any $w \in (\pi^*)^{-1}(w^*)$, noting that this definition is unambiguous since the output $T_0(w)$ is the same for all $w \in (\pi^*)^{-1}(w^*)$. Let us also note that $n_0(w)=n_0(\varphi(w))$ for any $w \in \overline{W}$ by \eqref{mu_ratio_automorph}. This implies that $T^*_0(\varphi(w^*))=\varphi(T^*_0(w^*))$  holds for any $w^* \in \overline{W}^*$ and $\varphi \in \Gamma$, i.e., $T^*_0$  is $\Gamma$-equivariant.
 
 Let  $\mathcal{X}^*=\sum_{i \in I} \delta_{w^*_i}$ denote  a PPP on $(W^*, \mathcal{W}^*)$ with intensity measure $\nu$. We will show that
 \begin{equation}\label{nonunimod_nu_supp_on_overline_W_star}
     \mathbb{P}\left(\,  \, w^*_i \in  \overline{W}^*, \, i \in I   \,\right)=1,
 \end{equation}
 thus $\mathcal{X}:= T^*_0(\mathcal{X}^* )= \sum_{i \in I} \delta_{T^*_0(w^*_i)} \in \wormspm(W)$ is almost surely well-defined. Note that the
 mapping property of Poisson point processes implies that $\mathcal{X}$ is a PPP on $W$. Let us denote by  $Q$ the intensity measure of $\mathcal{X}$. Property \eqref{QW_projects_to_nu} of Claim \ref{claim_QW_exists_then_fiid_triv} holds by the mapping property and the identity $\pi^*(\mathcal{X})=\mathcal{X}^*$, which follows from the simple observation that $\pi^*(T^*_0(w^*))=w^*$ holds for all $w^* \in \overline{W}^*$. In order to prove Property \eqref{QW_invariant} of Claim \ref{claim_QW_exists_then_fiid_triv}, we only need to check that the law of $\mathcal{X}$ is invariant under the action of any $\Gamma$, but this directly follows from the fact that the same holds for the law of $\mathcal{X}^*$ (cf.\ Claim \ref{claim_interlacement_invariant}) and the observation that $T^*_0$  is $\Gamma$-equivariant.
 
 It remains to prove \eqref{nonunimod_nu_supp_on_overline_W_star}. It is enough to prove $\nu(W^* \setminus \overline{W}^*)=0$. By the definition of $\nu$ (cf.\ Theorem \ref{thm_nu_exists_unique}), it is enough to show that $ P^{\pm}_x\left( \overline{W} \right)=1$ for any $x \in V$.
In fact, it is enough to show that $P_x\left( \overline{W}_+ \right)=1$ for any $x \in V$, since the backward path of a doubly infinite random walk starting from $x$ is a time-reversed random walk, and if the forward path as well as the time-reversed backward path is in  $\overline{W}_+$ then the whole doubly infinite path is in $\overline{W}$. 

In order to prove $P_x\left( \overline{W}_+ \right)=1$, it is enough to show that if $(X(n))_{n \in \mathbb{N}}$ is a simple random walk on $G$ and if we define $Y(n):=\ln(\mu(X(n))), n \in \mathbb{N}$, then $(Y(n))_{n \in \mathbb{N}}$ is a random walk on $\mathbb{R}$ with a negative drift, i.e., a process with i.i.d.\ increments such that the expectation of one increment is strictly negative. The increments of the process $(Y(n))_{n \in \mathbb{N}}$ are indeed i.i.d.\ by the definition of simple random walk on the transitive graph $G$ and \eqref{mu_ratio_automorph}.

It remains to show that the expectation of one increment of $(Y(n))_{n \in \mathbb{N}}$ is negative, i.e.,
\begin{equation}\label{increment_negative_expect}
 \mathbb{E}\big[Y(1)-Y(0)\big]=  E_o\big[\ln(\mu(X(1)))-\ln(\mu(o)) \big] =\frac{1}{d} \sum_{x \in N(o)} \ln\left( \frac{\mu(x)}{\mu(o)} \right)
 <0.
\end{equation}
In order to prove this, we recall from \cite[Corollary 8.8]{LP16} a
 generalization of the mass transport principle \eqref{mass_transport_principle_transitive} which can be applied to any locally finite connected transitive graph $G$ (but unimodularity of $G$ is not required): if $f: V \times V \to [0,+\infty)$ is invariant under the diagonal action of $\Gamma$ (i.e., if $f$ is a mass transport function) then we have
\begin{equation}\label{nonunimod_mass_transport}
    \sum_{x \in V} f(o,x)=\sum_{x \in V} f(x,o)\frac{|S(x)o|}{|S(o)x|}.
\end{equation}
Applying this identity to the function $f(x,y):=\mathds{1}[\, \{x,y \} \in E \, ]$, where $E$ is the edge set of $G$, we obtain $d= \sum_{x \in N(o)} \frac{|S(x)o|}{|S(o)x|}$, thus the desired inequality \eqref{increment_negative_expect} follows:
\begin{equation}
    \frac{1}{d} \sum_{x \in N(o)} \ln\left( \frac{\mu(x)}{\mu(o)} \right) \stackrel{ \eqref{def_of_mu_haar} }{=}
    \frac{1}{d} \sum_{x \in N(o)} \ln\left( \frac{|S(x)o|}{|S(o)x|} \right) \stackrel{(*)}{<} 
    \ln\left( \frac{1}{d} \sum_{x \in N(o)} \frac{|S(x)o|}{|S(o)x|}  \right)=\ln(1)=0,
\end{equation}
where  $(*)$ holds by Jensen's inequality, noting that the inequality is indeed strict by \eqref{mu_not_constant}.
\end{proof}
 
\section{Further notation and auxiliary results}

In Section \ref{subsection_fopm} 
we introduce some notation 
related to the ``shearing'' of finite-length trajectories and prove an upper bound on the probability that a point process on $W_T$ with certain good properties  hits a finite set $K$ of vertices.
In Section \ref{subcetion_topology}
we introduce the topology of local convergence on the space $ \wormspm(\wormspace \times [0,1])$ of labeled trajectories and show that the resulting space is Polish (i.e., separable and completely metrizable). Finally, we prove that the law of the finite-length random interlacements process $\mathcal{Q}_{T}$
       (cf.\ Definition \ref{def_PPP_of_labelled_worms}) weakly converges to the law of the random interlacements process $\mathcal{Q}_\infty$  (cf.\ Definition \ref{def:RI_PP}) as $T \to \infty$ with respect to topology of local convergence.

\subsection{Functions of point measures}\label{subsection_fopm}

Now we introduce some functions on the space of trajectories which can be naturally extended to the case of point measures on the space of trajectories.

Given $T \in \N_+$ let us denote the vertex visited by a trajectory  $w \in  W_T$ in the $n$'th step by
\begin{equation}
    \label{notation:nth_step_visited}
    \step_n(w) := w(n), \qquad  0 \leq n \leq T-1.
\end{equation}
Let us introduce a separate notion for the initial point and the endpoint of $w \in W_T$:
\begin{equation}
    \label{notation:initial_terminal}
    \initial(w) := \step_0(w) = w(0) \qquad \text{ and } \qquad \terminal(w) := \step_{T-1}(w) = w(T-1).
\end{equation}
If $\omega = \sum_{i \in I} \delta_{w_j} \in \wormspm( W_T )$, let  $\initial(\omega)=\sum_{i \in I} \delta_{\initial(w_i)}$ and $\terminal(\omega) = \sum_{i \in I} \delta_{\terminal(w_i)}$ denote the point measure of initial points and endpoints of $\omega$, respectively and $\step_n(\omega) := \sum_{i \in I} \delta_{\step_n(w_i)}$ for general step $0 \leq n \leq T-1$. Note that
$\initial(\omega)$, $\terminal(\omega)$ and $\step_n(\omega)$, $n = 0, 1, \ldots T-1$ are  elements of $\wormspm(V)$.

\medskip

Given some $T \leq T' \in \mathbb{N}$, let us define the maps
$\initialinterval_T: W_{T'} \to W_T$ and $\terminalinterval_T: W_{T'} \to W_T$ by
\begin{equation}\label{initial_terminal_intervals}
\initialinterval_T(w)=(w(0),\dots,w(T-1)), \qquad
\terminalinterval_T(w)=(w(T'-T), \dots, w(T'-1)).
    \end{equation}
In words: $\initialinterval_T(w)$ is the initial sub-trajectory of $w$ of length $T$ and $\terminalinterval_T(w)$ is the terminal sub-trajectory of $w$ of length $T$.
We can extend these notions for any point measure $\omega = \sum_{i \in I} \delta_{w_i} \in \wormspm(W_{T'})$  as follows:
\begin{equation}\label{initial_terminal_interval_pp_def}
\initialinterval_{T}(\omega) := \sum_{i \in I} \delta_{ \initialinterval_{T}(w_i)}, \qquad
 \terminalinterval_{T}(\omega) := \sum_{i \in I} \delta_{ \terminalinterval_{T}(w_i)}.
\end{equation}

\begin{claim}[Shearing the trajectories of $\mathcal{P}_{v, T'}$]
    \label{claim:same_law_after_RW_shifts}
    If $v \in \mathbb{R}_+$, $T \leq T' \in \N_+$ and $\mathcal{X}^{v,T'} \sim \mathcal{P}_{v, T'}$ then the following statements hold.
    \begin{enumerate}[(i)]
        \item\label{nyissz} We have $\initialinterval_T( \mathcal{X}^{v,T'} ) \sim \mathcal{P}_{v, T}$ and $\terminalinterval_T( \mathcal{X}^{v,T'}) \sim \mathcal{P}_{v, T}$.
        \item\label{nyassz}  For any $n = 0, \ldots, T'-1$ we have $\step_n(\mathcal{X}^{v,T'}) \sim \mathcal{P}_{v,1}$.
    \end{enumerate}
\end{claim}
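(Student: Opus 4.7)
My plan is to prove both parts as a direct application of the mapping theorem for Poisson point processes (\cite[Section 5.2]{DRS14}), combined with the fact that all the intensity measures involved are constant multiples of counting measures on countable sets, so pushforwards reduce to counting preimages. Concretely, the maps $\initialinterval_T, \terminalinterval_T : W_{T'} \to W_T$ and $\step_n : W_{T'} \to V$ are measurable between countable measurable spaces, and by the mapping theorem, $f(\mathcal{X}^{v,T'})$ is a PPP whose intensity at a point $s$ equals $\nu_{v,T'}(f^{-1}(\{s\}))$.

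For part \eqref{nyissz}, I would fix any $w \in W_T$ and count the preimage $\initialinterval_T^{-1}(\{w\}) = \{\, w' \in W_{T'} \, : \, w'(k)=w(k),\ 0 \leq k \leq T-1\,\}$. Such a $w'$ is obtained by extending $w$ with a nearest-neighbour walk of $T'-T$ further steps starting from $w(T-1)$; since $G$ is $d$-regular each step has exactly $d$ choices and the ``finite visit'' condition in \eqref{transient_nearest_neighb_traj} is automatically satisfied for finite-length trajectories, so $|\initialinterval_T^{-1}(\{w\})| = d^{T'-T}$. Therefore the pushforward mass at $\{w\}$ equals
\begin{equation*}
v \cdot d^{1-T'} \cdot d^{T'-T} = v \cdot d^{1-T} = \nu_{v,T}(\{w\}),
\end{equation*}
which gives $\initialinterval_T(\mathcal{X}^{v,T'}) \sim \mathcal{P}_{v,T}$. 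The case of $\terminalinterval_T$ is symmetric: each $w \in W_T$ has $d^{T'-T}$ preimages obtained by prepending $T'-T$ further steps backward from $w(0)$, yielding the same pushforward value.

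For part \eqref{nyassz}, I would similarly count $\step_n^{-1}(\{x\}) = \{\, w \in W_{T'} \, : \, w(n) = x \,\}$. Splitting a trajectory pinned to $x$ at time $n$ into its forward part ($T'-1-n$ further steps, each with $d$ options) and its backward part ($n$ steps, each with $d$ options) gives $|\step_n^{-1}(\{x\})| = d^n \cdot d^{T'-1-n} = d^{T'-1}$, so the pushforward mass at $\{x\}$ is $v \cdot d^{1-T'} \cdot d^{T'-1} = v$, agreeing with $v \cdot \mu^V(\{x\})$ as required by Definition \ref{def_PPP_on_V}.

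There is no real obstacle here; the main thing to be careful about is to verify that every sequence produced by an extension is genuinely in $W_{T'}$ (which reduces to the trivial observation that the ``only finitely many visits'' requirement is vacuous for finite-length trajectories) and to use the $d$-regularity consequence of transitivity recorded in Section \ref{subsec_transitivity}. An alternative path via Claim \ref{claim_worms_random_walks} also works cleanly for \eqref{nyissz} (a truncation of a simple random walk is a simple random walk), but the mapping-theorem route handles both parts uniformly and also covers the less obvious $\terminalinterval_T$ statement without having to reason about the distribution of a random walk at a fixed positive time.
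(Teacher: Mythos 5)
Your proposal is correct and follows essentially the same route as the paper: apply the mapping property of Poisson point processes and count that each $w \in W_T$ has exactly $d^{T'-T}$ preimages under $\initialinterval_T$ (and under $\terminalinterval_T$), so the pushforward intensity is $v\cdot d^{1-T}$. The only cosmetic difference is in part \eqref{nyassz}, where the paper deduces the statement by applying part \eqref{nyissz} twice while you count the $d^{T'-1}$ preimages of $\step_n$ directly; both are the same counting argument.
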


\begin{proof} Recall from Definition \ref{def_PPP_of_worms} that 	$\nu_{v,T'}$ denotes the intensity measure of the PPP  $\mathcal{X}^{v,T'}$.

First we prove \eqref{nyissz}.
For any $w \in W_T$, the number of elements $w'$ of $W_{T'}$ satisfying $\initialinterval_T(w' )=w$ is $d^{T'-T}$,  thus
$\nu_{v,T'}(\, (\initialinterval_T)^{-1}(w) \, )=v \cdot d^{1-T}$ by \eqref{intensity_worms_PPP}. Similarly, we have $\nu_{v,T'}(\, (\terminalinterval_T)^{-1}(w) \, )=v \cdot d^{1-T}$.  The proof of \eqref{nyissz}  is complete by the mapping property of Poisson point processes (cf.\ \cite[Section 5.2]{DRS14}).
The proof of \eqref{nyassz} can be deduced if we apply \eqref{nyissz} twice.
 \end{proof}

\begin{lemma}[Bound on the probability of hitting a set $K$ of vertices]\label{lemma_gen_pp_density_worm_hit_bound}  Let $\beta \in \mathbb{R}_+$, $T \in \N_+$ and $K \subset \subset V$. Let $\mathcal{X}$ denote a random element of $\wormspm(   W_T)$.
Let us assume that for any $x \in V$ the inequality $\mathbb{E}[ \initial(\mathcal{X})(\{x\}) ]\leq \beta$ holds. Let us also assume that if we condition on $\initial(\mathcal{X})$ then
 the trajectories of $\mathcal{X}$  are
 distributed as the first $T-1$ steps of a simple random walk  on $G$ with the points of the point process $\initial(\mathcal{X})$  as starting points.
The probability of the event that
a trajectory from $\mathcal{X}$ hits the set $K$ can be bounded as follows:
\begin{equation}\label{eq:prob_visit_KTv}
\mathbb{P}\left[\, \mathcal{X}(W_T(K)) \neq 0 \, \right] \leq \beta \cdot |K|\cdot T.
\end{equation}
 \end{lemma}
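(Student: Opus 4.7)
The plan is to use Markov's inequality together with a union bound on each time step, exploiting the symmetry of the random walk transition kernel.

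First, I would observe that
\begin{equation*}
\mathbb{P}\left[\mathcal{X}(W_T(K)) \neq 0\right] \leq \mathbb{E}\left[\mathcal{X}(W_T(K))\right] = \mathbb{E}\left[\sum_{i \in I} \mathds{1}\left[w_i \in W_T(K)\right]\right],
\end{equation*}
where $\mathcal{X} = \sum_{i \in I}\delta_{w_i}$. Then I would condition on $\initial(\mathcal{X})$ and invoke the assumed conditional distribution of the trajectories to rewrite each indicator's conditional expectation as the probability that a simple random walk started at $w_i(0)$ hits $K$ within its first $T-1$ steps. A union bound over time steps gives, for any $y \in V$,
\begin{equation*}
P_y\big[\exists\, 0 \leq n \leq T-1 \, : \, X(n) \in K\big] \leq \sum_{n=0}^{T-1} \sum_{x \in K} p_n(y,x).
\end{equation*}

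Next I would sum over the points of $\initial(\mathcal{X})$ and apply the assumed bound $\mathbb{E}[\initial(\mathcal{X})(\{y\})] \leq \beta$ together with Campbell's formula to get
\begin{equation*}
\mathbb{E}\left[\mathcal{X}(W_T(K))\right] \leq \sum_{y \in V} \beta \sum_{n=0}^{T-1}\sum_{x \in K} p_n(y,x).
\end{equation*}
The key step is then to swap the order of summation and use the time-reversibility \eqref{heat_kernel_symmetric}, noting $\sum_{y \in V} p_n(y,x) = \sum_{y \in V} p_n(x,y) = 1$ since $p_n(x,\cdot)$ is a probability distribution on $V$. This collapses the inner double sum to $|K|$, yielding the desired bound $\beta \cdot T \cdot |K|$.

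I do not anticipate any real obstacle: the argument is a first-moment calculation, and each step (Markov's inequality, conditioning, union bound, reversibility) is standard. The only mild care needed is to justify the interchange of expectation and summation, which is automatic since all summands are nonnegative, and to observe that the assumption on $\mathbb{E}[\initial(\mathcal{X})(\{y\})]$ is uniform in $y$, so the bound $\beta$ factors out of the sum over starting vertices.
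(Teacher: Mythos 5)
Your proposal is correct and follows essentially the same route as the paper's proof: Markov's inequality on the first moment, a union bound over the $T$ time steps, the conditional random-walk distribution together with the density bound $\beta$, and the symmetry $p_n(x,y)=p_n(y,x)$ to collapse the sum over starting vertices to $|K|$ per time step. The only cosmetic difference is that the paper bounds $\mathcal{X}(W_T(K))\leq\sum_{n=0}^{T-1}\step_n(\mathcal{X})(K)$ directly rather than applying the union bound trajectory-by-trajectory, which is the same estimate read in a different order.
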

 Before we prove Lemma \ref{lemma_gen_pp_density_worm_hit_bound}, let us state a corollary which follows from it using Claim \ref{claim_worms_random_walks}.
 \begin{corollary}\label{corollary:prob_visit_KTv}  In particular, if $\mathcal{X} \sim \mathcal{P}_{\beta,T}$ (cf.\ Definition \ref{def_PPP_of_worms}) then \eqref{eq:prob_visit_KTv} holds.
\end{corollary}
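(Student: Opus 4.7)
The plan is to apply Markov's inequality together with a union bound over the $T$ time steps. More precisely, since a trajectory $w \in W_T$ hits $K$ if and only if $\step_n(w) \in K$ for some $n \in \{0, 1, \dots, T-1\}$, we have the pointwise bound
\begin{equation*}
\mathds{1}[w \in W_T(K)] \leq \sum_{n=0}^{T-1} \mathds{1}[\step_n(w) \in K],
\end{equation*}
which, summed over all trajectories in $\mathcal{X}$, yields $\mathcal{X}(W_T(K)) \leq \sum_{n=0}^{T-1} \step_n(\mathcal{X})(K)$. Markov's inequality then gives
\begin{equation*}
\mathbb{P}\left[\mathcal{X}(W_T(K)) \neq 0 \right] \leq \mathbb{E}[\mathcal{X}(W_T(K))] \leq \sum_{n=0}^{T-1} \sum_{y \in K} \mathbb{E}\bigl[\step_n(\mathcal{X})(\{y\})\bigr].
\end{equation*}

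The main step is to estimate $\mathbb{E}[\step_n(\mathcal{X})(\{y\})]$ for fixed $n$ and $y \in K$. Conditioning on $\initial(\mathcal{X})$ and using the assumption that the trajectories evolve as independent simple random walks started at the initial points, we obtain
\begin{equation*}
\mathbb{E}\bigl[\step_n(\mathcal{X})(\{y\}) \,\bigm|\, \initial(\mathcal{X}) \bigr] = \sum_{x \in V} \initial(\mathcal{X})(\{x\}) \cdot p_n(x,y),
\end{equation*}
so taking expectations and using $\mathbb{E}[\initial(\mathcal{X})(\{x\})] \leq \beta$ yields
\begin{equation*}
\mathbb{E}\bigl[\step_n(\mathcal{X})(\{y\})\bigr] \leq \beta \sum_{x \in V} p_n(x,y) = \beta \sum_{x \in V} p_n(y,x) = \beta,
\end{equation*}
where in the penultimate equality we used the reversibility identity \eqref{heat_kernel_symmetric}, and the last equality holds because $p_n(y,\cdot)$ is a probability distribution. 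Plugging this bound into the previous display gives $\mathbb{E}[\mathcal{X}(W_T(K))] \leq T \cdot |K| \cdot \beta$, which is \eqref{eq:prob_visit_KTv}.

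There is no real obstacle here; the only subtle point to be careful about is ensuring that the exchange of expectation and summation over $x \in V$ is justified (via Tonelli's theorem, applied to the nonnegative quantities $\initial(\mathcal{X})(\{x\}) \cdot p_n(x,y)$), and that the use of reversibility to replace $\sum_x p_n(x,y)$ by $\sum_x p_n(y,x) = 1$ does rely on the symmetry of the simple random walk on the (regular) graph $G$.
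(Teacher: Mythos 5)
Your argument is correct and is essentially the paper's: the paper obtains the corollary by checking, via Claim \ref{claim_worms_random_walks}, that $\mathcal{X}\sim\mathcal{P}_{\beta,T}$ satisfies the hypotheses of Lemma \ref{lemma_gen_pp_density_worm_hit_bound}, whose proof is exactly your computation (Markov's inequality, union over the $T$ time steps, conditioning on $\initial(\mathcal{X})$, and the symmetry \eqref{heat_kernel_symmetric} to sum $p_n(\cdot,y)$ to $1$). You have simply inlined that lemma's proof rather than citing it, which changes nothing of substance.
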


\begin{proof}[Proof of Lemma \ref{lemma_gen_pp_density_worm_hit_bound}] For any $n=0,\dots,T-1$ let us denote $\mathcal{X}_n:=\step_n(\mathcal{X})$. We have
\begin{multline*}
  \mathbb{P}\left[\, \mathcal{X}(W_T(K)) \neq 0 \, \right]
  \stackrel{(*)}{\leq}
  \mathbb{E} \left[ \mathcal{X}(W_T(K))  \right]
  \leq
     \sum_{n=0}^{T-1} \mathbb{E} \left[ \mathcal{X}_n(K)  \right]      \stackrel{(**)}{\leq}  \sum_{n=0}^{T-1} \sum_{x \in V} \sum_{y \in K}  \beta \cdot p_n(x,y) \stackrel{\eqref{heat_kernel_symmetric}}{=}\\
     \sum_{n=0}^{T-1}  \sum_{y \in K} \sum_{x \in V} \beta \cdot p_n(y,x)=
     \sum_{n=0}^{T-1}  \sum_{y \in K}  \beta =
       \sum_{n=0}^{T-1} \beta \cdot |K|= \beta \cdot |K|\cdot T,
\end{multline*}
  where $(*)$  is Markov's inequality and $(**)$ follows from the assumptions of Lemma \ref{lemma_gen_pp_density_worm_hit_bound}.
\end{proof}


	

\subsection{Topology, convergence, completeness}\label{subcetion_topology}

Recall the notion of $\mathcal{H}$, $W_H(K)$, $ \fwormspace(K)$ and $\wormspace(K)$ from Section \ref{sebsection:wstar}.

\begin{definition}[Localization map]
    \label{projection_map_K} $ $
    
\begin{enumerate}[(i)]    
\item	For any $K \subset \subset V$ and $H \in \mathcal{H}$, we define the map
		$\firstlastvis_K: W_H(K) \to \fwormspace(K)$ as follows.
		\begin{equation}\label{firstlastvisit}
	    \firstlastvis_K (w):=\Big(w\big(H_K(w)+n\big)\Big)_{ 0 \leq n\leq L_K(w)-H_K(w)  }=\Big(w\big(H_K(w)\big), \dots, w\big(L_K(w)\big) \Big).
	\end{equation}
We define
	$\firstlastvis_K: W^* \to \fwormspace(K)$ by letting
	$ \firstlastvis_K (w^*):= \firstlastvis_K (w)$ for any $w\in(\pi^*)^{-1}(w^*)$ (noting that this definition is unambiguous).		
By the above definitions, $\firstlastvis_K(\lozengeworm)$ is defined for any  $\lozengeworm \in \wormspace(K)$ (cf.\ \eqref{def_eq_W_Wplus_Wminus}, \eqref{def_eq_WT_frormspace} and \eqref{wormspaces}), i.e.,
		$\firstlastvis_K: \wormspace(K) \to \fwormspace(K)$ is now well-defined.
We call  $\firstlastvis_K(\lozengeworm)$ the local image of  $\lozengeworm$ on $K$.

\item We  define the local image of a point measure  $\omega = \sum_{i \in I} \delta_{(\lozengeworm_i,t_i)}  \in \wormspm(\wormspace \times [0,1] )$
on a finite set $K \subset \subset V$ by letting
	\begin{equation}
	    \firstlastvis_K (\omega):= \sum_{i \in I} \delta_{(\firstlastvis_K(\lozengeworm_i),t_i )} \, \ind[ \lozengeworm_i \in \wormspace(K) ].
	\end{equation}
	The notion of $\firstlastvis_K (\omega)$ can be defined analogously for any
	$\omega \in \wormspm(\wormspace)$ as well.
\end{enumerate}
\end{definition}

\noindent In words: $ \firstlastvis_K (\lozengeworm)$ is the finite  sub-trajectory of $\lozengeworm$ which starts at the first visit of $\lozengeworm$ to $K$, ends at the last visit of $\lozengeworm$ to $K$, and the indexing of $ \firstlastvis_K (\lozengeworm)$  starts from zero.

If $K \subseteq K' \subset \subset V$ then the following compatibility relation holds for any $ \omega  \in  \wormspm(\wormspace \times [0,1] )$:
\begin{equation}
    \label{eq:compatibility_of_local_image}
    \firstlastvis_K (\omega)=\firstlastvis_K (  \firstlastvis_{K'} (\omega)  ).
\end{equation}

\begin{claim}[Reconstruction from local images]\label{claim_reconstruction}
Any $ \omega  \in  \wormspm(\wormspace \times [0,1])$ can be uniquely reconstructed if we know $ \firstlastvis_K (\omega)$ for all $K \subset \subset V$.
\end{claim}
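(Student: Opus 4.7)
The plan is to exhibit an explicit reconstruction of $\omega$ from the family $(\firstlastvis_K(\omega))_{K \fsubset V}$. First I would fix an exhausting increasing sequence $K_1 \subset K_2 \subset \cdots$ of finite subsets of $V$ with $\bigcup_n K_n = V$ (for example balls around a fixed origin). Since $\omega \in \wormspm(\wormspace \times [0,1])$ satisfies $\omega(\wormspace(\{x\}) \times [0,1]) < \infty$ for every $x \in V$ and $K_n$ is finite, the local image $\firstlastvis_{K_n}(\omega)$ is a \emph{finite} point measure on $\fwormspace(K_n) \times [0,1]$ for each $n$. By the compatibility relation \eqref{eq:compatibility_of_local_image}, the sequence $(\firstlastvis_{K_n}(\omega))_{n \in \N}$ is coherent under the projections $\firstlastvis_{K_n}$, and knowing it is equivalent to knowing $(\firstlastvis_K(\omega))_{K \fsubset V}$.

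Next I would match atoms across the sequence. Every atom $(\lozengeworm, t)$ of $\omega$ hits $K_n$ for every $n \geq N(\lozengeworm) := \min\{m : \lozengeworm \in \wormspace(K_m)\}$ and so contributes the atom $(\firstlastvis_{K_n}(\lozengeworm), t)$ to $\firstlastvis_{K_n}(\omega)$. For every pair $(\lozengeworm, t) \in \wormspace \times [0,1]$ and every $n \geq N(\lozengeworm)$, define
\begin{equation*}
    m_n(\lozengeworm, t) := \firstlastvis_{K_n}(\omega)\bigl(\{(\firstlastvis_{K_n}(\lozengeworm), t)\}\bigr),
\end{equation*}
which counts atoms $(\lozengeworm', t)$ of $\omega$ sharing the $K_n$-local image of $\lozengeworm$. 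This is a non-increasing sequence of nonnegative integers in $n$ (the equivalence ``same $K_n$-local image'' becomes finer as $K_n$ grows), hence stabilizes to some $m(\lozengeworm, t)$; a short argument identifies $m(\lozengeworm, t) = \omega(\{(\lozengeworm, t)\})$. This yields the explicit reconstruction
\begin{equation*}
    \omega = \sum_{(\lozengeworm, t) \in \wormspace \times [0,1]} m(\lozengeworm, t)\, \delta_{(\lozengeworm, t)},
\end{equation*}
and uniqueness is immediate since the right-hand side is an explicit function of $(\firstlastvis_{K_n}(\omega))_{n \in \N}$.

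The main technical obstacle I anticipate is the stabilization claim $m_n(\lozengeworm, t) = \omega(\{(\lozengeworm, t)\})$ for all sufficiently large $n$, which is equivalent to the statement that distinct elements of $\wormspace$ have distinct coherent sequences of $K_n$-local images. The subtlety is that the time indexing of $\firstlastvis_{K_n}(\lozengeworm)$ is reset to start at $0$, so the ``absolute time'' in $\lozengeworm$ (relevant for the $W_+$, $W_-$, $W^*$ components of $\wormspace$) must be recovered via the offsets induced by the identity $\firstlastvis_{K_n} \circ \firstlastvis_{K_{n+1}} = \firstlastvis_{K_n}$. Once this bookkeeping is set up, two distinct $\lozengeworm, \lozengeworm' \in \wormspace$ can be discriminated in finitely many steps: by the growth pattern of $|\firstlastvis_{K_n}(\lozengeworm)|$ one reads off which of the four constituents of $\wormspace$ the trajectory lies in (bounded for $\fwormspace$; growth on both sides for $W^*$; growth on one side with the fixed end marking the origin of time for $W_\pm$), and, within a given constituent, by picking $n$ large enough so that the symmetric differences between the supports of $\lozengeworm$ and $\lozengeworm'$ lie inside $K_n$.
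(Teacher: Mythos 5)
Your proposal is correct and takes essentially the same route as the paper's proof: reconstruct along an exhaustion $K_1 \subseteq K_2 \subseteq \cdots$ of $V$ and treat the four constituents of $\wormspace$ separately, recovering the time parametrization for $\fwormspace$, $W_+$, $W_-$ (via the fixed endpoint at index zero, read off from the offsets/growth pattern of the local images) and only the shift-equivalence class for $W^*$. Your explicit atom-counting and stabilization step merely spells out the reduction from point measures to single trajectories that the paper compresses into ``it is enough to show''.
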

\begin{proof} It is enough to show that if $\lozengeworm \in \wormspace$ then
$\lozengeworm$ can be uniquely reconstructed by looking at $ \firstlastvis_K (\lozengeworm)$ for all $K \subset \subset V$. More specifically, we will consider an exhaustion of $V$, i.e., an increasing sequence $K_1 \subseteq K_2 \subseteq \dots$ of finite subsets of $V$ such that $\cup_{k=1}^{\infty} K_k =V$, and reconstruct $\lozengeworm$ from $\firstlastvis_{K_1}(\lozengeworm), \firstlastvis_{K_2}(\lozengeworm), \dots$.

 Recalling the definition of $\wormspace$ from \eqref{wormspaces}, it suffices to verify our reconstruction claim
 for elements of $\fwormspace$,    $W_+$,  $W_-$ and  $W^*$ separately.
In all of these cases, it is straightforward to reconstruct $\lozengeworm$ up to time-shift equivalence (cf.\ Definition \ref{time_shift_equivalence}). If  $\lozengeworm \in  \fwormspace \cup W_+ \cup W_-$ then the time-parametrization is determined since either the starting point (if $\lozengeworm \in  \fwormspace \cup W_+$) or the endpoint (if $\lozengeworm \in  W_-$) of the trajectory is indexed by zero. On the other hand, if $\lozengeworm \in  W^*$ then it is enough to reconstruct $\lozengeworm$ up to time-shift equivalence.
\end{proof}

\begin{definition}[Point measure of  labels of copies of a trajectory in the local image]\label{def_aux_meas}
For any $K \subset \subset V$, $w \in \fwormspace(K)$  and $\omega \in \wormspm(\wormspace \times [0,1])$ let us denote by $ \firstlastvis_K^{w} (\omega)$ the   point measure on $[0,1]$ defined by
\begin{equation}
    \firstlastvis_K^{w} (\omega)(A):=   \firstlastvis_K (\omega)(\{w\} \times A), \qquad A \subseteq [0,1].
\end{equation}
\end{definition}
\noindent
The total mass $|\firstlastvis_K^{w}(\omega)|$ of the measure $\firstlastvis_K^{w}(\omega)$  is equal to the number of labeled copies of $w$ in $\firstlastvis_K (\omega)$ with any label. Note that this number is finite by Definition \ref{def_space_of_worm_point_measures}. If $|\firstlastvis_K^{w}(\omega)|=k$ then $\firstlastvis_K^{w}(\omega)$ can be viewed as the multiset of labels of the $k$ copies of $w$ in $\firstlastvis_K (\omega)$.

\begin{definition}[Local pseudometric on the space of point measures]
\label{def:local_metric}
Let $K \subset \subset V$. Let us define the pseudometric $\mathrm{d}_{K}(\cdot,\cdot)$ on
 $\wormspm(\wormspace \times [0,1])$ as follows.  Let $\omega, \omega' \in \wormspm(\wormspace \times [0,1] )$.
 If there exists a $w \in \fwormspace(K)$ for which $|\firstlastvis_K^{w}(\omega)| \neq |\firstlastvis_K^{w}(\omega')|$ then we define
 $\mathrm{d}_{K}(\omega,\omega'):=1$. On the other hand, if
 $|\firstlastvis_K^{w}(\omega)| = |\firstlastvis_K^{w}(\omega')|$ for every $w \in \fwormspace(K)$,
 let us define
 \begin{equation}\label{wass}
   \mathrm{d}_{K}(\omega,\omega'):= \max_{\substack{w \in \fwormspace(K) \\ |\firstlastvis_K^{w}(\omega)|\neq 0 } } 
    \mathrm{d}_{\mathrm{Wass}}\left( \frac{\firstlastvis_K^{w}(\omega)}{|\firstlastvis_K^{w}(\omega)|}, \frac{\firstlastvis_K^{w}(\omega')}{|\firstlastvis_K^{w}(\omega')|} \right),
 \end{equation}
where $ \mathrm{d}_{\mathrm{Wass}}(\cdot,\cdot)$ denotes the $1^{\mathrm{st}}$ Wasserstein distance (also known as earth mover's distance) of probability measures on the real line.
\end{definition}
The reason we can write $\max$ instead of $\sup$ in \eqref{wass} is that by Definition \ref{def_space_of_worm_point_measures}, for any $\omega, \omega' \in \wormspm(\wormspace \times [0,1])$ there are only finitely many trajectories $w \in \fwormspace(K)$ for which either $ |\firstlastvis_K^{w}(\omega)|\neq 0$ or $|\firstlastvis_K^{w}(\omega')|\neq 0 $.
 Note that the expression on the r.h.s.\ of \eqref{wass} is at most $1$, since both probability measures are supported on $[0,1]$. One can easily check that $\mathrm{d}_{K}(\cdot,\cdot)$ is
indeed a pseudometric on $\wormspm(\wormspace \times [0,1])$.

\begin{definition}[Locally Cauchy sequences of point measures]
    \label{def:local_conv}
    Let $\omega_n \in \wormspm(\wormspace \times [0,1]), n \in \mathbb{N}$. We say that the sequence $(\omega_n)_{n \in \mathbb{N}}$ is locally Cauchy if for every $K \fsubset V$ the sequence
    $(\omega_n)_{n \in \mathbb{N}}$ is Cauchy with respect to the pseudometric $d_K(\cdot,\cdot)$.
\end{definition}
Note that by Definition \ref{def:local_metric} the sequence 
$(\omega_n)_{n \in \mathbb{N}}$ is locally Cauchy if and only if for every $K \fsubset V$
there exists an $n_0\in \mathbb{N}_+$ such that for every $w \in \fwormspace(K)$ the number $|\firstlastvis_K^{w}(\omega_n)|$   stays constant if $n \geq n_0$, moreover for any $w \in \fwormspace(K)$ the sequence of measures $\firstlastvis_K^{w} (\omega_n), n \in \mathbb{N}$ weakly converges (since the topology of weak convergence  and the topology induced by the $1^{\mathrm{st}}$ Wasserstein metric are equivalent if we only consider probability measures supported on $[0,1]$).

\begin{claim}[Completeness]\label{claim_completeness}
    If the sequence of point measures $\omega_n \in  \wormspm(\wormspace \times [0,1])$, $n \in \mathbb{N}$ is locally Cauchy then there exists	$\omega \in \wormspm(\wormspace \times [0,1])$ such that for every $K \fsubset V$ we have $\lim_{n \to \infty}\mathrm{d}_{K}(\omega_n,\omega)=0$.
\end{claim}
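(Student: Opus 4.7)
The plan is to construct the limit $\omega$ by first identifying its local images on each finite set $K \fsubset V$ and then assembling the point measure $\omega$ itself from the resulting compatible family of local images. The uniqueness aspect of Claim \ref{claim_reconstruction} justifies the strategy, while its existence counterpart will need to be established along the way.

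First, I would fix $K \fsubset V$ and use the locally Cauchy property to construct a candidate $\omega^{(K)}$ for $\firstlastvis_K(\omega)$. By Definition \ref{def:local_metric} there exists $n_0 = n_0(K) \in \N$ such that for all $n \geq n_0$ and every $w \in \fwormspace(K)$ the quantity $|\firstlastvis_K^w(\omega_n)|$ equals a fixed value $k(w,K) \in \N$; since each $\omega_n$ is locally finite, $k(w,K) > 0$ for only finitely many $w$. For each such $w$ the normalized measure $\firstlastvis_K^w(\omega_n)/k(w,K)$ is an empirical distribution of $k(w,K)$ points in $[0,1]$, and by the locally Cauchy property it converges in the $1^{\mathrm{st}}$ Wasserstein distance. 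Since the set of empirical measures of exactly $k$ points in $[0,1]$ is the continuous image of the compact set $[0,1]^k$ under $(t_1, \ldots, t_k) \mapsto \tfrac{1}{k} \sum_i \delta_{t_i}$, it is closed in the Wasserstein topology, so the limit is itself an empirical measure of $k(w,K)$ points. Collecting these limits yields
\[
\omega^{(K)} := \sum_{w \in \fwormspace(K) \, : \, k(w,K) > 0} \sum_{i=1}^{k(w,K)} \delta_{(w, s_i(w,K))}
\]
for suitable $s_i(w,K) \in [0,1]$, and by construction $\lim_n \mathrm{d}_K(\omega_n, \omega^{(K)}) = 0$. The compatibility $\firstlastvis_K(\omega^{(K')}) = \omega^{(K)}$ for $K \subseteq K' \fsubset V$ then follows by taking limits in \eqref{eq:compatibility_of_local_image} applied to each $\omega_n$.

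Next, I would assemble $\omega \in \wormspm(\wormspace \times [0,1])$ from the compatible family $(\omega^{(K)})_{K \fsubset V}$. Fixing an exhaustion $K_1 \subseteq K_2 \subseteq \cdots$ of $V$, each labeled trajectory $(w', t') \in \omega^{(K_{j+1})}$ whose underlying trajectory hits $K_j$ projects to $(\firstlastvis_{K_j}(w'), t') \in \omega^{(K_j)}$, and this projection realizes $\omega^{(K_j)}$ as the image multiset. After fixing a consistent rule for matching copies whenever multiplicities exceed one, each maximal chain of matched labeled trajectories across the exhaustion determines, exactly as in the proof of Claim \ref{claim_reconstruction}, a unique labeled element of $\wormspace \times [0,1]$; I would take $\omega$ to be the point measure of all such elements with the corresponding multiplicities. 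Its local finiteness is immediate from that of each $\omega^{(\{x\})}$, and by construction $\firstlastvis_K(\omega) = \omega^{(K)}$ for every $K \fsubset V$, which combined with the first step yields the desired conclusion $\lim_n \mathrm{d}_K(\omega_n, \omega) = 0$.

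I expect the main obstacle to be the bookkeeping in this assembly step rather than the metric limiting argument itself. When the multiplicity $k(w, K_j)$ exceeds $1$ and several distinct labeled trajectories in $\omega^{(K_{j+1})}$ project down to the same labeled trajectory in $\omega^{(K_j)}$, the pairing across levels must be made so that the resulting chains are unambiguous; any fixed combinatorial convention (e.g.\ a well-ordering of $\fwormspace \times [0,1]$) suffices, but writing it down cleanly requires some care.
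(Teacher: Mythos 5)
Your proposal is correct and follows essentially the same route as the paper's proof: take weak (Wasserstein) limits of the local images $\firstlastvis_K^{w}(\omega_n)$ to obtain a compatible family of limit local images, then reconstruct $\omega$ from this family as in Claim \ref{claim_reconstruction}. You merely make explicit two points the paper leaves implicit (that the Wasserstein limit of normalized point measures with $k$ atoms is again such a measure, and the inverse-limit bookkeeping when multiplicities exceed one), so no further changes are needed.
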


\begin{proof} Let us assume that $(\omega_n)_{n \in \mathbb{N}}$ is locally Cauchy.
For any $K \fsubset V$ and $w \in \fwormspace(K)$ let us denote by  $\widetilde{\firstlastvis}_K^{w}$ the weak limit of $\firstlastvis_K^{w} (\omega_n)$. Let us define
$\widetilde{\firstlastvis}_K \in \wormspm(\fwormspace(K) \times [0,1]) $ by letting 
\[ \widetilde{\firstlastvis}_K(\{ w \} \times A ):=\widetilde{\firstlastvis}_K^{w}(A), \qquad w \in \fwormspace(K), \qquad A \subseteq [0,1].\] 
Observe that  \eqref{eq:compatibility_of_local_image} holds for $\omega_n$ for each $n$, thus for any
     $K \subseteq K' \subset \subset V$  the  compatibility relation
      $\widetilde{\firstlastvis}_K=\firstlastvis_K (  \widetilde{\firstlastvis}_{K'})$ also holds. Similarly to Claim \ref{claim_reconstruction}, it is easy to check that
      this  implies that there exists a unique $\omega \in \wormspm(\wormspace \times [0,1])$
      such that $\widetilde{\firstlastvis}_K=\firstlastvis_K(\omega)$ for all $K$.
\end{proof}

\begin{claim}[Polish space]
    There is a way to equip $\wormspm(\wormspace \times [0,1])$ with a metric $\widetilde{\mathrm{d}}(\cdot,\cdot)$  so that the resulting metric space is (i) complete and separable, moreover (ii) it is a metrization of the convergence introduced in Definition \ref{def:local_conv}.
\end{claim}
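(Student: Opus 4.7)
The plan is to define $\widetilde{\mathrm{d}}$ as a weighted sum of the local pseudometrics $\mathrm{d}_{K_n}$ along an exhaustion of $V$, and then verify separately that $\widetilde{\mathrm{d}}$ is a genuine metric, that it metrizes local convergence, that the resulting space is complete, and that it is separable. Since $V$ is countable, fix any exhaustion $K_1 \subseteq K_2 \subseteq \cdots$ of $V$ with $\bigcup_n K_n = V$ (e.g.\ balls of radius $n$ around $o$), and set
\[
\widetilde{\mathrm{d}}(\omega,\omega') := \sum_{n=1}^\infty 2^{-n}\,\mathrm{d}_{K_n}(\omega,\omega').
\]
Since each $\mathrm{d}_{K_n}$ is bounded by $1$, the series converges and inherits symmetry and the triangle inequality from the pseudometrics. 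Non-degeneracy is the only subtle metric axiom: $\widetilde{\mathrm{d}}(\omega,\omega')=0$ forces $\mathrm{d}_{K_n}(\omega,\omega')=0$ for every $n$, so $\firstlastvis_{K_n}(\omega)=\firstlastvis_{K_n}(\omega')$ for all $n$, and Claim~\ref{claim_reconstruction} applied to the exhaustion $(K_n)$ yields $\omega=\omega'$.

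To show that $\widetilde{\mathrm{d}}$ metrizes the convergence of Definition~\ref{def:local_conv}, one direction is immediate: $\widetilde{\mathrm{d}}(\omega_n,\omega)\to 0$ forces $\mathrm{d}_{K_m}(\omega_n,\omega)\to 0$ for each $m$, and for a general $K\fsubset V$ one picks $m$ with $K\subseteq K_m$ and uses the compatibility relation \eqref{eq:compatibility_of_local_image} to deduce $\mathrm{d}_K(\omega_n,\omega)\to 0$. Concretely, once $n$ is large enough that the atoms of $\firstlastvis_{K_m}(\omega_n)$ and $\firstlastvis_{K_m}(\omega)$ match in count (trajectory by trajectory) and their label-empirical measures are close in Wasserstein distance, pushing forward under the deterministic map $\firstlastvis_K$ — which drops the trajectories not meeting $K$ and relabels the rest by the fixed function $\firstlastvis_K$ without touching the labels — preserves both matching and Wasserstein-closeness for each $w\in\fwormspace(K)$. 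The converse direction is the standard tail estimate: given $\varepsilon>0$, choose $m$ with $\sum_{k>m}2^{-k}<\varepsilon/2$ and use local convergence on $K_1,\dots,K_m$ to make the truncated sum less than $\varepsilon/2$ for $n$ large.

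Completeness then falls out of Claim~\ref{claim_completeness}: any $\widetilde{\mathrm{d}}$-Cauchy sequence is $\mathrm{d}_{K_m}$-Cauchy for every $m$, hence locally Cauchy in the sense of Definition~\ref{def:local_conv}, hence convergent to some $\omega\in\wormspm(\wormspace\times[0,1])$ in every $\mathrm{d}_{K_m}$, which by the metrization argument is $\widetilde{\mathrm{d}}$-convergence. For separability, take the countable set
\[
\mathcal{D}:=\Bigl\{\sum_{i=1}^N \delta_{(w_i,q_i)}\ :\ N\in\N,\ w_i\in\fwormspace,\ q_i\in\mathbb{Q}\cap[0,1]\Bigr\},
\]
which is countable because $\fwormspace=\bigcup_{T}W_T\subseteq\bigcup_T V^T$. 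Given $\omega\in\wormspm(\wormspace\times[0,1])$ and $\varepsilon>0$, pick $m$ with $\sum_{k>m}2^{-k}<\varepsilon/2$, write the (finite) local image $\firstlastvis_{K_m}(\omega)=\sum_{i=1}^N\delta_{(w_i,t_i)}$ with $w_i\in\fwormspace(K_m)$, approximate each $t_i$ by a rational $q_i$ within $\varepsilon/2$, and define $\widetilde\omega:=\sum_{i=1}^N\delta_{(w_i,q_i)}\in\mathcal{D}$; the compatibility \eqref{eq:compatibility_of_local_image} then gives $\mathrm{d}_{K_k}(\omega,\widetilde\omega)\leq\varepsilon/2$ for all $k\leq m$, so $\widetilde{\mathrm{d}}(\omega,\widetilde\omega)<\varepsilon$.

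The only place where genuine care is needed is the implication $\mathrm{d}_{K_m}(\omega_n,\omega)\to 0\Rightarrow \mathrm{d}_K(\omega_n,\omega)\to 0$ for $K\subseteq K_m$; the rest is bookkeeping. The potential pitfall is that $\firstlastvis_K$ can merge distinct trajectories of $\fwormspace(K_m)$ into the same $w\in\fwormspace(K)$, and one must check that the aggregated per-class empirical measures still converge. This is routine because a convergent sequence of finite empirical measures on $[0,1]$ of equal mass remains convergent after grouping atoms according to any deterministic classification of their pre-images.
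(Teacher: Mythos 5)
Your construction is essentially the paper's: the paper defines $\widetilde{\mathrm{d}}$ as a convergent weighted sum of the pseudometrics $\mathrm{d}_K$ (summing over all $K \fsubset V$ rather than an exhaustion) and leaves the verification as a standard exercise, which your write-up correctly carries out. The details you supply — non-degeneracy via the reconstruction claim, the reduction from $\mathrm{d}_{K_m}$ to $\mathrm{d}_K$ for $K \subseteq K_m$ via convexity of the Wasserstein distance, completeness via Claim \ref{claim_completeness}, and the rational-label dense set — are correct and match the intended argument.
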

\begin{proof} Let us choose $b_K \in (0,+\infty)$ for each
$K \fsubset V$ so that $\sum_{K \fsubset V } b_K <+\infty$. It is a standard exercise to see that if we define $\widetilde{\mathrm{d}}(\omega,\omega'):= \sum_{K \fsubset V} b_K \cdot \mathrm{d}_K(\omega,\omega')$ then $\widetilde{\mathrm{d}}(\cdot,\cdot)$  is indeed a metric on $\wormspm(\wormspace \times [0,1])$ that satisfies properties (i) and (ii).
\end{proof}

Now that we established a notion of convergence on the space $\wormspm(\wormspace \times [0,1])$, we can talk about weak convergence of probability measures on $\wormspm(\wormspace \times [0,1])$, or more precisely the weak convergence of the law of the finite-length random interlacements process $\mathcal{Q}_{T}$
       (cf.\ Definition \ref{def_PPP_of_labelled_worms}) to the law of the random interlacements process $\mathcal{Q}_\infty$  (cf.\ Definition \ref{def:RI_PP}) as $T \to \infty$.

The idea of the proof the next lemma is similar to the proof of \cite[Theorem A.2]{B19}, \cite[Proposition 3.3]{H18}, or the proof in  \cite[Theorem 3.1]{DRS14}. However, none of these results imply the result of our next lemma, so we include its proof for completeness.
\begin{lemma}[Convergence in law]
    \label{lemma:worms_converge_in_distribution} The sequence of probability measures
     $\mathcal{Q}_{T}, T \in \mathbb{N}$
        weakly converges to  $\mathcal{Q}_\infty$   w.r.t.\ the notion of local convergence introduced in Definition \ref{def:local_conv} as $T \to \infty$.
\end{lemma}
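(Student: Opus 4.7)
The plan is to reduce the claimed weak convergence on $\wormspm(\wormspace \times [0,1])$ to weak convergence of the ``localized'' point processes $\firstlastvis_K(\mathcal{Z}^T)$ on the simpler space $\wormspm(\fwormspace(K) \times [0,1])$ for each finite $K \fsubset V$, and then exploit the Poisson structure directly on that simpler space.

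Because the topology on $\wormspm(\wormspace \times [0,1])$ is generated by the continuous localization maps $\firstlastvis_K$, $K \fsubset V$ (through the compatibility relation \eqref{eq:compatibility_of_local_image} and the metric $\widetilde{\mathrm{d}}$), a standard Stone--Weierstrass / projective-limit argument, together with the tightness of $\mathcal{Q}_T$ that follows from the uniform bound on total intensity derived below, reduces the problem to showing, for every fixed $K \fsubset V$, that $\firstlastvis_K(\mathcal{Z}^T)$ converges in distribution to $\firstlastvis_K(\mathcal{Z}_\infty)$ as $T \to \infty$. Here $\mathcal{Z}^T \sim \mathcal{Q}_T$ and $\mathcal{Z}_\infty \sim \mathcal{Q}_\infty$. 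By the mapping property of PPPs both of these are Poisson point processes on $\fwormspace(K) \times [0,1]$, with respective intensity measures of the product form $I_T \times \lambda\ind[[0,1]]$ and $I_\infty \times \lambda\ind[[0,1]]$ for some measures $I_T, I_\infty$ supported on the countable set $\fwormspace(K)$. Hence the problem reduces further to checking $I_T(\{w\}) \to I_\infty(\{w\})$ for each $w \in \fwormspace(K)$, together with a uniform bound on total mass.

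Fix $w \in \fwormspace(K)$ of length $\ell = |w|$. Using \eqref{eq:compatibility_of_RI_intensity_measure}, \eqref{def:Q_k_measures} and decomposing a doubly-infinite trajectory under $P^\pm_{w(0)}$ into its backward extension, its $w$-segment, and its forward extension (which are independent by reversibility and the strong Markov property) gives
\[ I_\infty(\{w\}) = d^{1-\ell}\, e_K(w(0))\, e_K(w(\ell-1)). \]
For the finite-length version, partitioning the trajectories $\widetilde{w} \in W_T$ with $\firstlastvis_K(\widetilde{w}) = w$ according to the first-entrance position $i = H_K(\widetilde{w}) \in \{0,\ldots, T-\ell\}$, and enumerating admissible prefixes of length $i$ and admissible suffixes of length $T-i-\ell$ that avoid $K$ (counts $d^i\, e_K^i(w(0))$ and $d^{T-i-\ell}\, e_K^{T-i-\ell}(w(\ell-1))$ respectively, via \eqref{equilib_s}), and multiplying by the per-trajectory weight $\frac{1}{T}\,d^{1-T}$ from \eqref{intensity_worms_PPP_labelled}, yields
\[ I_T(\{w\}) = \frac{d^{1-\ell}}{T} \sum_{i=0}^{T-\ell} e_K^i(w(0))\, e_K^{T-i-\ell}(w(\ell-1)). \]
Combining the monotone convergence \eqref{equlib_s_conv} with a Cesaro-type argument for convolutions of bounded convergent sequences gives $I_T(\{w\}) \to I_\infty(\{w\})$ as $T \to \infty$. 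Corollary \ref{corollary:prob_visit_KTv} provides the uniform bound $\sum_w I_T(\{w\}) \le |K|$; with pointwise convergence on the countable set $\fwormspace(K)$ and bounded total intensity, a standard Laplace-functional computation (which factorizes over $w$ by independence of the PPP on disjoint fibers) yields the desired weak convergence of $\firstlastvis_K(\mathcal{Z}^T)$ to $\firstlastvis_K(\mathcal{Z}_\infty)$.

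The main obstacle lies in the combinatorial counting producing the explicit formula for $I_T(\{w\})$: one must carefully enforce that $\widetilde{w}$ avoids $K$ at every position outside its $w$-segment (so that $e_K^s$, rather than ordinary random-walk survival probabilities, is the natural building block), and must treat the boundary cases (e.g.\ $\ell = 1$, or the endpoints of the summation range, where $e_K^0 \equiv 1$) correctly. Once the intensity formula is in place, the Cesaro passage to the limit and the projective-limit assembly of local convergences into weak convergence of $\mathcal{Q}_T$ to $\mathcal{Q}_\infty$ are routine.
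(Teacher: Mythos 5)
Your localization step and your intensity computation coincide with the paper's: the paper likewise reduces to the local images $\firstlastvis_K$ and derives exactly your formulas $a_\infty(w)=d^{1-\ell}e_K(w(0))e_K(w(\ell-1))$ and $a_T(w)=\frac{d^{1-\ell}}{T}\sum_{s=0}^{T-\ell}e_K^s(w(0))\,e_K^{T-\ell-s}(w(\ell-1))$, together with the pointwise convergence $a_T(w)\to a_\infty(w)$ via \eqref{equlib_s_conv}. The difference is in how the convergence of the localized Poisson processes is concluded, and this is where your argument has a genuine gap: pointwise convergence of the fiber intensities on the countable set $\fwormspace(K)$ \emph{plus a uniform bound} on the total intensity does not imply weak convergence with respect to the topology of Definition \ref{def:local_conv}. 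The functional ``number of trajectories in the local image at $K$'' (capped at a constant) is bounded and continuous for $\mathrm{d}_K$, so weak convergence forces the total intensities to converge; but mass could in principle escape along trajectories $w\in\fwormspace(K)$ of diverging length. Concretely, if $I_T=\delta_{w_T}$ for a sequence of distinct $w_T\in\fwormspace(K)$, then $I_T(\{w\})\to 0$ for every fixed $w$ and the total mass is bounded by $1$, yet the associated PPPs have a $\mathrm{POI}(1)$ number of trajectories hitting $K$ for every $T$ and do not converge to the empty process. So your ``standard Laplace-functional computation'' is not valid as stated: it controls test functions supported on finitely many fibers, but not the continuous functionals that see the total count.

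The missing ingredient is precisely the extra verification the paper makes before concluding: the total intensities converge,
\begin{equation*}
\sum_{w} a_T(w)=\frac{1}{T}\sum_{x\in K}\sum_{s=0}^{T-1}e_K^s(x)\;\longrightarrow\;\mathrm{cap}(K)=\sum_{w} a_\infty(w),\qquad T\to\infty,
\end{equation*}
which follows from the same Ces\`aro argument you already use for the pointwise limits. With pointwise convergence and convergence of total masses, Scheff\'e's lemma gives $\sum_w|a_T(w)-a_\infty(w)|\to 0$, and the paper then finishes with an explicit coupling in which the two local images agree except on an event of probability at most this $\ell^1$ distance (which also makes the assembly over all $K$, via the compatibility relation \eqref{eq:compatibility_of_local_image} and taking finite unions of the relevant $K$'s, immediate). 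If you add the total-mass computation and replace the Laplace-functional step by either Scheff\'e plus a coupling or by checking convergence of the count together with the fiberwise distributions, your proof becomes essentially the paper's. A minor additional point: Corollary \ref{corollary:prob_visit_KTv} bounds a hitting probability, not an expected number of hitting trajectories; the bound $\sum_w I_T(\{w\})\le|K|$ should instead be read off from the expectation estimate inside the proof of Lemma \ref{lemma_gen_pp_density_worm_hit_bound} (or directly from $e_K^s\le 1$).
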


\begin{proof} Let $\mathcal{Z}_T \sim \mathcal{Q}_{ T }$
and $\mathcal{Z}_\infty \sim \mathcal{Q}_\infty$.
By Definition \ref{def:local_conv} it is enough
 to show that for any $K \subset \subset V$  one can couple  $\firstlastvis_K(\mathcal{Z}_T) $ and  $\firstlastvis_K(\mathcal{Z}_\infty)$ in a way that
 \begin{equation}\label{coupling_high_prob_equal}
 \mathbb{P}\left(\firstlastvis_K(\mathcal{Z}_T) \neq \firstlastvis_K(\mathcal{Z}_\infty) \right) \to 0, \qquad T \to \infty.
 \end{equation}
 Let $W^{\circ}(K)$ denote the set of elements $w$ of $\fwormspace(K)$ that also satisfy $\initial(w),\terminal(w) \in K$. Note that for any
 $\lozengeworm \in \wormspace$ we have $\firstlastvis_K(\lozengeworm) \in W^{\circ}(K)$.

Note that both $\firstlastvis_K(\mathcal{Z}_T)$ and  $\firstlastvis_K(\mathcal{Z}_\infty)$ are Poisson point measures on $W^{\circ}(K) \times [0,1]$.
Let us denote by $\mu_T$ and $\mu_\infty$ their respective intensity measures. We have $\mu_T=a_T \times  \lambda \mathds{1}[\, [0,1]\, ]$ and $\mu_\infty=a_\infty \times  \lambda \mathds{1}[\, [0,1]\, ]$, where $a_T$ and $a_\infty$ are both measures on $W^{\circ}(K)$.
In order to describe these measures,
let us pick any $w \in W^{\circ}(K)$ and let us assume that $\initial(w)=x \in K$, $\terminal(w)=y\in K$ and $|w|=\ell \in \mathbb{N}_+$.  It follows from Definitions \ref{def_PPP_of_labelled_worms}, \ref{def:RI_PP}, \ref{projection_map_K} as well as  the Markov property and time-reversibility of simple random walk on $G$ that we have
\begin{align}
a_T(w)&:=	
		\frac{1}{T} \sum_{s=0}^{T-\ell} e_K^s(x) \cdot d^{1-\ell} \cdot e_K^{T-\ell-s}(y),  \\
	a_\infty(w)&:=	e_K(x) \cdot d^{1-\ell} \cdot e_K(y),
\end{align}
where $e_K(\cdot)$ and $e_K^s(\cdot)$ are defined in \eqref{def_eq_equlibrium_measure} and \eqref{equilib_s}, respectively.

Next we note that it follows from \eqref{equlib_s_conv} that  we have $\lim_{T \to \infty}a_T(w) =a_\infty(w)$ for any $w \in W^{\circ}(K) $,
moreover we also have
\begin{equation} \sum_{w \in W^{\circ}(K)} a_T(w)=  \frac{1}{T} \sum_{x \in K}
    \sum_{s=0}^{T-1} e_K^s(x) \to \mathrm{cap}(K)= \sum_{w \in W^{\circ}(K)} a_\infty(w), \qquad T \to \infty,
\end{equation}
where $\mathrm{cap}(K)$ is defined in \eqref{def_eq_equlibrium_measure}.
    From these observations and
 Scheff\'e's lemma we obtain
 \begin{equation}\label{sum_abs_diff_a_goes_to_zero}
 \lim_{T \to \infty} \sum_{w \in W^{\circ}(K)}|a_T(w)-a_\infty(w)| =0 .
 \end{equation}
\noindent
In order to prove \eqref{coupling_high_prob_equal}, we will construct a coupling of
$\firstlastvis_K(\mathcal{Z}_T) $ and  $\firstlastvis_K(\mathcal{Z}_\infty)$ that satisfies
 \begin{equation}\label{coupling_bound_a}
 \mathbb{P}\left(\firstlastvis_K(\mathcal{Z}_T) \neq \firstlastvis_K(\mathcal{Z}_\infty) \right)
 \leq \sum_{w \in W^{\circ}(K)}|a_T(w)-a_\infty(w)|.
 \end{equation}

	Let us define
	\begin{equation}
	W^\circ_1(K):= \{ w \in W^\circ(K) \, : \, a_T(w) <a_\infty(w) \}, \quad  W^\circ_2(K):=  W^\circ(K) \setminus W^\circ_1(K) .
	\end{equation}
Let us now define some Poisson point processes on  $W^\circ(K) \times [0,1]$.
	
\noindent 	Let $\mathcal{Z}^{w}_{\mathrm{min}}$ denote
	a PPP
	with intensity measure $(a_T(w)\wedge a_\infty(w))\mathds{1}[ \{ w \} ] \times \lambda \mathds{1}[\, [0,1]\, ] $,  $w \in 	W^\circ(K) $.

\noindent	Let $\mathcal{Z}^{w}_{\mathrm{diff}}$ denote
	a PPP
	with intensity measure $|a_\infty(w)-a_T(w)| \mathds{1}[ \{ w \} ] \times \lambda \mathds{1}[\, [0,1] \, ] $,  $w \in 	W^\circ(K) $.
	
\noindent Let us assume that all of these Poisson point processes are  independent.

\noindent Now let us define the Poisson point processes
\begin{equation}
    \tilde{Z}_T:= \sum_{w \in 	W^\circ(K) } \mathcal{Z}^{w}_{\mathrm{min}} + \sum_{w \in 	W^\circ_2(K) } \mathcal{Z}^{w}_{\mathrm{diff}}, \qquad
     \tilde{Z}_\infty:= \sum_{w \in 	W^\circ(K) } \mathcal{Z}^{w}_{\mathrm{min}} + \sum_{w \in 	W^\circ_1(K) } \mathcal{Z}^{w}_{\mathrm{diff}}.
\end{equation}

 By the above construction, the PPP $ \tilde{Z}_T$ has the same law as $\firstlastvis_K(\mathcal{Z}_T) $, while the PPP
 $ \tilde{Z}_\infty$ has the same law as $\firstlastvis_K(\mathcal{Z}_\infty)$. Moreover, we have
\begin{align*}
    \mathbb{P}\left(\firstlastvis_K(\mathcal{Z}_T) \neq \firstlastvis_K(\mathcal{Z}_\infty) \right) & =
    \mathbb{P}\left(\tilde{Z}_T \neq \tilde{Z}_\infty  \right) =
    \mathbb{P}\left( \sum_{w \in W^{\circ}(K)} |\mathcal{Z}^{w}_{\mathrm{diff}}| \neq 0 \right) \\ & \leq
    \sum_{w \in W^{\circ}(K)} \mathbb{P}( |\mathcal{Z}^{w}_{\mathrm{diff}}| \geq 1 ) \leq
	\sum_{w \in W^{\circ}(K)}|a_T(w)-a_\infty(w)|.
\end{align*}
This implies \eqref{coupling_bound_a}, which, together with \eqref{sum_abs_diff_a_goes_to_zero}, gives \eqref{coupling_high_prob_equal}. The proof of Lemma \ref{lemma:worms_converge_in_distribution} is complete.
\end{proof}

\section{Main result follows from coupling results}\label{section_main_follows_from_coupling}

In Section \ref{section_main_follows_from_coupling} we show that  Theorem \ref{main_thm_intro} follows from a variant (Theorem \ref{thm:main}) where the labels on the trajectories are restricted to $[0,1]$. We then deduce Theorem \ref{thm:main} from  Lemma \ref{lemma:labeled_coupling}, which states that we can couple a PPP with distribution  $\mathcal{Q}_{T}$ and a PPP with distribution $\mathcal{Q}_{2T}$ with small local error. We then deduce Lemma \ref{lemma:labeled_coupling} from Lemma \ref{lemma:coupling_lemma}, which states that we can couple a PPP with distribution  $\mathcal{P}_{v,T}$ and a PPP with distribution $\mathcal{P}_{v/2,2T}$ with small local error. The result stated in Lemma \ref{lemma:coupling_lemma} will be proved in Sections \ref{section_matching_softlocal} and \ref{section_mating_of_worms}.

\medskip 

 Recall from Definition \ref{def:RI_PP} the notion of
$\mathcal{Q}_\infty$. 
Recall how $\Gamma$ acts on $\wormspm(\wormspace \times \mathbb{R}_+)$ from Definition \ref{def_auto}. 
Recall that if $\underline{\eta} = ( \eta_{x} )_{x \in V} \in \Omega^V$ and $\varphi \in \Gamma$ then we denote
$\varphi (\underline{\eta}) = ( \eta_{ \varphi^{-1}(x) } )_{x \in V}$.

\begin{theorem}[Interlacement with restricted labels is a factor of i.i.d.]\label{thm:main} There exists a probability space $(\overline{\Omega}, \overline{\mathcal{A}}, \overline{\vartheta})$ and a measurable map $\Upsilon \, : \, \overline{\Omega}^V \rightarrow \wormspm ( W^* \times [0,1] )$  with the following properties.
\begin{enumerate}[(i)]
	\item\label{main_thm_i} If $\overline{\underline{\eta}}=( \overline{\eta}_x )_{x\in V}$ are i.i.d.\ with distribution $\overline{\vartheta}$ then $\Upsilon ( \overline{\underline{\eta}} )$ has law $\mathcal{Q}_\infty$.
	\item\label{main_thm_ii} For any graph automorphism $\varphi\in\Gamma$ we have $\Upsilon ( \varphi( \overline{\underline{\eta}} ) ) = \varphi\left( \Upsilon ( \overline{\underline{\eta}} ) \right)$.
\end{enumerate}
\end{theorem}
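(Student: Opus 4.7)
The plan is to realize $\Upsilon(\overline{\underline{\eta}})$ as an almost sure limit of finite-length interlacements $\mathcal{Z}_n$ with lengths $T_n = 2^n$, built jointly on a single product space so that (a) each $\mathcal{Z}_n$ is an equivariant factor of the i.i.d. input, and (b) the sequence is locally Cauchy almost surely with respect to the metric $\mathrm{d}_K(\cdot, \cdot)$ of Definition \ref{def:local_metric}. Given such a construction, Claim \ref{claim_completeness} produces a limit point measure $\Upsilon(\overline{\underline{\eta}}) \in \wormspm(\wormspace \times [0,1])$; equivariance of $\Upsilon$ is inherited from the equivariance of each finite stage, giving property \eqref{main_thm_ii}. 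Because $\mathcal{Z}_n$ has law $\mathcal{Q}_{T_n}$ and $\mathcal{Q}_{T} \Rightarrow \mathcal{Q}_\infty$ by Lemma \ref{lemma:worms_converge_in_distribution}, the almost sure convergence forces the limit law to be $\mathcal{Q}_\infty$, giving \eqref{main_thm_i}. In particular, since the limiting trajectories are doubly infinite, the limit is automatically supported on $W^* \times [0,1]$.

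For the single-scale construction, I would take $\overline{\eta}_x$ to be a countable collection of i.i.d.\ uniform random variables at each vertex, rich enough to encode: a separate family of uniforms $\underline{U}$ used to build the factor-of-i.i.d.\ atlas $\underline{\varphi}^{\underline{U}} = F(\underline{U})$ of Lemma \ref{lemma_atlas_factor_of_iid}; and, for each scale $n$, a $\mathrm{POI}(1/T_n)$ number of trajectory starts at $x$, their subsequent $T_n - 1$ random-walk increments, and attached $\mathrm{UNI}[0,1]$ labels. By Claims \ref{claim_worms_random_walks} and \ref{claim_uniform_labels}, this yields $\mathcal{Z}_n \sim \mathcal{Q}_{T_n}$. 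Equivariance is obtained by coding the random-walk increments through $\underline{\varphi}^{\underline{U}}$: at each vertex $y$, neighbours of $y$ are identified with neighbours of $o$ via $\underline{\varphi}^{\underline{U}}_y$, and a fresh uniform selects the next vertex. This is precisely the mechanism already used in the proof of Claim \ref{claim_QW_exists_then_fiid_triv}, adapted to the finite-length setting where the entire intensity $\nu_{v, T_n}$ of \eqref{intensity_worms_PPP} splits cleanly over vertices.

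The core new ingredient is to produce the scales jointly. Here Lemma \ref{lemma:labeled_coupling}, deduced from Lemma \ref{lemma:coupling_lemma} via the soft local time method in Sections \ref{section_matching_softlocal} and \ref{section_mating_of_worms}, supplies for each $n$ a coupling of $\mathcal{Z}_n \sim \mathcal{Q}_{T_n}$ and $\mathcal{Z}_{n+1} \sim \mathcal{Q}_{T_{n+1}}$ whose local images $\firstlastvis_K(\mathcal{Z}_n)$ and $\firstlastvis_K(\mathcal{Z}_{n+1})$ agree on $K \fsubset V$ outside an event whose probability is summable in $n$. I would assemble these pairwise couplings into a joint realization by consuming the i.i.d.\ input in disjoint blocks across scales: at scale $n$, extract the Poisson marks needed for $\mathcal{Z}_n$ and the additional randomness needed to match the trajectories of $\mathcal{Z}_n$ with a subset of those of $\mathcal{Z}_{n+1}$ and stitch the unmatched pieces together. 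Once this is done equivariantly, Borel--Cantelli applied to a countable exhaustion of $V$ by finite sets $K_k$ shows that $(\mathcal{Z}_n)_n$ is locally Cauchy almost surely, and Claim \ref{claim_completeness} delivers $\Upsilon(\overline{\underline{\eta}})$.

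The main obstacle, and where the real technical work lies, is making the soft local time matching between consecutive scales itself a factor of the shared i.i.d.\ input in a $\Gamma$-equivariant way: the matching has to be built from local randomness at each vertex without breaking invariance, while still guaranteeing the summable local-coupling error needed for the Borel--Cantelli step. This is exactly the content deferred to Sections \ref{section_matching_softlocal} and \ref{section_mating_of_worms}; modulo that, the argument above reduces Theorem \ref{thm:main} to Lemma \ref{lemma:coupling_lemma}.
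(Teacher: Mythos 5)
Your proposal follows essentially the same route as the paper: iterate the coupling map of Lemma \ref{lemma:labeled_coupling} along the scales $T_n=2^n$ (each new scale built from the previous process plus fresh i.i.d.\ randomness), use Borel--Cantelli and Claim \ref{claim_completeness} to obtain an almost sure, equivariant local limit, and identify its law as $\mathcal{Q}_\infty$ via Lemma \ref{lemma:worms_converge_in_distribution}. One minor correction: Lemma \ref{lemma:labeled_coupling} yields $\mathrm{d}_K(\mathcal{Z}_n,\mathcal{Z}_{n+1})\leq 1/m_n$ outside an event of summable probability, not exact agreement of the labeled local images, but this weaker conclusion is exactly what your locally-Cauchy argument requires.
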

\noindent
In words: the PPP on $W^* \times [0,1]$ with intensity measure $\nu \times \lambda \mathds{1} [ \, [0,1] \,  ]$ is a factor of i.i.d.\ 

Before we prove Theorem \ref{thm:main}, let us deduce Theorem \ref{main_thm_intro} from it.

\begin{proof}[Proof of Theorem \ref{main_thm_intro}] 
We want to show that the PPP on $W^* \times \mathbb{R}_+$ with intensity measure $\nu \times  \lambda$ (i.e., the random interlacement point process, cf.\ Definition \ref{def:RI_PP}) is a factor of i.i.d. For any $k\in \mathbb{N}_0$, let us define the map $\psi_k: \wormspm(W^* \times [0,1] ) \to \wormspm(W^* \times [k,k+1] ) $ by letting $\psi_k\left( \sum_{i \in I} \delta_{(w_i, t_i))}  \right):=  \sum_{i \in I} \delta_{(w_i, t_i+k))}$. Let $\mathcal{Z}^k, k\in \mathbb{N} $ denote i.i.d.\ point processes with distribution $\mathcal{Q}_\infty$, noting that $(\mathcal{Z}_k)_{k \in \mathbb{N}}$ can be jointly realized as a factor of i.i.d.\
by Theorem \ref{thm:main}. Let $\mathcal{Z}:= \sum_{k=0}^{\infty} \psi_k(\mathcal{Z}^k)$. We constructed $\mathcal{Z}$ as a factor of i.i.d.\ and $\mathcal{Z}$
 is a PPP on $W^* \times \mathbb{R}_+$ with intensity measure $\nu \times  \lambda$.
\end{proof}

\begin{remark}\label{remark_after_main_thm} $ $

\begin{enumerate}[(i)]
\item It is well-known (see for instance Corollary 3.3 of \cite{L17}) that the factor of i.i.d\ property is not necessarily inherited by a distributional limit, hence  Theorem \ref{thm:main} does not follow automatically from Lemma \ref{lemma:worms_converge_in_distribution} and the fact that  $\mathcal{Q}_{T}$ is a  factor of i.i.d.\ for each $T \in \mathbb{N}_+$.

\item In our proof of Theorem \ref{thm:main} we only construct the output of the function $\Upsilon$ for  $\otimes_{x \in V}\vartheta$-almost surely 
all elements of the input space $\Omega^V$. However, it is easy to see that if we define the  $\Upsilon$-value for the remaining elements of $\Omega^V$ to be the point measure on $ W^* \times [0,1] $ with zero total mass  then both statements \eqref{main_thm_i} and \eqref{main_thm_ii} of Theorem \ref{thm:main} remain valid.

\item \label{remark_cannot_choose_time_zero} Note that one possible way of creating a PPP $\omega$ on $W^* \times [0,1]$ with law $\mathcal{Q}_\infty$ is (a) to create a PPP $\omega'=\sum_{i\in I} \delta_{w^*_i}$
on $W^*$ with intensity measure $\nu$ and (b) conditional on $\omega'$, create   i.i.d.\ random variables $U_i, \, i \in I$ with $\mathrm{UNI}[0,1]$ distribution and then one obtains that the PPP $\sum_{i \in I} \delta_{(w^*_i,U_i)}$ has distribution  $\mathcal{Q}_\infty$. Note that even if we could create $\omega'$ as a factor of i.i.d.\ in a cheaper way than our  construction used in the proof of Theorem \ref{thm:main}, we do not know how to perform step (b) alone in a factor of i.i.d.\ fashion if $G$ is unimodular, since for each $i \in I$, the doubly infinite trajectory $w^*_i$ is only identified up to time shift equivalence, thus  we do not know how to assign a single vertex of $G$  to $w^*_i$ (where it can pick its $\mathrm{UNI}[0,1]$ label up) in an equivariant fashion. However,  if $G$ is non-unimodular then this can be done
using the tricks that we also used the proof of direction \eqref{unimod_B}$\implies$\eqref{unimod_A} of Proposition \ref{prop_QW_unimod}.
\end{enumerate}
\end{remark}

Our proof of Theorem \ref{thm:main} involves a construction of a PPP $\mathcal{Z}$ with law $\mathcal{Q}_\infty$ by coupling a sequence of finite-length interlacement point processes with increasing length in a way that they almost surely converge to $\mathcal{Z}$. Our next lemma provides
the coupling between consecutive elements of the sequence.
Recall the notion of $\mathcal{Q}_{T}$ from Definition \ref{def_PPP_of_labelled_worms} and the notion of $\mathrm{d}_K(\cdot,\cdot)$ from Definition \ref{def:local_metric}.

\begin{lemma}[Doubling the length of labeled finite-length interlacements] \label{lemma:labeled_coupling}
For any $T,m \in \N_+$  satisfying $m \leq \sqrt{T}$ there exists a probability space $(\hat{\Omega}, \hat{\mathcal{A}}, \hat{\vartheta})$ and a measurable map
\begin{equation}
		\label{eq:coupling_map_labels}
		 \hat{\Psi}_{T, m} \, : \,
		\wormspm(W_T \times [0,1] ) \times \hat{\Omega}^{V}
		\longrightarrow
		\wormspm(W_{2T} \times [0,1] )
	\end{equation}
satisfying the following properties.
\begin{enumerate}[(i)]
		\item\label{mating_correct_distribution_2T_labels} If $\mathcal{Z}_T \sim \mathcal{Q}_{T}$ and $\hat{\underline{\eta}}=  (\hat{\eta}_x)_{x \in V}$ are i.i.d\ with distribution $\hat{\vartheta}$ (moreover $\mathcal{Z}_T$ and $\hat{\underline{\eta}}$ are independent) then
		\begin{equation}\label{mating_identity_labels}
			\mathcal{Z}_{2T} :=
			\hat{\Psi}_{T, m} \left( \mathcal{Z}_T, \hat{\underline{\eta}} \right)
		\end{equation}
		has law $\mathcal{Z}_{2T} \sim \mathcal{Q}_{2T}$.
		\item\label{mating_equivariant_labels} For  any realization  of $(\mathcal{Z}_{T},\hat{\underline{\eta}})$ and any $\varphi \in \Gamma$ we have
		\begin{equation}
			\hat{\Psi}_{T,m} \left( \varphi(\mathcal{Z}_{T}), \varphi\left(  \hat{\underline{\eta}} \right) \right) = \varphi\left(\hat{\Psi}_{T,m} \left( \mathcal{Z}_{T}
			,  \hat{\underline{\eta}} \right)\right).
		\end{equation}
		\item\label{mating_error_bound_labels} Using the notation introduced in \eqref{mating_identity_labels}, there exists a constant $C \in \mathbb{R}_+$ that only depends on $G$ such that for any $K \subset \subset V$ we have
		\begin{equation}
		    \label{eq:coupling_error_labels}
			\prob \left( \mathrm{d}_K\left(
			 \mathcal{Z}_{T},  \mathcal{Z}_{2T}\right) > \frac{1}{m}
			\right)
			\leq
			C \cdot |K| \cdot T^{-1/7} \cdot m^{2/7}  +
			C \cdot |K|^2  \cdot  \frac{1}{\sqrt{T}}.
		\end{equation}
	\end{enumerate}
\end{lemma}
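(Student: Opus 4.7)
The plan is to reduce Lemma \ref{lemma:labeled_coupling} to its unlabeled counterpart Lemma \ref{lemma:coupling_lemma} by transferring labels through the coupling. By Claim \ref{claim_uniform_labels} we first decompose $\mathcal{Z}_T = \sum_{i\in I} \delta_{(w_i, U_i)}$ into $\mathcal{X}_T := \sum_i \delta_{w_i} \sim \mathcal{P}_{1/T, T}$ equipped with conditionally i.i.d.\ $\mathrm{UNI}[0,1]$ labels $(U_i)_{i \in I}$. After equivariantly splitting $\hat{\underline\eta}$ into two independent i.i.d.\ families $\hat\eta^{(1)}, \hat\eta^{(2)}$ (for instance by partitioning the coordinates of each $\hat\eta_x$), we apply Lemma \ref{lemma:coupling_lemma} with $v := 1/T$ and auxiliary randomness $\hat\eta^{(1)}$ to obtain $\mathcal{X}_{2T} := \Psi_{T,m}(\mathcal{X}_T, \hat\eta^{(1)}) \sim \mathcal{P}_{1/(2T), 2T}$ together with the associated unlabeled local closeness guarantee.

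Because $\Psi_{T,m}$ constructs $\mathcal{X}_{2T}$ by stitching matched pairs of trajectories from $\mathcal{X}_T$, each $v_j \in \mathcal{X}_{2T}$ has a canonical \emph{left parent} $w_{\ell(j)} \in \mathcal{X}_T$ with $\initialinterval_T(v_j) = w_{\ell(j)}$, where $\ell$ is a measurable function of $(\mathcal{X}_T, \hat\eta^{(1)})$ taking distinct values whenever defined. I set $V_j := U_{\ell(j)}$ when $\ell(j)$ is defined and sample $V_j$ canonically from $\hat\eta^{(2)}$ as a fresh $\mathrm{UNI}[0,1]$ variable otherwise, and define $\hat\Psi_{T,m}(\mathcal{Z}_T, \hat{\underline\eta}) := \sum_j \delta_{(v_j, V_j)}$. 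To verify \eqref{mating_correct_distribution_2T_labels}: since $\ell$ depends only on $(\mathcal{X}_T, \hat\eta^{(1)})$ and selects distinct indices of the family $(U_i)$, which is i.i.d.\ $\mathrm{UNI}[0,1]$ independent of $(\mathcal{X}_T, \hat\eta^{(1)})$, the labels $(V_j)$ are conditionally i.i.d.\ $\mathrm{UNI}[0,1]$ given $\mathcal{X}_{2T}$, and the analog of Claim \ref{claim_uniform_labels} at length $2T$ yields $\hat\Psi_{T,m}(\mathcal{Z}_T, \hat{\underline\eta}) \sim \mathcal{Q}_{2T}$. Property \eqref{mating_equivariant_labels} is inherited from the equivariance of $\Psi_{T,m}$ together with the canonical nature of the label-transfer rule.

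The main obstacle is the error bound \eqref{eq:coupling_error_labels}. I would introduce the \emph{good event} $G_K$ on which (i) the unlabeled coupling $\Psi_{T,m}$ produces $\firstlastvis_K$-agreement in the sense that $\ell$ restricts to a label-respecting bijection between the $K$-hitting trajectories of $\mathcal{X}_T$ and those of $\mathcal{X}_{2T}$, and (ii) no right-half sub-trajectory $\terminalinterval_T(v_j)$ of any matched pair visits $K$. On $G_K$ we have $\firstlastvis_K(v_j) = \firstlastvis_K(w_{\ell(j)})$ and $V_j = U_{\ell(j)}$ for every $K$-hitting $v_j$, so that the multisets of labels at each local trajectory $w^* \in \fwormspace(K)$ agree between $\firstlastvis_K(\mathcal{Z}_T)$ and $\firstlastvis_K(\mathcal{Z}_{2T})$, giving $\mathrm{d}_K(\mathcal{Z}_T, \mathcal{Z}_{2T}) = 0 \leq 1/m$. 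The failure of (i) is absorbed into the unlabeled error of Lemma \ref{lemma:coupling_lemma}, producing the $|K|\, T^{-1/7} m^{2/7}$ term. The failure of (ii) is the main new source of error introduced by the label transfer; it is controlled by applying Corollary \ref{corollary:prob_visit_KTv} to the right-half point process (which by Claim \ref{claim:same_law_after_RW_shifts} is essentially $\mathcal{P}_{1/(2T), T}$-distributed conditionally on the left halves), refined using the heat-kernel estimate Lemma \ref{lemma:heat_kernel_upper}, to yield the $|K|^2 T^{-1/2}$ term. The hardest part will be extracting the parent map $\ell$ concretely from the construction of $\Psi_{T,m}$ in Section \ref{section_matching_softlocal} and showing that these two error contributions combine cleanly.
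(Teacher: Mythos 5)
Your reduction to Lemma \ref{lemma:coupling_lemma} via a ``left parent'' label transfer gets parts \eqref{mating_correct_distribution_2T_labels} and \eqref{mating_equivariant_labels} (the conditional-i.i.d.-labels argument and equivariance are fine), but the error bound \eqref{mating_error_bound_labels} breaks down, and this is exactly where your route diverges fatally from what is needed. Under your rule, a glued trajectory $v_j$ inherits the label of its \emph{left} half only. But on the good event of Lemma \ref{lemma:coupling_lemma} the local image $\firstlastvis_K(\mathcal{X}_{2T})$ typically contains local trajectories that arise from the \emph{second} halves of glued trajectories (i.e.\ from trajectories of $\mathcal{X}_2$): such a local trajectory carries label $U_{i'}$ in $\mathcal{Z}_T$ but the unrelated label $U_{\ell(j)}$ of the left parent in your $\mathcal{Z}_{2T}$. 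Two independent $\mathrm{UNI}[0,1]$ labels differ by more than $1/m$ with probability about $1-1/m$, so whenever a second half hits $K$ you get $\mathrm{d}_K(\mathcal{Z}_T,\mathcal{Z}_{2T})>1/m$. Your condition (ii) (``no right half visits $K$'') is meant to exclude this, but its failure probability is \emph{not} $O(|K|^2 T^{-1/2})$: with $v=1/T$ the right halves form (essentially) a $\mathcal{P}_{1/(2T),T}$ cloud, and the expected number of visits of this cloud to a single vertex is $\tfrac{1}{2T}\cdot T=\tfrac12$, so the event that some right half hits $K$ has probability of constant order in $T$ (Corollary \ref{corollary:prob_visit_KTv} only gives the bound $\beta|K|T=|K|/2$, and the heat-kernel estimate controls two-point events like both halves hitting $K$, not this one-point event). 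Hence $\prob(\mathrm{d}_K>1/m)$ does not decay in $T$ under your coupling, and \eqref{eq:coupling_error_labels} fails. A telltale sign is that your argument never uses $m\le\sqrt T$ and never produces the $m^{2/7}$ factor.

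The paper avoids exact label transfer altogether: it partitions the label interval $[0,1]$ into the $m$ sub-intervals $[\frac{n-1}{m},\frac nm]$, so that $\mathcal{Z}_T$ splits into $m$ i.i.d.\ unlabeled clouds $\mathcal{X}_T^n\sim\mathcal{P}_{1/(Tm),T}$; it applies Lemma \ref{lemma:coupling_lemma} with $v=1/(Tm)$ to each class separately (this is where $m\le\sqrt T$, i.e.\ $v\ge T^{-3/2}$, and the $m^{2/7}$ term come from), and then assigns to the length-$2T$ trajectories of class $n$ \emph{fresh} $\mathrm{UNI}[\frac{n-1}m,\frac nm]$ labels. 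On the event that all $m$ unlabeled local images agree, every copy of a local trajectory $w$ is replaced by a copy whose label lies in the same length-$1/m$ sub-interval --- regardless of whether the hit of $K$ comes from the left or the right half --- so the Wasserstein distances in \eqref{wass} are at most $1/m$ by a convexity argument over the classes. This discretize-and-resample step is the missing idea; without it (or some equally $K$-independent way of keeping the label of a glued trajectory within $1/m$ of the labels of \emph{both} constituent halves), the error bound cannot be obtained.
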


Before we prove Lemma \ref{lemma:labeled_coupling}, let us deduce  Theorem \ref{thm:main} from it.

\begin{proof}[Proof of Theorem \ref{thm:main}] Let us start with  $\mathcal{Z}^0 \sim \mathcal{Q}_{1}$ (which can be realized as a factor of i.i.d.) and let us 
iteratively define the point processes $\mathcal{Z}^n, n \in \mathbb{N}$ by letting 
\begin{equation}\label{Z_n_plus_1_Tn_mn_def}
\mathcal{Z}^{n+1} := \hat{\Psi}_{T_n, m_n} \left( \mathcal{Z}^n, \hat{\underline{\eta}}^n \right), \quad \text{where} \quad 
T_n:=2^n, \quad m_n := \left\lfloor \sqrt[4]{T_n}  \right\rfloor, \quad n \in \mathbb{N},   
\end{equation}
 $\hat{\Psi}_{T_n, m_n}$ is the map defined in \eqref{eq:coupling_map_labels} and $\hat{\underline{\eta}}^n, n \in \mathbb{N}$ are independent with distribution as in Lemma \ref{lemma:labeled_coupling}. Note that $\mathcal{Z}^n \sim  \mathcal{Q}_{T_n}, \, n \in \mathbb{N} $ follows from Lemma \ref{lemma:labeled_coupling}\eqref{mating_correct_distribution_2T_labels} by induction on $n$.

We will now show that  $(\mathcal{Z}^n)_{ n \in \mathbb{N}}$ almost surely locally converges (cf.\ Definition \ref{def:local_conv} and Claim \ref{claim_completeness}). It is enough to show that for every $K \fsubset V$,  $(\mathcal{Z}^n)_{ n \in \mathbb{N}}$ is a Cauchy sequence w.r.t.\ $\mathrm{d}_K(\cdot,\cdot)$ with probability $1$.
In order to show this, it is enough to show that for every $K \fsubset V$ there exists an almost surely finite random variable $N$ such that if $n \geq N$ then we have $\mathrm{d}_K\left(
			 \mathcal{Z}^{n},  \mathcal{Z}^{n+1}\right) \leq 1/m_n $. However, this follows from  
Lemma \ref{lemma:labeled_coupling}\eqref{mating_error_bound_labels}
by Borel-Cantelli:
 \begin{equation*}
   \sum_{n=0}^{\infty} \mathbb{P}\left( \mathrm{d}_K\left(
			 \mathcal{Z}^{n},  \mathcal{Z}^{n+1}\right) > \frac{1}{m_n} \right) \stackrel{ \eqref{eq:coupling_error_labels} }{\leq}  \sum_{n=0}^{\infty} \left( C \cdot |K| \cdot T_n^{-1/7} \cdot m_n^{2/7}  +
			C \cdot |K|^2  \cdot  \frac{1}{\sqrt{T_n}} \right) \stackrel{ \eqref{Z_n_plus_1_Tn_mn_def} }{<}  +\infty.
 \end{equation*}
 Let $\mathcal{Z}$ denote the $\wormspm(\wormspace \times [0,1])$-valued random variable that arises as the almost sure local limit of the sequence $(\mathcal{Z}^n)_{ n \in \mathbb{N}}$.
Observe that it follows by a repeated application of Lemma \ref{lemma:labeled_coupling}\eqref{mating_equivariant_labels} that $(\mathcal{Z}^n)_{ n \in \mathbb{N}}$   can be jointly realized as a factor of i.i.d., thus $\mathcal{Z}$ is a factor of i.i.d. The only thing left to prove is that $\mathcal{Z} \sim \mathcal{Q}_\infty$, but this follows from $\mathcal{Z}^n \sim  \mathcal{Q}_{T_n}, \, n \in \mathbb{N} $, Lemma \ref{lemma:worms_converge_in_distribution} and the fact that almost sure convergence implies convergence in distribution.
\end{proof}

 Lemma \ref{lemma:labeled_coupling} involves the coupling of labeled finite-length interlacement point processes. We will prove it using the next lemma, which involves the coupling of unlabeled finite-length interlacement point processes.
Recall the notion of $\mathcal{P}_{v,T}$ from Definition \ref{def_PPP_of_worms}.

\begin{lemma}[Doubling the length of unlabeled finite-length interlacements]
	\label{lemma:coupling_lemma}
	Let us fix  $T \in \N_+$ and $v \in [T^{-3/2}, T^2]$. There exists a probability space $(\tilde{\Omega}, \tilde{\mathcal{A}}, \tilde{\vartheta})$ and a measurable map
	\begin{equation}
		\label{eq:coupling_map}
		 \Psi_{v,T} \, : \,
		\wormspm(W_T) \times \tilde{\Omega}^{V}
		\longrightarrow
		\wormspm(W_{2T})
	\end{equation}
	satisfying the following properties.
	\begin{enumerate}[(i)]
		\item\label{mating_correct_distribution_2T} If $\mathcal{X}^{v,T} \sim \mathcal{P}_{v,T}$ and $\tilde{\underline{\eta}}=  (\tilde{\eta}_x)_{x \in V}$ are i.i.d\ with distribution $\tilde{\vartheta}$ (moreover $\mathcal{X}^{v,T}$ and $\tilde{\underline{\eta}}$ are independent) then
		\begin{equation}\label{mating_identity}
			\mathcal{X}^{v/2,2T} :=
			\Psi_{v,T} \left( \mathcal{X}^{v,T}, \tilde{\underline{\eta}} \right)
		\end{equation}
		has law $\mathcal{X}^{v/2,2T} \sim \mathcal{P}_{v/2,2T}$.
		\item\label{mating_equivariant} For any $\varphi \in \Gamma$ we have
		\begin{equation}
			\Psi_{v,T} \left( \varphi(\mathcal{X}^{v,T}), \varphi\left(  \tilde{\underline{\eta}} \right) \right) = \varphi\left(\Psi_{v,T} \left( \mathcal{X}^{v,T}
			,  \tilde{\underline{\eta}} \right)\right).
		\end{equation}
		\item\label{mating_error_bound} Using the notation introduced in \eqref{mating_identity}, there exists a constant $C \in \mathbb{R}_+$ that only depends on $G$ such that for any $K \subset \subset V$ we have
		\begin{equation}
		    \label{eq:coupling_error}
			\prob \left(
			\firstlastvis_K( \mathcal{X}^{v,T}) \neq \firstlastvis_K( \mathcal{X}^{v/2,2T})
			\right)
			\leq
			C \cdot |K| \cdot T^{4/7} \cdot v^{5/7}  +
			C \cdot |K|^2  \cdot  \sqrt{T} \cdot v.
		\end{equation}
	\end{enumerate}
\end{lemma}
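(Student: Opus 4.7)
We build $\Psi_{v,T}$ by a \emph{mating} scheme. Using one component of $\tilde{\underline{\eta}}$ (encoded so as to provide an independent fair coin for each trajectory of $\mathcal{X}^{v,T}$ in an equivariant way), independently tag every trajectory as type $1$ or $2$, producing independent PPPs $\mathcal{X}_1,\mathcal{X}_2 \sim \mathcal{P}_{v/2,T}$. For each $w_1\in\mathcal{X}_1$ use further auxiliary randomness to sample a uniform random neighbour $Y(w_1)$ of $\terminal(w_1)$ and, conditionally on $Y(w_1)$, an independent length-$T$ simple random walk $\tilde{w}_2$ started at $Y(w_1)$. A direct computation combining $\nu_{v/2,T}(\{w_1\})=(v/2)d^{1-T}$ with the conditional density $d^{-T}$ of $\tilde{w}_2$ given $w_1$ and summing over the fiber of $w_1$ above each $\tilde{w}_2\in W_T$ shows that $\tilde{\mathcal{X}}_2:=\sum_{w_1\in\mathcal{X}_1}\delta_{\tilde{w}_2}$ is a PPP on $W_T$ with intensity $\nu_{v/2,T}$, i.e.\ the same as $\mathcal{X}_2$. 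We then invoke a variant of the soft local time method of \cite{PT15} to couple $\tilde{\mathcal{X}}_2$ with $\mathcal{X}_2$ on a common probability space, $\Gamma$-equivariantly, so that their restrictions to $\{w\in W_T : w\text{ visits }K\}$ coincide with high probability; the machinery required is developed in Sections \ref{section_matching_softlocal}--\ref{section_mating_of_worms}. Finally $\Psi_{v,T}(\mathcal{X}^{v,T},\tilde{\underline{\eta}})$ is obtained by concatenating, for every $w_1\in\mathcal{X}_1$, the trajectory $w_1$ with the bridge step $\terminal(w_1)\to Y(w_1)$ and with $\tilde{w}_2$ into a single element of $W_{2T}$.

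Marginalizing the joint law of $(w_1,\tilde{w}_2)$ under the stitching map gives that the resulting PPP on $W_{2T}$ has intensity $\nu_{v/2,2T}$, yielding \eqref{mating_correct_distribution_2T}. Equivariance \eqref{mating_equivariant} holds because every random choice uses randomness keyed to vertices (via $\tilde{\underline{\eta}}$ and the $\Gamma$-invariant law of $\mathcal{X}^{v,T}$) and the soft-local-time coupling is performed symmetrically at each vertex. For \eqref{eq:coupling_error}, note that the event $\{\firstlastvis_K(\mathcal{X}^{v,T})\neq\firstlastvis_K(\mathcal{X}^{v/2,2T})\}$ is contained in the union of (a) a \emph{coupling-failure} event---some trajectory in the symmetric difference $\mathcal{X}_2\triangle\tilde{\mathcal{X}}_2$ visits $K$---and (b) a \emph{double-hit} event---some matched pair $(w_1,\tilde{w}_2)$ has both halves visit $K$, in which case the stitched local image spans from the first hit in $w_1$ to the last one in $\tilde{w}_2$ and does not match the sum of the two individual local images. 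For (b), Campbell's formula, time-reversibility, and Chapman--Kolmogorov give
\begin{equation*}
    \mathbb{E}[\text{double-hit count}] \;\le\; \tfrac{v}{2}\sum_{y,z\in K}\sum_{k=1}^{2T-1}\min(k,\,2T-k)\,p_k(y,z) \;\le\; C\,v\,|K|^2\sqrt{T},
\end{equation*}
where the second inequality uses the heat-kernel decay $p_k\le C k^{-3/2}$ of Lemma \ref{lemma:heat_kernel_upper}; this produces the second summand in \eqref{eq:coupling_error}. For (a), the soft-local-time analysis gives a bound with an internal length-scale parameter $r$, and optimizing $r$ against $v$, $T$, and $|K|$ delivers the first summand $C|K|T^{4/7}v^{5/7}$.

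The technical heart of the proof is the soft-local-time coupling of $\tilde{\mathcal{X}}_2$ with $\mathcal{X}_2$. The two point processes already share the intensity $\nu_{v/2,T}$ on $W_T$, so they can in principle be coupled to be identical, but we must do so using only $\mathcal{X}^{v,T}$ and $\tilde{\underline{\eta}}$, respect $\Gamma$-equivariance, and quantify the probability that the symmetric difference contains a trajectory visiting $K$. Balancing two naturally competing error terms in the soft-local-time estimate---one growing with the internal length-scale and the other with a heat-kernel tail---is what produces the unusual exponents $4/7$ and $5/7$.
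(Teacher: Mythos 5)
Your overall architecture (split $\mathcal{X}^{v,T}$ into $\mathcal{X}_1,\mathcal{X}_2\sim\mathcal{P}_{v/2,T}$, attach second halves to the terminal points of $\mathcal{X}_1$ via a soft-local-time matching, stitch, and control the local discrepancy by an ``unmatched trajectory hits $K$'' event plus a ``double hit'' event) is the right one, and your double-hit estimate is essentially the paper's bound for that event. But there is a genuine gap at the heart of the construction: your bridge is a \emph{single} step (a uniform neighbour $Y(w_1)$ of $\terminal(w_1)$), so the starting point of the reused/new second half is pinned to distance $1$ from $\terminal(w_1)$. Conditionally on $\mathcal{X}_1$, the intensity of these starting points is the barely smoothed random field $\sum_i p_1(\terminal(w_1^i),\cdot)$, whose deviation from the flat intensity $\tfrac{v}{2}\mu^V$ is of order $\sqrt{v\,p_2(o,o)}$ per vertex; consequently the best achievable coupling with the genuinely independent $\mathcal{X}_2$ leaves an unmatched density of order $\sqrt{v}$, and the probability that an unmatched length-$T$ trajectory hits $K$ is of order $|K|\,T\sqrt{v}$. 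This is far larger than the required $C|K|\,T^{4/7}v^{5/7}$ (with $v\asymp 1/(Tm)$ it is not even summable along $T_n=2^n$ in the application), so the claimed first summand cannot be reached. You mention an ``internal length-scale parameter $r$'' to be optimized, but your construction contains no mechanism that realizes it: with a one-step bridge there is no way to match a trajectory of $\mathcal{X}_1$ to a trajectory of $\mathcal{X}_2$ starting at distance $r\gg 1$. The paper's essential missing ingredient is to insert a bridge of \emph{length} $L=\lceil T^{4/7}v^{-2/7}\rceil$, i.e.\ to run the soft-local-time matching on the two point processes $\terminal(\mathcal{X}_1)$ and $\initial(\mathcal{X}_2)$ on $V$ with kernel $p_L$, which smooths the conditional intensity so that the unmatched density drops to $\sqrt{2\alpha\,p_{2L}(o,o)}\lesssim\sqrt{v}\,L^{-3/4}$; the price is that the stitched trajectories have length $2T+L-1$ and must be truncated back to $2T$, producing the extra error terms of order $v|K|L$ (middle part or discarded tail hitting $K$), and balancing $|K|T\sqrt{v}L^{-3/4}$ against $|K|vL$ is exactly what yields the exponents $4/7$, $5/7$.

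A second, related gap is that your ``couple $\tilde{\mathcal{X}}_2$ with $\mathcal{X}_2$ so that they coincide near $K$'' step is not a free move. Property (i) demands that the output have law $\mathcal{P}_{v/2,2T}$ \emph{exactly}, so the conditional law of the second halves given $\mathcal{X}_1$ and the bridges must be exactly that of independent simple random walks started at the bridge endpoints; any modification of $\tilde{\mathcal{X}}_2$ to force agreement with $\mathcal{X}_2$ near $K$ must preserve this. The paper handles this by matching only the \emph{endpoint} point processes on $V$ (where Corollary \ref{corollary:PPP_of_the_unmatched} gives the exact joint law of the matched pairs) and then reusing the actual $\mathcal{X}_2$-trajectories attached to matched initial points, with the rewiring Lemma \ref{lemma_rewiring} and Claims \ref{claim_conditional_indep_of_X_12_given_R_12}, \ref{claim_XXX_condindep_given_RR} verifying that the resulting concatenation has exactly the law $\mathcal{P}_{v/2,2T+L-1}$. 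Your proposal defers both the quantitative coupling bound and this exactness check to ``the machinery of Sections \ref{section_matching_softlocal}--\ref{section_mating_of_worms}'', but that machinery is built for the endpoint matching with kernel $p_L$, not for coupling two whole families of length-$T$ trajectories whose starting points are constrained to single-step neighbourhoods; as written, neither the error bound nor the exact distributional identity is established.
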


Before we prove Lemma \ref{lemma:coupling_lemma}, let us deduce Lemma \ref{lemma:labeled_coupling} from it.

\begin{proof}[Proof of Lemma \ref{lemma:labeled_coupling}] Let us fix  $T,m \in \N_+$  satisfying $m \leq \sqrt{T}$. Let $\mathcal{Z}_T=\sum_{i \in I} \delta_{(w_i,t_i)} \sim \mathcal{Q}_T$. 
For any $n \in [m] $  let 
\begin{equation}\label{XTn_def}
\mathcal{Z}_T^n:= \sum_{i \in I} \delta_{(w_i,t_i)} \mathds{1}\left[\, t_i \in \left[ \frac{n-1}{m}, \frac{n}{m} \right]  \,\right], \quad
\mathcal{X}_T^n:= \sum_{i \in I} \delta_{w_i} \mathds{1}\left[\, t_i \in \left[ \frac{n-1}{m}, \frac{n}{m} \right]  \,\right].
\end{equation}
From this definition we obtain that almost surely we have
\begin{equation}\label{useful_sum_Z}
 \mathcal{Z}_T=   \sum_{n =1}^m \mathcal{Z}_T^n.
\end{equation}
Note that it follows from Definitions \ref{def_PPP_of_worms} and  \ref{def_PPP_of_labelled_worms}  that 
$\mathcal{X}_T^n, \, n=1,\dots, m$ are i.i.d.\ with distribution  $\mathcal{P}_{1/(Tm),T}$.
Let us now define
\begin{equation}
\mathcal{X}_{2T}^n:= \Psi_{1/(Tm),T} \left( \mathcal{X}_{T}^n, \tilde{\underline{\eta}}^n \right),
\end{equation}
where $\Psi_{1/(Tm),T}$ is the map defined in \eqref{eq:coupling_map} and $\tilde{\underline{\eta}}^n, n=1,\dots,m$ are independent with distribution as in Lemma \ref{lemma:coupling_lemma}. Now $\mathcal{X}_{2T}^n, \, n=1,\dots, m$ are i.i.d.\ with distribution  $\mathcal{P}_{1/(2Tm),2T}$
by Lemma \ref{lemma:coupling_lemma}\eqref{mating_correct_distribution_2T}. Let us denote $\mathcal{X}_{2T}^n=\sum_{i \in I_n} \delta_{w_i}$.
Given $\mathcal{X}_{2T}^n$, let $U^n_i, i \in I_n$ denote conditionally i.i.d.\ random variables with $\mathrm{UNI}\left[ \frac{n-1}{m}, \frac{n}{m} \right]$ distribution.
Note that we can use auxiliary i.i.d.\ randomness on $V$ to generate $U^n_i, i \in I_n$ in a factor of i.i.d.\ fashion. For any $n \in [m]$, let   
\begin{equation}\label{Z2T_def}
 \mathcal{Z}_{2T}^n:=\sum_{i \in I_n} \delta_{(w_i, U^n_i)}, \qquad   \mathcal{Z}_{2T}:=\sum_{n=1}^m  \mathcal{Z}_{2T}^n.
\end{equation}
It follows from our construction and the coloring property of Poisson point processes (cf.\ \cite[Section 5.2]{DRS14}) that $\mathcal{Z}_{2T}\sim \mathcal{Q}_{2T}$ (cf.\ Definition \ref{def_PPP_of_labelled_worms}), thus Lemma \ref{lemma:labeled_coupling}\eqref{mating_correct_distribution_2T_labels} holds. Note that 
Lemma \ref{lemma:labeled_coupling}\eqref{mating_equivariant_labels} also holds since we constructed $\mathcal{Z}_{2T}$ from $\mathcal{Z}_T$ and some auxiliary i.i.d.\ randomness on $V$ in a factor of i.i.d.\ fashion (cf.\ Lemma \ref{lemma:coupling_lemma}\eqref{mating_equivariant}). 

In order to show that the error bound of Lemma \ref{lemma:labeled_coupling}\eqref{mating_error_bound_labels} holds, let us first note that
by Lemma \ref{lemma:coupling_lemma}\eqref{mating_error_bound} there exists a constant $C \in \mathbb{R}_+$ that only depends on $G$ such that for any $K \subset \subset V$ we have
\begin{multline}\label{one_non_match_union_T_m}
   \mathbb{P}\left(\, \exists\, n \in [m] \, : \, \firstlastvis_K( \mathcal{X}_{T}^n) \neq \firstlastvis_K( \mathcal{X}_{2T}^n )   \,\right) \leq 
  \sum_{n=1}^m \mathbb{P}\left(\,  \firstlastvis_K( \mathcal{X}_{T}^n) \neq \firstlastvis_K( \mathcal{X}_{2T}^n )   \,\right)  
  \stackrel{ (*)  }{\leq} \\
  m \cdot\left( \frac{C \cdot |K| \cdot T^{4/7}}{(Tm)^{5/7}}  +
			\frac{C \cdot |K|^2  \cdot  \sqrt{T}}{  Tm } \right) = 
  C \cdot |K| \cdot T^{-1/7} \cdot  m ^{2/7}  +
			C \cdot |K|^2  \cdot    \frac{1}{\sqrt{T}}, 
\end{multline}
where $(*)$ follows from \eqref{eq:coupling_error} with $v:=1/(Tm)$, noting that the condition $v \in [T^{-3/2}, T^2]$ of Lemma \ref{lemma:coupling_lemma} follows from the condition $m \leq \sqrt{T}$ of Lemma \ref{lemma:labeled_coupling}.
 The desired bound \eqref{eq:coupling_error_labels}
will follow from \eqref{one_non_match_union_T_m} as soon as we show that if the complement of the  event  on the l.h.s.\ of \eqref{one_non_match_union_T_m} occurs, i.e., if we have
\begin{equation}\label{good_event_m}
 \firstlastvis_K( \mathcal{X}_{T}^n) = \firstlastvis_K( \mathcal{X}_{2T}^n ), \qquad n \in [m], 
\end{equation}
then we have
 $\mathrm{d}_K\left(
			 \mathcal{Z}_{T},  \mathcal{Z}_{2T}\right) \leq \frac{1}{m}$. The rest of the proof of
 Lemma \ref{lemma:labeled_coupling} is devoted to the proof of this implication.

 Let us first
note that by Definition \ref{def_aux_meas}, for each $ w \in \fwormspace(K)$ and $n \in [m]$ we have
 \begin{align}
  |\firstlastvis_K^{w}(\mathcal{Z}_T^n)|  &\stackrel{\eqref{XTn_def} }{=}  \firstlastvis_K( \mathcal{X}_{T}^n)(\{w\}), & 
|\firstlastvis_K^{w}(\mathcal{Z}_{2T}^n)| 
&\stackrel{ \eqref{Z2T_def} }{=}  \firstlastvis_K( \mathcal{X}_{2T}^n)(\{w\}), \\ \label{add_it_up}
\sum_{n =1}^m \firstlastvis_K^{w}(\mathcal{Z}_T^n)  &\stackrel{\eqref{useful_sum_Z} }{=} 
\firstlastvis_K^{w}(\mathcal{Z}_T), &
\sum_{n =1}^m \firstlastvis_K^{w}(\mathcal{Z}_{2T}^n)  &\stackrel{ \eqref{Z2T_def} }{=} \firstlastvis_K^{w}(\mathcal{Z}_{2T}).
\end{align} 
Thus, if \eqref{good_event_m} holds then we have 
  \begin{align}\label{nw_eq}
  |\firstlastvis_K^{w}(\mathcal{Z}_T^n)| &= |\firstlastvis_K^{w}(\mathcal{Z}_{2T}^n)|, \qquad w \in \fwormspace(K), \quad n \in [m], \\ 
  \label{nw_summed_eq}
   |\firstlastvis_K^{w}(\mathcal{Z}_T)| &= |\firstlastvis_K^{w}(\mathcal{Z}_{2T})|, \qquad w \in \fwormspace(K).
  \end{align} For any $w \in \fwormspace(K)$, let $\mathcal{S}(w):=\{\, n \in [m]\, : \,  |\firstlastvis_K^{w}(\mathcal{Z}^n_T)|\neq 0\, \} $. 
   Note that if \eqref{good_event_m} holds then  we have $\mathcal{S}(w)=\{\, n \in [m]\, : \,  |\firstlastvis_K^{w}(\mathcal{Z}^n_{2T})|\neq 0\, \} $ and
   \begin{equation}\label{wass_interval_m_bound} \mathrm{d}_{\mathrm{Wass}}\left( \frac{\firstlastvis_K^{w}( \mathcal{Z}_{T}^n )}{|\firstlastvis_K^{w}( \mathcal{Z}_{T}^n )|}, \frac{\firstlastvis_K^{w}(\mathcal{Z}_{2T}^n )}{|\firstlastvis_K^{w}(\mathcal{Z}_{2T}^n )|} \right)\leq  \frac{1}{m}, \quad w \in \fwormspace(K), \quad n \in \mathcal{S}(w),
   \end{equation}
   because both of the point measures $\firstlastvis_K^{w}( \mathcal{Z}_{T}^n )$ and $\firstlastvis_K^{w}(\mathcal{Z}_{2T}^n )$
are supported on $\left[ \frac{n-1}{m}, \frac{n}{m} \right]$.

Assuming that  $w \in \fwormspace(K)$ satisfies $|\firstlastvis_K^{w}(\mathcal{Z}_T)|\neq 0$ (or, equivalently, $\mathcal{S}(w)\neq \emptyset$), we define
\begin{equation}
    \alpha_n:= \frac{|\firstlastvis_K^{w}(\mathcal{Z}_T^n)|}{|\firstlastvis_K^{w}(\mathcal{Z}_T)| }
    \stackrel{ \eqref{good_event_m}, \eqref{nw_eq}, \eqref{nw_summed_eq}    }{=}
    \frac{|\firstlastvis_K^{w}(\mathcal{Z}_{2T}^n)|}{|\firstlastvis_K^{w}(\mathcal{Z}_{2T})| }, \qquad n \in \mathcal{S}(w).
\end{equation}
Assuming that \eqref{good_event_m} holds, for any  $w \in \fwormspace(K)$ satisfying $|\firstlastvis_K^{w}(\mathcal{Z}_T)|\neq 0$ we have
        \begin{multline}\label{wass_ineq_m}
    \mathrm{d}_{\mathrm{Wass}}\left( \frac{\firstlastvis_K^{w}( \mathcal{Z}_{T} )}{|\firstlastvis_K^{w}( \mathcal{Z}_{T} )|}, \frac{\firstlastvis_K^{w}(\mathcal{Z}_{2T} )}{|\firstlastvis_K^{w}(\mathcal{Z}_{2T} )|} \right) \stackrel{ \eqref{add_it_up} }{=}
    \mathrm{d}_{\mathrm{Wass}}\left( \sum_{n\in \mathcal{S}(w)} \alpha_n \frac{\firstlastvis_K^{w}( \mathcal{Z}^n_{T} )}{|\firstlastvis_K^{w}( \mathcal{Z}^n_{T} )|}, \sum_{n\in \mathcal{S}(w)} \alpha_n \frac{\firstlastvis_K^{w}(\mathcal{Z}^n_{2T} )}{|\firstlastvis_K^{w}(\mathcal{Z}^n_{2T} )|} \right)
    \leq \\
\sum_{n\in \mathcal{S}(w)} \alpha_n  \cdot  \mathrm{d}_{\mathrm{Wass}}\left(  \frac{\firstlastvis_K^{w}( \mathcal{Z}^n_{T} )}{|\firstlastvis_K^{w}( \mathcal{Z}^n_{T} )|},  \frac{\firstlastvis_K^{w}(\mathcal{Z}^n_{2T} )}{|\firstlastvis_K^{w}(\mathcal{Z}^n_{2T} )|} \right)
\stackrel{ \eqref{wass_interval_m_bound} }{\leq} \sum_{n\in \mathcal{S}(w)} \alpha_n \cdot \frac{1}{m} \stackrel{ \eqref{add_it_up} }{=}
\frac{1}{m}.
    \end{multline}
If \eqref{good_event_m} holds then \eqref{nw_summed_eq} also holds, thus  by Definition \ref{def:local_metric}  we can use \eqref{wass} to calculate $\mathrm{d}_K\left( \mathcal{Z}_{T},  \mathcal{Z}_{2T}\right)$, consequently \eqref{wass_ineq_m} gives that if \eqref{good_event_m} holds then we also have $\mathrm{d}_K\left( \mathcal{Z}_{T},  \mathcal{Z}_{2T}\right) \leq \frac{1}{m}$. Using this, the proof of \eqref{eq:coupling_error_labels} follows from \eqref{one_non_match_union_T_m}. The proof of Lemma \ref{lemma:labeled_coupling} is complete.
\end{proof} 

It remains to prove Lemma \ref{lemma:coupling_lemma}.
Sections \ref{section_matching_softlocal} and \ref{section_mating_of_worms} are devoted to this proof.

\section{Matching i.i.d.\ Poisson point processes on the vertex set}\label{section_matching_softlocal}

The goal of Section \ref{section_matching_softlocal} is to state and prove Lemma \ref{lemma:matching_iid_PPP_on_V}. In Section
\ref{section_mating_of_worms} we will use  Lemma \ref{lemma:matching_iid_PPP_on_V} to prove
 Lemma \ref{lemma:coupling_lemma}. Let us informally explain the idea of the proof of Lemma \ref{lemma:coupling_lemma}:  we will take a point process $\mathcal{X}^{v,T}$ with distribution $\mathcal{P}_{v,T}$, split it into two i.i.d.\ point processes $\mathcal{X}_1$ and  $\mathcal{X}_2$ with distribution $\mathcal{P}_{v/2,T}$ and   we will try to match the terminal points of (most of) the trajectories of $\mathcal{X}_1$ with the starting points of nearby trajectories of $\mathcal{X}_2$ using an auxiliary random walk trajectory of length $L$, gluing these pairs of trajectories of length $T$ as well as the auxiliary middle part of length $L$ to create a point process of  trajectories of length $2T+L$ with distribution  $\mathcal{P}_{v/2,2T+L}$, and finally we cut off a portion of length $L$ from these trajectories to obtain the desired PPP  $\mathcal{X}^{v/2,2T}$ with distribution  $\mathcal{P}_{v/2,2T}$. Intuitively, making the parameter $L$ bigger helps us to reach further when we find a pair for a trajectory (which results in a higher fraction of matched trajectories), but making $L$ bigger  also makes the local images of $\mathcal{X}^{v,T}$ and $\mathcal{X}^{v/2,2T}$ more different. We will see later in Section \ref{section_mating_of_worms} that in some sense the optimal choice of $L$ is  $L= \left\lceil T^{4/7} \cdot v^{-2/7} \right\rceil$.
 
 Lemma \ref{lemma:matching_iid_PPP_on_V} provides us with  a (partial) matching of the endpoints of the trajectories of $\mathcal{X}_1$ and the starting points of the trajectories of  $\mathcal{X}_2$, i.e., two i.i.d.\ point processes on $V$ with distribution  $\mathcal{P}_{v/2,1}$. In order to state Lemma \ref{lemma:matching_iid_PPP_on_V}, we need some definitions.

\medskip 

Let us define the projections $\textbf{p}_1: V \times V \to V$ and   $\textbf{p}_2: V \times V \to V$ by letting
$\textbf{p}_1((x,y))=x$ and $\textbf{p}_2((x,y))=y$.
In words, $\textbf{p}_1$ and $\textbf{p}_2$ are the projections on the first and second coordinates.

\noindent If $\omega= \sum_{i \in I} \delta_{(x_i,y_i)} \in \wormspm(V \times V)$, let us denote by
$\textbf{p}_1(\omega)=\sum_{i \in I} \delta_{x_i} $ and $\textbf{p}_2(\omega)=\sum_{i \in I} \delta_{y_i}$.


If $\varphi \in \Gamma$ and  $\omega= \sum_{i \in I} \delta_{(x_i,y_i)} \in \wormspm(V \times V)$, let us denote $\varphi(\omega)=\sum_{i \in I} \delta_{(\varphi(x_i),\varphi(y_i))}$.

Recall the notion of $\mathcal{P}_{v,1}$ from Definition \ref{def_PPP_on_V}.  Recall how $\Gamma$ acts on $\wormspm(\wormspace)$ from Definition \ref{def_auto}. 
Also recall that if $\underline{\hat{\eta}} = ( \hat{\eta}_{x} )_{x \in V} \in \hat{\Omega}^V$ and $\varphi \in \Gamma$ then we denote
$\varphi (\underline{\hat{\eta}}) = ( \hat{\eta}_{ \varphi^{-1}(x) } )_{x \in V}$.

\begin{lemma}[Partial matching of i.i.d.\ PPPs on $V$]
	\label{lemma:matching_iid_PPP_on_V}
	Let us fix $\alpha \in \mathbb{R}_+$ and $L \in \N$. There exists a probability space
 $(\hat{\Omega}, \hat{\mathcal{A}}, \hat{\pi})$ and a measurable map
	\begin{equation}
		\label{eq:coupling_map_VV}
		 \Psi^*_{\alpha,L} \, : \,
		\wormspm(V) \times \wormspm(V) \times \hat{\Omega}^{V}
		\longrightarrow
		\wormspm(V \times V)
	\end{equation}
	satisfying the following properties.
	\begin{enumerate}[(i)]
		\item\label{output_ppp_pair} If $\mathcal{R}_1$ and $\mathcal{R}_2$ are i.i.d.\ Poisson point processes on $V$ with law $\mathcal{P}_{\alpha,1}$ 
 and $\hat{\underline{\eta}}=  (\hat{\eta}_x)_{x \in V}$ are i.i.d\ with distribution $\hat{\pi}$ (moreover $(\mathcal{R}_1,\mathcal{R}_2)$ and $\hat{\underline{\eta}}$ are independent) then
		\begin{equation}\label{matching_identity}
			\mathcal{R} :=
			\Psi^*_{\alpha,L} \left( \mathcal{R}_1, \mathcal{R}_2, \hat{\underline{\eta}} \right)
		\end{equation}
		is a PPP on $V \times V$ with intensity measure $\nu^{(2)}(\{ (x,y)\} ):= \alpha \cdot p_L(x,y)$ (cf.\ \eqref{random_walk_transitions}).
		\item\label{matching_ppp_equivariant} For  any $\varphi \in \Gamma$ we have
		\begin{equation}\label{triple_varphi_invariance}
			\Psi^*_{\alpha,L} \left( \varphi(\mathcal{R}_1),\varphi(\mathcal{R}_2),  \varphi\left(  \hat{\underline{\eta}} \right) \right) = \varphi\left(\Psi^*_{\alpha,L} \left(   \mathcal{R}_1, \mathcal{R}_2,  \hat{\underline{\eta}} \right)\right).
		\end{equation}
\item\label{first_projection_is_R1}  We have $\textbf{p}_1(\mathcal{R})=\mathcal{R}_1$.
\item \label{matching_ppp_error_bound_from_surface} 	 We have $\mathbb{E}\left[ \Big( \mathcal{R}_2(\{ o \} ) - \textbf{p}_2(\mathcal{R})(\{ o \}) \Big)_+   \right] \leq \sqrt{2\alpha \cdot p_{2L}(o,o) } $.
		\item	\label{matching_ppp_error_bound_from_remaining}  We have $\mathbb{E}\left[ \Big(   \textbf{p}_2(\mathcal{R})(\{ o \}) -\mathcal{R}_2(\{ o \} ) \Big)_+   \right] \leq \sqrt{2 \alpha \cdot p_{2L}(o,o) } $.
	\end{enumerate}
\end{lemma}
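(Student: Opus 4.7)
The plan is to apply a variant of the Popov--Teixeira soft local time algorithm in which $\mathcal{R}_2$ is absorbed into the background Poisson point process. First, using the i.i.d.\ family $\hat{\underline{\eta}}$, I would build a rate-one PPP $\Pi$ on $V \times [0,\infty)$ such that $\mathcal{R}_2(\{v\}) = |\Pi \cap (\{v\}\times [0,\alpha])|$ for every $v$: at each vertex $v$ place the $\mathcal{R}_2(\{v\})$ existing particles at i.i.d.\ $\mathrm{UNI}[0,\alpha]$ heights and superimpose an independent rate-one PPP on $\{v\}\times[\alpha,\infty)$, all sampled from $\hat{\eta}_v$. The resulting $\Pi$ is a rate-one PPP on $V \times [0, \infty)$, and its construction is manifestly $\Gamma$-equivariant.

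Next, view $\Pi$ as the soft local time background and the points of $\mathcal{R}_1$ as items with densities $g_i(\cdot) := p_L(x_i,\cdot)$. Using further $\mathrm{UNI}[0,1]$ marks from $\hat{\underline{\eta}}$ to induce a total order on the points of $\mathcal{R}_1$, I would run the sequential soft local time algorithm: at step $i$, grow the envelope $\tau \mapsto G_{<i}(v) + \tau\, g_i(v)$ from $\tau = 0$ until the first new point of $\Pi$ is swept up at location $y_i$, and update $G_{\leq i}(v) := G_{<i}(v) + \tau_i\, g_i(v)$ accordingly. Setting $\mathcal{R} := \sum_i \delta_{(x_i,y_i)}$ immediately gives $\textbf{p}_1(\mathcal{R}) = \mathcal{R}_1$, establishing (iii); the standard property of soft local time that, conditional on $\mathcal{R}_1$, the outputs $(y_i)$ are independent with $y_i \sim p_L(x_i,\cdot)$ then shows that $\mathcal{R}$ is a PPP on $V\times V$ with intensity $\alpha\, p_L(x,y)$, establishing (i); and (ii) follows because $\Pi$ and the marks are produced equivariantly from $\hat{\underline{\eta}}$.

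For (iv) and (v), the key observation is that the set of points of $\Pi$ consumed by the algorithm at $v$ is exactly $\Pi \cap \{v\}\times[0,G_\infty(v)]$, where $G_\infty(v) := \sum_i \tau_i\, p_L(x_i,v)$. Hence $\textbf{p}_2(\mathcal{R})(\{v\}) = |\Pi\cap\{v\}\times[0,G_\infty(v)]|$ while $\mathcal{R}_2(\{v\}) = |\Pi\cap\{v\}\times[0,\alpha]|$, so $(\mathcal{R}_2(\{o\}) - \textbf{p}_2(\mathcal{R})(\{o\}))_+$ counts the points of $\Pi$ in $\{o\}\times(G_\infty(o),\alpha]$ on the event $\{G_\infty(o)\le\alpha\}$. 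Since $G_\infty(o)$ is a stopping level for the height filtration of $\Pi$ at $o$, the points of $\Pi$ above $G_\infty(o)$ form a rate-one PPP independent of $G_\infty(o)$, and conditioning therefore yields $\E\bigl[(\mathcal{R}_2(\{o\}) - \textbf{p}_2(\mathcal{R})(\{o\}))_+\bigr] = \E\bigl[(\alpha - G_\infty(o))_+\bigr] \leq \E|\alpha - G_\infty(o)|$, with the symmetric bound for the reverse sign.

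The concluding step is a variance computation. Conditional on $\mathcal{R}_1$ the $(\tau_i)$ are i.i.d.\ $\mathrm{Exp}(1)$ (since $\sum_v p_L(x_i,v) = 1$), so $\E[G_\infty(o)\mid\mathcal{R}_1] = \sum_i p_L(x_i,o)$ and $\Var(G_\infty(o)\mid\mathcal{R}_1) = \sum_i p_L(x_i,o)^2$. Applying Campbell's formula to the rate-$\alpha$ PPP $\mathcal{R}_1$ together with the reversibility identity $\sum_x p_L(x,o)^2 = p_{2L}(o,o)$ gives $\E[G_\infty(o)] = \alpha$ and $\Var(G_\infty(o)) = 2\alpha\, p_{2L}(o,o)$, after which Cauchy--Schwarz delivers $\E|\alpha - G_\infty(o)| \leq \sqrt{2\alpha\, p_{2L}(o,o)}$, which is precisely the stated bound. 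I expect the main obstacle to be giving rigorous meaning to the sequential soft local time when items are ordered by $\mathrm{UNI}[0,1]$ marks with no least element; this should be resolved via an exhaustion of $V$ (processing items supported on finite exhausting subsets one after another) combined with the order-independence of the distribution of the output point measure $\mathcal{R}$.
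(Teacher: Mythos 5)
Your construction is essentially the paper's: the same background PPP obtained by giving the points of $\mathcal{R}_2$ i.i.d.\ uniform labels on $[0,\alpha]$ and extending above level $\alpha$, the same soft local time mechanism with densities $p_L(x_i,\cdot)$, the same final envelope $g(\cdot,\infty)=\sum_i \eta_i\, p_L(x_i,\cdot)$ with conditionally i.i.d.\ $\mathrm{EXP}(1)$ variables, and the same moment computation $\E[g(o,\infty)]=\alpha$, $\Var(g(o,\infty))=2\alpha\, p_{2L}(o,o)$ followed by Cauchy--Schwarz. The genuine gap is precisely the step you defer to the end: scheduling the (a.s.\ infinitely many) points of $\mathcal{R}_1$. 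Ordering by i.i.d.\ $\mathrm{UNI}[0,1]$ marks gives a total order with no least element, and your proposed fix --- processing along a fixed exhaustion of $V$ --- sacrifices property \eqref{matching_ppp_equivariant}: the \emph{realization} of the matching (unlike its law) depends on the processing order (two items with equal densities simply swap partners when their order is swapped), and an exhaustion is not an $\mathrm{Aut}(G)$-invariant object, so $\Psi^*_{\alpha,L}(\varphi(\mathcal{R}_1),\varphi(\mathcal{R}_2),\varphi(\hat{\underline{\eta}}))$ will in general differ from $\varphi(\Psi^*_{\alpha,L}(\mathcal{R}_1,\mathcal{R}_2,\hat{\underline{\eta}}))$. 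Order-independence of the distribution of $\mathcal{R}$ only yields (i); \eqref{matching_ppp_equivariant} is a pointwise identity and is the whole point of the lemma, since it is what feeds the factor-of-i.i.d.\ conclusion downstream. The paper resolves this by running the algorithm in rounds: in each round a not-yet-matched point $x_i$ is processed if and only if its fresh mark is smallest among unmatched points within distance $2L$ (cf.\ \eqref{the_smallest_one}); the selected points are pairwise at distance $>2L$, so the functions $p_L(x_i,\cdot)$, supported in balls of radius $L$, have disjoint supports and can be grown simultaneously with no ordering ambiguity, and a geometric-domination argument gives \eqref{eq:almost_surely_finds_a_pair}, i.e.\ every point is processed after an a.s.\ finite number of rounds. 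Some such equivariant parallel scheduling device is the missing idea in your proposal.

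A smaller point: for (v) the ``symmetric bound for the reverse sign'' does not follow by mirroring your argument for (iv). The points of the background process in $\{o\}\times(\alpha, g(o,\infty)]$ are exactly points already consumed (hence revealed) by the algorithm, so the conditional-PPP/stopping-level argument used above the final envelope is not available there. The estimate is still true, but one should argue as the paper does: $\textbf{p}_2(\mathcal{R})\sim\mathcal{P}_{\alpha,1}$ (from (i), the mapping property and $\sum_x p_L(x,o)=1$), so $\E[\textbf{p}_2(\mathcal{R})(\{o\})]=\alpha=\E[\mathcal{R}_2(\{o\})]$, and then (v) follows from (iv) by adding and subtracting the expected number of matched points at $o$.
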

In words, $ \left( \mathcal{R}_2(\{ o \} ) - \textbf{p}_2(\mathcal{R})(\{ o \}) \right)_+$ is the number of unmatched points of $\mathcal{R}_2$ located at vertex $o$ and 
  $ \left(   \textbf{p}_2(\mathcal{R})(\{ o \}) -\mathcal{R}_2(\{ o \} ) \right)_+$
  is the number of unmatched points of $\mathcal{R}_1$ which have a ``phantom pair'' (cf.\ Remark \ref{remark_phantom})
  located at vertex $o$.
Section \ref{section_matching_softlocal} is devoted to the proof of Lemma \ref{lemma:matching_iid_PPP_on_V}.
In Section \ref{subsection:greedy_mathcing_using_SLT_method} we construct the partial matching $\mathcal{R}$ satisfying \eqref{output_ppp_pair}, \eqref{matching_ppp_equivariant} and \eqref{first_projection_is_R1}. In Section \ref{subsetion_matching_error_bounds} we prove that the error bounds \eqref{matching_ppp_error_bound_from_surface} and \eqref{matching_ppp_error_bound_from_remaining} hold.

\subsection{Matching PPPs using the soft local time method}
\label{subsection:greedy_mathcing_using_SLT_method}

Let us introduce the notation
\begin{equation}\label{R1_R2_labels_I1_I2}
  \mathcal{R}_1=\sum_{i \in I_1} \delta_{x_i}, \qquad  \mathcal{R}_2=\sum_{i \in I_2} \delta_{y_i}.
\end{equation}

\noindent Let us assign i.i.d.\ uniformly distributed labels on $[0,\alpha]$ to the points of $\mathcal{R}_2$, noting that this can be done in a factor of i.i.d.\ fashion.
 If $i \in I_2$, let $t_i$ denote the label assigned to $y_i$, thus
	 $ \mathcal{Y}^{\alpha}:= \sum_{i \in I_2} \delta_{(y_i,t_i)}$ is a PPP on $V \times [0,\alpha]$ with intensity measure $\mu^{V} \times \lambda \mathds{1}[\, [0,\alpha] \, ]$.

Then we use further i.i.d.\  randomness on $V$ to
 extend the PPP $\mathcal{Y}^\alpha$  in a factor of i.i.d.\ fashion  to a PPP $\mathcal{Y}$ on $V \times \mathbb{R}_+$ with intensity measure $\mu^{V} \times \lambda $. Let us denote
\begin{equation}
    \label{notation:extended_labelled_worms:Z}
	\mathcal{Y} =   \sum_{j \in J} \delta_{(y_j, t_j)}.
\end{equation}
 Note  that $\mathcal{Y}$ is still independent of $\mathcal{R}_1$, moreover we have
 \begin{equation}\label{Y_alpha_from_Y}
 \mathcal{Y}^{\alpha} =\mathcal{Y}\mathds{1}[\, V \times [0,\alpha]\, ] \qquad \text{and} \qquad \mathcal{R}_2=  \sum_{j \in J} \delta_{y_j} \mathds{1}[ t_j \leq \alpha ].
 \end{equation}
 Alternatively, one may view $\mathcal{Y}$ as follows: $\mathcal{Y}$ is made up of i.i.d.\ homogeneous Poisson point processes on $\mathbb{R}_+$ with unit intensity, one such point process for each $x \in V$.

 Also note that  $t_j$ uniquely identifies  the corresponding $j \in J$, since the values $t_j, j \in J$ are almost surely distinct.

\begin{remark}\label{remark_phantom}  In order to create the PPP $\mathcal{R}$ on $V \times V$ as in  Lemma \ref{lemma:matching_iid_PPP_on_V}, we will find exactly one pair in $\mathcal{Y}$ for each point in $\mathcal{R}_1$.
\begin{enumerate}[(a)]
\item\label{a_silly} Since $\mathcal{Y} \geq \mathcal{Y}^\alpha$, it might happen that  a point from $\mathcal{R}_1$ only has a phantom pair, that is, a point of $\mathcal{Y}$ that does not correspond to a point in $\mathcal{R}_2$.
\item \label{b_silly}
Moreover, it might also happen that a point of $\mathcal{Y}$ that corresponds to a point in $\mathcal{R}_2$ is not matched to any point in $\mathcal{R}_1$.
\end{enumerate}
Nevertheless, we will  show that these mismatches are infrequent if the parameter $L$ is big enough. More specifically, the error described in \eqref{a_silly} will be bounded in Lemma \ref{lemma:matching_iid_PPP_on_V}\eqref{matching_ppp_error_bound_from_remaining} and the error described in \eqref{b_silly}  will be bounded in Lemma \ref{lemma:matching_iid_PPP_on_V}\eqref{matching_ppp_error_bound_from_surface}.
\end{remark}

We are ready to define the factor of i.i.d.\ algorithm which assigns a pair in $\mathcal{Y}$  to every point in  $\mathcal{R}_1$.
More precisely, we will construct a matching resulting in a point process $\mathcal{R}$ of pairs which is a PPP with intensity measure
 $\nu^{(2)}(\{ (x,y)\} )= \alpha \cdot p_L(x,y)$, as required by Lemma \ref{lemma:matching_iid_PPP_on_V}\eqref{output_ppp_pair}.

The algorithm will perform the matching in rounds.
For $\ell \in \mathbb{N}$, let us denote by $S_\ell$ the set of indices of the points of $\mathcal{R}_1$ which are not yet matched by the end of round $\ell$.
Let $S_0 := I_1$.
We will have  $S_{\ell+1} \subseteq S_{\ell}$ for any $\ell \in \mathbb{N}$. We will see that $S_\ell$  converges point-wise to $\emptyset$ almost surely as $\ell \to \infty$, i.e., we have
\begin{equation}
	\label{eq:almost_surely_finds_a_pair}
	\mathbb{P} (\, \exists \, i \in I_1 \;\, \forall \, \ell \in \mathbb{N}\; :  \; i \in S_\ell\, ) = 0.
\end{equation}
	
Let us also define a randomly growing surface, encoded by a function $g: V \times \mathbb{N} \to \mathbb{R}_+ $. We say that $g(y,\ell)$ is the height of the surface at vertex $y \in V$ after the end of round $\ell$. We have $g(y,0)=0$ for each $y \in V$, and we will have $g(y,\ell)\leq g(y,\ell+1)$ for all $y \in V$ and all $\ell \in \mathbb{N}$.

As we will see, this height function will govern the matching in the sense that a point $(y_j, t_j)$ of the point process  $\mathcal{Y}$ will be already matched to some point in $\mathcal{R}_1$ by the end of round $\ell$ if and only if $t_j \leq g(y_j,\ell)$. Moreover, it will also follow from the algorithm that almost surely
\begin{equation}\label{g_infty}
	g(y,\infty) := \lim_{\ell \to \infty} g(y,\ell)<+\infty, \qquad y \in V
\end{equation}
 and we will see in Lemma \ref{lemma_g_expect_var} that   $g(y, \infty)$ is close to $\alpha$ if $L$ is big enough.
	
Now let us describe how to obtain $S_{\ell+1}$ from $S_\ell$ and $g(\cdot,\ell+1)$ from $g(\cdot, \ell)$, and how to find a pair for $x_i, i \in S_{\ell} \setminus S_{\ell+1}$.
This is a variant of the \emph{soft local time method}, introduced in \cite{PT15}.
Given $\mathcal{R}_1$, let $U_{\ell,i}, i \in I_1$ denote i.i.d.\ $\mathrm{UNI}[0,1]$ random variables,  realized in a factor of i.i.d.\ fashion. Let us define
\begin{align}\label{the_smallest_one}
	\widetilde{S}_{\ell}:=
	\left\{ \,
	i \in S_{\ell} \, : \,
	\parbox{14em}{
		$U_{\ell,i} < U_{\ell,i'}$  for all $i' \in S_{\ell} \setminus \{ i \}$ \\
		for which  $\distance(x_i,x_{i'} ) \leq 2L$
	}
	\, \right\},
	\qquad S_{\ell+1}:= S_\ell \setminus \widetilde{S}_{\ell},
\end{align}
where we recall that $\distance(\cdot,\cdot)$ denotes the graph distance on $G$. Note that $ \widetilde{S}_{\ell}$ is obtained from $S_\ell$ in a factor of i.i.d.\ fashion. Also note that we have
\begin{equation}
	\label{eq:far_apart}
	\forall \, i \neq i' \in  \widetilde{S}_{\ell} \; : \;  \distance(x_i, x_{i'} ) > 2L.
\end{equation}

In round $\ell+1$ we will find a pair for each point $x_i$, $i \in \widetilde{S}_{\ell}$ from the set of vertices of $\mathcal{Y}$ not yet matched using the following method. Having already constructed $g(.,\ell)$ and $\widetilde{S}_{\ell}$ we define
\begin{equation}
	\label{def:growth_of_height_function_round_ell}
	\eta_i:= \min\left\{\, t \geq 0 \, : \, \exists \, j \in J \text{ such that }
	t_j \in \big(g( y_j, \ell ),\, g( y_j, \ell ) + t \cdot  p_L( x_i, y_j)\big]  \, \right\},
	\quad
	i \in \widetilde{S}_{\ell}.
\end{equation}
Note that in \eqref{def:growth_of_height_function_round_ell} we could write $\min$ instead of $\inf$ because for each $i \in  \widetilde{S}_{\ell}$ the function $y \mapsto  p_L( x_i, y) $ is finitely supported (it is supported on the ball of radius $L$ around $x_i$).  
 For every $i \in \widetilde{S}_{\ell}$ there exist an almost surely unique index $\pi(i) \in J$ for which
\begin{equation}
	g( y_{\pi(i)}, \ell ) + \eta_i \cdot  p_L(x_i,y_{\pi(i)}) = t_{\pi(i)}
	\end{equation}
holds. For each $i \in \widetilde{S}_{\ell}$
\begin{equation}
    \text{we match the point $x_i$ of $\mathcal{R}_1$ to the (labeled) point $(y_{\pi(i)}, t_{\pi(i)})$ of $\mathcal{Y}$. }
\end{equation}
In words: we start from the function $g(\cdot, \ell)$ and  for each $i \in \widetilde{S}_{\ell}$ we grow $g$ inside the ball of radius $L$ centered at the point $x_i$ by increasing $t$ in the expression $g(\cdot, \ell)+ t \cdot  p_L( x_i, \cdot)$.
We grow the function around $x_i$ until the graph of the function consumes a new point $(y_{\pi(i)}, t_{\pi(i)})$ from the support of $\mathcal{Y}$, and this new point becomes the pair of $x_i$. Also note that we can do this simultaneously for all $x_i, i \in \widetilde{S}_\ell$ without ambiguities, since the supports of the functions $ p_L( x_i, \cdot), i \in \widetilde{S}_{\ell}$ are disjoint by \eqref{eq:far_apart}.

 In order to  finish round $\ell + 1$, we use the variables $\eta_i$, $i \in \widetilde{S}_{\ell}$ to define
\begin{equation}
	\label{def_of_height_function}
	g(y,\ell+1):=g(y,\ell)+\sum_{i \in \widetilde{S}_\ell } \eta_i \cdot p_L(x_i,y), \qquad y \in V.
\end{equation}
Intuitively, we explore the sub-region of  $V\times \mathbb{R}_+$ that lies below the graph of the function $g(\cdot, \ell+1)$ by the end of round $\ell+1$.
As we mentioned before,  a point $(y_j, t_j)$ from $\mathcal{Y}$ is matched to some point in $\mathcal{R}_1$ by the end of round $\ell+1$ if and only if $t_j \leq g(y_j,\ell+1)$, or, more formally, 
\begin{equation}\label{matched_under_surface_ell}
 \sum_{i  \in I_1 \setminus S_{\ell+1}} \delta_{(y_{\pi(i)}, t_{\pi(i)})} = \sum_{j \in J} \delta_{(y_j,t_j)} \mathds{1}[ t_j \leq g(y_j,\ell+1) ].
\end{equation}

\begin{definition}[Sigma-algebra $\mathcal{F}_\ell$]
	\label{def:SLT_sigma_algebras}
	Given $\ell \in \N$ let $\mathcal{F}_{\ell}$ denote the $\sigma$-field generated by $\mathcal{R}_1$, the  variables $U_{k,i}$, $k = 0, 1, \ldots, \ell$, $i \in I_1$, the random variables $\eta_i$, $i \in I_1 \setminus S_{\ell}$, the indices $\pi(i) \in J, i \in I_1 \setminus S_{\ell}$ and the points $(y_{\pi(i)}, t_{\pi(i)}),  i \in I_1 \setminus S_{\ell}$  of $\mathcal{Y}$ already matched to some points of $\mathcal{R}_1$ up to the end of the $\ell$'th round.
\end{definition}

\noindent
In words, $\mathcal{F}_{\ell}$ contains all the information about everything that has been constructed up to the end of round $\ell$ and the variables $U_{\ell,i}$, $i \in I_1$. For example, the index sets $\widetilde{S}_k$ and the functions $y \mapsto g(y,k)$ for $k = 0, \ldots, \ell$ are all measurable with respect to $\mathcal{F}_{\ell}$.

\begin{definition}[Point process  $\dot{\mathcal{Y}}_{\ell}$] For any $\ell \in \mathbb{N}$
     let us denote by $J_{\ell}$ the set of indices of the points of $\mathcal{Y}$ not matched until the end of the $\ell$'th round. Let us denote by
         \begin{equation}\dot{\mathcal{Y}}_{\ell} := \sum_{j \in J_{\ell}} \delta_{(y_j, t_j)}
         \end{equation}
         the point process of labeled points of $\mathcal{Y}$ not matched until the end of the $\ell$'th round.
\end{definition}
Note that with the above notation we have $\mathcal{Y}= \sum_{i \in I_1 \setminus S_\ell} \delta_{(y_{\pi(i)},t_{\pi(i)})} + \dot{\mathcal{Y}}_{\ell}$.

\begin{lemma}[Consequences of the soft local time method]
	\label{lemma:soft_local_time}
	For any $\ell \in \N$, given $\mathcal{F}_{\ell}$,
	\begin{enumerate}[(i)]
		\item the random variables $\eta_i, i \in \widetilde{S}_{\ell}$ are conditionally i.i.d.\ with $\mathrm{EXP}(1)$ distribution;
		\item\label{coloring_soft} the random variables $y_{\pi(i)}, i \in \widetilde{S}_{\ell}$ are conditionally independent with distribution
		\begin{equation}
		    \mathbb{P}\left(\, y_{\pi(i)} = y \, | \, \mathcal{F}_{\ell} \,  \right) = p_L(x_i, y),
		    \qquad
		    y \in V,
		\end{equation}
		\item the random variables
		 $\eta_i, i \in \widetilde{S}_{\ell}$ and  $y_{\pi(i)}, i \in \widetilde{S}_{\ell}$ are conditionally independent,
	\item\label{remaining_PPP_nice}
	the point process   $\dot{\mathcal{Y}}_\ell$ is a PPP on $V \times \mathbb{R}_+$ with intensity measure $\dot{\upsilon}^{\ell}$, where
	\begin{equation}
		\dot{\upsilon}^{\ell} (\{y\} \times [t, t+ \mathrm{d}t]) =  \ind \left[ t > g(y, \ell) \right] \, \mathrm{d} t.
	\end{equation}
\end{enumerate}
\end{lemma}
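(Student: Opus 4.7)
The plan is to prove all four statements by induction on $\ell$, with the induction hypothesis being that statement \eqref{remaining_PPP_nice} holds at level $\ell$, i.e., that conditionally on $\mathcal{F}_\ell$ the point process $\dot{\mathcal{Y}}_\ell$ is a PPP on $V \times \mathbb{R}_+$ with intensity $\dot{\upsilon}^\ell$. The base case $\ell=0$ is immediate: $g(\cdot,0)\equiv 0$, so by construction $\dot{\mathcal{Y}}_0=\mathcal{Y}$ is a PPP with intensity $\mu^V\times\lambda$, and $\mathcal{F}_0$ only contains $\mathcal{R}_1$ and the $U_{0,i}$'s, which are independent of $\mathcal{Y}$ by the way $\mathcal{Y}$ was constructed.

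For the induction step, assume the statement holds at level $\ell$ and condition on $\mathcal{F}_\ell$. The set $\widetilde{S}_\ell$ and the function $g(\cdot,\ell)$ are $\mathcal{F}_\ell$-measurable. For each $i\in\widetilde{S}_\ell$, introduce the exploration region
\begin{equation*}
R_i(t):=\bigl\{(y,s)\in V\times\R_+\,:\, g(y,\ell)<s\leq g(y,\ell)+t\cdot p_L(x_i,y)\bigr\},\qquad t\geq 0.
\end{equation*}
By \eqref{eq:far_apart} and the fact that $p_L(x_i,\cdot)$ is supported on the ball of radius $L$ centered at $x_i$, the projections of $R_i(\infty)$ and $R_{i'}(\infty)$ onto $V$ are disjoint whenever $i\neq i'\in\widetilde{S}_\ell$. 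Hence the restrictions $\dot{\mathcal{Y}}_\ell\ind[R_i(\infty)]$, $i\in\widetilde{S}_\ell$, are conditionally independent PPPs. Next I would compute, for a single $i$, that $\dot{\upsilon}^\ell(R_i(t))=t\cdot\sum_{y\in V}p_L(x_i,y)=t$, so under the time-change $t\mapsto R_i(t)$ the restricted PPP becomes a rate-$1$ Poisson process on $[0,\infty)$. This gives \eqref{def:growth_of_height_function_round_ell}: $\eta_i=\inf\{t\geq 0:\dot{\mathcal{Y}}_\ell(R_i(t))>0\}\sim\mathrm{EXP}(1)$, conditionally independent across $i\in\widetilde{S}_\ell$, proving (i). Conditionally on $\eta_i$, the unique point of $\dot{\mathcal{Y}}_\ell$ inside $R_i(\eta_i)$ lies on the ``boundary surface'' $\{(y,g(y,\ell)+\eta_i p_L(x_i,y)):y\in V\}$, and the standard coloring argument for PPPs shows that its $V$-coordinate $y_{\pi(i)}$ has distribution proportional to $p_L(x_i,\cdot)$, independently of $\eta_i$ and of the other indices; this yields (ii) and (iii).

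To close the induction, observe that $g(\cdot,\ell+1)=g(\cdot,\ell)+\sum_{i\in\widetilde S_\ell}\eta_i\, p_L(x_i,\cdot)$ by \eqref{def_of_height_function}, and the disjointness noted above ensures that the region between the two surfaces is exactly the disjoint union $\bigsqcup_{i\in\widetilde{S}_\ell}R_i(\eta_i)$. The matched points of $\dot{\mathcal{Y}}_\ell$ are exactly the (unique) point of $\dot{\mathcal{Y}}_\ell$ inside each $R_i(\eta_i)$, so $\dot{\mathcal{Y}}_{\ell+1}$ is the restriction of $\dot{\mathcal{Y}}_\ell$ to the set $\{(y,s):s>g(y,\ell+1)\}$. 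By the restriction property of Poisson processes, conditionally on $\mathcal{F}_{\ell+1}$ the point process $\dot{\mathcal{Y}}_{\ell+1}$ is a PPP with intensity $\ind[s>g(y,\ell+1)]\,\mathrm{d}s$, which is \eqref{remaining_PPP_nice} at level $\ell+1$.

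The main technical obstacle will be writing the simultaneous-exploration step cleanly: the $\eta_i$'s for different $i\in\widetilde{S}_\ell$ are defined from the same random point process $\dot{\mathcal{Y}}_\ell$, so a priori they are not obviously independent. The geometric fact \eqref{eq:far_apart} together with the support of $p_L(x_i,\cdot)$ being contained in the ball of radius $L$ is precisely what decouples them via the independence of PPP restrictions to disjoint sets; making this decoupling explicit (perhaps by parametrizing each $R_i$ via an independent rate-$1$ clock) is the only place that requires care. Everything else is a direct invocation of standard properties of Poisson point processes.
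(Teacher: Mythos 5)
Your proposal is correct, and its skeleton (induction on $\ell$, with the disjointness \eqref{eq:far_apart} of the supports of $p_L(x_i,\cdot)$, $i\in\widetilde{S}_\ell$, used to decouple the simultaneous explorations) is exactly the paper's. The difference is in how the single-index step is handled: the paper simply invokes Proposition 4.1 of \cite{PT15}, applied simultaneously to all $i\in\widetilde{S}_\ell$, which packages in one statement that $\eta_i\sim\mathrm{EXP}(1)$, that the consumed point has $V$-marginal $p_L(x_i,\cdot)$ independent of $\eta_i$, and that the points left above the new surface again form a PPP; you instead re-derive this one-step soft local time result from scratch via the mapping/time-change property of Poisson processes (pushing the intensity $\ind[s>g(y,\ell)]\,\mathrm{d}s$ forward under $u=(s-g(y,\ell))/p_L(x_i,y)$ to get a rate-$1$ marked Poisson process on $[0,\infty)$ with mark law $p_L(x_i,\cdot)$) together with the restriction theorem and the strong Markov property at the first arrival. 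Your route buys self-containedness at the cost of length, and the one place you flag as delicate — that $\eta_i$ and $y_{\pi(i)}$ are independent and that conditioning on the first arrival leaves the remainder Poisson — is precisely the content of the cited proposition, so making the product-form computation in the time-changed coordinates explicit would complete the argument; nothing in your outline would fail.
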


\begin{proof} One proves the statements of the lemma by induction on $\ell$.
	Considering any $\ell \in \N$, observe that by \eqref{eq:far_apart}, the supports of the functions $y \mapsto p_L(x_i, y)$, $i \in \widetilde{S}_{\ell}$ are disjoint. As a consequence, in the $\ell$'th round we can use Proposition $4.1.$ of \cite{PT15} simultaneously for all indices from $\widetilde{S}_{\ell}$ to conclude the proof of Lemma \ref{lemma:soft_local_time}.
\end{proof}

Now the proof of \eqref{eq:almost_surely_finds_a_pair} is straightforward. Let us condition on $\mathcal{R}_1$ and let us also fix $i \in I_1$.
Denote by $N$ the number of points in $\mathcal{R}_1$ that are closer than $2L$ to  $x_i$:
\begin{equation} N:= \sum_{i' \in I_1} \ind[\, \distance(x_i, x_{i'} ) \leq 2L \,].
\end{equation}
Observe that $N<+\infty$ holds almost surely.
Moreover, we have
\begin{equation}
\mathbb{P}\left(\,i \in \widetilde{S}_\ell\; \middle| \; \mathcal{F}_{\ell-1}\,  \right) \stackrel{ \eqref{the_smallest_one} }{\geq} \frac{1}{N} \mathds{1}[i \in S_\ell],
\end{equation}
 since the random variables $U_{\ell,i}, i \in I_1, \ell \in \mathbb{N}$ are i.i.d.
Thus the number of rounds it takes for us to find a pair for $i$ is stochastically dominated by a $\mathrm{GEO}(1/N)$ random variable, therefore it is almost surely finite and thus \eqref{eq:almost_surely_finds_a_pair} holds.

Recalling the definition of $g(y,\infty)$ from	\eqref{g_infty} we note that  if we let $\ell \to \infty$ in \eqref{matched_under_surface_ell} then we obtain that the  point process of points of $\mathcal{Y}$ that are matched to a point in $\mathcal{R}_1$ consists of those points of $\mathcal{Y}$ that are below the graph of $g(\cdot, \infty)$:
\begin{equation}\label{Y_under_the_surface}
   \sum_{i \in I_1} \delta_{(y_{\pi(i)}, t_{\pi(i)})} = \sum_{j \in J} \delta_{(y_j,t_j)} \mathds{1}[ t_j \leq g(y_j,\infty) ].
\end{equation}

Let us introduce the $\sigma$-field
\begin{equation}
    \mathcal{F} := \sigma \left( \bigcup_{\ell \geq 0} \mathcal{F}_{\ell} \right).
\end{equation}

\noindent	Let us denote by $\pi(I_1):=\{ \pi(i), \, i \in I_1\}$ the subset of the index set $J$ which consists of the pairs matched to some index in $I_1$. Let us introduce the point process of unmatched points $\dot{\mathcal{Y}}$:
	\begin{equation}
\label{Y_dot}
	\dot{\mathcal{Y}} :=  \sum_{j \in J \setminus \pi(I_1)} \delta_{(y_j, t_j)} \stackrel{\eqref{Y_under_the_surface}}{=} \sum_{j \in J} \delta_{(y_j,t_j)} \mathds{1}[ t_j > g(y_j,\infty) ].
	\end{equation}	

\noindent
Our next result follows from Lemma \ref{lemma:soft_local_time}.
\begin{corollary}
	\label{corollary:PPP_of_the_unmatched} $ $

\begin{enumerate}[(i)]	
	\item\label{maradek_PPP_infty}
	 Given $\mathcal{F}$, the point process $\dot{\mathcal{Y}}$ is a PPP on $V \times \R_+$ with intensity measure $\dot{\upsilon}$, where
	\begin{equation}
		\dot{\upsilon}(\{y\} \times [t, t+ \mathrm{d}t]) = \ind[t > g(y, \infty)] \, \mathrm{d} t.
	\end{equation}
\item\label{R_is_a_PPP_as_stated} 	The  point process
\begin{equation}
	\label{PPP_of_connected_pairs}
	\mathcal{R}:= \sum_{i \in I_1} \delta_{(x_i,y_{\pi(i)})}
\end{equation}
is a Poisson point process on $V \times V$ with intensity measure  $\nu^{(2)}(\{ (x,y)\} )= \alpha \cdot p_L(x,y)$.
	\end{enumerate}
\end{corollary}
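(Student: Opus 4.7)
The plan is to obtain both parts as a limit as $\ell \to \infty$ of Lemma \ref{lemma:soft_local_time}, which is the ``one-round'' version of the same statements.

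For part \eqref{maradek_PPP_infty}, I would verify the conditional Laplace functional identity
\begin{equation*}
\mathbb{E}\bigl[Y e^{-\dot{\mathcal{Y}}(f)}\bigr] = \mathbb{E}\Bigl[Y \exp\Bigl( -\textstyle\int (1-e^{-f}) \, d\dot{\upsilon} \Bigr)\Bigr]
\end{equation*}
for every bounded $\mathcal{F}$-measurable $Y$ and every bounded non-negative measurable $f$ on $V \times \R_+$ whose support is contained in $V' \times [0,t]$ for some finite $V' \subset V$ and $t < \infty$ (this suffices by a monotone class argument and uniqueness of Laplace functionals). Note that $\dot{\mathcal{Y}}_\ell \searrow \dot{\mathcal{Y}}$ monotonically and $g(\cdot,\ell) \nearrow g(\cdot,\infty)$, hence $\dot{\upsilon}^\ell \searrow \dot{\upsilon}$. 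Let $Y_\ell := \mathbb{E}[Y \mid \mathcal{F}_\ell]$; by Lévy's upward theorem $Y_\ell \to Y$ almost surely and in $L^1$, while $\|Y_\ell\|_\infty \leq \|Y\|_\infty$. Conditioning on $\mathcal{F}_\ell$ and applying Lemma \ref{lemma:soft_local_time}\eqref{remaining_PPP_nice} we get
\begin{equation*}
\mathbb{E}\bigl[Y_\ell \, e^{-\dot{\mathcal{Y}}_\ell(f)}\bigr] = \mathbb{E}\Bigl[Y_\ell \, \exp\Bigl( -\textstyle\int (1-e^{-f})\, d\dot{\upsilon}^\ell \Bigr)\Bigr].
\end{equation*}
The left-hand side converges to $\mathbb{E}[Y e^{-\dot{\mathcal{Y}}(f)}]$ (by splitting $Y_\ell = Y + (Y_\ell - Y)$, using boundedness of $e^{-\dot{\mathcal{Y}}_\ell(f)}$ and the monotone convergence $\dot{\mathcal{Y}}_\ell(f) \searrow \dot{\mathcal{Y}}(f)$), and the right-hand side converges to the required expression by the same kind of argument together with $\int(1-e^{-f})\, d\dot{\upsilon}^\ell \nearrow \int(1-e^{-f})\, d\dot{\upsilon}$.

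For part \eqref{R_is_a_PPP_as_stated}, I would show that, conditionally on the $\sigma$-algebra $\mathcal{G}$ generated by $\mathcal{R}_1$ and the auxiliary variables $(U_{\ell,i})_{\ell \in \N, i \in I_1}$, the family $(y_{\pi(i)})_{i \in I_1}$ is independent with $y_{\pi(i)} \sim p_L(x_i,\cdot)$. This is proved by induction on $\ell$: the partition $S_\ell = \widetilde{S}_\ell \sqcup S_{\ell+1}$ is $\mathcal{G}$-measurable, the $y_{\pi(i)}$ for $i \in I_1 \setminus S_\ell$ are $\mathcal{F}_\ell$-measurable by construction, and Lemma \ref{lemma:soft_local_time}\eqref{coloring_soft} yields conditional independence with the correct marginals for the newly matched indices $i \in \widetilde{S}_\ell$. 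Since \eqref{eq:almost_surely_finds_a_pair} guarantees that every $i \in I_1$ is matched at an almost surely finite round, this inductive statement pieces together to give the full claim. Given that, the marking theorem for Poisson point processes (applied to the PPP $\mathcal{R}_1$ of intensity $\alpha \, \mu^V$, with each atom $x_i$ independently marked by $y_{\pi(i)} \sim p_L(x_i,\cdot)$) directly yields that $\mathcal{R} = \sum_{i \in I_1} \delta_{(x_i, y_{\pi(i)})}$ is a PPP on $V \times V$ with intensity $\alpha \cdot p_L(x,y)$.

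The main obstacle is the careful handling of the conditioning in part \eqref{maradek_PPP_infty}: the target $\sigma$-algebra $\mathcal{F} = \sigma(\bigcup_\ell \mathcal{F}_\ell)$ is strictly larger than each $\mathcal{F}_\ell$, so the conditional PPP property cannot be read off directly from Lemma \ref{lemma:soft_local_time}\eqref{remaining_PPP_nice}; the Lévy upward theorem combined with the monotonicity of $\dot{\mathcal{Y}}_\ell$ and $\dot{\upsilon}^\ell$ is the key device that lets us pass to the limit. Part \eqref{R_is_a_PPP_as_stated} is then essentially bookkeeping on top of the marking theorem.
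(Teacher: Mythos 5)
Your proposal is correct and takes essentially the same route as the paper: the paper also obtains part \eqref{maradek_PPP_infty} by letting $\ell \to \infty$ in Lemma \ref{lemma:soft_local_time}\eqref{remaining_PPP_nice} (you simply supply the Laplace-functional/L\'evy-upward details that the paper leaves implicit), and part \eqref{R_is_a_PPP_as_stated} from $\mathcal{R}_1 \sim \mathcal{P}_{\alpha,1}$, Lemma \ref{lemma:soft_local_time}\eqref{coloring_soft} and the coloring (marking) property of Poisson point processes. The only immaterial slip is the arrow direction: since $\dot{\upsilon}^{\ell}$ decreases to $\dot{\upsilon}$, the integrals $\int (1-e^{-f})\,\mathrm{d}\dot{\upsilon}^{\ell}$ converge downward, not upward, to $\int (1-e^{-f})\,\mathrm{d}\dot{\upsilon}$.
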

\begin{proof} Using \eqref{g_infty}, the property stated in \eqref{maradek_PPP_infty} follows if we let $\ell \to \infty$ in  Lemma \ref{lemma:soft_local_time}\eqref{remaining_PPP_nice}.
In order to see that \eqref{R_is_a_PPP_as_stated} holds, we use our assumption that $\mathcal{R}_1  \sim \mathcal{P}_{\alpha,1}$,
Lemma \ref{lemma:soft_local_time}\eqref{coloring_soft}  and the coloring property of Poisson point processes (cf.\ \cite[Section 5.2]{DRS14}).
\end{proof}

This already shows that the statements \eqref{output_ppp_pair}, \eqref{matching_ppp_equivariant} and \eqref{first_projection_is_R1} of Lemma \ref{lemma:matching_iid_PPP_on_V} hold with our construction of $\mathcal{R}$. It remains to show that the error bounds
\eqref{matching_ppp_error_bound_from_surface} and \eqref{matching_ppp_error_bound_from_remaining} hold. In order to do so, we need some further preparations.

\subsection{Bounds on the number of unmatched points}
\label{subsetion_matching_error_bounds}
	
In the previous section (see \eqref{def_of_height_function} and Lemma \ref{lemma:soft_local_time}) we have shown that
\begin{equation}
	\label{eq:g_same_law_as}
\begin{array}{c}	g(y,\infty) = \sum_{i \in I_1} \eta_i \cdot p_L(x_i,y), \text{ where} \\ \text{$\eta_i, i \in I_1$ are conditionally i.i.d.\ with $\mathrm{EXP}(1)$ distribution given $\mathcal{R}_1$, } \end{array}
\end{equation}
where $\mathcal{R}_1= \sum_{i \in I_1} \delta_{x_i} \sim \mathcal{P}_{\alpha,1}$.

\begin{lemma}[Expectation and variance of the height function $g(\cdot,\infty)$]\label{lemma_g_expect_var}
We have
\begin{equation}
\mathbb{E}[ g(y,\infty)]=\alpha, \qquad \Var \left( g(y, \infty) \right)=	2\alpha \cdot p_{2L}(y,y)=2\alpha \cdot p_{2L}(o,o).
\end{equation}
    \end{lemma}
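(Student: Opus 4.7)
\medskip

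\noindent\textbf{Proof plan.} The representation \eqref{eq:g_same_law_as} shows that $g(y,\infty)$ is a linear functional of the marked Poisson point process
\begin{equation*}
    \mathcal{M} := \sum_{i \in I_1} \delta_{(x_i,\eta_i)}
\end{equation*}
on $V\times \mathbb{R}_+$. Since $\mathcal{R}_1 \sim \mathcal{P}_{\alpha,1}$ and the marks $\eta_i$ are conditionally i.i.d.\ with $\mathrm{EXP}(1)$ distribution given $\mathcal{R}_1$, the marking theorem says that $\mathcal{M}$ is a PPP on $V\times \mathbb{R}_+$ whose intensity measure $\mathrm{m}$ has density $\alpha\,e^{-\eta}\mathds{1}[\eta>0]$ with respect to $\mu^V\otimes\lambda$. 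The plan is to compute $\mathbb{E}[g(y,\infty)]$ and $\Var(g(y,\infty))$ by applying Campbell's formula (for the first and second moments of a sum of a non-negative function over a PPP, see e.g.\ \cite{DRS14}) to the function $(x,\eta)\mapsto \eta\cdot p_L(x,y)$.

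For the expectation, Campbell's formula gives
\begin{equation*}
    \mathbb{E}[g(y,\infty)] = \int_{V\times\mathbb{R}_+} \eta\, p_L(x,y)\, \mathrm{m}(dx,d\eta)
    = \alpha\cdot \mathbb{E}[\eta_1]\cdot \sum_{x\in V} p_L(x,y)
    = \alpha,
\end{equation*}
where the key step is that $\sum_{x\in V}p_L(x,y)=\sum_{x\in V}p_L(y,x)=1$ by the symmetry relation \eqref{heat_kernel_symmetric} and the fact that $p_L(y,\cdot)$ is a probability distribution.

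For the variance, Campbell's formula for the second factorial moment yields
\begin{equation*}
    \Var(g(y,\infty)) = \int_{V\times\mathbb{R}_+} \big(\eta\, p_L(x,y)\big)^2\, \mathrm{m}(dx,d\eta)
    = \alpha\cdot \mathbb{E}[\eta_1^2]\cdot \sum_{x\in V} p_L(x,y)^2
    = 2\alpha\cdot \sum_{x\in V} p_L(x,y)^2.
\end{equation*}
I would then recognize $\sum_{x\in V}p_L(x,y)^2$ as a diagonal heat kernel value: using symmetry \eqref{heat_kernel_symmetric} followed by the Chapman--Kolmogorov identity,
\begin{equation*}
    \sum_{x\in V}p_L(x,y)^2 = \sum_{x\in V}p_L(y,x)p_L(x,y) = p_{2L}(y,y).
\end{equation*}
Finally, transitivity of $G$ together with \eqref{heat_kernel_symmetric} gives $p_{2L}(y,y)=p_{2L}(o,o)$ for every $y\in V$ (picking any $\varphi\in\Gamma$ with $\varphi(y)=o$ and using automorphism invariance of the random walk law), which finishes the proof. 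No step presents a genuine obstacle: the only minor subtlety is making sure that Campbell's formula is legitimately applied, but this is justified by the non-negativity of the integrand together with the finiteness of both moments established above.
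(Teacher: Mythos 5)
Your proof is correct, and it reaches the same two identities the paper relies on: $\sum_{x\in V}p_L(x,y)=1$ and $\sum_{x\in V}p_L(x,y)^2=p_{2L}(y,y)$ via \eqref{heat_kernel_symmetric} and Chapman--Kolmogorov, with transitivity giving $p_{2L}(y,y)=p_{2L}(o,o)$. The only difference is packaging: the paper conditions on $\mathcal{R}_1$ and uses the laws of total expectation and total variance, so the factor $2$ in the variance arises as the sum of two equal contributions, $\mathbb{E}\left[\Var\left(g(y,\infty)\mid\mathcal{R}_1\right)\right]$ and $\Var\left(\mathbb{E}\left[g(y,\infty)\mid\mathcal{R}_1\right]\right)$, each equal to $\alpha\sum_{x\in V}p_L(x,y)^2$; you instead invoke the marking theorem to view $\sum_{i\in I_1}\delta_{(x_i,\eta_i)}$ as a PPP on $V\times\mathbb{R}_+$ with intensity $\alpha\,e^{-\eta}$ relative to $\mu^V\otimes\lambda$ and apply Campbell's formula, so the factor $2$ comes in one stroke from $\mathbb{E}[\eta_1^2]=2$. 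Both arguments rest on the same input, namely that $\mathcal{R}_1\sim\mathcal{P}_{\alpha,1}$ and that the $\eta_i$ are conditionally i.i.d.\ $\mathrm{EXP}(1)$ given $\mathcal{R}_1$ as recorded in \eqref{eq:g_same_law_as}; your route is marginally slicker but requires citing the marking theorem and the second-moment (variance) form of Campbell's formula, whereas the paper's conditioning argument uses only elementary Poisson moment facts. One cosmetic quibble: the variance identity $\Var\left(\sum_i f(x_i,\eta_i)\right)=\int f^2\,\mathrm{d}\mathrm{m}$ is the Campbell variance formula rather than a statement about the second \emph{factorial} moment, so you should name it accordingly, but the computation you perform with it is the right one.
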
	
\begin{proof}
 In the case of the expectation we have
\begin{equation}
	\label{expectation_of_g}
	\mathbb{E}[ g(y,\infty)]
	 =
	\mathbb{E} \left[ \sum_{i \in I_1} \eta_i \cdot p_L(x_i,y) \right] \stackrel{(*)}{=}
	\mathbb{E} \left[ \sum_{i \in I_1} p_L(x_i,y) \right]
	 \stackrel{(\circ) }{=}
	\alpha \cdot \sum_{x \in V}  p_L \left( x, y \right) \stackrel{\eqref{heat_kernel_symmetric} }{=}
	\alpha,
\end{equation}
where in $(*)$ we used the law of total expectation together with \eqref{eq:g_same_law_as}, and in $(\circ)$ we used
 that $\mathcal{R}_1 \sim \mathcal{P}_{\alpha,1}$. In the case of the variance we have
	\begin{multline}
\Var \left( g(y, \infty) \right)\stackrel{(**)}{=}
\mathbb{E}\left[ \Var \left( g(y, \infty) \, | \, \mathcal{R}_1 \right) \right]+
\Var\left( \mathbb{E}\left[ g(y, \infty) \, | \, \mathcal{R}_1 \right]  \right)
\stackrel{ \eqref{eq:g_same_law_as} }{=}
\mathbb{E}\left[ \sum_{i \in I_1}  p^2_L \left( x_i , y \right) \right] +\\
\Var\left[  \sum_{i \in I_1}  p_L \left( x_i , y \right)  \right] \stackrel{(\circ \circ )}{=}
2 \alpha \sum_{x \in V }    p^2_L \left( x , y \right) \stackrel{\eqref{heat_kernel_symmetric} }{=} 2 \alpha \sum_{x \in V }  p_L \left( y , x \right)    p_L \left( x , y \right) \stackrel{( \bullet \bullet)}{=} 2 \alpha \cdot  p_{2L}(y,y),
\end{multline}
where in $(**)$ we used the law of total variance, $(\circ \circ)$  follows from the fact that $\mathcal{R}_1 \sim \mathcal{P}_{\alpha,1}$  and $( \bullet \bullet)$ holds by the Chapman-Kolmogorov equations.
\end{proof}

\begin{proof}[Proof of Lemma \ref{lemma:matching_iid_PPP_on_V}\eqref{matching_ppp_error_bound_from_surface}]
Note that we have
\begin{equation}\label{R_2_p2R_and_Y}
\mathcal{R}_2(\{ o \}) \stackrel{ \eqref{Y_alpha_from_Y} }{=} \mathcal{Y}( \{o\} \times [0,\alpha] ),
\qquad
\textbf{p}_2(\mathcal{R})(\{ o \}) \stackrel{\eqref{Y_under_the_surface},\eqref{PPP_of_connected_pairs} }{=}\mathcal{Y}( \{o\} \times [0,g(o, \infty)] ),
\end{equation}
which implies that the number of unmatched points of $\mathcal{R}_2$ at vertex $o$ is equal to
\begin{equation}
  \Big( \mathcal{R}_2(\{ o \} ) - \textbf{p}_2(\mathcal{R})(\{ o \}) \Big)_+ =
  \mathcal{Y}( \{o\} \times ( g(o, \infty) \wedge \alpha ,  \alpha  ] )\stackrel{ \eqref{Y_dot} }{=} \dot{\mathcal{Y}}( \{ o \} \times [0,\alpha]).
\end{equation}
  We can thus write
\begin{multline}
  \mathbb{E}\left[ \Big( \mathcal{R}_2(\{ o \} ) - \textbf{p}_2(\mathcal{R})(\{ o \}) \Big)_+   \right] =
   \mathbb{E}\left[ \dot{\mathcal{Y}}( \{ o \} \times [0,\alpha] )     \right]=  \mathbb{E}\left[ \mathbb{E}\left(  \dot{\mathcal{Y}}( \{ o \} \times [0,\alpha])\; \middle| \; \mathcal{F} \right)  \right]
  \stackrel{(*)}{=}\\
	    \E \left[ \left( \alpha - g(o, \infty) \right)_+ \right]\leq
 \E \left[ | \alpha - g(o, \infty)| \right]
 \stackrel{(**)}{\leq}
	    \sqrt{ \E \left[ (\alpha - g(o, \infty))^2 \right] }
	    \stackrel{(\bullet) }{\leq}
	    \sqrt{2 \alpha \cdot p_{2L}(o,o)},
\end{multline}
where in $(*)$ we used Corollary \ref{corollary:PPP_of_the_unmatched}\eqref{maradek_PPP_infty}, in $(**)$ we used Jensen's inequality and in $(\bullet)$
we used Lemma \ref{lemma_g_expect_var}. The proof of Lemma \ref{lemma:matching_iid_PPP_on_V}\eqref{matching_ppp_error_bound_from_surface} is complete.
\end{proof}

\begin{proof}[Proof of Lemma \ref{lemma:matching_iid_PPP_on_V}\eqref{matching_ppp_error_bound_from_remaining}]
Let $ M:=  \mathcal{R}_2(\{ o \} ) \wedge \textbf{p}_2(\mathcal{R})(\{ o \}) $ denote the number of matched points of $\mathcal{R}_2$ at vertex $o$. We have
\begin{equation}\label{min_expect_lower}
\mathbb{E}[M]= \mathbb{E}\left[ \mathcal{R}_2(\{ o \} )- \Big( \mathcal{R}_2(\{ o \} ) - \textbf{p}_2(\mathcal{R})(\{ o \}) \Big)_+ \right] \stackrel{(\circ)}{\geq}
\alpha - \sqrt{2 \alpha \cdot p_{2L}(o,o)},
\end{equation}
where in $(\circ)$ we used that $ \mathcal{R}_2  \sim \mathcal{P}_{\alpha,1}$ and Lemma \ref{lemma:matching_iid_PPP_on_V}\eqref{matching_ppp_error_bound_from_surface}.

From this we obtain the desired upper bound on the number of unmatched points of $\mathcal{R}_1$ that have a phantom pair located at vertex $o$: 
\begin{multline*}
   \mathbb{E}\left[ \Big(   \textbf{p}_2(\mathcal{R})(\{ o \}) -\mathcal{R}_2(\{ o \} )  \Big)_+   \right]=
   \mathbb{E}\left[  \textbf{p}_2(\mathcal{R})(\{ o \}) -M  \right] \stackrel{ \eqref{min_expect_lower} }{\leq} \\ \mathbb{E}\left[  \textbf{p}_2(\mathcal{R})(\{ o \})  \right] - \left( \alpha - \sqrt{2 \alpha \cdot p_{2L}(o,o)} \right) \stackrel{(\circ \circ)}{=}
   \alpha-  \left( \alpha - \sqrt{2 \alpha \cdot p_{2L}(o,o)} \right)=
    \sqrt{2 \alpha \cdot p_{2L}(o,o)},
\end{multline*}
where $(\circ \circ)$ holds because $ \textbf{p}_2(\mathcal{R}) \sim \mathcal{P}_{\alpha,1}$, as we now explain.
We know from  Lemma \ref{lemma:matching_iid_PPP_on_V}\eqref{output_ppp_pair} that $\mathcal{R}$ is a PPP on $V \times V$ with intensity measure $\nu^{(2)}(\{ (x,y)\} ):= \alpha \cdot p_L(x,y)$, thus by the mapping property of Poisson point processes (cf.\ \cite[Section 5.2]{DRS14})  we obtain that $ \textbf{p}_2(\mathcal{R})$  is a PPP on $V$ with intensity measure $\nu^{(1)}$, where
\begin{equation*}
  \nu^{(1)}(\{ y \}  ):= \nu^{(2)}\big( (\textbf{p}_2)^{-1} (\{ y \})  \big)= \alpha \sum_{x \in V} p_L(x,y)  \stackrel{\eqref{heat_kernel_symmetric}}{=} \alpha \sum_{x \in V} p_L(y,x) = \alpha, \quad  y \in V.
\end{equation*}
 Thus  $ \textbf{p}_2(\mathcal{R}) \sim \mathcal{P}_{\alpha,1}$ follows from Definition \ref{def_PPP_on_V}.  The proof of Lemma \ref{lemma:matching_iid_PPP_on_V}\eqref{matching_ppp_error_bound_from_remaining} is complete.
\end{proof}
The proof of Lemma \ref{lemma:matching_iid_PPP_on_V} is complete.

\section{Doubling the length of unlabeled finite-length interlacements}\label{section_mating_of_worms}

The goal of this section is to  prove Lemma \ref{lemma:coupling_lemma}. In Section \ref{subsection:decorating_trajectories_with_labels} we construct a coupling of a PPP with distribution  $\mathcal{P}_{v,T}$ and a PPP with distribution $\mathcal{P}_{v/2,2T}$ that satisfies  properties \eqref{mating_correct_distribution_2T} and \eqref{mating_equivariant} of Lemma \ref{lemma:coupling_lemma}. In Section \ref{subsection_error_bound_of _local_map} we show that the coupling that we constructed also satisfies the error bound stated in Lemma \ref{lemma:coupling_lemma}\eqref{mating_error_bound}.

\subsection{Construction of the coupling}
\label{subsection:decorating_trajectories_with_labels}
	
We are given $\mathcal{X}^{v,T} \sim \mathcal{P}_{v,T} $ (cf.\ Definition \ref{def_PPP_of_worms}). Using $\mathcal{X}^{v,T}$ and some additional i.i.d.\ randomness on the vertex set $V$ of $G$ as ingredients, we will  create a point process  $\mathcal{X}^{v/2,2T}$ with distribution $\mathcal{P}_{v/2,2T} $ which is coupled to $\mathcal{X}^{v,T}$ in a way that the two point processes are close to each other (locally) with high enough probability.
	
\medskip

Given  the PPP $\mathcal{X}^{v,T}=\sum_{i \in I} \delta_{w_i}  \sim \mathcal{P}_{v,T}$, let us toss a fair coin for each finite trajectory $w_i, i \in I$ and note that this can be done in a factor of i.i.d.\ fashion.
Using this ``coloring'' we can write
\begin{equation}\label{X1_X2_sum_is_XvT}
\mathcal{X}^{v,T}= \mathcal{X}_1 + \mathcal{X}_2,
\end{equation}
where $\mathcal{X}_1$ and  $\mathcal{X}_2$ are i.i.d.\ with distribution $\mathcal{P}_{v/2, T}$. Let us introduce the notation
\begin{equation}
    \label{notation:copies_of_half_worms}
    \mathcal{X}_1 := \sum_{i \in I_1} \delta_{ w_i^1 }, \qquad
    \mathcal{X}_2 :=  \sum_{i \in I_2} \delta_{ w_i^2 }.
\end{equation}
We will  pair (most of) the trajectories of $\mathcal{X}_1$ to (most of) the trajectories of  $\mathcal{X}_2$ in a way that the matched pairs (mostly) look like the first and the second halves of a trajectory of a copy of $\mathcal{X}^{v/2,2T}$. Recalling the notation introduced in \eqref{notation:initial_terminal}, we define
\begin{equation}\label{R_1_R_2_from_X_1_X_2}
  \mathcal{R}_1:= \terminal\left(  \mathcal{X}_1 \right), \qquad \mathcal{R}_2:= \initial\left(  \mathcal{X}_2 \right).
\end{equation}
In words: $\mathcal{R}_1$ is the point process of the terminal points of the trajectories of $\mathcal{X}_1$ and $\mathcal{R}_2$ is the point process of the initial points of the trajectories of $\mathcal{X}_2$.  Let us also make the following observation, which follows from Claim \ref{claim_worms_random_walks}.

\begin{claim}[Pinning down one endpoint of trajectories] \label{claim_conditional_indep_of_X_12_given_R_12}
Conditional on $\mathcal{R}_1$ and $\mathcal{R}_2$ (cf.\ \eqref{R_1_R_2_from_X_1_X_2}),
\begin{enumerate}[(i)]
\item\label{balaton1}  the trajectories of $\mathcal{X}_2$   are
 distributed as the first $T-1$ steps of a simple random walk  on $G$ with the points of the point process $\mathcal{R}_2$  as starting points,
 \item\label{balaton2}  the time-reversals of the trajectories of $\mathcal{X}_1$   are
 distributed as the first $T-1$ steps of a simple random walk  on $G$ with the points of the point process $\mathcal{R}_1$  as starting points,
 \item\label{balaton3} all of these random walk trajectories are independent of each other.
 \end{enumerate}
\end{claim}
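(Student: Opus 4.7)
\emph{My proof plan.} The plan is to reduce all three statements to Claim \ref{claim_worms_random_walks} (the random walk description of $\mathcal{P}_{v,T}$) applied to two carefully chosen PPPs on $W_T$. Part \eqref{balaton1} will be essentially a direct invocation, part \eqref{balaton2} will use time-reversibility via a bijective reversal map on $W_T$, and part \eqref{balaton3} will combine the independence of $\mathcal{X}_1$ and $\mathcal{X}_2$ with the within-cloud conditional independence from Claim \ref{claim_worms_random_walks}. I do not expect any serious obstacle; the main point is just to notice that time-reversal is a measure-preserving bijection of $W_T$, which is clean because $G$ is $d$-regular.

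For \eqref{balaton1}, I will argue as follows. By \eqref{X1_X2_sum_is_XvT} and the coloring property of Poisson point processes (cf.\ \cite[Section 5.2]{DRS14}), $\mathcal{X}_2 \sim \mathcal{P}_{v/2, T}$ and is independent of $\mathcal{X}_1$. Since $\mathcal{R}_2 = \initial(\mathcal{X}_2)$ by \eqref{R_1_R_2_from_X_1_X_2}, Claim \ref{claim_worms_random_walks} applied to $\mathcal{X}_2$ immediately gives that, conditional on $\mathcal{R}_2$, the trajectories of $\mathcal{X}_2$ are distributed as the first $T-1$ steps of i.i.d.\ simple random walks with starting points given by the points of $\mathcal{R}_2$.

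For \eqref{balaton2}, the key device will be the time-reversal map $\rho: W_T \to W_T$ defined by $(\rho(w))(n) := w(T-1-n)$ for $0 \leq n \leq T-1$. This is a bijection of $W_T$ that swaps $\initial$ and $\terminal$, and it obviously preserves the counting measure $\mu^{W_T}$. Letting $\rho$ act on point measures coordinate-wise, i.e., $\rho(\mathcal{X}_1) := \sum_{i \in I_1} \delta_{\rho(w^1_i)}$, the mapping property of Poisson point processes shows that $\tilde{\mathcal{X}}_1 := \rho(\mathcal{X}_1)$ is a PPP on $W_T$ with the same intensity measure $\frac{v}{2} \cdot d^{1-T} \mu^{W_T}$ as $\mathcal{X}_1$, so $\tilde{\mathcal{X}}_1 \sim \mathcal{P}_{v/2, T}$. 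Moreover, by construction the trajectories of $\tilde{\mathcal{X}}_1$ are the time-reversals of those of $\mathcal{X}_1$, and $\initial(\tilde{\mathcal{X}}_1) = \terminal(\mathcal{X}_1) = \mathcal{R}_1$ by \eqref{R_1_R_2_from_X_1_X_2}. Applying Claim \ref{claim_worms_random_walks} to $\tilde{\mathcal{X}}_1$ now yields exactly \eqref{balaton2}.

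Finally, \eqref{balaton3} will follow by combining two independence statements: first, the coloring property makes $\mathcal{X}_1$ and $\mathcal{X}_2$ (hence also $\tilde{\mathcal{X}}_1$ and $\mathcal{X}_2$) independent, so their conditional laws factorize given the pair $(\mathcal{R}_1, \mathcal{R}_2)$; second, Claim \ref{claim_worms_random_walks} applied within each of $\mathcal{X}_2$ and $\tilde{\mathcal{X}}_1$ gives that inside each family the random walk trajectories are conditionally independent given their respective starting points. Together these yield the full joint conditional independence claimed in \eqref{balaton3}.
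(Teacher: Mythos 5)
Your proof is correct and follows essentially the same route as the paper, which simply derives the claim from Claim \ref{claim_worms_random_walks} together with the independence of $\mathcal{X}_1$ and $\mathcal{X}_2$ coming from the coloring property. Your explicit use of the time-reversal involution on $W_T$, which preserves the intensity measure $\tfrac{v}{2}\, d^{1-T}\mu^{W_T}$, is just a clean way of spelling out the time-reversibility argument the paper leaves implicit for part \eqref{balaton2}.
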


Both $\mathcal{R}_1$ and $\mathcal{R}_2$ (cf.\ \eqref{R_1_R_2_from_X_1_X_2}) have law $\mathcal{P}_{v/2,1}$ by Claim
    \ref{claim:same_law_after_RW_shifts}\eqref{nyassz}, moreover $\mathcal{R}_1$ and $\mathcal{R}_2$ are independent (since the same holds for $\mathcal{X}_1$ and $\mathcal{X}_2$). Thus, if we choose $\alpha:=v/2$ and
     \begin{equation}\label{optimal_L_choice}
       L:= \left\lceil T^{4/7} \cdot v^{-2/7} \right\rceil,
     \end{equation}
     then we can apply Lemma \ref{lemma:matching_iid_PPP_on_V}  using $\mathcal{R}_1$ and $\mathcal{R}_2$ (as well as some auxiliary i.i.d.\ randomness on  $V$) as inputs. The output is a Poisson point process $\mathcal{R}$ on $V \times V$ with intensity measure  $\nu^{(2)}(\{ (x,y)\} )= \frac{v}{2} \cdot p_L(x,y)$. As we will see, the seemingly arbitrary choice of $L$ in \eqref{optimal_L_choice} will turn out to minimize the value of an error term.

By the definition \eqref{random_walk_transitions} of $p_L(x,y)$,  one can use
auxiliary i.i.d.\ randomness on $V$ to create in a factor of i.i.d.\ fashion a Poisson point process
\begin{equation}\label{X_star_satisfies}
 \mathcal{X}' \sim \wormsppp_{v/2,L+1} \quad
\text{satisfying} \quad
  \initial\left( \mathcal{X}'\right) = \textbf{p}_1(\mathcal{R}), \qquad  \terminal\left( \mathcal{X}'\right) = \textbf{p}_2(\mathcal{R}).
    \end{equation}
 In words: we connect the  vertices $x_i$ and  $y_i$ of each pair $(x_i,y_i)$ of $\mathcal{R}$ with a random walk trajectory that performs $L$ steps (i.e.,  has length $L+1$), starts at $x_i$,  and is conditioned to be at $y_i$ at time $L$. We omit the technical details of this construction, which uses Lemma \ref{lemma_atlas_factor_of_iid} just like in the case of the proof of Claim \ref{claim_QW_exists_then_fiid_triv}.
 
\noindent Note that one can write $ \mathcal{X}'= \sum_{i \in I_1} \delta_{w'_i} $, where $I_1$ is the same index set as the one that appears in $\mathcal{X}_1=\sum_{i \in I_1} \delta_{ w_i^1 }$ (cf.\ \ref{notation:copies_of_half_worms}), since
\begin{equation}\label{X_prime_indexed_by_I1}
\initial\left( \mathcal{X}' \right) \stackrel{ \eqref{X_star_satisfies} }{=}  \textbf{p}_1(\mathcal{R}) \stackrel{ \text{Lemma \ref{lemma:matching_iid_PPP_on_V}\eqref{first_projection_is_R1}} }{=} \mathcal{R}_1
\stackrel{ \eqref{R_1_R_2_from_X_1_X_2}}{=} \terminal\left(  \mathcal{X}_1 \right).
\end{equation}
We have created $\mathcal{R}$ and then $\mathcal{X}'$ from $\mathcal{R}_1$ and $\mathcal{R}_2$ using auxiliary i.i.d.\ randomness, thus we can use Claim \ref{claim_conditional_indep_of_X_12_given_R_12} to make the following observation.

\begin{claim}\label{claim_XXX_condindep_given_RR}
 $\mathcal{X}_1$, $\mathcal{X}_2$ and $\mathcal{X}'$ are conditionally independent of each other given $\mathcal{R}_1$ and $\mathcal{R}_2$.
\end{claim}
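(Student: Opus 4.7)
The plan is to combine two observations: first, that $(\mathcal{X}_1, \mathcal{X}_2)$ is already known to be conditionally independent given $(\mathcal{R}_1, \mathcal{R}_2)$; second, that $\mathcal{X}'$ depends on $(\mathcal{X}_1, \mathcal{X}_2)$ only through $(\mathcal{R}_1, \mathcal{R}_2)$, with all remaining randomness independent of $\mathcal{X}^{v,T}$ and of the coin-flip coloring used in \eqref{X1_X2_sum_is_XvT}. Combining these two facts yields the joint conditional independence of $\mathcal{X}_1$, $\mathcal{X}_2$ and $\mathcal{X}'$ given $(\mathcal{R}_1, \mathcal{R}_2)$.

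First I would record the conditional independence of $\mathcal{X}_1$ and $\mathcal{X}_2$ given $(\mathcal{R}_1, \mathcal{R}_2)$. This is immediate from Claim \ref{claim_conditional_indep_of_X_12_given_R_12}: parts \eqref{balaton1}, \eqref{balaton2} and \eqref{balaton3} assert that, conditionally on $(\mathcal{R}_1, \mathcal{R}_2)$, the trajectories making up $\mathcal{X}_2$ and the (time-reversed) trajectories making up $\mathcal{X}_1$ are mutually independent random walks; in particular $\mathcal{X}_1 \perp \mathcal{X}_2 \mid (\mathcal{R}_1, \mathcal{R}_2)$.

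Next I would trace through the construction of $\mathcal{X}'$ and see what randomness it actually depends on. The point process $\mathcal{R}$ is produced in \eqref{matching_identity} by applying $\Psi^*_{v/2,L}$ to $(\mathcal{R}_1,\mathcal{R}_2)$ together with an i.i.d.\ family $\hat{\underline{\eta}} = (\hat{\eta}_x)_{x\in V}$ on $V$ that is independent of everything previously introduced (in particular, independent of $\mathcal{X}^{v,T}$ and of the coloring). Then $\mathcal{X}'$ is obtained from $\mathcal{R}$ by interpolating, between the endpoints of each matched pair, a random-walk bridge of length $L$; this is carried out as a factor of a further independent i.i.d.\ family on $V$, exactly as in the proof of Claim \ref{claim_QW_exists_then_fiid_triv}. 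Hence $\mathcal{X}'$ is a measurable function of $(\mathcal{R}_1, \mathcal{R}_2)$ and of auxiliary i.i.d.\ randomness $\underline{\xi}$ on $V$ which is independent of $(\mathcal{X}_1, \mathcal{X}_2)$.

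Finally I would put these pieces together. Write $\mathcal{X}' = \Xi(\mathcal{R}_1, \mathcal{R}_2, \underline{\xi})$ for the appropriate measurable map $\Xi$. Since $\underline{\xi}$ is independent of $(\mathcal{X}_1, \mathcal{X}_2, \mathcal{R}_1, \mathcal{R}_2)$, conditionally on $(\mathcal{R}_1, \mathcal{R}_2)$ the random element $\mathcal{X}'$ is independent of $(\mathcal{X}_1, \mathcal{X}_2)$. Combining this with the earlier conditional independence of $\mathcal{X}_1$ and $\mathcal{X}_2$ given $(\mathcal{R}_1, \mathcal{R}_2)$ (which can be checked on product test sets using the tower property), we conclude that $\mathcal{X}_1$, $\mathcal{X}_2$ and $\mathcal{X}'$ are jointly conditionally independent given $(\mathcal{R}_1, \mathcal{R}_2)$. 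There is no real obstacle here; the statement is essentially a bookkeeping assertion about the construction, and the only thing one must be careful about is to verify that the auxiliary i.i.d.\ inputs used to produce $\mathcal{R}$ and to build the bridges constituting $\mathcal{X}'$ are genuinely independent of the coloring and of $\mathcal{X}^{v,T}$, which is explicit in the construction.
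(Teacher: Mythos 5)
Your proposal is correct and follows essentially the same route as the paper, whose proof is the one-line observation that $\mathcal{R}$ and then $\mathcal{X}'$ are constructed from $\mathcal{R}_1,\mathcal{R}_2$ together with auxiliary i.i.d.\ randomness independent of everything else, combined with Claim \ref{claim_conditional_indep_of_X_12_given_R_12}. You merely spell out the standard bookkeeping (measurable-function-of-independent-input plus the tower property) that the paper leaves implicit.
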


One goal of Lemma \ref{lemma:matching_iid_PPP_on_V} was to create $\mathcal{R}$ in a way that the difference  between $\mathcal{R}_2$ and $\textbf{p}_2(\mathcal{R}) $ is small.
Let us define the point measure $\mathcal{R}_2^m \in \wormspm(V)$ by
\begin{equation}\label{common_point_measure}
\mathcal{R}_2^m(\{x\}):=\mathcal{R}_2(\{x\}) \wedge \textbf{p}_2(\mathcal{R})(\{x\}), \qquad x \in V.
\end{equation}
We will see that $\mathcal{R}_2^m(\{x\})$ is the number of trajectories of $\mathcal{X}_2$ with starting point $x$ that are matched to a trajectory from $\mathcal{X}_1$.
In the next lemma, the superscripts $m$ and $u$ stand for ``matched'' and ``unmatched'', respectively. More specifically, $\mathcal{X}_2^m$ and $\mathcal{X}_2^u$ respectively denote the point process of matched and unmatched trajectories of $\mathcal{X}_2$, while the point process $\widehat{\mathcal{X}}_2^u$ consists of new trajectories that serve as continuations of the unmatched trajectories of $\mathcal{X}_1$. 

\begin{lemma}[Rewiring]\label{lemma_rewiring}
 With the use of auxiliary i.i.d.\ randomness on $V$ one can create   point processes
$\mathcal{X}_2^u, \mathcal{X}_2^m, \widehat{\mathcal{X}}_2^u \in  \wormspm(W_T) $ in a factor of i.i.d.\ fashion that satisfy the following properties.
\begin{enumerate}[(i)]
\item\label{bereny1} $\initial\left( \mathcal{X}_2^u \right)= \mathcal{R}_2-\mathcal{R}_2^m$, and
conditional on $\mathcal{R}_2-\mathcal{R}_2^m$,  the trajectories of $\mathcal{X}_2^u$   are
 distributed as the first $T-1$ steps of a simple random walk  on $G$ with the points of the point process $\mathcal{R}_2-\mathcal{R}_2^m$  as starting points.
\item \label{bereny2} $\mathcal{X}_2=\mathcal{X}_2^u+\mathcal{X}_2^m$.
\item \label{bereny3} $\initial(\widehat{\mathcal{X}}_2^u)=\textbf{p}_2(\mathcal{R})-\mathcal{R}_2^m$, and conditional on $\textbf{p}_2(\mathcal{R})-\mathcal{R}_2^m$,  the trajectories of $\widehat{\mathcal{X}}_2^u$   are
 distributed as the first $T-1$ steps of a simple random walk  on $G$ with the points of the point process $\textbf{p}_2(\mathcal{R})-\mathcal{R}_2^m$  as starting points.
 \item \label{bereny4} If we define
 \begin{equation}\label{wormppp_hat_X2}
 \widehat{\mathcal{X}}_2:=\widehat{\mathcal{X}}_2^u+\mathcal{X}_2^m,
 \end{equation}
 then, conditional on $\mathcal{X}'$ and $\mathcal{X}_1$, the trajectories of $\widehat{\mathcal{X}}_2$ are
 distributed as the first $T-1$ steps of a  random walk  on $G$ with the points of the point process $\textbf{p}_2(\mathcal{R})$  as starting points.
\end{enumerate}
\end{lemma}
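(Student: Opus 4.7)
To construct $\mathcal{X}_2^u$, $\mathcal{X}_2^m$ and $\widehat{\mathcal{X}}_2^u$ as factors of i.i.d., the plan is to enlarge the input i.i.d.\ field so that each vertex $x$ also carries an independent sequence of $\mathrm{UNI}[0,1]$ labels and an independent sequence of i.i.d.\ length-$T$ nearest-neighbour random-walk trajectories starting at $x$; the atlas trick of Lemma \ref{lemma_atlas_factor_of_iid} then allows all subsequent selections to be made $\Gamma$-equivariantly. For each $x \in V$, rank the trajectories of $\mathcal{X}_2$ that start at $x$ by their labels, declare the $\mathcal{R}_2^m(\{x\})$ smallest-labelled ones to belong to $\mathcal{X}_2^m$ and put the rest into $\mathcal{X}_2^u$. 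Similarly, for each $x$ with $\textbf{p}_2(\mathcal{R})(\{x\}) > \mathcal{R}_2^m(\{x\})$, take the first $\textbf{p}_2(\mathcal{R})(\{x\}) - \mathcal{R}_2^m(\{x\})$ trajectories from the auxiliary random-walk pool at $x$ to form the contribution of $\widehat{\mathcal{X}}_2^u$ at $x$.

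Properties \eqref{bereny2} and \eqref{bereny3} are then immediate from the construction. For \eqref{bereny1}, we would combine Claim \ref{claim_conditional_indep_of_X_12_given_R_12}\eqref{balaton1} (conditional on $\mathcal{R}_2$, the trajectories of $\mathcal{X}_2$ starting at each $x$ are i.i.d.\ random walks) with the fact that the uniform labels used for the splitting are independent of the trajectory contents: this makes the $\mathcal{R}_2(\{x\}) - \mathcal{R}_2^m(\{x\})$ surviving trajectories of $\mathcal{X}_2^u$ at $x$ a uniformly selected sub-collection, hence still i.i.d.\ random walks starting at $x$.

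The main step is \eqref{bereny4}. The plan is to condition on the enlarged $\sigma$-algebra $\mathcal{G} := \sigma(\mathcal{X}_1, \mathcal{X}', \mathcal{R}_2)$, verify the desired conditional law of $\widehat{\mathcal{X}}_2$ at this level, and observe that it does not depend on $\mathcal{R}_2$ so that $\mathcal{R}_2$ can be integrated out. Since $\mathcal{R} = \sum_{i \in I_1} \delta_{(\initial(w'_i),\, \terminal(w'_i))}$ is recoverable from $\mathcal{X}'$ (cf.\ \eqref{X_star_satisfies}), both $\textbf{p}_2(\mathcal{R})$ and $\mathcal{R}_2^m$ are $\mathcal{G}$-measurable. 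By Claims \ref{claim_conditional_indep_of_X_12_given_R_12}\eqref{balaton1} and \ref{claim_XXX_condindep_given_RR}, the point process $\mathcal{X}_2$ is, conditional on $\mathcal{G}$, a family of independent random walks from $\mathcal{R}_2$, and the auxiliary labels and random walks used to build $(\mathcal{X}_2^m, \widehat{\mathcal{X}}_2^u)$ are independent of everything constructed so far. An exchangeability argument analogous to the one used for \eqref{bereny1} then yields that, conditional on $\mathcal{G}$, $\mathcal{X}_2^m$ is a family of independent random walks from $\mathcal{R}_2^m$ and $\widehat{\mathcal{X}}_2^u$ is an independent family of independent random walks from $\textbf{p}_2(\mathcal{R}) - \mathcal{R}_2^m$; summing, $\widehat{\mathcal{X}}_2$ is conditionally a family of independent random walks from $\textbf{p}_2(\mathcal{R})$, whose conditional law depends on $\mathcal{G}$ only through the $(\mathcal{X}_1, \mathcal{X}')$-measurable point process $\textbf{p}_2(\mathcal{R})$, which gives \eqref{bereny4} after integrating out $\mathcal{R}_2$. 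The hardest part will be this thinning step: because $\mathcal{R}_2^m(\{x\}) = \mathcal{R}_2(\{x\}) \wedge \textbf{p}_2(\mathcal{R})(\{x\})$ couples the split to both $\mathcal{R}_2$ and $\mathcal{X}'$, a direct Poisson-coloring argument does not apply, and one has to first condition on $\mathcal{G}$ (so that $\mathcal{R}_2^m$ becomes deterministic) and then exploit the exchangeability of the uniform labels to identify the selected trajectories with fresh i.i.d.\ random walks.
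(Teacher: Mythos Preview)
Your proposal is correct and follows essentially the same approach as the paper. The only implementation difference is that you introduce fresh $\mathrm{UNI}[0,1]$ labels to select which $\mathcal{R}_2^m(\{x\})$ trajectories of $\mathcal{X}_2$ at each vertex $x$ go into $\mathcal{X}_2^m$, whereas the paper simply reuses the splitting of $I_2$ that was already implicit in the soft local time construction of Lemma~\ref{lemma:matching_iid_PPP_on_V} (the matched points of $\mathcal{R}_2$ are exactly those whose label lies under the surface $g(\cdot,\infty)$); similarly the paper indexes $\widehat{\mathcal{X}}_2^u$ by the unmatched part $I_1^u$ of the index set $I_1$ rather than by vertex location. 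Your more explicit conditioning argument for \eqref{bereny4}---enlarging to $\mathcal{G}=\sigma(\mathcal{X}_1,\mathcal{X}',\mathcal{R}_2)$, noting that $\mathcal{R}_2^m$ becomes deterministic there, and then integrating out $\mathcal{R}_2$---is exactly what the paper's terse appeal to Claims~\ref{claim_conditional_indep_of_X_12_given_R_12} and~\ref{claim_XXX_condindep_given_RR} is hiding.
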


\begin{proof} We can use auxiliary i.i.d.\ randomness on $V$ to split the index set $I_2$ (cf.\ \eqref{notation:copies_of_half_worms}) in a factor of i.i.d.\ fashion
 into the disjoint union of $I_2^m$ and $I_2^u$ such that if we define the point processes
 \begin{equation}\label{split_ppp_X2_into_matced_unmatched}
 \mathcal{X}_2^m:=\sum_{i \in I_2^m} \delta_{w_i^2}, \qquad \mathcal{X}_2^u:= \sum_{i \in  I_2^u} \delta_{w_i^2},
 \end{equation}
 then we have $\initial\left(  \mathcal{X}_2^m \right)= \mathcal{R}_2^m$ and $\initial\left( \mathcal{X}_2^u \right)= \mathcal{R}_2-\mathcal{R}_2^m$ (cf.\ \eqref{R_1_R_2_from_X_1_X_2}). As a matter of fact, such a splitting of the index set $I_2$ has already been performed in the proof of Lemma \ref{lemma:matching_iid_PPP_on_V}.
 
 The statement \eqref{bereny1} follows from  Claims \ref{claim_conditional_indep_of_X_12_given_R_12} and \ref{claim_XXX_condindep_given_RR}.
 The identity \eqref{bereny2} follows from \eqref{split_ppp_X2_into_matced_unmatched}.

 Note that one can write $ \mathcal{R}= \sum_{i \in I_1} \delta_{(x_i,y_i)}$, where $I_1$ is the same index set as the one that appears in $\mathcal{X}_1=\sum_{i \in I_1} \delta_{ w_i^1 }$ (cf.\ \ref{notation:copies_of_half_worms}), since \begin{equation*}
 \textbf{p}_1(\mathcal{R})\stackrel{ \text{ Lemma \ref{lemma:matching_iid_PPP_on_V}\eqref{first_projection_is_R1}} }{=} 
 \mathcal{R}_1
 \stackrel{ \eqref{R_1_R_2_from_X_1_X_2} }{=}
 \terminal(\mathcal{X}_1).
 \end{equation*}
  We can use auxiliary i.i.d.\ randomness on $V$ to split the index set $I_1$  in a factor of i.i.d.\ fashion
 into the disjoint union of $I_1^m$ and $I_1^u$ in such a way that if we define the point processes
  \begin{equation}\label{split_ppp_X1_into_matced_unmatched}
 \mathcal{R}^m:=\sum_{i \in I_1^m} \delta_{(x_i,y_i)}, \qquad \mathcal{R}^u:= \sum_{i \in  I_1^u} \delta_{(x_i,y_i)},
 \end{equation}
then we have $\textbf{p}_2(\mathcal{R}^m)=\mathcal{R}_2^m$ and $\textbf{p}_2(\mathcal{R}^u)=\textbf{p}_2(\mathcal{R})-\mathcal{R}_2^m$. Again, such a splitting of the index set $I_1$ has in fact already been performed in the proof of Lemma \ref{lemma:matching_iid_PPP_on_V}.

Let us define the point process
$\widehat{\mathcal{X}}_2^u =\sum_{i \in I_1^u} \delta_{w^2_i} $ using auxiliary i.i.d.\ randomness on the vertices of $V$ so that
 $w^2_i$   is
 distributed as the first $T-1$ steps of a simple random walk  on $G$ satisfying $\initial(w^2_i)=y_i$ for any $i \in I_1^u$, moreover the trajectories
 $w^2_i, i \in I_1^u$ are conditionally independent given their starting points.  We omit the technical details of this construction, which is carried out using Lemma \ref{lemma_atlas_factor_of_iid} just like in the case of the proof of Claim \ref{claim_QW_exists_then_fiid_triv}.

 Statement \eqref{bereny3} holds with this construction,
 since $\textbf{p}_2(\mathcal{R})-\mathcal{R}_2^m=\textbf{p}_2(\mathcal{R}^u)=\sum_{i \in I_1^u} \delta_{y_i}=\initial(\widehat{\mathcal{X}}_2^u)$.
  If we put statement \eqref{bereny3} together with
  Claims  \ref{claim_conditional_indep_of_X_12_given_R_12} and \ref{claim_XXX_condindep_given_RR}, we obtain that statement  \eqref{bereny4} also holds, since
\begin{equation}\label{X_hat_2_indexed_by_I1}
\initial\left( \widehat{\mathcal{X}}_2\right)\stackrel{ \eqref{wormppp_hat_X2} }{=}
\initial\left(\widehat{\mathcal{X}}_2^u\right)+
\initial\left(\mathcal{X}_2^m\right)
\stackrel{\eqref{bereny1} }{=}
\left(\textbf{p}_2(\mathcal{R})-\mathcal{R}_2^m\right) +
 \mathcal{R}_2^m = \textbf{p}_2(\mathcal{R}).
\end{equation}
\end{proof}
We have already seen that the index set $I_1$ (that was introduced to index the points of $\mathcal{X}_1$ in \eqref{notation:copies_of_half_worms}) 
can also be used to index the point processes $\mathcal{X}'$ (cf.\ \eqref{X_star_satisfies} and \eqref{X_prime_indexed_by_I1}) and  $\widehat{\mathcal{X}}_2$ (cf.\ \eqref{X_hat_2_indexed_by_I1}).
Also note that $\terminal(\mathcal{X}_1)=\initial(\mathcal{X}')$ by \eqref{X_prime_indexed_by_I1}, moreover $\terminal(\mathcal{X}')=\initial( \widehat{\mathcal{X}}_2 )$ by \eqref{X_star_satisfies} and Lemma \ref{lemma_rewiring}\eqref{bereny4}. As a consequence, we can stitch together the three trajectories of $\mathcal{X}_1$, $\mathcal{X}'$ and $\widehat{\mathcal{X}}_2$ indexed by the same $i \in I_1$ to form a trajectory $w''_i$ in $W_{2T+L-1}$. The stitching results in a point process that we denote  by
\begin{equation}\label{triple_concatenation_def_eq}
\mathcal{X}'':=\sum_{i \in I_1} \delta_{w''_i}  \in \wormspm(W_{2T+L-1}).
\end{equation}
 Recalling the notation introduced in
\eqref{initial_terminal_intervals} and \eqref{initial_terminal_interval_pp_def}, we have
\begin{equation}\label{Xvv_eleje_kozepe_vege}
  \initialinterval_T\left( \mathcal{X}'' \right)=\mathcal{X}_1, \qquad
   \terminalinterval_T\left( \mathcal{X}'' \right)=\widehat{\mathcal{X}}_2, \qquad
 \terminalinterval_{L+1} \left(\, \initialinterval_{T+L} \left( \mathcal{X}'' \right)\, \right) = \mathcal{X}'.
\end{equation}
Also note that we have
\begin{equation}\label{triple_concatenation_law}
     \mathcal{X}^{''} \sim \wormsppp_{v/2,2T+L-1},
\end{equation}
since the ``middle part'' $\mathcal{X}'$ has distribution  $\wormsppp_{v/2,L+1}$ by \eqref{X_star_satisfies}, and if we condition on $\mathcal{X}'$ then the ``backward parts'' (i.e., the trajectories of $\mathcal{X}_1$) are conditionally independent (time-reversed) random walk trajectories of length $T$  by Claim \ref{claim_conditional_indep_of_X_12_given_R_12}\eqref{balaton2} and \eqref{balaton3}, moreover the ``forward parts'' (i.e., the trajectories of $\widehat{\mathcal{X}}_2$) are conditionally independent random walk trajectories of length $T$ given $\mathcal{X}_1$ and $\mathcal{X'}$ by Lemma \ref{lemma_rewiring}\eqref{bereny4}.

Finally, let us define
\begin{equation}\label{the_final_product_def}
  \mathcal{X}^{v/2,2T}:= \initialinterval_{2T}\left( \mathcal{X}'' \right).
\end{equation}
Our goal is to show that the statements of Lemma \ref{lemma:coupling_lemma} hold for this point process $\mathcal{X}^{v/2,2T}$.


Lemma \ref{lemma:coupling_lemma}\eqref{mating_correct_distribution_2T} holds, as we now explain. On the one hand, we started with $\mathcal{X}^{v,T} \sim \mathcal{P}_{v,T} $ and the auxiliary randomness that we used was i.i.d.\ on $V$ and it was also independent of $\mathcal{X}^{v,T}$. On the other hand, $\mathcal{X}^{v/2,2T} \sim \mathcal{P}_{v/2,2T}$ follows from \eqref{triple_concatenation_law}
and Claim \ref{claim:same_law_after_RW_shifts}\eqref{nyissz}.

Lemma \ref{lemma:coupling_lemma}\eqref{mating_equivariant} holds since we used our input (i.e., $\mathcal{X}^{v,T}$ and the auxiliary i.i.d.\ randomness) in an equivariant fashion: if the input is transformed by some $\varphi \in \Gamma$ then the output $\mathcal{X}^{v/2,2T}$ will also be transformed by $\varphi$.

 It remains to prove  Lemma \ref{lemma:coupling_lemma}\eqref{mating_error_bound}, i.e., the upper bound on the probability of the event $\{\firstlastvis_K( \mathcal{X}^{v,T}) \neq \firstlastvis_K( \mathcal{X}^{v/2,2T}) \}$ that the local image of input $\mathcal{X}^{v,T}$ on $K$ and the local image of the output $\mathcal{X}^{v/2,2T}$ on $K$ are different. This is what we will do in Section \ref{subsection_error_bound_of _local_map}.

\subsection{Bounding the probability of local discrepancies}\label{subsection_error_bound_of _local_map}

Recall the notion of the localization map $	\firstlastvis_K$ from Definition \ref{projection_map_K}.

\begin{lemma}[Bounds on the probabilities of bad events] \label{lemma_bad_event_bounds}
  There exists a constant $C=C(G)$ such that for any $T \in \N_+$, $K \subset \subset V$ and any  $v \in [T^{-3/2}, T^2]$, we have
\begin{enumerate}[(i)]
  \item\label{yaris1} $\prob \left(
			\firstlastvis_K( \mathcal{X}^{v,T}) \neq \firstlastvis_K( \mathcal{X}_1 ) + \firstlastvis_K( \mathcal{X}_2 )	\right)=0$,
\item\label{yaris2}  $\prob \left(
			  \firstlastvis_K( \mathcal{X}_2 ) \neq   \firstlastvis_K( \widehat{\mathcal{X}}_2 )
			  \right) \leq 2 \cdot |K| \cdot T \cdot \sqrt{v \cdot p_{2L}(o,o)} $,
   \item\label{yaris3}  $\prob \left(
			  |\firstlastvis_K( \mathcal{X}' )| \neq  0 
			  \right) \leq \frac{v}{2} \cdot |K|\cdot (L+1)  $,
   \item\label{yaris4}  $\prob \left( 
   |\firstlastvis_K( \mathcal{X}' )| =  0, \;
			\firstlastvis_K( \mathcal{X}_1 ) + \firstlastvis_K( \widehat{\mathcal{X}}_2 )  \neq   \firstlastvis_K( \mathcal{X}'' )	
			\right) \leq C \cdot v \cdot \sqrt{T} \cdot |K|^2 $,
\item\label{yaris5} $\prob \left(
			  \firstlastvis_K( \mathcal{X}'' ) \neq   \firstlastvis_K( \mathcal{X}^{v/2,2T} )	
			  \right) \leq \frac{v}{2} \cdot |K|\cdot (L-1)  $.
\end{enumerate}
\end{lemma}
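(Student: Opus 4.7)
Parts (i), (iii), and (v) admit short arguments. For (i), $\mathcal{X}^{v,T}=\mathcal{X}_1+\mathcal{X}_2$ by \eqref{X1_X2_sum_is_XvT}, and since the localization $\firstlastvis_K$ acts on each trajectory separately, the equality $\firstlastvis_K(\mathcal{X}^{v,T})=\firstlastvis_K(\mathcal{X}_1)+\firstlastvis_K(\mathcal{X}_2)$ holds identically. For (iii), $\mathcal{X}'\sim\mathcal{P}_{v/2,L+1}$ by \eqref{X_star_satisfies}, and $|\firstlastvis_K(\mathcal{X}')|\neq 0$ is the event $\mathcal{X}'(W_{L+1}(K))\neq 0$; Corollary~\ref{corollary:prob_visit_KTv} applied with $(\beta,T)=(v/2,L+1)$ gives the bound. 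For (v), a trajectory of $\mathcal{X}''$ contributes identically to $\firstlastvis_K(\mathcal{X}'')$ and to $\firstlastvis_K(\mathcal{X}^{v/2,2T})=\firstlastvis_K(\initialinterval_{2T}(\mathcal{X}''))$ unless its last $L-1$ coordinates meet $K$; hence a discrepancy forces $\terminalinterval_{L-1}(\mathcal{X}'')$ to visit $K$. By Claim~\ref{claim:same_law_after_RW_shifts}\eqref{nyissz} together with \eqref{triple_concatenation_law} one has $\terminalinterval_{L-1}(\mathcal{X}'')\sim\mathcal{P}_{v/2,L-1}$, and Corollary~\ref{corollary:prob_visit_KTv} finishes the bound.

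For (ii), Lemma~\ref{lemma_rewiring}\eqref{bereny2} and \eqref{wormppp_hat_X2} yield the cancellation $\mathcal{X}_2-\mathcal{X}_2^m=\mathcal{X}_2^u$ and $\widehat{\mathcal{X}}_2-\mathcal{X}_2^m=\widehat{\mathcal{X}}_2^u$, so the event in question is contained in $\{\mathcal{X}_2^u(W_T(K))\neq 0\}\cup\{\widehat{\mathcal{X}}_2^u(W_T(K))\neq 0\}$. By Lemma~\ref{lemma_rewiring}\eqref{bereny1} the point process $\mathcal{X}_2^u$ satisfies the hypotheses of Lemma~\ref{lemma_gen_pp_density_worm_hit_bound}; transitivity of $G$ and the $\Gamma$-equivariance of the matching construction (Lemma~\ref{lemma:matching_iid_PPP_on_V}\eqref{matching_ppp_equivariant}) make $\mathbb{E}[\initial(\mathcal{X}_2^u)(\{x\})]=\mathbb{E}\left[(\mathcal{R}_2(\{o\})-\textbf{p}_2(\mathcal{R})(\{o\}))_+\right]$ independent of $x$, and Lemma~\ref{lemma:matching_iid_PPP_on_V}\eqref{matching_ppp_error_bound_from_surface} (with $\alpha=v/2$) bounds this common value by $\sqrt{v\cdot p_{2L}(o,o)}$. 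Lemma~\ref{lemma_gen_pp_density_worm_hit_bound} then yields $\mathbb{P}(\mathcal{X}_2^u(W_T(K))\neq 0)\leq\sqrt{v\cdot p_{2L}(o,o)}\cdot|K|\cdot T$. The symmetric estimate for $\widehat{\mathcal{X}}_2^u$ is obtained from Lemma~\ref{lemma_rewiring}\eqref{bereny3} and Lemma~\ref{lemma:matching_iid_PPP_on_V}\eqref{matching_ppp_error_bound_from_remaining}, and a union bound delivers (ii).

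Part (iv) is the main obstacle. The key structural observation is that, on the event $\{|\firstlastvis_K(\mathcal{X}')|=0\}$, the local image $\firstlastvis_K(w_i'')$ of the concatenated trajectory agrees with the corresponding summand in $\firstlastvis_K(\mathcal{X}_1)+\firstlastvis_K(\widehat{\mathcal{X}}_2)$ unless \emph{both} $w_i^1$ and $\widehat{w}_i^2$ meet $K$: if at most one of them does, then $\firstlastvis_K(w_i'')$ is entirely produced by that single meeting segment. Hence the event in (iv) is contained in $\bigl\{\exists\,i\in I_1:\,w_i''\text{ visits }K\text{ in both }[0,T-1]\text{ and }[T+L-1,\,2T+L-2]\bigr\}$. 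By Markov's inequality and \eqref{triple_concatenation_law},
\begin{equation*}
\mathbb{P}(\cdot)\;\leq\;\frac{v}{2}\sum_{x\in V}\sum_{n_1=0}^{T-1}\sum_{n_2=T+L-1}^{2T+L-2}\sum_{y_1,y_2\in K}p_{n_1}(x,y_1)\,p_{n_2-n_1}(y_1,y_2)\;=\;\frac{v}{2}\sum_{y_1,y_2\in K}\sum_{d\geq L}N(d)\,p_d(y_1,y_2),
\end{equation*}
where the simplification uses $\sum_x p_{n_1}(x,y_1)=1$ (cf.\ \eqref{heat_kernel_symmetric}) and $N(d):=\#\{(n_1,n_2):n_2-n_1=d\}\leq\min(d-L+1,\,T,\,2T+L-1-d)$. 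Invoking the heat kernel decay $p_d\leq Cd^{-3/2}$ of Lemma~\ref{lemma:heat_kernel_upper} and splitting the $d$-sum at $d=L+T-1$, careful bookkeeping against the triangular profile of $N(d)$ shows that both resulting pieces are $O(\sqrt{T})$ (regardless of whether $L\leq T$ or $L>T$). Thus $\sum_{d\geq L}N(d)\,p_d(y_1,y_2)\leq C\sqrt{T}$, and summing over $y_1,y_2\in K$ contributes the factor $|K|^2$, giving the announced $Cv|K|^2\sqrt{T}$. The delicate step is precisely this combinatorial/analytic estimate: identifying the correct discrepancy event in terms of a double hit of $K$ by the concatenated trajectory, and then controlling the resulting triangular heat-kernel sum uniformly in the admissible range $v\in[T^{-3/2},T^2]$.
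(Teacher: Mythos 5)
Your proposal is correct and follows essentially the same route as the paper's proof for all five parts: the same decomposition of the discrepancy events, the same use of Lemma \ref{lemma_rewiring}, Lemma \ref{lemma:matching_iid_PPP_on_V}\eqref{matching_ppp_error_bound_from_surface}--\eqref{matching_ppp_error_bound_from_remaining} together with Lemma \ref{lemma_gen_pp_density_worm_hit_bound} for \eqref{yaris2}, Corollary \ref{corollary:prob_visit_KTv} for \eqref{yaris3} and \eqref{yaris5}, and for \eqref{yaris4} the same reduction to the event that some $w''_i$ hits $K$ in both its initial and terminal length-$T$ segments, bounded via the intensity of $\mathcal{X}''$ and the heat kernel estimate. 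The only (immaterial) difference is that in \eqref{yaris4} you keep the triangular count $N(d)\leq\min(d-L+1,T,2T+L-1-d)$, which makes the $O(\sqrt{T})$ bound uniform without invoking $L\leq T$, whereas the paper uses the cruder bound $\sum_n n\,p_n$ and then the inequality $L\leq T$ coming from $v\geq T^{-3/2}$ and \eqref{optimal_L_choice}.
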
	

Before we prove Lemma \ref{lemma_bad_event_bounds}, let us deduce the proof of 	Lemma \ref{lemma:coupling_lemma}\eqref{mating_error_bound} from it.

\begin{proof}[Proof of Lemma \ref{lemma:coupling_lemma}\eqref{mating_error_bound}] 
Let us introduce the events 
$A_1:=\{ 
			\firstlastvis_K( \mathcal{X}^{v,T}) \neq \firstlastvis_K( \mathcal{X}_1 ) + \firstlastvis_K( \mathcal{X}_2 ) \}$, 
			$A_2:=\{  \firstlastvis_K( \mathcal{X}_2 ) \neq   \firstlastvis_K( \widehat{\mathcal{X}}_2 ) \}$, $A_3:=\{  |\firstlastvis_K( \mathcal{X}' )| \neq  0  \}$, $A_4:=\{ 	\firstlastvis_K( \mathcal{X}_1 ) + \firstlastvis_K( \widehat{\mathcal{X}}_2 )  \neq   \firstlastvis_K( \mathcal{X}'' )	 \}$, $A_5:=\{  \firstlastvis_K( \mathcal{X}'' ) \neq   \firstlastvis_K( \mathcal{X}^{v/2,2T} )	 \}$. We first argue that if the events $A_1^c,A_2^c, A_4^c,A_5^c$ all occur then
			$\{\firstlastvis_K( \mathcal{X}^{v,T}) = \firstlastvis_K( \mathcal{X}^{v/2,2T}) \}$ also occurs. Indeed:
\begin{equation}
	\firstlastvis_K( \mathcal{X}^{v,T}) \stackrel{A_1^c}{=}	
	\firstlastvis_K( \mathcal{X}_1 ) + \firstlastvis_K( \mathcal{X}_2 ) \stackrel{A_2^c}{=} 	\firstlastvis_K( \mathcal{X}_1 ) + \firstlastvis_K( \widehat{\mathcal{X}}_2 )\stackrel{A_4^c}{=}  \firstlastvis_K( \mathcal{X}'' )
	\stackrel{A_5^c}{=}\firstlastvis_K( \mathcal{X}^{v/2,2T}).
	\end{equation}
Thus by De Morgan's laws and the union bound we obtain	
\begin{equation}
\mathbb{P}\left(\firstlastvis_K( \mathcal{X}^{v,T}) \neq \firstlastvis_K( \mathcal{X}^{v/2,2T}) \right) \leq
\mathbb{P}(A_1)+\mathbb{P}(A_2)+\mathbb{P}(A_4)+\mathbb{P}(A_5).
\end{equation}
Noting that $\mathbb{P}(A_4) \leq \mathbb{P}(A_3)+ \mathbb{P}(A_3^c \cap A_4) $, we obtain that 
 $\prob\left( \firstlastvis_K( \mathcal{X}^{v,T}) \neq \firstlastvis_K( \mathcal{X}^{v/2,2T}) \right)$ can be upper bounded by the sum of the
 terms on the r.h.s.\ of \eqref{yaris1}--\eqref{yaris5}. Using that our assumption $v \leq T^2$ and the definition \eqref{optimal_L_choice} of $L$ together imply $L+1 \leq 2L$, we obtain that  the sum of the
 terms on the r.h.s.\ of \eqref{yaris1}--\eqref{yaris5} is less than or equal to
 \begin{equation}\label{error_sum_L_incl}
   2 \cdot |K| \cdot T \cdot \sqrt{v \cdot p_{2L}(o,o)}  + 2 \cdot |K| \cdot v \cdot L+  C \cdot |K|^2  \cdot v \cdot \sqrt{T}.
 \end{equation}
Noting that $p_{2L}(o,o)\leq C \cdot L^{-3/2}$ by Lemma \ref{lemma:heat_kernel_upper}, we can use the definition \eqref{optimal_L_choice} of $L$ to see that the first two terms of \eqref{error_sum_L_incl} are both upper bounded by a constant multiple of $|K| \cdot T^{4/7} \cdot v^{5/7} $,  thus the upper bound stated in \eqref{eq:coupling_error} indeed holds. The proof of Lemma \ref{lemma:coupling_lemma}\eqref{mating_error_bound} is complete.
 \end{proof}

The proof of Lemma \ref{lemma:coupling_lemma} is complete (given Lemma \ref{lemma_bad_event_bounds}).

\begin{proof}[Proof of Lemma \ref{lemma_bad_event_bounds}\eqref{yaris1}] We have $\mathcal{X}^{v,T}= \mathcal{X}_1 + \mathcal{X}_2$ by \eqref{X1_X2_sum_is_XvT}, thus
$\firstlastvis_K( \mathcal{X}^{v,T}) = \firstlastvis_K( \mathcal{X}_1 ) + \firstlastvis_K( \mathcal{X}_2 )$ follows from the definition of $\firstlastvis_K: \wormspm( \wormspace) \to \wormspm( \fwormspace(K))$, cf.\  Definition \ref{projection_map_K}.
\end{proof}

\begin{proof}[Proof of Lemma \ref{lemma_bad_event_bounds}\eqref{yaris2}] Recall from the statement of Lemma \ref{lemma_rewiring} that we have $\mathcal{X}_2=\mathcal{X}_2^u+\mathcal{X}_2^m$ and $\widehat{\mathcal{X}}_2=\widehat{\mathcal{X}}_2^u+\mathcal{X}_2^m$, thus by Definition \ref{projection_map_K} and the union bound we obtain
\begin{equation}\label{X2u_and_hatX_2_u_on_rhs}
  \prob \left(
			  \firstlastvis_K( \mathcal{X}_2 ) \neq   \firstlastvis_K( \widehat{\mathcal{X}}_2 )	\right) \leq
\prob \left( \mathcal{X}_2^u(W_T(K)) \neq 0
			  	\right) + \prob \left( \widehat{\mathcal{X}}_2^u(W_T(K)) \neq 0
			  	\right).
\end{equation}
In order to bound the first term on the r.h.s.\ of \eqref{X2u_and_hatX_2_u_on_rhs}, first observe that for any $ x \in V$ we have
\begin{equation}
 \initial(\mathcal{X}_2^u)(\{x\})  \stackrel{\text{Lemma \ref{lemma_rewiring}\eqref{bereny1}} }{=}  \mathcal{R}_2(\{x\})-\mathcal{R}_2^m(\{x\})  \stackrel{ \eqref{common_point_measure} }{=}
\Big( \mathcal{R}_2(\{x\}) - \textbf{p}_2(\mathcal{R})(\{x\})\Big)_+,
\end{equation}
thus we obtain that for any $x \in V$ we have $\mathbb{E}\left[ \initial(\mathcal{X}_2^u)(\{x\}) \right] \leq \sqrt{v \cdot p_{2L}(o,o) }$ using Lemma \ref{lemma:matching_iid_PPP_on_V}\eqref{matching_ppp_error_bound_from_surface} and the fact that the law of $\initial(\mathcal{X}_2^u)$ is invariant under the action of $\Gamma$. Putting this together with
Lemma \ref{lemma_gen_pp_density_worm_hit_bound} and Lemma \ref{lemma_rewiring}\eqref{bereny1},
 we obtain
$\prob \left( \mathcal{X}_2^u(W_T(K)) \neq 0
			  	\right) \leq |K| \cdot T \cdot \sqrt{v \cdot p_{2L}(o,o)} $.
The second term of on the r.h.s.\ of \eqref{X2u_and_hatX_2_u_on_rhs} can be bounded analogously using Lemma \ref{lemma_rewiring}\eqref{bereny3},
Lemma \ref{lemma:matching_iid_PPP_on_V}\eqref{matching_ppp_error_bound_from_remaining} and Lemma \ref{lemma_gen_pp_density_worm_hit_bound}.
Plugging these bounds into \eqref{X2u_and_hatX_2_u_on_rhs} we obtain Lemma \ref{lemma_bad_event_bounds}\eqref{yaris2}.
\end{proof}

\begin{proof}[Proof of Lemma \ref{lemma_bad_event_bounds}\eqref{yaris3}]
First observe that it follows from the definition of $\firstlastvis_K$ (cf.\ Definition \ref{projection_map_K}) that the event $\{ |\firstlastvis_K( \mathcal{X}' )| \neq  0 \}$ occurs if and only if a trajectory from $\mathcal{X}'$ hits $K$.
 Recalling from \eqref{X_star_satisfies}
that $\mathcal{X}' \sim \wormsppp_{v/2,L+1}$, the desired bound is just an application of Corollary \ref{corollary:prob_visit_KTv}.
\end{proof}

\begin{proof}[Proof of Lemma \ref{lemma_bad_event_bounds}\eqref{yaris4}]
Recall from \eqref{triple_concatenation_def_eq} the notation
$\mathcal{X}''=\sum_{i \in I_1} \delta_{w''_i}  \in \wormspm(W_{2T+L-1})$.
 Recall from \eqref{Xvv_eleje_kozepe_vege} that we have
  $\initialinterval_T\left( \mathcal{X}'' \right)=\mathcal{X}_1$,
  $\terminalinterval_T\left( \mathcal{X}'' \right)=\widehat{\mathcal{X}}_2$ and
 $\terminalinterval_{L+1} \left(\, \initialinterval_{T+L} \left( \mathcal{X}'' \right)\, \right) = \mathcal{X}'$.
 It follows from Definition \ref{projection_map_K} that we have
\begin{align}
\label{flXpp}
    \firstlastvis_K (\mathcal{X}'') &:= \sum_{i \in I_1} \delta_{\firstlastvis_K(w''_i) } \, \ind[\, w''_i \in \fwormspace(K)\, ],
    \\
    \firstlastvis_K (\mathcal{X}_1) & := \sum_{i \in I_1} \delta_{\firstlastvis_K(\initialinterval_T(w''_i)) } \, \ind[\, \initialinterval_T(w''_i) \in \fwormspace(K)\, ],
    \\
\label{flhatX2}
    \firstlastvis_K (\widehat{\mathcal{X}}_2 ) &:= \sum_{i \in I_1} \delta_{\firstlastvis_K(\terminalinterval_T(w''_i)) } \, \ind[\, \terminalinterval_T(w''_i) \in \fwormspace(K) \, ].
\end{align}
 In words, the event $\{ |\firstlastvis_K( \mathcal{X}' )| =  0\}$ that appears in \eqref{yaris4} means that the trajectories of $\mathcal{X}'$ are not in $\fwormspace(K)$.
 Now let us note that if $i \in I_1$ and the event
 \begin{equation}
 \Big\{ \terminalinterval_{L+1} \left(\, \initialinterval_{T+L} \left( w''_i \right)\, \right) \notin \fwormspace(K) \Big\} \cap
 \left( \Big\{ \initialinterval_T\left( w''_i \right) \in \fwormspace(K) \Big\} \cap  \Big\{ \terminalinterval_T\left( w''_i \right) \in \fwormspace(K) \Big\}  \right)^c
 \end{equation}
 occurs then we have
  \begin{multline}\label{delta_ind_identity}
   \delta_{\firstlastvis_K(w''_i) } \, \ind[\, w''_i \in \fwormspace(K)\, ]=\\
   \delta_{\firstlastvis_K(\initialinterval_T(w''_i)) } \, \ind[\, \initialinterval_T(w''_i) \in \fwormspace(K)\, ]
   +
   \delta_{\firstlastvis_K(\terminalinterval_T(w''_i)) } \, \ind[\, \terminalinterval_T(w''_i) \in \fwormspace(K) \, ],
  \end{multline}
 because  if both terms on the r.h.s.\ vanish then the l.h.s.\ also vanishes, but if exactly one term on the r.h.s.\ is nonzero then it is equal to the l.h.s.
 If \eqref{delta_ind_identity} holds for all $i \in I_1$ then by \eqref{flXpp}--\eqref{flhatX2}  we obtain $\firstlastvis_K( \mathcal{X}_1 ) + \firstlastvis_K( \widehat{\mathcal{X}}_2 )  =   \firstlastvis_K( \mathcal{X}'' )$.
 As a consequence, the bad event of \eqref{yaris4} can only occur if there is an $i \in I_1$ such that both $\initialinterval_T(w''_i)$ and $\terminalinterval_T(w''_i)$ hit $K$.
We can thus bound
\begin{multline} \prob \left(  \firstlastvis_K( \mathcal{X}' ) =  0, \;
			\firstlastvis_K( \mathcal{X}_1 ) + \firstlastvis_K( \widehat{\mathcal{X}}_2 )  \neq   \firstlastvis_K( \mathcal{X}'' )	\right) \leq
\prob \left[ \, \exists \, i \in I_1\, : \, \initialinterval_T(w''_i), \terminalinterval_T(w''_i) \in \fwormspace(K)\,  \right] \leq
 \\ \sum_{x,y \in K}
 \sum_{k=0}^{T-1} \sum_{\ell=T+L}^{2T+L-1}
		 \prob \left[ \, \exists\, i \in  I_1\, : \,  w''_i(k)=x, \, w''_i(\ell)=y \, \right]
\stackrel{(*)}{\leq}
\sum_{x,y \in K}
		  \sum_{k=0}^{T-1} \sum_{\ell=T+L}^{2T+L-1}
		 \frac{v}{2} \cdot p_{\ell-k}(x,y) \leq \\
\frac{v}{2} \cdot \sum_{x,y \in K} \sum_{n=1}^{2T+L} n \cdot p_n(x,y)
		\stackrel{ \eqref{eq:heat_kernel_upper}}{\leq}
		 \frac{v}{2} \cdot \sum_{x,y \in K} \sum_{n = 1}^{2T+L} C \cdot n^{- 1 \slash 2}
		\stackrel{(**)}{\leq}
		v \cdot  |K|^2 \cdot  C  \cdot \sqrt{T},
\end{multline}
where in $(*)$ we used Markov's inequality,
 $\mathcal{X}^{''} \sim \wormsppp_{v/2,2T+L-1}$ (cf.\ \eqref{triple_concatenation_law}) and Claim \ref{claim:same_law_after_RW_shifts}, and
 in $(**)$ we used that our assumption $v \geq T^{-3/2}$ and the definition \eqref{optimal_L_choice} of $L$ imply $L \leq T$.
\end{proof}

\begin{proof}[Proof of Lemma \ref{lemma_bad_event_bounds}\eqref{yaris5}]
Recall from \eqref{triple_concatenation_def_eq} the notation
$\mathcal{X}''=\sum_{i \in I_1} \delta_{w''_i}  \in \wormspm(W_{2T+L-1})$ and also recall from \eqref{the_final_product_def} that
  $\mathcal{X}^{v/2,2T}= \initialinterval_{2T}\left( \mathcal{X}'' \right)$. Let $i \in I_1$ and note that if $\terminalinterval_{L-1}(w''_i)$ does not hit
  $K$ then $\firstlastvis_K(w''_i)= \firstlastvis_K( \initialinterval_{2T}(w''_i) )$, thus $\prob \left(
			  \firstlastvis_K( \mathcal{X}'' ) \neq   \firstlastvis_K( \mathcal{X}^{v/2,2T} )	\right)$ is less than or equal to the probability  that a
  trajectory from $\terminalinterval_{L-1}(\mathcal{X}'')$ hits $K$. Observing that $\terminalinterval_{L-1}(\mathcal{X}'') \sim \wormsppp_{v/2,L-1} $ by
  \eqref{triple_concatenation_law} and Claim \ref{claim:same_law_after_RW_shifts}\eqref{nyissz}, the desired bound follows from Corollary \ref{corollary:prob_visit_KTv}.
\end{proof}

\appendix

\section{Heat kernel estimate}

Although the result of Lemma \ref{lemma:heat_kernel_upper} is well-known (as a mathematical folklore), since we were unable to find a written reference satisfying all of our needs, in this appendix we shall present a quick access to its proof, collecting all the necessary results together (in their appropriate forms) and filling in the missing gaps.

We will proceed as follows. In Subsection \ref{appendix:preliminaries} we will recall some notions and definitions from (geometric) group theory that we will use extensively during the argument.
After that, in Subsection \ref{appendix:growth_of_groups} we will take a short excursion in the structure theory of groups and will argue that any graph which we will encounter (i.e., satisfying the conditions of Lemma \ref{lemma:heat_kernel_upper}) has at least cubic growth rate.
We have put this argument in a separate subsection since it uses deep theorems from the structure theory of groups, which have little to do with probability theory.
The final steps of the proof are collected in Subsection \ref{appendix:isoperimetry_return_probabilities}:
First, using a general result of \cite{CSC93} we will show that such a cubic growth rate implies at least $3$-dimensional isoperimetry, then we will substitute this isoperimetric inequality into the machinery provided by \cite{MP05} to produce the desired upper bound on the transition probabilities.

From now on, if we don't say otherwise, $G$ will always denote a connected, locally finite infinite graph. Recall that such a graph is called (vertex-)transitive, if the group of its graph automorphisms acts transitively on its vertex set. Recall also that a graph is called transient if a simple random walk on it is transient (and is called recurrent otherwise).

\subsection{Preliminaries}
\label{appendix:preliminaries}

Consider two functions $f, g \, : \, \N \rightarrow \R_+$. Let us introduce the notation $f \preceq g$ meaning that there exist some constants $C, \alpha > 0$ such that $f(n) \leq C g(\alpha n)$ holds for all $n > 0$. We say that $f$ and $g$ are equivalent, denoted by $f \equivalent g$, if $f \preceq g$ and $f \succeq g$.

\begin{definition}
	Let $f \, : \, \N \rightarrow \R_+$. The function $f$ is called polynomial if there exists some $\beta > 0$ such that $f(n) \equivalent n^{\beta}$. Furthermore, it is called superpolynomial if $n^{\beta} \preceq f(n)$ for all $\beta > 0$.
\end{definition}

\noindent
For example, according to this definition, the function $n^{\pi}$ is considered to be polynomial and the function $n^{\log (\log (n))}$ is superpolynomial.

Most of the time graphs and groups are only considered up to quasi-isometries when their large scale geometric properties, such as the growth rate, are being examined. We recall this notion in the next definition, albeit its definition can be found in any textbook considering any aspect of asymptotic geometry (for example \cite{P22}, \cite{LP16}, \cite{W00} or \cite{G14}).

\begin{definition}
    Suppose that $(X_1, \distance_1)$ and $(X_2, \distance_2)$ are metric spaces. A map $\Phi \, : \, X_1 \rightarrow X_2$ is called a  quasi-isometry (or a rough isometry) if there exist positive constants $\alpha$ and $\beta$ such that the following two conditions are met:
    \begin{enumerate}[(i)]
        \item for all $x, y \in X_1$, we have $\alpha^{-1} \cdot \distance_1(x,y) - \beta \leq \distance_2( \Phi(x), \Phi(y) ) \leq \alpha \cdot \distance_1(x,y) + \beta$;
        \item for each $y \in X_2$, there is some $x \in X_1$ such that $\distance_2(y, \Phi(x)) < \beta$.
    \end{enumerate}
    If there is such a quasi-isometry between the two metric spaces then we say that they are quasi-isometric (or roughy isometric).
\end{definition}

\noindent
It is easy to check that being quasi-isometric is an equivalence relation.

As it was mentioned before, in the first step of the proof of Lemma \ref{lemma:heat_kernel_upper} we will use some interplay between groups and graphs. Therefore, it is worth it to recall the notion of Cayley graphs.

Given a finitely generated group $\Gamma$ with a symmetric generating set $S$, we define its (right) Cayley graph (with respect to the given generating set) $\Cayley(\Gamma, S)$ to be the graph whose vertices are the elements of $\Gamma$ and whose edge set is
\begin{equation}
	E( \Cayley(\Gamma, S) ) :=
	\left\{ (x,y) \in \Gamma \times \Gamma \, : \, y = x s \text{ for some } s \in S \right\}.
\end{equation}
Let us note that there is a natural (vertex-)transitive action of $\Gamma$ on $\Cayley(\Gamma, S)$ given by the multiplication from the left, whence Cayley graphs are always transitive. 
However, transitivity does not necessarily imply that the graph is a Cayley graph of some group (the standard example of this is the so-called Petersen graph).

Finally, let us recall some definitions from elementary group theory.
Given a group $\Gamma$, by the index of a subgroup $H \subgrp \Gamma$ we mean the number its (left or right) cosets
\begin{equation}
	[\Gamma:H] := \left| \left\{ x H \, : \, x \in \Gamma \right\} \right|=
	\left| \left\{ H x \, : \, x \in \Gamma \right\} \right|.
\end{equation}
If the two sets on the right-hand side are also equal (not just in size), then the subgroup $H$ is called normal, which is denoted by $H \triangleleft \Gamma$.

If we are given two subgroups $H, K \subgrp \Gamma$, then their commutator is the generated subgroup
\begin{equation}
	[H,K] := \langle h k h^{-1} k^{-1} \, : \, h \in H, \, k \in K \rangle.
\end{equation}

\begin{definition}
	A group $\Gamma$ is called nilpotent (or $s$-step nilpotent) if the lower central series $\gamma_0(\Gamma) := \Gamma$, $\gamma_{i+1}(\Gamma) := [ \gamma_i( \Gamma), \Gamma]$ terminates in $\gamma_s(\Gamma) = \{ 1 \}$ in $s \in \N$ steps.
\end{definition}

\noindent
Since an Abelian group is $1$-step nilpotent, one can think about the class of nilpotent groups as the groups that are almost Abelian.

For further reference, let us note here that for the elements of the lower central series we have $\gamma_{i+1}(\Gamma) \triangleleft \gamma_i(\Gamma)$ and that the quotient group $\gamma_i(\Gamma) / \gamma_{i+1}(\Gamma)$ is always Abelian (for example see page 113 of \cite{R95}).

\subsection{Growth of graphs and structure of groups}
\label{appendix:growth_of_groups}

If we endow the transitive graph $G$ with the usual graph metric, denoted by $\distance(.,.)$, then the ball of radius $n \in \N$ centered at $x \in V(G)$ is $\ball(x,n) := \left\{ y \in V(G) \, : \, \distance(x,y) \leq n \right\}$. The function $\growth_G(n) := \left| \ball(o, n) \right|$ is called the (volume) growth function, where $o \in V(G)$ is chosen arbitrarily (due to the transitivity). A graph has (super)polynomial growth if its growth function is (super)polynomial.

Using the so-called Nash-Williams criterion (see page 37, equation (2.14) of \cite{LP16}) from electric network theory, one can immediately give a necessary condition on the growth function of a transient graph.

\begin{lemma}[\cite{W00}, Lemma 3.12]
	\label{lemma:at_least_quadratic_growth}
	If $G$ is a transitive and transient graph, then 
	\begin{equation*}
		\liminf_{n \to \infty} \growth_G(n)/n^2 = \infty.
	\end{equation*}
\end{lemma}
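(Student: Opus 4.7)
The plan is to establish the contrapositive: if $\liminf_{n \to \infty} \growth_G(n)/n^2 < \infty$, then the simple random walk on $G$ is recurrent. The main tool is the Nash--Williams criterion: for disjoint finite edge cutsets $\Pi_1, \Pi_2, \ldots$ separating $o$ from infinity, the effective resistance obeys $R_{\mathrm{eff}}(o \leftrightarrow \infty) \geq \sum_n 1/|\Pi_n|$, and $G$ is recurrent iff $R_{\mathrm{eff}}(o \leftrightarrow \infty) = \infty$. So it suffices to exhibit disjoint cutsets whose reciprocal sizes sum to $+\infty$.

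Assume for contradiction that there exists a constant $L$ and a sequence $n_k \to \infty$ with $\growth_G(n_k) \leq L\, n_k^2$, chosen to satisfy $n_{k+1} \geq 2 n_k + 1$. The natural candidate cutset at radius $r$ is $\Pi_r :=$ the set of edges between $S(o, r-1)$ and $S(o, r)$; these are automatically disjoint across different $r$ and satisfy $|\Pi_r| \leq d \cdot |S(o,r)|$ by transitivity, where $|S(o,r)| = \growth_G(r) - \growth_G(r-1)$.

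The main obstacle is that a naive one-sphere-per-scale argument is too weak: within $[n_k/2, n_k]$, a simple averaging only guarantees one radius $r_k$ with $|S(o, r_k)| \lesssim L n_k$, yielding a contribution of order $1/n_k$, which need not sum to infinity when $n_k$ grows quickly. The remedy will be to use \emph{every} radius in the interval simultaneously via the Cauchy--Schwarz inequality:
\begin{equation*}
    \Bigl( \tfrac{n_k}{2} \Bigr)^2
    \leq
    \Biggl( \sum_{r=\lceil n_k/2 \rceil}^{n_k} |S(o,r)| \Biggr)
    \Biggl( \sum_{r=\lceil n_k/2 \rceil}^{n_k} \frac{1}{|S(o,r)|} \Biggr)
    \leq
    \growth_G(n_k) \cdot \sum_{r=\lceil n_k/2 \rceil}^{n_k} \frac{1}{|S(o,r)|}.
\end{equation*}
Rearranging and substituting $\growth_G(n_k) \leq L n_k^2$ gives $\sum_{r = \lceil n_k/2 \rceil}^{n_k} 1/|S(o,r)| \geq 1/(4L)$, a uniform positive lower bound that does not degrade with $k$.

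Since the intervals $[\lceil n_k/2 \rceil, n_k]$ are pairwise disjoint by the choice $n_{k+1} \geq 2 n_k + 1$, summing the above bound over $k$ produces $\sum_r 1/|S(o,r)| = \infty$, and therefore $\sum_r 1/|\Pi_r| \geq (1/d) \sum_r 1/|S(o,r)| = \infty$. Nash--Williams then forces $G$ to be recurrent, contradicting the hypothesis of transience and completing the argument.
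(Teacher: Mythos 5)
Your proof is correct, and it follows exactly the route the paper indicates for this lemma: the Nash--Williams criterion applied to the edge cutsets between consecutive spheres, with the Cauchy--Schwarz step supplying the divergence of $\sum_r 1/|S(o,r)|$ that the paper leaves to the cited reference \cite{W00}. (As a minor remark, your argument only uses bounded degree rather than full transitivity, so it is if anything slightly more general than the stated hypothesis requires.)
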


\noindent
In words, Lemma \ref{lemma:at_least_quadratic_growth} says that a transient, transitive graph must have a growth rate faster than quadratic. 
Not surprisingly, to arrive at the aformentioned at least cubic growth rate, we need much stronger results.

Another crucial observation is the following. By definition we have that if our transitive graph $G$ has  superpolynomial growth then for all $\alpha >0$ there exists a constant $c >0$ (maybe depending on $\alpha$) such that the growth function satisfies $c n^{\alpha} \leq \growth_G(n)$ for all $n \in \N$. In particular, this means we already have the desired at least cubic growth rate in this case. Consequently, we only need to focus on graphs with polynomial growth, or more precisely, we only need to show that a transient, transitive graph with polynomial growth has at least cubic growth rate. Fortunately, as the next result shows, we have a good understanding of the structure of these graphs.

\begin{theorem}[\cite{T85}, Theorem 1.]
	\label{theorem:Trofimov}
	If a transitive graph has polynomial growth then it is roughly isometric to a Cayley graph of a finitely generated group that has a nilpotent subgroup of finite index.
\end{theorem}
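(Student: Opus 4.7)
The plan is to combine two deep ingredients: a structural theorem reducing a transitive graph of polynomial growth to a situation in which a finitely generated group acts cocompactly with finite stabilizers, and Gromov's classical theorem characterizing finitely generated groups of polynomial growth.

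First I would consider the full automorphism group $\Gamma := \mathrm{Aut}(G)$ equipped with the topology of pointwise convergence, which makes it a locally compact totally disconnected topological group acting transitively on $V$. The stabilizer $\Gamma_o$ of any vertex $o$ is compact, as it embeds as a closed subgroup of $\prod_{n \in \mathbb{N}} \mathrm{Sym}(\ball(o,n))$, and $V \cong \Gamma / \Gamma_o$ as $\Gamma$-sets. The polynomial growth of $G$ translates into a growth condition on $\Gamma$ relative to $\Gamma_o$. The key geometric step, which is precisely the structural theorem of Trofimov \cite{T85}, is to produce a $\Gamma$-invariant system of imprimitivity with \emph{finite} blocks: writing $K$ for the setwise stabilizer of this block system, $K$ is a compact normal subgroup with bounded orbits, and the quotient graph $\overline{G} := G / K$ is locally finite and admits a transitive action of the countable discrete group $\Gamma / K$ with finite vertex stabilizers. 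Because the blocks of $K$ are finite, the natural quotient map $G \to \overline{G}$ is a quasi-isometry.

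Second, from the cocompact, finite-stabilizer action of $\Gamma/K$ on the locally finite graph $\overline{G}$ I would extract a finitely generated subgroup $H \leq \Gamma/K$ still acting cocompactly on $\overline{G}$ (one produces generators by taking, for each $\Gamma/K$-orbit of edges, a group element realizing that edge). Standard Schwarz-Milnor-type reasoning then shows that the Cayley graph $\Cayley(H, S)$, for any finite symmetric generating set $S$, is quasi-isometric to $\overline{G}$, and hence to $G$. Since the growth type is a quasi-isometry invariant, $H$ has polynomial growth as a finitely generated group.

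Finally, I would invoke Gromov's theorem on groups of polynomial growth \cite{G81}, which asserts that every finitely generated group of polynomial growth contains a nilpotent subgroup of finite index; applying this to $H$ completes the proof. The main obstacle is the construction of the finite-block imprimitivity system in Trofimov's structural theorem: one must carefully track how pointwise stabilizers of successive balls evolve, using the polynomial growth to force these stabilizers to stabilize at some finite scale, which is what yields the compact normal subgroup $K$ with bounded orbits. Once this reduction is in hand, the remaining passage through Cayley graphs and the appeal to Gromov's theorem are essentially formal.
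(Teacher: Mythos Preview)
The paper does not give its own proof of this theorem: it is stated as a citation of \cite[Theorem~1]{T85}, followed only by the remark that the formulation here is ``just the corollary of the original statement'' and that it extends Gromov's theorem \cite{G81}. Your outline is exactly the derivation the paper is alluding to---Trofimov's imprimitivity/structure theorem to pass to a discrete group acting with finite stabilizers, a Schwarz--Milnor argument to identify the graph up to quasi-isometry with a Cayley graph, and then Gromov's polynomial-growth theorem to obtain the virtually nilpotent conclusion---so there is nothing further to compare. Your summary is accurate and you correctly flag that the substantive work (the bounded-block imprimitivity system) is the content of \cite{T85} itself rather than something you are reproving.
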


\noindent
Maybe it is worth mentioning that Theorem \ref{theorem:Trofimov} is an extension of the famous theorem of Gromov from \cite{G81} (see page $54$, Main theorem therein) about the structure of groups with polynomial growth. Let us also note here that the way we stated Theorem \ref{theorem:Trofimov} is just the corollary of the original statement.

To convince the reader that Theorem \ref{theorem:Trofimov} is indeed all we can wish for, it is reasonable to state the following lemma about the growth of a graph being invariant under quasi-isometries. Moreover, this next lemma can also be used to define the notion of volume growth for groups, or more precisely, for finitely generated groups, supplemented with the definition of Cayley graphs from the previous subsection.

\begin{lemma}[\cite{W00}, Lemma 3.13]
	\label{lemma:growth_invariant_under_rough_isometry}
	If the graphs $G$ and $G'$ are roughly isometric (as metric spaces) then their growth functions are equivalent.
\end{lemma}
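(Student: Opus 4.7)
The plan is to establish both directions $\growth_G \preceq \growth_{G'}$ and $\growth_{G'} \preceq \growth_G$ separately by the same argument, once we construct a quasi-inverse to the given quasi-isometry. Let $\Phi : V(G) \to V(G')$ be a quasi-isometry with constants $\alpha, \beta > 0$.

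The key preliminary observation is a \emph{bounded fiber} property: if $\Phi(x) = \Phi(y)$, then the lower QI-inequality gives $\alpha^{-1} \distance(x,y) - \beta \leq \distance'(\Phi(x),\Phi(y)) = 0$, so $\distance(x,y) \leq \alpha\beta$. Since $G$ is locally finite, it follows that $|\Phi^{-1}(z)| \leq \growth_G(\alpha\beta) =: M_G < \infty$ for every $z \in V(G')$ (note $\growth_G(\alpha\beta)$ is a uniform bound because $G$ is transitive).

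For the first direction, I would fix $o \in V(G)$ and set $o' := \Phi(o)$. The upper QI-inequality yields $\Phi(B_G(o,n)) \subseteq B_{G'}(o', \alpha n + \beta)$, so
\begin{equation*}
    \growth_G(n) = |B_G(o,n)| \leq M_G \cdot |\Phi(B_G(o,n))| \leq M_G \cdot \growth_{G'}(\alpha n + \beta) \leq M_G \cdot \growth_{G'}((\alpha+\beta)n)
\end{equation*}
for all $n \geq 1$, which gives $\growth_G \preceq \growth_{G'}$.

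For the reverse direction, I would construct a quasi-inverse $\Psi : V(G') \to V(G)$ by using the near-surjectivity condition: for each $y \in V(G')$, choose $\Psi(y) \in V(G)$ with $\distance'(y, \Phi(\Psi(y))) < \beta$. A routine triangle-inequality computation, combined with the QI-inequalities for $\Phi$, shows that $\Psi$ is itself a quasi-isometry (with constants depending only on $\alpha,\beta$), and the same fiber-bound argument yields $|\Psi^{-1}(x)| \leq M_{G'}$ for a constant $M_{G'}$ depending only on $G'$ and $\alpha,\beta$. Repeating the ball-containment argument with $\Psi$ in place of $\Phi$ then gives $\growth_{G'} \preceq \growth_G$, completing the proof. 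The only mild technical point is verifying that $\Psi$ genuinely satisfies the QI-inequalities (with new constants) and has bounded fibers; both are short direct computations from the triangle inequality applied to the defining estimate $\distance'(y, \Phi(\Psi(y))) < \beta$. No substantive obstacle is expected.
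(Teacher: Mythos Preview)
The paper does not supply its own proof of this lemma; it is stated as a citation from \cite{W00}, Lemma 3.13, and used as a black box. Your argument is the standard one (bounded fibers from the lower QI bound plus transitivity, ball containment from the upper QI bound, then symmetrize via a quasi-inverse) and is correct in the paper's setting, where transitivity is a standing assumption so that $\growth_G$ is well-defined independently of the basepoint and the fiber bound $M_G = \growth_G(\alpha\beta)$ is uniform.
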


\noindent
Indeed, it is an easy exercise to show that the Cayley graphs of a finitely generated group with respect to different generating sets are roughly isometric. Hence, it is meaningful to define the (volume) growth function of a finitely generated group as the growth function of one of its Cayley graphs.

With the growth function being defined for finitely generated groups, the other step to convince the reader about the usefulness of Theorem \ref{theorem:Trofimov} is a consequence of the following property.

\begin{lemma}[\cite{W00}, Lemma 3.14]
	\label{lemma:growth_finite_index_subgroup}
    Let $\Gamma$ be a finitely generated group with a finite index subgroup $\Gamma_1$. Then $\Gamma$ and $\Gamma_1$ have equivalent growth functions.
\end{lemma}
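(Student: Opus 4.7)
The plan is to show that the inclusion $\iota : \Gamma_1 \hookrightarrow \Gamma$ becomes a quasi-isometry for a suitable pair of finite symmetric generating sets, and then to invoke Lemma \ref{lemma:growth_invariant_under_rough_isometry}, using also the standard fact (implicit in the paragraph preceding the lemma) that any two Cayley graphs of a finitely generated group with respect to different finite symmetric generating sets are quasi-isometric, so the growth function of a finitely generated group is well-defined up to the equivalence $\equivalent$.

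First I would fix a finite symmetric generating set $S_1$ of $\Gamma_1$ and a complete set $T = \{x_1, \dots, x_k\}$ of right coset representatives of $\Gamma_1 \subgrp \Gamma$, chosen with $x_1 = e$, and take $S := S_1 \cup T \cup T^{-1}$ as the finite symmetric generating set of $\Gamma$. Quasi-surjectivity of $\iota : (\Gamma_1, \distance_{S_1}) \to (\Gamma, \distance_S)$ is immediate: every $x \in \Gamma$ factors as $x = y x_i$ for some $y \in \Gamma_1$, so $\distance_S(\iota(y), x) \leq 1$. The upper bound $\distance_S(\iota(y), \iota(y')) \leq \distance_{S_1}(y, y')$ is trivial from the inclusion $S_1 \subseteq S$.

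The substantive step is the matching lower bound $\distance_{S_1}(y, y') \leq M \cdot \distance_S(y, y')$ for some constant $M$ and all $y, y' \in \Gamma_1$, for which I would use the following rewriting trick. For each pair $(s, i) \in S \times \{1, \dots, k\}$ there exist unique $\sigma(s, i) \in \Gamma_1$ and $\tau(s, i) \in \{1, \dots, k\}$ with $x_i s = \sigma(s, i) \, x_{\tau(s, i)}$. Since $S$ and $T$ are finite, the set $\Sigma := \{\sigma(s,i) : s \in S, \, 1 \leq i \leq k\} \subseteq \Gamma_1$ is finite, so each of its elements has $S_1$-length at most some constant $M$. Given any word $s_1 s_2 \cdots s_n$ of $S$-length $n$ representing $y^{-1} y' \in \Gamma_1$ (we may left-translate by $y^{-1}$ by left-invariance of $\distance_S$), I would iteratively apply the identity $x_i s = \sigma(s, i) x_{\tau(s,i)}$ starting from $x_1 = e$ to rewrite the product as $\sigma_1 \sigma_2 \cdots \sigma_n \cdot x_{i_n}$ with each $\sigma_j \in \Sigma$. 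Since the whole product lies in $\Gamma_1$ and $\sigma_1 \cdots \sigma_n \in \Gamma_1$, the residue $x_{i_n}$ lies in $\Gamma_1 \cap T = \{e\}$, so $y^{-1} y' = \sigma_1 \cdots \sigma_n$ has $S_1$-length at most $Mn$, as required.

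The main obstacle is the bookkeeping in this iterated rewriting: one must verify carefully by induction on $n$ that the factorization $s_1 \cdots s_n = \sigma_1 \cdots \sigma_n \cdot x_{i_n}$ holds with $\sigma_j$ and $i_n$ defined as above, and that the final residue $x_{i_n}$ is forced to equal $e$ precisely because the product lies in $\Gamma_1$. Once the quasi-isometry $\iota$ is established, Lemma \ref{lemma:growth_invariant_under_rough_isometry} yields $\growth_\Gamma \equivalent \growth_{\Gamma_1}$ for the growth functions with respect to $S$ and $S_1$, and the generating-set independence mentioned at the outset then gives the conclusion for growth functions computed with respect to any finite symmetric generating sets of $\Gamma$ and $\Gamma_1$.
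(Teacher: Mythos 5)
Your argument is correct. Note that the paper offers no proof of this statement at all: it is quoted as Lemma~3.14 of \cite{W00}, so there is no internal argument to compare against; what you give is the standard coset-rewriting proof, and it fits the paper's toolkit since it feeds directly into Lemma~\ref{lemma:growth_invariant_under_rough_isometry} and the generating-set-independence remark preceding it. The substantive steps check out: $S=S_1\cup T\cup T^{-1}$ is a finite symmetric generating set of $\Gamma$; the telescoping identity $s_1\cdots s_n=\sigma_1\cdots\sigma_n x_{i_n}$ follows by induction from $x_{i_{j-1}}s_j=\sigma_j x_{i_j}$ with $i_0=1$; and $x_{i_n}\in\Gamma_1\cap T=\{e\}$ because distinct representatives lie in distinct right cosets and $x_1=e$. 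This yields $\distance_{S_1}(y,y')\le M\,\distance_S(y,y')$ with $M=\max_{\sigma\in\Sigma}|\sigma|_{S_1}$, which together with $S_1\subseteq S$, left-invariance of the word metrics, and the coset decomposition gives the claimed quasi-isometry, and then Lemma~\ref{lemma:growth_invariant_under_rough_isometry} plus independence of the growth type from the chosen generating set finishes the proof. Two minor remarks. First, fixing $S_1$ tacitly assumes $\Gamma_1$ is finitely generated; this is implicit in the statement (otherwise $\Gamma_1$ has no growth function), and in fact your own rewriting proves it, since the finite set $\Sigma$ generates $\Gamma_1$ — Woess states this as part of his lemma — so it merits one explicit sentence rather than a silent assumption. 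Second, since $x_1=e$ your $S$ contains the identity, which creates loops in the Cayley graph as defined in the appendix; this is harmless for distances and growth, but you may as well remove $e$ from $S$. Finally, a marginally more elementary variant avoids quasi-isometries altogether: the same rewriting shows that the ball of radius $n$ in $\Cayley(\Gamma,S)$ is covered by the $k$ right translates, by the representatives $x_i$, of the ball of radius $Mn$ in $\Cayley(\Gamma_1,S_1)$, so $\growth_{\Gamma}(n)\le k\,\growth_{\Gamma_1}(Mn)$, while $S_1\subseteq S$ gives $\growth_{\Gamma_1}(n)\le\growth_{\Gamma}(n)$; combined with generating-set independence this is exactly the asserted equivalence, with the same combinatorial core as your proof.
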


\noindent
Indeed,  Theorem \ref{theorem:Trofimov} combined with Lemmas \ref{lemma:growth_invariant_under_rough_isometry} and \ref{lemma:growth_finite_index_subgroup} tells us that to describe all the transitive graphs of polynomial growth it is enough to consider the (asymptotic) structure of finitely generated nilpotent groups. Fortunately, the latter was done nearly 50 years ago in \cite{B72} and \cite{G73}.

Recall from elementary group theory (or see \cite{R95}, Theorem 10.20) the fundamental structure theorem of finitely generated Abelian groups. One of its consequences is the following observation: if we are given a finitely generated Abelian group $\Gamma$ then there exists a nonnegative integer $d \in \N$ such that $\Gamma$ contains a subgroup of finite index isomorphic with the free Abelian group $\Z^d$. In this case, this integer $d$ is called the rank (or the free-rank) of the Abelian group and is denoted by $\rank(\Gamma)$.

Now, if we are given a finitely generated nilpotent group $\Gamma$ with lower central series $( \gamma_i(\Gamma) )_{i = 0}^{s}$ then, as we mentioned previously, the quotient groups $\gamma_{i}(\Gamma) / \gamma_{i+1}(\Gamma)$ are (finitely generated) Abelian groups. Consequently, it is meaningful to define the nonnegative integer
\begin{equation}
    \mathbf{d}(\Gamma) := \sum_{i = 1}^{s} i \, \rank \left( \Gamma_{i-1} / \Gamma_i \right).
\end{equation}

As the following result, the so-called Bass-Guivarc'h formula, says, this number $\mathbf{d}(\Gamma)$ can be used to describe the structure of a finitely generated nilpotent group. Let us mention here that the name comes from the fact that in \cite{G73} (see Theorem II.4. therein) an identical result was proved for topological groups.

\begin{theorem}[\cite{B72}, Theorem 2.]
    If $\Gamma$ is a finitely generated nilpotent group then we have
    \begin{equation}
         \growth_{\Gamma}(n) \equivalent n^{\mathbf{d}(\Gamma)}.
    \end{equation}
\end{theorem}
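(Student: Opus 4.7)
The plan is to reduce counting elements of word length at most $n$ to counting lattice points in an anisotropic box, by using a normal form adapted to the lower central series; in the torsion-free case this is called a \emph{Mal'cev basis}. First I would dispose of torsion by passing to a finite-index torsion-free nilpotent subgroup (which exists by a theorem of Mal'cev) and invoking Lemma \ref{lemma:growth_finite_index_subgroup} to assume $\Gamma$ is torsion-free. Then I would pick a polycyclic generating sequence $x_1,\ldots,x_k$ whose images in the successive quotients $\gamma_{i-1}(\Gamma)/\gamma_i(\Gamma)$ form bases of the free parts, and assign to each $x_j$ the weight $w_j$ equal to the unique $i$ with $x_j\in\gamma_{i-1}(\Gamma)\setminus\gamma_i(\Gamma)$. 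A standard commutator collection argument shows that every $g\in\Gamma$ has a unique normal form $g=x_1^{a_1}\cdots x_k^{a_k}$ with integer exponents, turning $\Gamma$ set-theoretically into $\mathbb{Z}^k$.

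The heart of the argument is a two-sided comparison $|g|_S \asymp \max_j |a_j|^{1/w_j}$ for any finite symmetric generating set $S$. The upper bound $|a_j|\leq C\,|g|_S^{w_j}$ is proved by induction on the nilpotency class: multiplying a word of length $n$ by a generator changes each $a_j$ by at most a polynomial of degree $w_j-1$ in the other exponents, via the Hall--Petresco collection formula, and Gronwall-type bookkeeping then produces the polynomial bound. The lower bound, asserting that every tuple with $|a_j|\leq c\,n^{w_j}$ is realized by a word of length at most $n$, is the more delicate direction: one must exhibit each weight-$w_j$ generator as an iterated commutator of length $w_j$ in generators of smaller weight, so that $x_j^{a_j}$ can be assembled economically by combining short words.

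Together these bounds identify $\ball(o,n)$, under the normal form bijection, with a set sandwiched between two boxes of the shape $\prod_{j=1}^k[-C_{\pm}n^{w_j},C_{\pm}n^{w_j}]\cap\mathbb{Z}^k$, yielding
\begin{equation*}
\growth_{\Gamma}(n)\equivalent \prod_{j=1}^k n^{w_j}=n^{\sum_j w_j}.
\end{equation*}
Grouping generators by weight and using that there are exactly $\rank(\gamma_{i-1}(\Gamma)/\gamma_i(\Gamma))$ generators of weight $i$ identifies the exponent as
\begin{equation*}
\sum_{j=1}^k w_j=\sum_{i=1}^s i\cdot\rank(\gamma_{i-1}(\Gamma)/\gamma_i(\Gamma))=\mathbf{d}(\Gamma),
\end{equation*}
which is the desired formula. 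The main obstacle is the lower bound on word length: one has to prove constructively that any tuple of exponents within the $n^{w_j}$ envelope can be realized simultaneously by a single word of length $O(n)$, which is exactly where the Hall basis/commutator collection machinery is indispensable.
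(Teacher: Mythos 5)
The paper does not actually prove this statement: it is quoted, with citation, from Bass \cite{B72} (see also Guivarc'h \cite{G73}), so there is no internal proof to compare against. Your sketch is precisely the classical argument behind that citation: reduce to the torsion-free case, choose a polycyclic (Mal'cev) basis adapted to the lower central series with weights $w_j$, establish the two-sided estimate $|g|_S \asymp \max_j |a_j|^{1/w_j}$, and count lattice points in the resulting anisotropic box; the outline is correct and the exponent bookkeeping $\sum_j w_j=\sum_i i\cdot\rank\bigl(\gamma_{i-1}(\Gamma)/\gamma_i(\Gamma)\bigr)=\mathbf{d}(\Gamma)$ is right.

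Two points would need to be made explicit in a full write-up. First, after passing to a finite-index torsion-free subgroup $H$ and invoking Lemma \ref{lemma:growth_finite_index_subgroup}, you must also check $\mathbf{d}(H)=\mathbf{d}(\Gamma)$; this is true because the torsion-free ranks of the successive lower-central quotients are commensurability invariants (they are the dimensions of the graded pieces of the common Mal'cev completion), but it does not follow from the growth comparison alone and should be stated. Second, the lower bound --- realizing every exponent tuple with $|a_j|\le c\,n^{w_j}$ by a single word of length $O(n)$ --- is the genuine core of Bass's proof: one expresses each weight-$w$ basis element as an iterated commutator of the original generators and uses identities of the form $[u^a,v^b]\equiv[u,v]^{ab}$ modulo terms of strictly higher weight, correcting the error terms by downward induction on the weight. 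You correctly identify this as the delicate step, but as written the proposal records the statement of the needed lemma rather than its proof; with that lemma and the commensurability remark supplied, the argument is complete and coincides with the cited one.
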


An immediate consequence of this result, using Lemma \ref{lemma:growth_finite_index_subgroup}, is the following: if a finitely generated group has a finite-index nilpotent subgroup then it has polynomial growth. Note that this consequence (accompanied with Theorem \ref{theorem:Trofimov}) gives a characterization of transitive graphs with polynomial growth. Moreover, due to Lemma \ref{lemma:at_least_quadratic_growth}, this result also implies the desired at least cubic growth in the polynomial growth regime and hence we just arrived at the following corollary, which can be considered as the first step in the proof of Lemma \ref{lemma:heat_kernel_upper}.

\begin{corollary}
	\label{coro:transient_at_least_cubic}
	If $G$ is a transient transitive graph then we have $\growth_G(n) \succeq n^3$, i.e., such a graph has at least cubic growth.
\end{corollary}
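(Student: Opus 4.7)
The plan is to split into two cases according to whether the growth function $\growth_G$ is superpolynomial or polynomial, handle the superpolynomial case by unwinding the definition, and reduce the polynomial case to the Bass--Guivarc'h formula combined with the super-quadratic lower bound of Lemma \ref{lemma:at_least_quadratic_growth}.

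First I would observe that the superpolynomial case is immediate. If $\growth_G$ is superpolynomial, then by definition $n^{\beta}\preceq\growth_G(n)$ for every $\beta>0$; in particular for $\beta=3$ we obtain $n^3\preceq\growth_G(n)$, as desired. So it remains to treat the polynomial case.

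In the polynomial case I would proceed as follows. By Theorem \ref{theorem:Trofimov}, $G$ is quasi-isometric to a Cayley graph $\Cayley(\Gamma,S)$ of some finitely generated group $\Gamma$ which contains a nilpotent subgroup $\Gamma_1$ of finite index. Combining Lemma \ref{lemma:growth_invariant_under_rough_isometry} (quasi-isometry invariance of growth) with Lemma \ref{lemma:growth_finite_index_subgroup} (finite-index invariance of growth) gives
\begin{equation*}
    \growth_G \equivalent \growth_{\Gamma} \equivalent \growth_{\Gamma_1}.
\end{equation*}
The Bass--Guivarc'h formula, applied to the finitely generated nilpotent group $\Gamma_1$, then yields $\growth_{\Gamma_1}(n)\equivalent n^{\mathbf{d}(\Gamma_1)}$, so altogether $\growth_G(n)\equivalent n^{\mathbf{d}(\Gamma_1)}$.

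The final step is to show that the nonnegative integer $d:=\mathbf{d}(\Gamma_1)$ satisfies $d\geq 3$. Since $G$ is transient and transitive, Lemma \ref{lemma:at_least_quadratic_growth} gives $\growth_G(n)/n^2\to\infty$, so the equivalence above forces $n^{d}/n^2\to\infty$ (up to the multiplicative and scaling constants implicit in $\equivalent$). This rules out $d\leq 2$, and as $d$ is a nonnegative integer we conclude $d\geq 3$, whence $\growth_G(n)\succeq n^3$. There is no real obstacle: all the heavy lifting has already been done by Trofimov's theorem and the Bass--Guivarc'h formula, and the only point requiring a moment of care is the (elementary) observation that $\mathbf{d}(\Gamma_1)$ is an integer, which is why the strict super-quadratic bound is enough to push the exponent from $2$ up to $3$.
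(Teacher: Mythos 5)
Your proposal is correct and follows essentially the same route as the paper: dispose of the superpolynomial case by definition, and in the polynomial case combine Trofimov's theorem with the quasi-isometry and finite-index invariance lemmas and the Bass--Guivarc'h formula to get $\growth_G(n)\equivalent n^{\mathbf{d}(\Gamma_1)}$ with an integer exponent, which Lemma \ref{lemma:at_least_quadratic_growth} forces to be at least $3$. Your explicit remark that the integrality of $\mathbf{d}(\Gamma_1)$ is what upgrades the super-quadratic bound to cubic is exactly the point the paper relies on.
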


\subsection{Isoperimetry and transition probabilities}
\label{appendix:isoperimetry_return_probabilities}

Although most of the authors use the notion of edge boundary in the following definition of isoperimetry, due to the local finiteness of the graph and the fact that we don't strive for the optimal constant, let us use something else. Namely, given a finite subset of vertices $K \fsubset V(G)$, we define its inner vertex boundary $\boundary K$ to be the set of vertices inside $K$ with at least one neighbour outside $K$.

\begin{definition}
	Let $\psi \, : \, \R_+ \rightarrow \R_+$ be a non-decreasing function. We say that $G$ satisfies the $\psi$-isoperimetric inequality $\isoperimetry_{\psi}$ if there exists a constant $c > 0$ such that
	\begin{equation}
	| \boundary K | \geq c \psi \left( | K| \right)
	\end{equation}
	holds for any finite subset of vertices $K \fsubset V(G)$. In particular, if $\psi(t) = t^{1 - 1/d}$ (with some $1 \leq d \leq \infty$), then we speak about $d$-dimensional isoperimetric inequality, denoted by $\isoperimetry_d$.
\end{definition}

Despite the fact that we are mostly interested in (at least) $3$-dimensional isoperimetry, it is worth mentioning that for example the $d$-dimensional lattice $\Z^d$ has $\isoperimetry_d$, which was originally proved in \cite{BL91} using compression by gravity.
However, a better known approach is to use the more general result of Theorem \ref{theorem:Coulhon_Saloff-Coste} below.

Now, according to the plan, we would like to use Corollary \ref{coro:transient_at_least_cubic} to obtain an at least $3$-dimensional isoperimetry for our graphs in question. This can be achieved by the next result, which was originally proved by Coulhon and Saloff-Coste in \cite{CSC93}. The referenced version of the statement is a combination of Lemma 10.46 and Proposition 8.14 from \cite{LP16}.

\begin{theorem}
	\label{theorem:Coulhon_Saloff-Coste}
	Define the inverse growth rate of the transitive graph $G$ by
	\begin{equation}
		\rho(n) := \min \left\{ r \, : \, \growth_G(r) \geq n \right\},
	\end{equation}
	that is, $\rho(n)$ is the smallest radius of a ball in $G$ that contains at least $n$ vertices. Then for all finite subset of vertices $K \fsubset V(G)$ we have
	\begin{equation}
		\frac{| \boundary K | }{ |K| } \geq \frac{1}{ 2 \rho \left( 2|K| \right) }.
	\end{equation}
\end{theorem}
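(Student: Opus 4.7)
The plan is to implement the classical path-averaging argument of Coulhon and Saloff-Coste. Set $R := \rho(2|K|)$, so that by definition of $\rho$, $|B(o, R)| \geq 2|K|$, and by transitivity $|B(x, R)| \geq 2|K|$ for every vertex $x \in V(G)$. Using a fixed well-ordering of $V(G)$, I would first choose in an $\mathrm{Aut}(G)$-equivariant fashion a distinguished geodesic path $\pi_{u,v}$ of length $\distance(u,v)$ for every ordered pair of vertices (a deterministic analog of the atlas construction in Lemma \ref{lemma_atlas_factor_of_iid}, with the random $\mathrm{UNI}[0,1]$ labels replaced by the fixed well-ordering).

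Next, for each $x \in K$, since $|B(x, R) \setminus K| \geq 2|K| - |K| = |K|$, there are at least $|K|^2$ ordered pairs $(x, y)$ with $x \in K$ and $y \in B(x, R) \setminus K$. For each such pair the geodesic $\pi_{x, y}$ begins in $K$, ends in $K^c$, has length at most $R$, and hence visits at least one vertex of $\boundary K$. The core of the argument is then to upper-bound this same count by a double counting over $\boundary K$: for each $z \in \boundary K$, one bounds the number of pairs $(x, y)$ whose geodesic passes through $z$. The comparison of the two bounds is designed to yield $|K|^2 \leq |\boundary K| \cdot 2R \cdot |K|$, which is the desired inequality $|\boundary K|/|K| \geq 1/(2R)$.

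\paragraph{Main obstacle.} The technical crux is precisely the upper bound in the double count. The naive estimate --- that for fixed $z \in \boundary K$, any pair $(x, y)$ whose geodesic passes through $z$ must satisfy $x, y \in B(z, R)$ --- gives a bound of $|B(o, R)|^2$ per boundary vertex, leading only to $|\boundary K|/|K| \gtrsim 1/|B(o,R)|$, which is too weak by a factor of $|B(o,R)|/R$. Sharpening this to an estimate of order $2R \cdot |K|$ per $z \in \boundary K$ requires a more subtle combinatorial argument: one should parametrize pairs $(x, y)$ crossing at $z$ by the position of $z$ along the geodesic (contributing a factor of $R$) and then, crucially, exploit the constraint $x \in K$ together with the $\mathrm{Aut}(G)$-invariance of the geodesic selection --- via a mass-transport style averaging --- to reduce the remaining ``endpoint'' factor from $|B(o, R)|$ down to $|K|$. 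This refined averaging, which uses the defining property $|B(o, R)| \geq 2|K|$ of $R$ in an essential way, is the main input of \cite{CSC93}, and carrying it out rigorously on a general transitive (and a priori non-Cayley, possibly non-unimodular) graph --- where one lacks a clean group translation structure and must instead rely on the equivariant section $V \to \mathrm{Aut}(G)$ --- is the step where most of the effort lies.
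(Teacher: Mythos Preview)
The paper does not prove this theorem; it quotes the result from \cite{CSC93}, pointing to \cite[Lemma 10.46 and Proposition 8.14]{LP16} for the version stated here. So there is no proof in the paper to compare your attempt against, and your sketch already goes further than what the paper itself offers.

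That said, the specific upper bound you aim for is false, not merely hard to prove. You want the number of pairs $(x,y)$ with $x\in K$ and $y\in B(x,R)\setminus K$ to be at most $|\partial K|\cdot 2R\cdot|K|$. Take $K=\{o\}$: then $R=\rho(2)=1$, $|\partial K|=|K|=1$, and the number of such pairs equals the degree $d$ of $G$, so your bound would force $d\le 2$. The issue is that counting \emph{all} crossing pairs inevitably brings in a factor $|B(o,R)|$, which cannot in general be traded for $|K|$ (we only know $|B(o,R)|\ge 2|K|$, not the reverse).

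The standard argument (for a Cayley graph of a group $\Gamma$) avoids this by not bounding all pairs at once. One first averages
\[
\sum_{g\in B(e,R)}|K\cap Kg|=\sum_{y\in K}\bigl|\{g\in B(e,R):yg^{-1}\in K\}\bigr|\le |K|^2
\]
to find a \emph{single} $g^*\in B(e,R)$ with $|K\cap Kg^*|\le |K|^2/|B(e,R)|\le|K|/2$; this is the only place where $|B(e,R)|\ge 2|K|$ is used. Then, fixing a geodesic word $g^*=s_1\cdots s_\ell$ with $\ell\le R$, each $y\in K\setminus Kg^*$ yields a path from $y\in K$ to $y(g^*)^{-1}\notin K$, hence a crossing of $\partial K$; since $g^*$ is fixed, the crossing vertex and its position along the word determine $y$, giving $|K|/2\le|K\setminus Kg^*|\le R\,|\partial K|$. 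So the ``averaging'' happens \emph{before} the telescoping, to select one good translation, rather than inside a per-boundary-vertex count. The extension from Cayley graphs to arbitrary transitive graphs is a genuinely separate issue, and is what the paper's citation of \cite[Proposition 8.14]{LP16} is addressing.
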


Noting that $\growth_G(n) \succeq n^3$ implies $\rho(n) \leq C n^{1/3}$, this instantly yields the desired corollary, which can be considered as the second step towards to the proof of Lemma \ref{lemma:heat_kernel_upper}.

\begin{corollary}
	\label{coro:cubic_growth_implies_3dim_isoperimetry}
	If $G$ is a transitive graph which has at least cubic growth, then there exists a constant $c > 0$ such that for any finite subset of vertices $K \fsubset V(G)$ we have
	\begin{equation}
		|\boundary K | \geq c |K|^{ 1 - 1/3 },
	\end{equation}
	that is, $G$ satisfies the $3$-dimensional isoperimetric inequality $\isoperimetry_3$.
\end{corollary}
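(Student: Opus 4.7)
The plan is to deduce the corollary directly from Theorem~\ref{theorem:Coulhon_Saloff-Coste} by translating the cubic growth hypothesis into a matching bound on the inverse growth rate $\rho(\cdot)$. The main work is bookkeeping; there is no substantive obstacle to overcome.

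First I would unpack the hypothesis $\growth_G(n) \succeq n^3$ using the definition of $\succeq$: there exist constants $C_1, \alpha_1 > 0$ such that
\begin{equation*}
    n^3 \leq C_1 \cdot \growth_G(\alpha_1 n) \qquad \text{for all } n \in \N.
\end{equation*}
Setting $m := \alpha_1 n$ and rearranging, this gives a lower bound of the form $\growth_G(m) \geq C_2 \cdot m^3$ valid for all integers $m$ in $\alpha_1 \N$; a standard monotonicity argument extends this (possibly with a worse constant) to all sufficiently large integers $m$, which is the form I actually need.

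Next I would use this to bound $\rho(n)$ from above. By definition, $\rho(n)$ is the smallest $r$ such that $\growth_G(r) \geq n$. Plugging $r := \lceil (n/C_2)^{1/3} \rceil$ (for $n$ large enough that the previous step applies) into $\growth_G(r) \geq C_2 r^3 \geq n$ shows that $\rho(n) \leq C_3 \cdot n^{1/3}$ for a suitable constant $C_3 > 0$ and all large $n$; for the finitely many small values of $n$ we adjust $C_3$ so that the same bound persists throughout (since $\rho$ takes finite values on any transitive graph).

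Finally I would substitute this into Theorem~\ref{theorem:Coulhon_Saloff-Coste}: for any $K \fsubset V(G)$,
\begin{equation*}
    \frac{|\boundary K|}{|K|} \geq \frac{1}{2 \rho(2|K|)} \geq \frac{1}{2 C_3 (2|K|)^{1/3}} =: c \cdot |K|^{-1/3},
\end{equation*}
which rearranges to $|\boundary K| \geq c \cdot |K|^{1 - 1/3}$, i.e.\ the $3$-dimensional isoperimetric inequality $\isoperimetry_3$. The only minor subtlety worth flagging is the small-$|K|$ regime, where the polynomial growth bound need not hold exactly but can always be absorbed into the constant $c$ by shrinking it; no genuine obstacle arises.
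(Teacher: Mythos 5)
Your argument is correct and is essentially the paper's own: the corollary is obtained by noting that $\growth_G(n)\succeq n^3$ gives $\rho(n)\leq C n^{1/3}$ and then substituting this into Theorem \ref{theorem:Coulhon_Saloff-Coste}; you merely spell out the constant bookkeeping and the small-$|K|$ adjustment that the paper leaves implicit.
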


For the final step, we would like to use this isoperimetry to obtain the desired upper bound on the transition probabilities of random walks. Let us note that while results of this kind are originated in \cite{V85} (see also \cite{VSCC92}), where the authors used tools from functional analysis, what we will present here is the result of \cite{MP05}, which follows from a purely probabilistic argument.

Recall that we only consider a symmetric random walk on the given transitive, transient graph $G$, which is a reversible Markov chain. Moreover, since $G$ is assumed to be transitive, and whence all the degrees are the same - denoted by $D$ - the constant function $\pi(x) = D$ ($x \in V(G)$) is a reversible measure.

Now, for $x,y \in V(G)$ let us introduce $Q(x,y) := \pi(x) p_1(x,y) = D \cdot p_1(x,y)$, where recall from \eqref{random_walk_transitions} that $p_1(x,y)$ denotes the one-step transition probability from $x$ to $y$.
Since the random walk is simple, note that $Q$ is just the adjacency matrix of the graph.

This can be extended for any subsets $K, K' \subseteq V(G)$ as $Q(K,K') := \sum_{x \in K, y \in K'} Q(x,y)$. Since for a finite set $K \fsubset V(G)$ the quantities $\pi(K)$ and $Q(K, K^c)$ are just a more general way to measure the size and the size of the boundary respectively, the next notion can be considered as slight extension of the isoperimetric dimension.

\begin{definition}
	The isoperimetric profile of a reversible Markov chain on the transitive graph $G$ is
	\begin{equation}
		\isoprof(r) := \inf \left\{ \frac{Q(K, K^c)}{ \pi(K) } \, : \, K \fsubset V(G), \, \pi(K) \geq r \right\}, \qquad r \geq D,
	\end{equation}
	where $\pi$ is the reversible measure and $D$ is the degree of a vertex.
\end{definition}

\noindent
Observe that the $d$-dimensional isoperimetry $\isoperimetry_d$ implies that there exists some constant $c > 0$ such that $\isoprof(r) \geq c r^{-1/d}$.

As it was mentioned earlier, a version of the next theorem was first proved in \cite{MP05} (see Theorem 2. therein) using a specific spatial martingale called the evolving set process. The form below, which is easier to use to achieve our goals, is Theorem 6.31 from \cite{LP16}.

\begin{theorem}
	\label{theorem:Morris_Peres}
	Suppose that we are given a reversible Markov chain with reversible measure $\pi$ on the graph $G$. If $x,y \in V(G)$, $n \in \N$ and $\veps > 0$ satisfies
	\begin{equation}
		n \geq 1 + \int_{\min \{ \pi(x), \pi(y) \}}^{4 / \veps} \frac{16}{u \cdot \isoprof^2(u)} \, \mathrm{d}u,
	\end{equation}
	then we have
	\begin{equation}
		p_n(x,y) \leq \pi(x) \cdot \veps.
	\end{equation}
\end{theorem}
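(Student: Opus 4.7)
The plan is to prove Theorem \ref{theorem:Morris_Peres} via the \emph{evolving set method} of \cite{MP05}. Given a reversible Markov chain with kernel $P$ and reversible measure $\pi$, set $Q(A,y) := \sum_{x \in A} \pi(x) P(x,y)$. Starting from $S_0 = \{x\}$, define a Markov chain $(S_n)_{n \geq 0}$ on finite subsets of $V(G)$ by drawing $U_{n+1}$ uniformly on $[0,1]$ independently of the past and setting
\begin{equation*}
S_{n+1} = \{ y \in V(G) \,:\, Q(S_n, y)/\pi(y) \geq U_{n+1} \}.
\end{equation*}
A direct computation using reversibility gives the fundamental identity
\begin{equation*}
p_n(x,y) = \frac{\pi(y)}{\pi(x)} \cdot \mathbb{P}_{\{x\}}(y \in S_n),
\end{equation*}
so the task reduces to quantitatively bounding the probability that $y$ lies in $S_n$ under the evolving-set dynamics started at $\{x\}$.

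Two structural facts about $(S_n)$ drive the argument. First, $\pi(S_n)$ is a nonnegative martingale in its natural filtration $(\mathcal{F}_n)$: summing the definition of $S_{n+1}$ against $\pi$ gives $\mathbb{E}[\pi(S_{n+1}) \mid \mathcal{F}_n] = \sum_y \pi(y) \cdot Q(S_n, y)/\pi(y) = Q(S_n, V(G)) = \pi(S_n)$. Second, and crucially, one has the one-step contraction inequality for the square-root functional
\begin{equation*}
\mathbb{E}[\sqrt{\pi(S_{n+1})} \mid \mathcal{F}_n] \leq \sqrt{\pi(S_n)} \left( 1 - \tfrac{1}{8}\phi(S_n)^2 \right),
\end{equation*}
where $\phi(S) := Q(S, S^c)/\pi(S)$ is the conductance of $S$. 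This is proved by computing $\mathbb{E}[|\pi(S_{n+1}) - \pi(S_n)| \mid \mathcal{F}_n] = 2 Q(S_n, S_n^c)$ directly from the dynamics and then combining it with the martingale identity via Cauchy--Schwarz applied to the identity $\sqrt{b} - \sqrt{a} = (b - a)/(\sqrt{a} + \sqrt{b})$.

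With these tools in hand I would pass to the Doob transform $\hat{\mathbb{P}}_x$ on $\mathcal{F}_n$ with Radon--Nikodym density $\pi(S_n)/\pi(x)$ with respect to $\mathbb{P}_{\{x\}}$: this size-biases sets in proportion to their $\pi$-mass, and under $\hat{\mathbb{P}}_x$ the square-root contraction becomes a genuine per-step decay for a suitable functional of $\pi(S_n)$. Since by definition $\phi(S_n) \geq \isoprof(\pi(S_n))$ whenever $\pi(S_n) \geq u$, the per-step decay rate on trajectories with $\pi(S_n) \in [u, 2u]$ is at least $\isoprof(u)^2/8$. Counting the steps needed for $\pi(S_n)$ to grow through successive dyadic windows $[2^k \min(\pi(x),\pi(y)), 2^{k+1} \min(\pi(x),\pi(y))]$ up to level $4/\epsilon$ and comparing the resulting dyadic sum to a continuous integral yields precisely the hypothesis
\begin{equation*}
n \geq 1 + \int_{\min(\pi(x),\pi(y))}^{4/\epsilon} \frac{16 \, \mathrm{d}u}{u \, \isoprof(u)^2},
\end{equation*}
and at this $n$ the derived bound $\mathbb{P}_{\{x\}}(y \in S_n) \leq \pi(y)\, \epsilon$ combined with the fundamental identity gives $p_n(x,y) \leq \pi(x)\,\epsilon$.

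The main obstacle is converting the in-expectation one-step contraction into an honest pathwise quantitative bound: because $\pi(S_n)$ fluctuates as a martingale rather than growing monotonically, one cannot simply assert that it spends a predictable number of steps in each dyadic window $[u, 2u]$. The resolution, and the technical heart of \cite{MP05}, is to execute the calculation under the Doob-transformed measure, where the appropriate functional of $\pi(S_n)$ becomes a supermartingale with quantitative per-step decay, and then to control the expected sojourn time in each window via an optional-stopping argument applied at the first time $\pi(S_n)$ exceeds $2u$. Tracking all constants through this bookkeeping is what produces the specific factor $16/(u\,\isoprof(u)^2)$ in the integrand and the factor $4/\epsilon$ at the upper limit.
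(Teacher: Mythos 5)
The paper does not prove Theorem~\ref{theorem:Morris_Peres}; it quotes it as Theorem~6.31 of \cite{LP16}, which is in turn based on the evolving-set method of \cite{MP05}. What you have written is therefore an outline of the proof behind that citation rather than an alternative to anything in the paper, and the high-level skeleton you describe---the evolving-set chain $(S_n)$, the martingale $\pi(S_n)$, the fundamental identity $p_n(x,y)=\frac{\pi(y)}{\pi(x)}\prob_{\{x\}}(y\in S_n)$, the Doob transform, and the sojourn-time count over dyadic windows---is indeed the structure of that argument.

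One step, however, is false as you have stated it. The contraction
\[
\E\bigl[\sqrt{\pi(S_{n+1})}\;\bigm|\;\mathcal{F}_n\bigr]\le\sqrt{\pi(S_n)}\Bigl(1-\tfrac18\phi(S_n)^2\Bigr)
\]
does \emph{not} hold for an arbitrary reversible chain: it requires a laziness hypothesis, typically $P(x,x)\ge 1/2$ for all $x$. Laziness is what forces the evolving set to be coherent (for $U_{n+1}<1/2$ one has $S_{n+1}\supseteq S_n$, for $U_{n+1}\ge 1/2$ one has $S_{n+1}\subseteq S_n$), and you tacitly use this both in the $L^1$-fluctuation identity $\E[|\pi(S_{n+1})-\pi(S_n)|\mid\mathcal{F}_n]=2Q(S_n,S_n^c)$ and in the concavity/Cauchy--Schwarz step that converts that identity into a $\phi^2$-order decay of $\sqrt{\pi(S_n)}$. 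Concretely, for the non-lazy two-state chain with $P(0,1)=P(1,0)=1$, $\pi\equiv 1$ and $S_0=\{0\}$, one has $S_1=\{1\}$ almost surely, so $\E[\sqrt{\pi(S_1)}]=\sqrt{\pi(S_0)}$ while $\phi(S_0)=1$, contradicting the asserted factor $7/8$. Since the theorem is invoked in the paper for simple random walk---which is not lazy---this is not a cosmetic omission: one must pass to the lazy kernel $(I+P)/2$, compare its conductance profile and transition probabilities with those of $P$, and it is precisely this reduction that produces the explicit constants $16$ and $4/\veps$ you defer to ``bookkeeping''. A smaller slip at the end: the bound $\prob_{\{x\}}(y\in S_n)\le\pi(y)\veps$ you aim for yields, via the fundamental identity, $p_n(x,y)\le\pi(y)^2\veps/\pi(x)$ rather than $\pi(x)\veps$; the two agree only when $\pi$ is constant, and in general one must target $\prob_{\{x\}}(y\in S_n)\le\pi(x)^2\veps/\pi(y)$ or interchange the roles of $x$ and $y$.
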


\noindent
As the following corollary shows, Theorem \ref{theorem:Morris_Peres} indeed gives us the required implication from lower bounds on the isoperimetric profile to upper bounds on transition probabilities and whence the final step in the proof of Lemma \ref{lemma:heat_kernel_upper}. Let us note here that the proof of Corollary \ref{coro:3dim_isoperimetry_implies_heat_kernel_decay} is almost identical to that of Corollary 6.32(ii) from \cite{LP16}.

\begin{corollary}
	\label{coro:3dim_isoperimetry_implies_heat_kernel_decay}
	Suppose that we have a simple random walk on a transitive graph $G$ satisfying $d$-dimensional isoperimetry. Then there exists a constant $C = C(G) < \infty$ such that
	\begin{equation}
		p_n(x,y) \leq C \cdot n^{-d / 2}
	\end{equation}
	holds for all $x,y \in V(G)$.
\end{corollary}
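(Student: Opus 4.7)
The plan is to apply Theorem \ref{theorem:Morris_Peres} directly, exploiting the simple structure of the problem afforded by transitivity. Since $G$ is transitive with common vertex degree $D$, the counting-like reversible measure satisfies $\pi(x) = \pi(y) = D$ for all $x, y \in V(G)$. Moreover, the $d$-dimensional isoperimetry $\isoperimetry_d$ translates (as noted in the excerpt) into the pointwise lower bound
\begin{equation*}
\isoprof(u) \geq c \cdot u^{-1/d}, \qquad u \geq D,
\end{equation*}
for some constant $c = c(G) > 0$. This is the only piece of geometric information about $G$ that will enter the argument.

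Next, I would substitute this bound into the integral appearing in the hypothesis of Theorem \ref{theorem:Morris_Peres}. Writing the lower limit as $D$ (since $\min\{\pi(x),\pi(y)\} = D$), one gets
\begin{equation*}
\int_{D}^{4/\varepsilon} \frac{16}{u \cdot \isoprof^2(u)} \, \mathrm{d}u
\leq \frac{16}{c^2} \int_{D}^{4/\varepsilon} u^{2/d - 1} \, \mathrm{d}u
= \frac{8d}{c^2} \left[ \left(\tfrac{4}{\varepsilon}\right)^{2/d} - D^{2/d} \right]
\leq \frac{8d}{c^2} \left(\tfrac{4}{\varepsilon}\right)^{2/d}.
\end{equation*}
So the hypothesis of Theorem \ref{theorem:Morris_Peres} holds as soon as $n - 1 \geq (8d/c^2)(4/\varepsilon)^{2/d}$, which is a condition we may invert: for every $n \geq 2$, setting
\begin{equation*}
\varepsilon_n := 4 \left( \frac{8d}{c^2 (n-1)} \right)^{d/2}
\end{equation*}
makes the hypothesis hold, and Theorem \ref{theorem:Morris_Peres} then yields $p_n(x,y) \leq D \cdot \varepsilon_n \leq C' \cdot n^{-d/2}$ for some constant $C' = C'(G, d)$.

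Finally, to pass from "for every $n \geq 2$" to "for every $n \in \mathbb{N}$" (as in the statement), I would absorb finitely many small values of $n$ into the constant using the trivial bound $p_n(x,y) \leq 1$: choose $C := \max\{C', 2^{d/2}\}$, so that both $p_0, p_1 \leq C$ and the asymptotic bound for $n \geq 2$ dominate. I do not expect any serious obstacle here — Theorem \ref{theorem:Morris_Peres} is tailor-made for this kind of implication, and the transitivity of $G$ removes any complication coming from a non-uniform reversible measure. The only minor technicality is keeping careful track of the constants (in particular, ensuring that the integrand is bounded and the integral is well-defined on $[D, 4/\varepsilon]$, which is automatic for $\varepsilon_n$ small enough), but this is routine.
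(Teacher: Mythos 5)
Your proposal is correct and follows essentially the same route as the paper's proof: bound $\isoprof(u) \geq c\, u^{-1/d}$, substitute into the integral condition of Theorem \ref{theorem:Morris_Peres}, invert to choose $\veps$ as a function of $n$, and conclude $p_n(x,y) \leq D \cdot \veps \leq C n^{-d/2}$, absorbing finitely many small $n$ into the constant. Your version is in fact somewhat more careful with the constants and the lower limit of integration than the paper's sketch, but the argument is the same.
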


\begin{proof}
	As we mentioned previously, $d$-dimensional isoperimetry $\isoperimetry_d$ implies that there exists $c > 0$ such that $\isoprof(r) \geq c r^{-1/d}$. Therefore, given $\veps \in (0,1)$ let
	\begin{equation}
		n \geq  1 + C \int_0^{4/\veps} u^{2/d - 1} \, \mathrm{d} u = 1 + c \veps^{-2/d},
	\end{equation}
	where $c$ is a constant.
	However, by Theorem \ref{theorem:Morris_Peres} this means that the $n$-step transition probability is less than $D \cdot \veps$. Consequently, choosing the minimum $\veps$ in terms of $n$ yields $p_n(x,y) \leq C n^{-d / 2}$.
\end{proof}

To sum up, the desired proof now follows as an easy chain of implications. Indeed, applying Corollary \ref{coro:transient_at_least_cubic}, then Corollary \ref{coro:cubic_growth_implies_3dim_isoperimetry} and Corollary \ref{coro:3dim_isoperimetry_implies_heat_kernel_decay} for $d = 3$ yields Lemma \ref{lemma:heat_kernel_upper}.

\bigskip

{\bf Acknowledgements:}\\
We thank \'Ad\'am T\'im\'ar for suggesting the idea of the proof of  implication \eqref{unimod_B}$\implies$\eqref{unimod_A} of
 Proposition \ref{prop_QW_unimod} to us. We thank G\'abor Pete for sketching the proof of Lemma \ref{lemma:heat_kernel_upper} to us.

The work of M.~Borb\'enyi was partially supported by the UNKP-21-2 New National Excellence Program of the Ministry for Innovation and Technology from the source of the National Research, Development and Innovation Fund.

The work of B.~R\'ath was partially supported by grants NKFI-FK-123962 and NKFI-KKP-139502 of NKFI (National Research, Development and Innovation Office) and the ERC Synergy under Grant No. 810115 - DYNASNET. 

The work of S.~Rokob was partially supported by the ERC Consolidator Grant 772466 ``NOISE''.

\end{document}